\newtheorem{theorem}{Theorem}[section]
\newtheorem{lemma}[theorem]{Lemma}
\newtheorem{proposition}{Proposition}
\theoremstyle{definition}
\newtheorem{definition}[theorem]{Definition}
\newtheorem{example}[theorem]{Example}
\theoremstyle{remark}
\newtheorem{remark}[theorem]{Remark}
\numberwithin{equation}{section}
\numberwithin{equation}{section}
\newcommand{\R}{\mathbb{R}}
\newcommand{\ttK}{\mathtt{K}}
\newcommand{\frk}{\mathfrak{k}}
\newcommand{\rN}{\mathrm{N}}
\newcommand{\fL}{\mathfrak{L}}
\newcommand{\rK}{\mathrm{K}}
\newcommand{\Yperp}{Y_{\perp}}
\newcommand{\sym}{\mathrm{sym}}
\newcommand{\asym}{\mathrm{skew}}
\newcommand{\Z}{\mathbb{Z}}
\newcommand{\cX}{\mathcal{X}}
\newcommand{\cB}{\mathcal{B}}
\newcommand{\cE}{\mathcal{E}}
\newcommand{\Herm}[1]{\mathrm{Sym}_{#1}}
\newcommand{\St}[2]{\mathrm{St}_{#1, #2}}
\newcommand{\cH}{\mathcal{H}}
\newcommand{\frj}{\mathfrak{j}}
\newcommand{\frJ}{\mathfrak{J}}
\newcommand{\frjH}{\mathfrak{j}_{\cH}}
\newcommand{\frb}{\mathfrak{b}}
\newcommand{\cF}{\mathcal{F}}
\newcommand{\cK}{\mathcal{K}}
\newcommand{\rM}{\mathcal{M}}
\newcommand{\cU}{\mathcal{U}}
\newcommand{\cT}{\mathcal{T}}
\newcommand{\cN}{\mathcal{N}}
\newcommand{\cQ}{\mathcal{Q}}
\newcommand{\lb}{\llbracket}
\newcommand{\rb}{\rrbracket}
\newcommand{\Gr}[2]{\mathrm{Gr}_{#1, #2}}
\newcommand{\cZ}{\mathcal{Z}}
\newcommand{\sfg}{\mathsf{g}}
\newcommand{\sfG}{\mathsf{G}}
\newcommand{\sfGQ}{\mathsf{G}_{\cQ}}
\newcommand{\txi}{\tilde{\xi}}
\newcommand{\qtxi}{q_{\tilde{\xi}}}
\newcommand{\rtxi}{r_{\tilde{\xi}}}
\newcommand{\teta}{\tilde{\eta}}
\newcommand{\qteta}{q_{\tilde{\eta}}}
\newcommand{\rteta}{r_{\tilde{\eta}}}
\newcommand{\tphi}{\tilde{\phi}}
\newcommand{\tomega}{\tilde{\omega}}
\newcommand{\hsfg}{\hat{\sfg}}
\newcommand{\pphi}{p_{\phi}}
\newcommand{\pxi}{p_{\xi}}
\newcommand{\peta}{p_{\eta}}
\newcommand{\cV}{\mathcal{V}}
\newcommand{\mrGamma}{\mathring{\Gamma}}
\newcommand{\Gammaa}{\Gamma^{\cQ}}
\newcommand{\sfT}{\mathsf{T}}
\newcommand{\qq}{\mathfrak{q}}
\newcommand{\ttX}{\mathtt{X}}
\newcommand{\ttY}{\mathtt{Y}}
\newcommand{\ttZ}{\mathtt{Z}}
\newcommand{\tts}{\mathtt{s}}
\newcommand{\rmh}{\mathrm{h}}
\newcommand{\rmv}{\mathrm{v}}
\newcommand{\rC}{\mathrm{C}}
\newcommand{\rmq}{\mathrm{h}}
\newcommand{\rmp}{\mathrm{v}}
\newcommand{\rmb}{\mathrm{b}}
\newcommand{\rCa}{\mathrm{C}^{\mathrm{\cQ}}}
\newcommand{\frm}{\mathfrak{m}}
\newcommand{\frc}{\mathfrak{c}}
\newcommand{\tDelta}{\tilde{\Delta}}
\newcommand{\tdelta}{\tilde{\delta}}
\newcommand{\frt}{\mathfrak{t}}
\newcommand{\tnabla}{\tilde{\nabla}}
\DeclareMathOperator{\rU}{\mathring{\mathtt{u}}}
\DeclareMathOperator{\rF}{F}
\DeclareMathOperator{\ad}{ad}
\DeclareMathOperator{\Ad}{Ad}
\DeclareMathOperator{\diag}{diag}
\DeclareMathOperator{\Tr}{Tr}
\DeclareMathOperator{\TrR}{Tr_{\R}}
\DeclareMathOperator{\Null}{Null}
\DeclareMathOperator{\OO}{O}
\DeclareMathOperator{\Exp}{Exp}
\DeclareMathOperator{\ft}{\mathsf{T}}
\DeclareMathOperator{\RcH}{R^{\cH}}
\DeclareMathOperator{\RcM}{R^{\rM}}
\DeclareMathOperator{\RcE}{R^{\cE}}
\DeclareMathOperator{\htK}{\hat{\ttK}}
\DeclareMathOperator{\hcR}{R}
\DeclareMathOperator{\ttH}{H}
\DeclareMathOperator{\ttQ}{Q}
\DeclareMathOperator{\ttV}{V}
\DeclareMathOperator{\rD}{D}
\DeclareMathOperator{\rR}{R}
\DeclareMathOperator{\rB}{B}
\DeclareMathOperator{\csr}{csr}
\DeclareMathOperator{\ssr}{ssr}
\DeclareMathOperator{\dI}{I}
\DeclareMathOperator{\frL}{L}
\DeclareMathOperator{\SOO}{SO}
\DeclareMathOperator{\GammaV}{\Gamma_{\cV}}
\DeclareMathOperator{\GammaH}{\Gamma_{\cH}}
\DeclareMathOperator{\GammaM}{\Gamma_{\rM}}
\DeclareMathOperator{\GammaE}{\Gamma_{\cE}}
\DeclareMathOperator{\Two}{I\!I}
\newcommand{\rA}{\mathrm{A}}
\newcommand{\rAd}{\mathrm{A}^{\dagger}}
\newcommand{\SOD}{\mathrm{S}(\mathrm{O}(d_0)\times \cdots\times\mathrm{O}(d_q))}
\newcommand{\sod}{\mathfrak{o}(d_0)\times \cdots\times\mathfrak{o}(d_q)}
\newcommand{\oo}{\mathfrak{o}}
\newcommand{\sbb}{\mathfrak{b}}
\newcommand{\cL}{\mathcal{L}}
\begin{document}
\title[Ambient space and metric operator]{Riemannian Geometry with differentiable ambient space and metric operator}
\author{Du Nguyen}

\email{nguyendu@post.harvard.edu}
\subjclass[2010]{Primary 53C05, 53C42, 53C30, 53Z30, 53Z50}
\keywords{Optimization, Riemannian geometry, Riemannian curvature, Naturally reductive, Tangent bundle metrics, Jacobi field, Machine Learning, Geodesic regression.}

\begin{abstract}We show Riemannian geometry could be studied by identifying the tangent bundle of a Riemannian manifold $\mathcal{M}$ with a subbundle of the trivial bundle $\mathcal{M}\times \mathcal{E}$, obtained by embedding $\mathcal{M}$ differentiably in a Euclidean space $\mathcal{E}$. Given such an embedding, we can extend the metric tensor on $\mathcal{M}$ to a (positive-definite) operator-valued function acting on $\mathcal{E}$, giving us an embedded ambient structure. The formulas for the Christoffel symbols and Riemannian curvature in local coordinates have simple generalizations to this setup. For a Riemannian submersion $\mathfrak{q}:\mathcal{M}\to \mathcal{B}$ from an embedded manifold $\mathcal{M}\subset \mathcal{E}$, we define a submersed ambient structure and obtain similar formulas, with the O'Neil tensor expressed in terms of the projection to the horizontal bundle $\mathcal{H}\mathcal{M}$. Using this framework, we provide the embedded and submersed ambient structures for the double tangent bundle $\mathcal{T}\mathcal{T}\mathcal{M}$ and the tangent of the horizontal bundle $\mathcal{T}\mathcal{H}\mathcal{M}$, describe the fibration of a horizontal bundle over the tangent bundle of the base manifold and extend the notion of a canonical flip to the submersion case. We obtain a formula for horizontal lifts of Jacobi fields, and a new closed-form formula for Jacobi fields of naturally reductive homogeneous spaces. We construct natural metrics on these double tangent bundles, in particular, extending Sasaki and other natural metrics to the submersion case. We illustrate by providing explicit calculations for several manifolds.
\end{abstract}
\maketitle

\section{Introduction} It is well-known that a Riemannian manifold could be embedded isometrically in a Euclidean space, via the Nash embedding theorems. However, this is difficult technically. On the other hand, manifolds are often encountered as a differentiable submersion of a differentiable embedding. It turns out it is relatively easy to do geometry in this context, where the metric on the manifold is not necessarily induced from the Euclidean space, but defined by an operator.

This approach to computation, initiated in \cite{Edelman_1999} in the optimization literature, has been very successful, leading to applications in optimization, statistics, and computer vision. The computation of the Levi-Civita connection either uses the calculus of variation or the well-known formulas for embedded or submersed manifolds. We attempted to suggest a simplified framework in \cite{Nguyen2020a}.

We show this approach is also fruitful in studying Riemannian geometry itself in this article. In a sense, the approach could be considered dual to the local chart approach. Its main advantage is all formulas are defined and computed globally. The local formulas involving Christoffel symbols, for example, the {\it curvature formula}, have very straightforward global\slash embedded counterparts which we will explain shortly. The applicability comes from the fact that we only need a differentiable embedding instead of a Riemannian embedding, and that there are also similarly simple formulas in the submersion case. Furthermore, we will show Jacobi fields and tangent\slash horizontal bundle metrics can be expressed and computed easily in this formulation, and obtain several new results. It is interesting to note that the global formulas are very similar to the local ones and are easy to use.

For a Riemannian manifold $\rM$ embedded (differentiably) in a Euclidean space $\cE$, at each point $x\in\rM$, the tangent space $\cT_x\rM$ is identified with a subspace of $\cE$, thus the tangent bundle $\cT\rM$ is a subbundle of $\rM\times \cE$. We show there exists a positive-definite operator-valued function $\sfg$ from $\rM$ into the space of linear operators $\fL(\cE, \cE)$ on $\cE$, inducing the original Riemannian metric on $\rM$. It extends the bundle metric from $\cT\rM$ to $\rM\times \cE$. The extension is not unique given an intrinsic metric on $\rM$. To use this approach to compute intrinsic Riemannian measures, we need to make a choice of $\sfg$, and the computational result will be independent of the choice. The operator $\sfg$ induces a {\it projection} $\Pi$ from $\rM\times\cE$ to the tangent bundle $\cT\rM$ of $\rM$, or in the case where we have a Riemannian submersion $\qq:\rM \to \cB$, a projection $\ttH$ from $\rM\times\cE$ to the horizontal subbundle $\cH\rM\subset \cT\rM$ associated with this submersion. These projections (considered as operator-valued functions) are pivotal in this approach.

First, the projection $\Pi$ to the tangent bundle defines a connection on $\rM$, defined simply as $\Pi(\rD_{\ttX} \ttY)$ for vector fields $\ttX$ and $\ttY$, where $\ttY$ is identified with an $\cE$-valued function on $\rM$, using the identification just discussed, $\rD_{\ttX}\ttY$ denotes the directional derivative (covariant derivative using the trivial connection on the trivial bundle defined by the embedding of $\rM$ in $\cE$). In general, this connection is not compatible with metric, but if the metric operator $\sfg$ is constant, it is identical to the Levi-Civita connection. Otherwise, it differs from the Levi-Civita connection by a tensor $\mrGamma$, evaluated on two tangent vectors $\xi$ and $\eta$ to $\rM$ as
$$\mrGamma(\xi, \eta) = \frac{1}{2}\Pi\sfg^{-1}((\rD_{\xi}\sfg)\eta + (\rD_{\eta}\sfg)\xi - \cX(\xi, \eta))
$$
($\cX$ is the index-raised term, see \cref{prop:Levi}). This is analogous to the usual formula for Christoffel symbols. Following \cite{Edelman_1999}, we define a concept of a Christoffel function $\Gamma$, that could be used to compute Levi-Civita covariant derivatives. It is a function from $\rM$ to the space of bilinear functions from $\cE\times\cE$ to $\cE$, such that $\nabla_{\ttX}\ttY = \rD_{\ttX}\ttY + \Gamma(\ttX, \ttY)$, where $\ttX, \ttY$ are vector fields and $\nabla$ is the Levi-Civita covariant derivative. On tangent vectors, $\Gamma(\xi, \eta) = -(\rD_{\xi}\Pi)\eta + \mrGamma(\xi, \eta)$ (again, $\rD_{\xi}\Pi$ is the directional derivative of the operator-valued function $\Pi$). Given a Christoffel function $\Gamma$, the curvature of $\rM$ could be computed by the familiar formula
\begin{equation}\label{eq:cur_local}
  \rR_{\xi, \eta}\phi = -(\rD_{\xi}\Gamma)(\eta, \phi)
  +(\rD_{\eta}\Gamma)(\xi, \phi) - \Gamma(\xi, \Gamma(\eta, \phi)) +\Gamma(\eta, \Gamma(\xi, \phi))
\end{equation}
for three tangent vectors $\xi, \eta, \phi$ at $x\in\rM$. Thus, for the textbook example of the sphere $\rM=S^n\subset \cE=\R^{n+1}$, $\Pi_x\omega = \omega - xx^{\sfT}\omega $ (with $x\in S^n, \omega\in \R^{n+1}$), where $\Pi_x$ denotes the projection to the tangent space $\cT_x\rM$ at $x$, $\Gamma(\xi, \eta) = -(\rD_{\xi}\Pi)\eta = x\xi^{\sfT}\eta$, and \cref{eq:cur_local} gives us the curvature. There is no need to convert to trigonometric coordinates. In this instance, \cref{eq:cur_local} is equivalent to a $(1,3)$-form of the Gauss-Codazzi equation, but it could be used for metric operators defined only on $\rM$.

The approach also works for a Riemannian submersion, in \cref{theo:rsub}, we provide a formula similar to \cref{eq:cur_local}. It is equivalent to the $(1, 3)$ form of the O'Neil formula \cite{ONeil1966}. In both the embedded and submersed cases, the projections $\Pi$ (to the tangent bundle) and $\ttH$ (to the horizontal bundle $\cH\rM$) allow us to extend a tangent or a horizontal vector $\xi$ at a point $x\in \rM$ to a vector field (or horizontal vector field) $p_{\xi}$ on $\rM$, defined as $\pxi(y)=\Pi_y\xi$ (or $\ttH_y\xi$) for $y\in \rM$. In the embedded case, we could show that $[p_\xi, p_\eta]$ evaluated at $x$ vanishes for two tangent vectors $\xi$ and $\eta$, or equivalently $(\rD_{\xi}\Pi)_x\eta = (\rD_{\eta}\Pi)_x\xi$. This is not the case with horizontal projections, and the difference $(\rD_{\xi}\ttH)_x\eta - (\rD_{\eta}\ttH)_x\xi$ is exactly $2\rA_{\xi}\eta$, where $\rA$ is the O'Neil tensor. Thus, knowing $\ttH$ and its directional derivative is sufficient to compute the lift of the curvature of a submersed manifold $\cB$ if the curvature of $\rM$ is known. We can derive easily the {\it curvature of flag manifolds} from this approach, obtaining an alternative form of the curvature formula for {\it naturally reductive homogeneous spaces} (\cite{KobNom}, chapter 10). In general, the curvature computed by this approach could produce rather complicated expressions if the underlying fibration or symmetries of the manifold is not apparent. However, it makes available a procedural approach for all metrics.

Our next goal is to study {\it Jacobi fields}, which could be considered as curves on the tangent bundle $\cT\rM$ of $\rM\subset \cE$, obtained by taking directional derivatives of geodesics, considered as a function of both time and initial conditions, by a change in initial conditions. The initial data of the Jacobi field equation could be identified as a point on the double tangent bundle $\cT\cT\rM$, considered as a submanifold of $\cE^4$. For $(x, v)\in \cT\rM\subset \cE^2$, a Jacobi field $\frJ(t)$ along a geodesic $\gamma(t)$, with initial condition $\gamma(0)=x, \dot{\gamma}(0) = v, \frJ(0) = (x, \Delta_{\frm})\in\cT\rM\subset\cE^2$ has initial time derivative of the form $\dot{\frJ}(0) = (x, \Delta_{\frm}, v, \Delta_{\frt})\in \cT\cT\rM\subset \cE^4$, where $(\Delta_{\frm}, \Delta_{\frt})\in\cE^2$ is a tangent vector in $\cT_{(x, v)}\cT\rM$, the tangent space of $\cT\rM$ at $(x, v)$. We describe $\cT\cT\rM$ as a submanifold of $\cE^4$, with constraints given in terms of the projection $\Pi$ and its directional derivative. To the best of our knowledge, the horizontal lift of a Jacobi field (from a curve on the tangent bundle $\cT\cB$ of a base manifold $\cB$ to a curve on the horizontal bundle $\cH\rM$ in a Riemannian submersion $\qq:\rM\to\cB$) has not been studied before, and it could be described quite explicitly in our framework. We show $d\qq_{|\cH\rM}:\cH\rM\to\cT\cB$ is a differentiable submersion, describe the vertical bundle $\cV\cH\rM$ of this submersion explicitly by a map $\rmb$ from the vertical bundle $\cV\rM$ to $\cV\cH\rM$, constructed by directional derivatives of $\ttH$. We also identify a subbundle $\cQ\cH\rM$ of $\cT\cH\rM$, which is transversal to the vertical bundle $\cV\cH\rM$, which will play the role of a horizontal bundle in the submersion $d\qq_{|\cH\rM}$. We define the {\it canonical flip} $\frjH$ on $\cQ\cH\rM$ (again, with the help of the O'Neil tensor) which corresponds to the {\it canonical flip} on $\cT\cT\cB$. The initial data of a lifted Jacobi field could be identified with a point of $\cQ\cH\rM$. Our main result for Jacobi fields is a horizontal lift formula in \cref{theo:jacsub}, using this canonical flip. We also obtain a formula for horizontal lifts of Jacobi fields of naturally reductive homogeneous spaces in \cref{theo:jacobi_submerse_nat}, further clarify the relationship between invariant vector fields and Jacobi fields. We also add partial results to a conjecture of Ziller characterizing symmetric spaces by zeros of Jacobi fields.

As we have a description of the double tangent bundle $\cT\cT\rM\subset\cE^4$, we can also explicitly construct {\it natural metrics} \cite{Sasaki,TriMuss,KowalSeki,Abbassi2005,GudKap} on $\cT\cT\rM$. We describe explicitly the connection map $\rC$, the metric operator $\sfG$, and its projection $\Pi_{\sfG}$ on $\cT\cT\rM$, for a family of metrics constructed based on two real-valued functions $\alpha, \beta$. The Sasaki metric \cite{Sasaki} ($\alpha=1, \beta=0$) and the Cheeger-Gromoll metric ($\alpha=\beta=(1+t)^{-1}$, constructed by Tricerri and Musso \cite{TriMuss}) are special cases of this family. Using the results in these cited works, we express the Christoffel function $\Gamma_{\sfG}$ corresponding to this metric in our framework.

Since $d\qq_{|\cH\rM}:\cH\rM\to\cT\cB$ is a differentiable submersion, naturally we wish to give $\cH\rM$ a metric $\sfGQ$ so $d\qq_{|\cH\rM}$ is a Riemannian submersion if $\cT\cB$ is equipped with the metric $\sfG$ above. The construction is similar to the embedded case, but the connection map $\rC$ is replaced by a modified counterpart $\rCa$, constructed with the help of both the Christoffel function and the map $\rmb$ defining the vertical bundle $\cV\cH\rM$. We provide formulas for the horizontal Christoffel function $\Gamma^{\cH}_{\sfGQ}$, allowing us to compute the horizontal lift of covariant derivatives on $\cT\cB$.

Besides the examples used to illustrate the concepts (including $\SOO(n)$, the tangent bundle of a sphere and flag manifolds), we present a detailed calculation for the Grassmann manifold $\Gr{p}{n}$, considered as the submersed image of the Stiefel manifold $\St{p}{n}$, providing explicit formulas for its Jacobi field, and the natural metric on the horizontal bundle $\cH\St{p}{n}$ corresponding to this submersion.
\section{Related works}The formulas for the Levi-Civita connections for Riemannian embedding and submersion are classical, for example in \cite{ONeil1983}, we make the observation that $\sfg$ is only required to be defined on $\rM$. Some results in \cref{sec:christ_func} overlap with \cite{Nguyen2020a}, but the focus of that paper is on numerical implementation and optimization, the proofs given here are also different. The foundational paper \cite{Edelman_1999} provided formulas in the style studied here for Grassmann and Stiefel manifolds, popularizing the method of Riemannian optimization from the earlier works of \cite{GaLu,Gabay1982}. A rather extensive collection of manifolds have been studied by this method, as quotients of products of Stiefel, Grassmann manifolds, Lie groups, and symmetric spaces and a few of their differential geometric measures have been implemented in computer codes, available in \cite{Manopt,Pymanopt,Geomstats}, among others. For the most part, these examples consider metrics that extend almost everywhere to the ambient space \cite{AMS_book, ONeil1983}, thus the role of the metric operator has not been emphasized, and the tensor $\mrGamma$ has not been studied. The treatment of the Gauss-Codazzi equation as a result about subbundles could be found in \cite{taylor2011partial}. We learn about the canonical flip from \cite{Michor}. The constant-coefficient differential equation for Jacobi fields of a naturally reductive space appeared in \cite{Rauch,Chavel,Ziller}. The closed-form formula for Jacobi fields for symmetric spaces, but not homogeneous spaces, is also well-known. The formula for Jacobi fields for $\SOO(n)$ traces back to \cite{GKR}. The idea of the horizontal lift of Jacobi fields in \cref{theo:jacsub} has been noted in Section 8 of \cite{Chavel}. Natural metrics were first studied in \cite{Sasaki} and subsequently extended by many authors. We provide the construction of families of natural metrics for horizontal bundles using our framework.

\section{Notations} By $\R^{m\times n}$, we denote the space of real matrices of size $m\times n$. The base inner product on an inner product space $\cE$ is denoted by $\langle,\rangle_{\cE}$. We use $\fL$ to denote the space of linear operators, for example, $\fL(\cE, \cE)$ is the space linear operator on $\cE$, $\fL(\cE\otimes\cE, \cE)$ is the space of $\cE$-valued bilinear operators on $\cE$. The metric operator on a manifold $\rM$ is often denoted by $\sfg$, $\Pi$ denotes the projection (under $\sfg$) from $\rM\times\cE$ to the tangent bundle $\cT\rM$ of $\rM$, considered as a function from $\rM$ to $\fL(\cE, \cE)$. If $\qq:\rM\to\cB$ is a Riemannian submersion, we denote by $\cV\rM$ and $\cH\rM$ the vertical and horizontal subbundles of $\cT\rM$, and $\ttV$, $\ttH$, the projections from $\cE$ to $\cV\rM$ and $\cH\rM$. For a tangent vector $\xi$, $\rD_{\xi}$ denotes the directional derivative in direction $\xi$ of a scalar, vector or operator-valued function. We use the same notation $\rD_{\ttX}$ for the derivatives by a vector field of scalar, vector, or operator-valued function. In particular, for two vector fields $\ttX$ and $\ttY$, $\rD_{\ttX}\ttY$ makes sense if we identify $\ttY$ with a function from $\rM$ to $\cE\supset \rM$. For a geodesic $\gamma(t)$, we denote by $\nabla_{d/dt}$ the covariant derivative in direction $\dot{\gamma}(t)$ along the curve. By $\Gamma$ and $\GammaM$ we denote the Christoffel function of $\rM$, $\GammaH$ and $\GammaV$ denote the horizontal and vertical Christoffel functions defined in \cref{sec:christ_func}. By $\nabla^{\cH}$ we denote $\ttH\nabla$, the horizontal component of the Levi-Civita connection. The O'Neil tensor is denoted by $\rA$ with adjoint $\rAd$, defined in \cref{subsec:Rsubmerse}. The tangent bundle projection map is typically denoted by $\pi$, $\rC$ is the connection map, $\frJ$ denotes the Jacobi field and $J$ denotes the tangent component of $\frJ$ in the embedding in $\cE^2$. By $\cQ$ we denote the horizontal subbundle of $\cT\cH\rM$, defined in \cref{sec:subm_tangent}.

We also use $\sfg_x$ to denote the valuation of $\sfg$ at $x$, this also applies for projections $\ttH_x$, vector fields $\ttY_x$, etc. Because of this, partial derivatives will always be denoted by $\partial$ to avoid confusion. The Riemannian exponential map is denoted by $\Exp$, with $\Exp_x v$ denotes the point $\gamma(1)$ for the geodesic $\gamma(t)$ with $\gamma(0) = x, \dot{\gamma}(0) = v$. The exponential map corresponding to a one-parameter subgroup of a Lie group is denoted by $\exp(tX)$, with $X$ an element of the corresponding Lie algebra, $t\in\R$.

For integers $n$ and $p$, we denote the orthogonal group in $\R^{n\times n}$ by $\OO(n)$, $\St{p}{n}$ the Stiefel submanifold of $\R^{n\times p}$, $\Gr{p}{n}$ the corresponding Grassmann manifold, $\Herm{p}$ the vector space of symmetric matrices in $\R^{p\times p}$, and $\oo(p)$ the space of antisymmetric matrices. We denote the symmetrize\slash asymmetrize operators by $\sym(A) = \frac{1}{2}(A + A^{\ft})$ and $\asym(A) = \frac{1}{2}(A - A^{\ft})$. By $\|\|_{\sfg}$ we denote the norm associated with the metric operator $\sfg$, and $\|\|_{\sfg, x}$ denotes the norm at a particular point.
\section{Embedded ambient structure: metric operator, projection, and the Levi-Civita connection}\label{sec:christ_func}
\subsection{Differentiable embedding and metric operator}\label{subsec:subman}
Let $\cE$ be an inner product space with the inner product of $\omega_1, \omega_2\in\cE$ denoted by $\langle\omega_1, \omega_2\rangle_{\cE}$. Assume $\rM$ is a differential submanifold of $\cE$. It is well-known, \cite{LeeSmooth} any (Hausdorff, $\sigma$-compact) differential manifold $\rM$ could be embedded to an inner product space $\cE$, for example, by the Whitney embedding theorem. We will assume $\rM$ is equipped with a Riemannian metric $\langle\rangle_R$, not necessarily the metric induced from the embedding in $\cE$. Let $\cT\rM$ be the tangent bundle of $\rM$, so at each point $x\in\rM$, the tangent space $\cT_x\rM$ is identified with a subspace of $\cE$. We define a metric operator as follows.
\begin{definition} Let $\rM$ be a Riemannian manifold, $\rM\subset \cE$ is a differentiable embedding of $\rM$ in a Euclidean space $(\cE, \langle\rangle_{\cE})$. A metric operator $\sfg$ on $\rM$ is a smooth operator-valued function $\sfg$ from $\rM$ to $\fL(\cE, \cE)$ such that $\sfg(x)$ is positive-definite for all $x\in\rM$ and $\langle \xi, \eta\rangle_{R, x} = \langle\xi, \sfg(x)\eta\rangle_{\cE}$ for all $x\in\rM, \xi, \eta\in \cT_x\rM$, where $\langle\rangle_{R, x}$ denotes the Riemannian metric evaluated at $x$. The triple $(\rM, \sfg, \cE)$ is called an embedded ambient structure (or simply ambient structure) of $\rM$.
\end{definition}
We will also write $\sfg_x$ for $\sfg(x)$. In general, a positive-definite operator $\sfg$ on an inner product space $\cE$ induces a new inner product  $\langle\rangle_{\sfg}$, defined as $\langle\omega_1, \sfg\omega_2\rangle_{\cE}$, and we write $\langle\rangle_{\sfg, x}$ for the inner product defined by $\sfg_x$. Recall if $\cE_1, \langle\rangle_{\cE_1}$ and $\cE_2, \langle\rangle_{\cE_2}$ are two inner product spaces, and $f:\cE_1\to\cE_2$ is a linear map, the adjoint $f^{\sfT}:\cE_2\to\cE_1$ of $f$ is the unique map satisfying $\langle \nu_2, f \omega_1\rangle_{\cE_2} = \langle f^{\sfT}\nu_2, \omega_1\rangle_{\cE_1}$, for $\nu_2\in\cE_2, \omega_1\in \cE_1$.

In \cref{prop:emb_exists}, we show metric operators always exist. The proof uses a standard result on projections stated below (which is also used in \cite{Nguyen2020a}).
\begin{lemma}\label{lem:projprop}Let $(\cE, \langle\rangle_{\cE})$ be an inner product space and $\cV\subset \cE$ be a subspace. Let $\sfg$ be a positive-definite operator on $\cE$. Assume there exists an inner product space $(\cE_{\rN},\langle\rangle_{\cE_{\rN}})$ and a linear map $\rN:\cE_{\rN}\to \cE$ such that $\rN(\cE_{\rN}) = \cV$ and $\rN$ is injective. Let $\rN^{\sfT}:\cE\to\cE_{\rN}$ be the adjoint of $\rN$ under the inner products $\langle\rangle_{\cE}$ and $\langle\rangle_{\cE_{\rN}}$, then $\rN^{\sfT}\sfg\rN$ is invertible and the projection $\Pi_{\sfg}$ from $\cE$ to $\cV$ under the metric induced by $\sfg$ is given by
  \begin{equation}\label{eq:Pig}
    \Pi_{\sfg}\omega = \rN(\rN^{\sfT}\sfg\rN)^{-1}\rN^{\sfT}\sfg\omega
  \end{equation}
  for $\omega\in\cE$. That means $\Pi_{\sfg}\omega\in \cV$, and $\langle \Pi_{\sfg}\omega, \sfg\eta\rangle_{\cE} = \langle \omega, \sfg\eta\rangle_{\cE}$ for any $\eta\in\cV$.
  Moreover, $\Pi_\sfg$ is idempotent and $\sfg\Pi_\sfg$ is self-adjoint under $\langle\rangle_{\cE}$.
\end{lemma}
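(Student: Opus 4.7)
The plan is to establish the four assertions — invertibility of $\rN^{\sfT}\sfg\rN$, the explicit formula for the projection, idempotency of $\Pi_{\sfg}$, and self-adjointness of $\sfg\Pi_{\sfg}$ under $\langle\rangle_{\cE}$ — by a short sequence of adjoint computations. Throughout I would use that $\sfg$ is self-adjoint on $\cE$ (implicit in being positive-definite, since $\langle\omega_1,\sfg\omega_2\rangle_{\cE}$ is taken as an inner product).

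\textbf{Invertibility.} I would first show that $\rN^{\sfT}\sfg\rN$ is positive-definite on $\cE_{\rN}$: for any nonzero $v\in\cE_{\rN}$, injectivity of $\rN$ gives $\rN v\neq 0$, so positive-definiteness of $\sfg$ yields
$$\langle v,\, \rN^{\sfT}\sfg\rN v\rangle_{\cE_{\rN}} \;=\; \langle \rN v,\, \sfg\rN v\rangle_{\cE} \;>\; 0.$$
Hence $\rN^{\sfT}\sfg\rN$ is invertible. The same identity combined with $(\rN^{\sfT}\sfg\rN)^{\sfT} = \rN^{\sfT}\sfg^{\sfT}\rN = \rN^{\sfT}\sfg\rN$ shows it is self-adjoint on $\cE_{\rN}$, and so is its inverse.

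\textbf{Projection property.} For any $\eta\in\cV$ pick $v\in\cE_{\rN}$ with $\eta=\rN v$, which exists because $\rN(\cE_{\rN})=\cV$. Then by moving $\rN$ and $(\rN^{\sfT}\sfg\rN)^{-1}$ across the pairing using the defining adjoint property of $\rN^{\sfT}$ and the self-adjointness established above,
\begin{align*}
\langle \Pi_{\sfg}\omega,\, \sfg\eta\rangle_{\cE}
&= \langle \rN(\rN^{\sfT}\sfg\rN)^{-1}\rN^{\sfT}\sfg\omega,\, \sfg\rN v\rangle_{\cE}\\
&= \langle (\rN^{\sfT}\sfg\rN)^{-1}\rN^{\sfT}\sfg\omega,\, \rN^{\sfT}\sfg\rN v\rangle_{\cE_{\rN}}
 = \langle \rN^{\sfT}\sfg\omega,\, v\rangle_{\cE_{\rN}}
 = \langle \omega,\, \sfg\eta\rangle_{\cE}.
\end{align*}
Since $\Pi_{\sfg}\omega\in\rN(\cE_{\rN})=\cV$, this identifies $\Pi_{\sfg}\omega$ with the orthogonal projection of $\omega$ onto $\cV$ under the metric induced by $\sfg$.

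\textbf{Idempotency and self-adjointness of $\sfg\Pi_{\sfg}$.} Idempotency is a direct cancellation: expanding $\Pi_{\sfg}^{2}\omega$ places $\rN^{\sfT}\sfg\rN$ immediately before its inverse, and the two collapse. For self-adjointness, I would take the $\langle\rangle_{\cE}$-adjoint of the product $\sfg\Pi_{\sfg} = \sfg\rN(\rN^{\sfT}\sfg\rN)^{-1}\rN^{\sfT}\sfg$ factor by factor, using that $\sfg$ is self-adjoint on $\cE$ and $(\rN^{\sfT}\sfg\rN)^{-1}$ on $\cE_{\rN}$; the resulting expression is again $\sfg\Pi_{\sfg}$. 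The only subtle point throughout is the bookkeeping between the two inner products $\langle\rangle_{\cE}$ and $\langle\rangle_{\cE_{\rN}}$ with $\rN^{\sfT}$ as the bridging adjoint, but there is no substantive obstacle.
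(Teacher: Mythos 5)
Your proposal is correct and follows essentially the same route as the paper: invertibility of $\rN^{\sfT}\sfg\rN$ from injectivity of $\rN$ plus positive-definiteness of $\sfg$ (the paper phrases it as triviality of the kernel, you as positive-definiteness, which is the same computation), the projection identity by transporting operators across the pairing via the adjoint of $\rN$, and idempotency and self-adjointness of $\sfg\Pi_{\sfg}$ read off directly from the formula. No gaps.
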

\begin{proof} If $\rN^{\sfT}\sfg\rN\delta = 0$ for $\delta\in \cE_{\rN}$, then $\langle\sfg\rN\delta, \rN\delta \rangle_{\cE} = \langle \rN^{\sfT}\sfg\rN\delta, \delta\rangle_{\cE}=0$, this means $\rN\delta = 0$ since $\sfg$ is positive-definite, and so $\delta = 0$ since $\rN$ is injective. Thus $\rN^{\sfT}\sfg\rN$ is invertible. It is clear $\Pi_{\sfg}\omega\in \cV$. If $\eta\in \cV$, then $\eta = \rN\delta$ for $\delta\in \cE_{\rN}$, thus
  $$\langle\rN(\rN^{\sfT}\sfg\rN)^{-1}\rN^{\sfT}\sfg \omega, \sfg\eta\rangle_{\cE} =
\langle \omega,\sfg\rN (\rN^{\sfT}\sfg\rN)^{-1}\rN^{\sfT}\sfg\rN\delta\rangle_{\cE} = 
\langle \omega, \sfg\rN\delta\rangle_{\cE}=\langle \omega, \sfg\eta\rangle_{\cE}$$
Finally, it is clear from \cref{eq:Pig} that $\Pi_{\sfg}$ is idempotent and $\sfg\Pi_{\sfg}$ is self-adjoint.
\end{proof}
\begin{proposition}\label{prop:emb_exists}If a manifold $\rM$ is differentiably embedded in $\cE$, then $x\mapsto \Pi^{\cE}_x$ is a smooth operator-valued map from $\rM$ to $\fL(\cE, \cE)$, the space of linear operators in $\cE$, where $\Pi^{\cE}_x$ denotes the projection from $\cE$ to $\cT_x\rM$ under the inner product in $\cE$. Assume $\rM$ is equipped with a Riemannian metric $\langle\rangle_R$, then there exists a smooth operator-valued function $\sfg$ from $\rM$ to $\fL(\cE, \cE)$ such that $\sfg_x :=\sfg(x)$ is positive-definite for all $x\in\rM$ and for two tangent vectors  $\eta, \xi\in \cT_x\rM$ we have
  \begin{equation}\langle \eta, \xi\rangle_{R, x} = \langle\eta, \sfg_x\xi\rangle_{\cE}
  \end{equation}
If $\dI_{\cE}$ is the identity map of $\cE$, we can take $\sfg$ to be the operator defined by
\begin{equation}\label{eq:stdg}
  \sfg_x\omega = (\dI_{\cE} - \Pi^{\cE}_x)\omega + \sfg_{R, x}(\Pi^{\cE}_{x}\omega)
\end{equation}  
where $\sfg_{R, x}$ is the unique self-adjoint operator on $\cT_x\rM$ such that
  $$\langle \xi, \eta\rangle_R = \langle \xi, \sfg_{R, x}\eta\rangle_{\cE}$$
\end{proposition}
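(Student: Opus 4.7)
The plan is to handle the two assertions separately, working locally with smooth frames for $\cT\rM$ in both cases, then verifying the global formula \cref{eq:stdg} by a direct decomposition argument.

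For the smoothness of $x \mapsto \Pi^{\cE}_x$, I would work in a neighborhood of an arbitrary $x_0 \in \rM$. Since $\rM$ is a differentiable submanifold of $\cE$, there exists a smooth local frame for $\cT\rM$, i.e. a smooth map $\rN : U \to \fL(\R^d, \cE)$ (with $d = \dim \rM$ and $U$ a neighborhood of $x_0$) such that $\rN(x)$ is an injection with image $\cT_x \rM$. Then \cref{lem:projprop} applied with $\sfg = \dI_{\cE}$ and $\cE_{\rN} = \R^d$ gives the closed-form expression $\Pi^{\cE}_x = \rN(x)(\rN(x)^{\sfT}\rN(x))^{-1}\rN(x)^{\sfT}$. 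Smoothness of matrix inversion on the open set of invertible matrices then yields smoothness of $\Pi^{\cE}$ on $U$; since $x_0$ was arbitrary, $\Pi^{\cE}$ is smooth globally.

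For the existence of $\sfg$, I would first justify that the fiberwise operator $\sfg_{R,x}$ on $\cT_x \rM$ exists, is unique, and depends smoothly on $x$ as an element of $\fL(\cE, \cE)$ (extended by zero, or equivalently precomposed with $\Pi^{\cE}_x$). Uniqueness and self-adjointness on each fiber come from the standard Riesz-type argument: the Riemannian inner product $\langle\,,\,\rangle_{R,x}$ and the restriction of $\langle\,,\,\rangle_{\cE}$ are both inner products on the finite-dimensional space $\cT_x \rM$, so there is a unique positive-definite self-adjoint (with respect to $\langle\,,\,\rangle_{\cE}$) operator relating them. For smoothness, in the frame $\rN(x)$ above, the Gram matrix $G_R(x) := (\langle \rN(x) e_i, \rN(x) e_j\rangle_{R,x})_{ij}$ is smooth in $x$, and one checks that $\sfg_{R,x} = \rN(x) (\rN(x)^{\sfT}\rN(x))^{-1} G_R(x) (\rN(x)^{\sfT}\rN(x))^{-1} \rN(x)^{\sfT}$ satisfies $\rN^{\sfT}\sfg_{R,x}\rN = G_R$ and maps into $\cT_x \rM$, hence is the desired operator; smoothness is then immediate. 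With $\Pi^{\cE}$ and $\sfg_R$ both smooth, the formula \cref{eq:stdg} defines a smooth operator-valued function on $\rM$.

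The final step is a direct verification of the two required properties using the $\cE$-orthogonal decomposition $\omega = \omega_{\|} + \omega_{\perp}$ with $\omega_{\|} = \Pi^{\cE}_x \omega \in \cT_x\rM$ and $\omega_{\perp} = (\dI_{\cE} - \Pi^{\cE}_x)\omega$. Then $\sfg_x \omega = \omega_{\perp} + \sfg_{R,x}\omega_{\|}$, and since $\sfg_{R,x}\omega_{\|} \in \cT_x\rM$ is $\cE$-orthogonal to $\omega_{\perp}$, one obtains $\langle \omega, \sfg_x \omega\rangle_{\cE} = \|\omega_{\perp}\|_{\cE}^2 + \langle \omega_{\|}, \sfg_{R,x}\omega_{\|}\rangle_{\cE} \geq 0$ with equality forcing both pieces to vanish, giving positive-definiteness on $\cE$. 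For compatibility with the Riemannian metric, if $\xi, \eta \in \cT_x \rM$ then $\Pi^{\cE}_x \eta = \eta$, so $\sfg_x \eta = \sfg_{R,x}\eta$ and $\langle \xi, \sfg_x \eta\rangle_{\cE} = \langle \xi, \sfg_{R,x}\eta\rangle_{\cE} = \langle \xi, \eta\rangle_{R,x}$ by definition of $\sfg_{R,x}$. The only conceptual subtlety, and the main point to dispatch carefully, is the smooth dependence of $\sfg_{R,x}$ on $x$ as an $\fL(\cE,\cE)$-valued function; everything else is linear algebra carried out pointwise.
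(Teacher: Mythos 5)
Your proposal is correct and follows essentially the same route as the paper's proof: a smooth local frame for $\cT\rM$ fed into \cref{lem:projprop} with the identity operator to get smoothness of $\Pi^{\cE}$, followed by the $\cE$-orthogonal splitting $\omega = \Pi^{\cE}_x\omega + (\dI_{\cE}-\Pi^{\cE}_x)\omega$ to verify positive-definiteness of \cref{eq:stdg} as a sum of two nonnegative terms. The one point where you go beyond the paper is the explicit frame formula $\sfg_{R,x} = \rN(\rN^{\sfT}\rN)^{-1}G_R(\rN^{\sfT}\rN)^{-1}\rN^{\sfT}$ establishing smooth dependence of $\sfg_{R,x}$ on $x$ (and, as a byproduct, self-adjointness of $\sfg_{R,x}\Pi^{\cE}_x$, which the paper instead checks by a direct adjoint computation); the paper leaves that smoothness implicit, so this is a welcome but not conceptually different addition.
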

\begin{proof} As a projection is defined for any subspace, $\Pi_x^{\cE}$ is a well-defined map for all points $x\in \rM$. Let $m$ be the dimension of $\rM$, $\psi_{\cU}: \R^m\to \cU\subset\rM\subset \cE$ be a coordinate chart for $\cU\subset\rM$ near $x$ considered as a map from $\R^m$ to $\cE$. Then $d\psi_{\cU}(x)$ is a map from $\R^m$ to $\cE$, injective with image precisely $\cT_x\rM$. Applying \cref{lem:projprop} for the identity operator, $\Pi_x^{\cE} = d\psi_{\cU}(x)(d\psi_{\cU}^{\sfT}(x)d\psi_{\cU}(x))^{-1}d\psi_{\cU}^{\sfT}(x)$. Since $\psi$ is assumed to be smooth, $\Pi_x^{\cE}$ is smooth in $\cU$.
  
  To show $\sfg$ defined in \cref{eq:stdg} is self-adjoint, note that $\Pi_x^{\cE}$, thus $\dI_{\cE} - \Pi_x^{\cE}$ is self-adjoint. For $\omega_1, \omega_2\in \cE$ we have
  $$\langle \omega_1,\sfg_{R, x} \Pi^{\cE}_{x}\omega_2\rangle_{\cE}= \langle\Pi^{\cE}_{x} \omega_1,\sfg_{R, x} \Pi^{\cE}_{x}\omega_2\rangle_{\cE}
  = \langle\sfg_{R, x}\Pi^{\cE}_{x} \omega_1, \Pi^{\cE}_{x}\omega_2\rangle_{\cE}
  $$
  where the first equality is the defining property of a projection, and the second is because $\sfg_R$ is self-adjoint on $\cT_x\rM$. The last expression is $\langle\sfg_{R, x}\Pi^{\cE}_{x} \omega_1, \omega_2\rangle_{\cE}$ by property of a projection. Thus $\sfg_{R, x}\Pi^{\cE}_{x}$ is self-adjoint, hence $\sfg$ in \cref{eq:stdg} is self-adjoint. If $\langle\omega, \sfg_x\omega\rangle_{\cE} =0$ then $\langle\omega, (\dI_{\cE} - \Pi^{\cE}_x)\omega\rangle_{\cE} + \langle \omega, \sfg_{R, x}\Pi^{\cE}_x\omega\rangle_{\cE} = 0$, or
  $$\langle(\dI_{\cE} - \Pi^{\cE}_x)\omega, (\dI_{\cE} - \Pi^{\cE}_x)\omega\rangle_{\cE} + \langle\Pi^{\cE}_x \omega, \sfg_{R, x}\Pi^{\cE}_x\omega\rangle_{\cE}=0$$
using the fact that $(\dI_{\cE} - \Pi^{\cE}_x)$ is idempotent and self-adjoint on the first term and that $\Pi^{\cE}_x$ is a $\langle\rangle_{\cE}$-projection  on the second term (because $ \sfg_{R, x}\Pi^{\cE}_x\omega\in \cT_x\rM$). Both terms are nonnegative, therefore both have to be zero, hence $\omega=0$.
\end{proof}
It is easy to see $\sfg$ satisfying the proposition above is not unique: for example, for the sphere $\rM=S^n\subset\cE=\R^{n+1}$ with $\omega\in \cE$ and $x\in S^n$, the operator $\sfg(x)\omega = \omega +\beta xx^{\sfT}\omega$ for $\beta \geq 0$ is a positive-definite operator. While each $\beta$ defines a different operator on $\cE$, the induced metrics on $\cT_x\rM$ are given by the same expression $\eta^{\sfT}\eta$ for a tangent vector $\eta$.
\begin{proposition}\label{prop:Levi} Let $(\rM, \sfg, \cE)$ be an ambient structure of a Riemannian manifold $\rM$ with metric operator $\sfg$. Then at each $x\in \rM$, $\sfg_x$ defines an inner product $\langle \omega_1, \omega_2\rangle_{\sfg, x} =
  \langle \omega_1, \sfg_x\omega_2\rangle_{\cE}$ on $\cE$ for $\omega_1, \omega_2\in \cE$. Denote by $\Pi_{\sfg, x}$ the associated projection to $\cT_x\rM$. Then $\Pi_{\sfg}:x\mapsto \Pi_{\sfg, x}$ is a smooth map from $\rM$ to $\fL(\cE, \cE)$. There exists a smooth operator-valued function $\Gamma$ from $\rM$ to $\fL(\cE\otimes\cE, \cE)$, the space of $\cE$-valued bilinear forms on $\cE$, such that if $\ttX$ and $\ttY$ are vector fields on $\rM$, then
  \begin{equation}\label{eq:LvC}
(\nabla_{\ttX} \ttY)_x = (\rD_{\ttX}\ttY)_x + \Gamma(\ttX_x, \ttY_x)\end{equation}
  for all $x\in\rM$, where $\nabla_{\ttX} \ttY$ is the covariant derivative defined by the Levi-Civita connection of the metric induced by $\sfg$. Here, we identify $\ttY$ with a $\cE$-valued function and $(\rD_{\ttX}\ttY)_x$ is its directional derivative in the direction $\ttX_x$. We call such $\Gamma$ a {\bf Christoffel function} of $(\rM, \sfg, \cE)$. There could be more than one Christoffel function given a metric operator $\sfg$, however  for two tangent vector $\xi, \eta$, $\Gamma(\xi, \eta)$ is independent of the vector field extensions of $\xi$ and $\eta$, and is only dependent on the restriction of the metric operator $\sfg$ to the tangent bundle.

If $\cX$ is a smooth function from $\rM$ to the space $\fL(\cE\times\cE, \cE)$ of $\cE$-bilinear forms with value in $\cE$, such that at any point $x\in \rM$ and any triple $\xi, \eta, \phi$ of tangent vectors to $\rM$ at $x$,
\begin{equation}\label{eq:cX}
  \langle \cX(\xi, \eta)_x, \phi\rangle_{\cE} = \langle(\rD_{\phi}\sfg)_x\xi, \eta\rangle_{\cE}\end{equation}  
  Then for two tangent vectors $\xi, \eta$ at $x\in\rM$
  \begin{equation}\label{eq:GammaForm}\Gamma(\xi, \eta)_x = -(\rD_{\xi}\Pi)\eta + \frac{1}{2} \Pi_{\sfg, x}\sfg_x^{-1}(\rD_{\xi}\sfg\eta +\rD_{\eta}\sfg\xi-\cX(\xi, \eta))_x\end{equation}
\end{proposition}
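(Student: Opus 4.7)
My plan is to apply the Koszul formula for the Levi-Civita connection, expand each term using the identification $\langle\xi,\eta\rangle_R = \langle\xi,\sfg\eta\rangle_\cE$ on tangent vectors, and then read off the Christoffel function $\Gamma$.

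For smoothness of $\Pi$, I would use a local-chart argument mirroring the proof of \cref{prop:emb_exists}: around any $x_0\in\rM$, pick a smooth chart $\psi_\cU:\R^m\to\rM\cap\cU$, whose differential $d\psi_\cU$ is smooth, injective, and surjects onto $\cT_x\rM$ at each $x\in\cU$. Applying \cref{lem:projprop} with $\rN = d\psi_\cU$ and the positive-definite $\sfg$ gives the explicit local formula
$$\Pi_x = d\psi_\cU(x)\bigl(d\psi_\cU(x)^{\ft}\,\sfg_x\,d\psi_\cU(x)\bigr)^{-1}d\psi_\cU(x)^{\ft}\,\sfg_x,$$
which is smooth in $x$ because $\psi_\cU$ and $\sfg$ are.

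For existence of $\Gamma$, the key computation is the Koszul identity. Regarding smooth vector fields $\ttX,\ttY,\ttZ$ on $\rM$ as $\cE$-valued functions yields $[\ttX,\ttY]_x = (\rD_\ttX\ttY)_x - (\rD_\ttY\ttX)_x$ and
$$\ttX\langle\ttY,\ttZ\rangle_R = \langle\rD_\ttX\ttY,\sfg\ttZ\rangle_\cE + \langle\ttY,(\rD_\ttX\sfg)\ttZ\rangle_\cE + \langle\ttY,\sfg\rD_\ttX\ttZ\rangle_\cE,$$
together with its two cyclic permutations. Substituting into Koszul's formula, the nine derivative-of-vector-field contributions cancel in pairs except for $2\langle\sfg\rD_\ttX\ttY,\ttZ\rangle_\cE$, and the three surviving $\rD\sfg$ contributions repackage, via self-adjointness of $\rD_\cdot\sfg$ and the defining identity \cref{eq:cX} of $\cX$, into
$$2\langle\nabla_\ttX\ttY,\ttZ\rangle_R = \bigl\langle 2\sfg\rD_\ttX\ttY + (\rD_\ttX\sfg)\ttY + (\rD_\ttY\sfg)\ttX - \cX(\ttX,\ttY),\,\ttZ\bigr\rangle_\cE.$$

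Because the left side equals $2\langle\sfg\nabla_\ttX\ttY,\ttZ\rangle_\cE$, the identity holds for all tangent $\ttZ$, and $\nabla_\ttX\ttY$ is itself tangent, applying $\tfrac12\Pi\sfg^{-1}$ to the bracketed right-hand vector recovers $\nabla_\ttX\ttY$. Combining this with the identity $\Pi\rD_\ttX\ttY = \rD_\ttX\ttY - (\rD_\ttX\Pi)\ttY$, obtained by differentiating $\Pi\ttY = \ttY$, gives exactly $\nabla_\ttX\ttY = \rD_\ttX\ttY + \Gamma(\ttX,\ttY)$ with $\Gamma$ as in \cref{eq:GammaForm}. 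A smooth extension of $\cX$ to all of $\fL(\cE\otimes\cE,\cE)$ can be produced, e.g.\ by precomposing with $\Pi$ in each slot; different choices give different Christoffel functions, accounting for the non-uniqueness. Tensoriality of $\Gamma(\xi,\eta)$ on tangent vectors follows immediately from \cref{eq:GammaForm}: every term depends pointwise on $\xi,\eta$ and on the operator-valued functions $\sfg$ and $\Pi$ together with their directional derivatives at $x$. The step most likely to require care is the Koszul cancellation: one must pair the nine $\rD$-of-vector-field terms correctly, and invoke self-adjointness of $\sfg$ exactly once on each surviving $\rD\sfg$ term together with \cref{eq:cX} to arrive at the symmetric expression of \cref{eq:GammaForm}.
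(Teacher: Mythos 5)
Your proof is correct, but it takes a genuinely different route from the paper's. The paper constructs $\Gamma$ by extending $\sfg$ to a tubular neighborhood $\cN$ of $\rM$ in $\cE$ via a retraction, invoking the classical Christoffel symbols $\Gamma_{\cN}$ of the resulting metric on the open set $\cN$, and then using the Riemannian-submanifold fact that $\nabla_{\ttX}\ttY = \Pi_{\sfg}(\rD_{\ttX}\ttY + \Gamma_{\cN}(\ttX,\ttY))$; the explicit formula \cref{eq:GammaForm} is then recovered by a separate uniqueness argument identifying $\Pi_{\sfg}\sfg^{-1}\cX$ with $\Pi_{\sfg}\sfg^{-1}\cX_0$. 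You instead run the Koszul formula directly on $\rM$, using $[\ttX,\ttY]=\rD_\ttX\ttY-\rD_\ttY\ttX$ and the product rule for $\ttX\langle\ttY,\sfg\ttZ\rangle_{\cE}$; the cancellation you describe is correct and yields $2\langle\sfg\nabla_\ttX\ttY,\ttZ\rangle_{\cE}=\langle 2\sfg\rD_\ttX\ttY+(\rD_\ttX\sfg)\ttY+(\rD_\ttY\sfg)\ttX-\cX(\ttX,\ttY),\ttZ\rangle_{\cE}$ for all tangent $\ttZ$, after which applying $\Pi_{\sfg}\sfg^{-1}$ and the identity $\Pi_{\sfg}\rD_\ttX\ttY=\rD_\ttX\ttY-(\rD_\ttX\Pi_{\sfg})\ttY$ gives \cref{eq:GammaForm} in one pass. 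Your route is more self-contained — it never leaves $\rM$ and never needs an extension of $\sfg$ off the manifold, which is arguably more in the spirit of the paper's thesis that $\sfg$ need only be defined on $\rM$ — while the paper's route gets existence essentially for free from a standard submanifold lemma. One small imprecision: the claim that $\Gamma(\xi,\eta)$ depends only on the restriction of $\sfg$ to the tangent bundle does not follow "immediately from \cref{eq:GammaForm}," since that formula involves $\sfg$, $\sfg^{-1}$ and $\rD\sfg$ on all of $\cE$; it follows instead (as the paper argues) from the identity $\Gamma(\xi,\eta)_x=(\nabla_{\ttX}\ttY)_x-(\rD_{\ttX}\ttY)_x$, where the first term is intrinsic and the second is metric-independent — an identity your Koszul derivation already establishes, so this is a presentational gap only.
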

\begin{proof} Let $m$ be the dimension of $\rM$ and $\psi_{\cU}:\R^m\to\cU\subset \cE$ be a coordinate function for an open subset $\cU$ of $\rM$, considered as a map from $\R^m$ to $\cE$. Then $\Pi_{\sfg} = d\psi_{\cU}(d\psi_{\cU}^{\sfT}\sfg d\psi_{\cU})^{-1}d\psi_{\cU}^\sfT\sfg$ in $\cU$, hence $\Pi_{\sfg}$ is smooth in $\cU$, thus, on $\rM$.
  
  To construct $\Gamma$, let $\cN\subset\cE$ be a tubular neighborhood of $\rM$ (see \cite{LeeRiemann}), $\cN$ is an open subset in $\cE$ and there is a retraction $r$, a smooth map from $\cN$ to $\rM$, such that $r$ is the identity map on $\rM$. We extend $\sfg$ from $\rM$ to $\cN$, setting $\sfg(x) = \sfg(r(x))$ for $x\in \cN$. This gives $\cN$ a metric. Since $\cN$ is an open subset of $\cE$, the Levi-Civita connection on $\cN$ is defined via the classical Christoffel symbols, which could be written as a bilinear form
\begin{equation}\label{eq:GammaN}\Gamma_{\cN}(\omega_1, \omega_2)=\frac{1}{2}\sfg^{-1}(\rD_{\omega_1}\sfg\omega_2 + \rD_{\omega_2}\sfg\omega_1 - \cX_0(\omega_1, \omega_2))\end{equation}
  for $\omega_1, \omega_2\in\cE$, with $\cX_0(\omega_1, \omega_2)$ satisfies $
\langle\cX_0(\omega_1, \omega_2), \omega_3\rangle_{\cE} = \langle \omega_2,\rD_{\omega_3}\omega_1\rangle_{\cE}$ for $\omega_3\in \cE$. Since $\rM$ is a Riemannian submanifold of $\cN$ with this metric, its Levi-Civita connection for two vector fields $\ttX, \ttY$ on $\rM$ is given by $\Pi_{\sfg}(\rD_{\ttX}\ttY + \Gamma_{\cN}(\ttX, \ttY))$ (\cite{ONeil1983}, Lemma 4.3). As $\Pi_{\sfg} \ttY = \ttY$, $\Pi_{\sfg}(\rD_{\ttX}\ttY) = \rD_{\ttX}(\Pi_{\sfg} \ttY) - (\rD_{\ttX}\Pi_{\sfg})\ttY = \rD_{\ttX} \ttY - (\rD_{\ttX}\Pi_{\sfg})\ttY$, so
$$\nabla_{\ttX}\ttY = \rD_\ttX \ttY -(\rD_\ttX\Pi_{\sfg})\ttY + \Pi_{\sfg}\Gamma_{\cN}(\ttX, \ttY)$$
Hence, we can define $\Gamma(\xi, \omega)_x := -(\rD_{\xi}\Pi_{\sfg})_x\Pi_{\sfg, x}\omega + \Pi_{\sfg, x}\Gamma_{\cN}(\xi, \omega)_x$ for $\xi\in \cT_x\rM, \omega\in\cE$, then $\Gamma(\ttX, \ttY)_x = -(\rD_{\ttX}\Pi_{\sfg})_x\ttY_x + \Pi_{\sfg, x}\Gamma_{\cN}(\ttX, \ttY)_x$ satisfies \cref{eq:LvC}. We can extend $\Gamma$ to an operator from $\cE\times\cE$ to $\cE$ by defining $\Gamma(\omega_1, \omega_2)_x = \Gamma(\Pi_{\sfg}\omega_1, \omega_2)_x$. Thus we have proved the existence of a Christoffel function $\Gamma$, and it is clearly smooth.

It is clear the condition in \cref{eq:LvC} implies $\Gamma(\ttX, \ttY)_x$ is only dependent on the value of $\ttX_x, \ttY_x$ at $x$. Since $(\rD_{\ttX}\ttY)_x$ is not dependent on the metric, while $\nabla_{\ttX}\ttY$ is only dependent on the restriction of the metric to the tangent bundle, $\Gamma$ evaluated on tangent vectors is only dependent on the restriction of a metric operator on the tangent bundle.

Let $\cX(\xi, \eta)$ be a function satisfying \cref{eq:cX}. $(\Pi_{\sfg}\sfg^{-1}\cX(\xi, \eta))_x$ is a vector in $\cT_x\rM$ satisfying $\langle (\Pi_{\sfg}\sfg^{-1}\cX(\xi, \eta))_x, \sfg_x\phi \rangle_{\cE}= \langle(\rD_{\phi}\sfg)_x\xi, \eta\rangle_{\cE}$ for all $\phi\in \cT_x\rM$. Such vector is unique as $\sfg_x$ is non degenerated in $\cT_x\rM$, so $(\Pi_{\sfg}\sfg^{-1}\cX(\xi, \eta))_x = (\Pi_{\sfg}\sfg^{-1}\cX_0(\xi, \eta))_x$ with $\cX_0$ as in \cref{eq:GammaN}, so the right-hand side of \cref{eq:GammaForm} evaluate to the same value as that of $\Gamma$ constructed from the existence part when restricted to tangent vectors, this proves the last statement.
\end{proof}
This last statement allows us to work with a more convenient $\cX$ in some cases, as we only need \cref{eq:cX} to be valid on tangent vectors.
\begin{remark}
  We will use the {\it derivative of projection trick} converting $\Pi_1(\rD_{\xi}\Pi_2)\omega$ to $(\rD_{\xi}(\Pi_1\Pi_2))\omega - (\rD_{\xi}\Pi_1)\Pi_2\omega$ often in this paper, for two projections $\Pi_1$ and $\Pi_2$. It is used in the following lemma, providing standard commuting vector fields extension of tangent vectors useful in tensor calculations.
\end{remark}
\begin{lemma}Let $\xi, \eta$ be two tangent vectors to $\rM$ at $x\in\rM$, where $(\rM, \sfg, \cE)$ is an embedded ambient structure. Define vector fields $\pxi, \peta$ on $\rM$ by setting $\pxi(y) = \Pi_{\sfg, y}\xi$ and $\peta(y) = \Pi_{\sfg, y}\eta$ for $y\in \rM$. Then $\pxi(x) = \xi, \peta(x) = \eta$ and
\begin{equation}\label{eq:proj_deriv}\begin{gathered}\Pi_{\sfg, x}(\rD_{\xi}\peta)_x = 0\\
    [\pxi,\peta]_x = 0\\
    (\rD_{\xi}\Pi_{\sfg})_x\eta = (\rD_{\eta}\Pi_{\sfg})_x\xi
    \end{gathered}
  \end{equation}
\end{lemma}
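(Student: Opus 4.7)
The plan is to prove the four claims in order, leveraging that each $p_{\xi}$ and $p_{\eta}$ is built by applying the idempotent projection $\Pi_{\sfg}$ to a fixed vector of $\cE$, so directional derivatives of these vector fields reduce to directional derivatives of the operator $\Pi_{\sfg}$ itself.

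The first claim is immediate: $\xi \in \cT_x\rM$ means $\Pi_{\sfg, x}\xi = \xi$, so $\pxi(x) = \xi$, and likewise $\peta(x) = \eta$. For the second claim, I would apply the derivative-of-projection trick singled out in the remark just above the lemma to the idempotence identity $\Pi_{\sfg}\Pi_{\sfg} = \Pi_{\sfg}$. Differentiating in direction $\xi$ yields
\begin{equation*}
(\rD_{\xi}\Pi_{\sfg})_x\Pi_{\sfg, x} + \Pi_{\sfg, x}(\rD_{\xi}\Pi_{\sfg})_x = (\rD_{\xi}\Pi_{\sfg})_x,
\end{equation*}
and evaluating on $\eta \in \cT_x\rM$ (where $\Pi_{\sfg, x}\eta = \eta$) gives $\Pi_{\sfg, x}(\rD_{\xi}\Pi_{\sfg})_x\eta = 0$. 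Since $\eta$ is a constant $\cE$-valued expression, $(\rD_{\xi}\peta)_x = (\rD_{\xi}\Pi_{\sfg})_x\eta$, which establishes $\Pi_{\sfg, x}(\rD_{\xi}\peta)_x = 0$.

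For the third claim I would invoke the standard fact that, for tangent vector fields $X, Y$ on $\rM \subset \cE$ regarded as $\cE$-valued smooth functions, the intrinsic Lie bracket is given by the ambient expression $[X, Y]_x = (\rD_X Y)_x - (\rD_Y X)_x$, which is automatically tangent to $\rM$. (This can be checked in a local parametrization $\psi_{\cU}\colon \R^m \to \rM$: the second-order terms $\partial_i\partial_j\psi_{\cU}$ cancel by symmetry, leaving the usual coordinate expression for the bracket.) Applied to $\pxi$ and $\peta$, this gives
\begin{equation*}
[\pxi, \peta]_x = (\rD_{\xi}\Pi_{\sfg})_x\eta - (\rD_{\eta}\Pi_{\sfg})_x\xi.
\end{equation*}
Being a Lie bracket of tangent fields, the left-hand side is tangent, hence fixed by $\Pi_{\sfg, x}$; but the second claim shows $\Pi_{\sfg, x}$ annihilates each summand on the right, so applying $\Pi_{\sfg, x}$ to both sides forces $[\pxi, \peta]_x = 0$. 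The fourth identity $(\rD_{\xi}\Pi_{\sfg})_x\eta = (\rD_{\eta}\Pi_{\sfg})_x\xi$ is then just a rewriting of the displayed expression for $[\pxi, \peta]_x$.

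The one nontrivial ingredient is the ambient Lie bracket formula for tangent fields on an embedded submanifold; this is where I would expect to spend the most care, though the local-coordinate calculation settles it in one line. Everything else is bookkeeping around the fact that $\Pi_{\sfg}$ is an operator-valued idempotent.
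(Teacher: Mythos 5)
Your proposal is correct and follows essentially the same route as the paper: differentiate the idempotency $\Pi_{\sfg}^2=\Pi_{\sfg}$ (the "derivative of projection trick") to get $\Pi_{\sfg,x}(\rD_{\xi}\Pi_{\sfg})_x\eta=0$, then use the ambient formula $[\pxi,\peta]_x=(\rD_{\xi}\peta)_x-(\rD_{\eta}\pxi)_x$ together with tangency of the bracket to conclude. The only cosmetic difference is that you spell out the local-coordinate justification of the ambient Lie bracket formula, which the paper takes as known.
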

\begin{proof}It is clear $\pxi(x) = \xi, \peta(x) = \eta$. $\peta$ could be considered as an $\cE$-valued function on $\rM$, so $(\rD_{\xi}\peta)_x=(\rD_{\xi}(\Pi_{\sfg}\eta))_x$ is defined and is equal to $(\rD_{\xi}\Pi_{\sfg})_x\eta$. Note that at $x$, $\Pi_{\sfg, x}(\rD_{\eta}\pxi)_x$ expands to
$$\Pi_{\sfg, x}(\rD_{\eta}\Pi_{\sfg})_x\xi = (\rD_{\eta}(\Pi_{\sfg}^2)_x)\xi - (\rD_{\eta}\Pi_{\sfg})_x(\Pi_{\sfg, x}\xi) =
(\rD_{\eta}\Pi_{\sfg})_x\xi - (\rD_{\eta}\Pi_{\sfg})_x\xi = 0$$
  and similarly $\Pi_{\sfg, x}(\rD_{\xi}\peta)_x=0$, thus $\Pi_{\sfg, x}[\pxi, \peta]_x =
\Pi_{\sfg, x}(\rD_{\xi}\peta)_x - \Pi_{\sfg, x}(\rD_{\eta}\pxi)_x= 0$. But $[\pxi, \peta]_x$ is tangent to $\rM$ at $x$, hence $[\pxi, \peta]_x=\Pi_{\sfg, x}[\pxi, \peta]_x =0$. This implies  $(\rD_{\xi}(\Pi_{\sfg}\eta))_x = (\rD_{\eta}(\Pi_{\sfg}\xi))_x$.
\end{proof}
\begin{remark}Note that when the metric operator $\sfg$ is constant, $\Gamma(\xi, \eta) = -\rD_{\xi}\Pi_{\sfg}\eta$ for tangent vectors $\xi, \eta$.

  Another way to look at \cref{eq:GammaForm} is for vector fields $\ttX$ and $\ttY$ on $\rM$ considered as functions from $\rM$ to $\cE$, $\Pi_{\sfg}\rD_{\ttX}\ttY$ is a connection, although in general not compatible with metric. The Levi-Civita connection is another connection, thus the difference gives rise to a $(1, 2)$-tensor $\mrGamma$ described below.  
\end{remark}  
\begin{proposition}Let $(\rM, \sfg, \cE)$ be an embedded tangent structure with a Christoffel function $\Gamma$. Then for all tangent vectors $\xi, \eta$ at $x\in \rM$, we have
  \begin{equation}\label{eq:mrGamma0}
    \mrGamma(\xi, \eta)_x := (\rD_{\xi}\Pi_{\sfg})_x\eta + \Gamma(\xi, \eta)_x = \Pi_{\sfg, x}\Gamma(\xi, \eta)_x \in \cT_x\rM\end{equation}
Also, $\Gamma(\xi, \eta)_x=\Gamma(\eta,\xi)_x$ and $\mrGamma(\xi, \eta)_x= (\nabla_{\pxi}\peta)_x$.
\end{proposition}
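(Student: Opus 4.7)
The plan is to realize $\mrGamma(\xi,\eta)_x$ as the value at $x$ of a Levi--Civita covariant derivative, which simultaneously forces it to be tangent to $\rM$, identifies it with $\Pi_{\sfg,x}\Gamma(\xi,\eta)_x$, and yields symmetry through torsion-freeness. The essential observation is that the extensions $\pxi$, $\peta$ provided by the preceding lemma are perfectly tailored for this: since $\peta(y)=\Pi_{\sfg,y}\eta$ and $\eta\in\cE$ is a constant vector, $(\rD_{\xi}\peta)_x$ collapses to $(\rD_{\xi}\Pi_{\sfg})_x\eta$, matching the leading term in the definition of $\mrGamma$.

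Concretely, first I would apply \cref{eq:LvC} to $\ttX=\pxi$, $\ttY=\peta$ at $x$ to read off
\[
(\nabla_{\pxi}\peta)_x = (\rD_{\xi}\peta)_x + \Gamma(\xi,\eta)_x = (\rD_{\xi}\Pi_{\sfg})_x\eta + \Gamma(\xi,\eta)_x = \mrGamma(\xi,\eta)_x.
\]
This simultaneously proves $\mrGamma(\xi,\eta)_x=(\nabla_{\pxi}\peta)_x$ and, since a covariant derivative of vector fields is a tangent vector, $\mrGamma(\xi,\eta)_x\in\cT_x\rM$. Next, I would apply $\Pi_{\sfg,x}$ to the definition of $\mrGamma$: tangency fixes the left side, while the preceding lemma's identity $\Pi_{\sfg,x}(\rD_{\xi}\peta)_x=0$ eliminates the first summand on the right, giving $\mrGamma(\xi,\eta)_x=\Pi_{\sfg,x}\Gamma(\xi,\eta)_x$.

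For the symmetry claim on $\Gamma$, I would use that $\nabla$ is torsion-free, together with $[\pxi,\peta]_x=0$ from the same lemma, to conclude $\mrGamma(\xi,\eta)_x=\mrGamma(\eta,\xi)_x$. Subtracting the two defining formulas for $\mrGamma$ and invoking the lemma's third identity $(\rD_{\xi}\Pi_{\sfg})_x\eta=(\rD_{\eta}\Pi_{\sfg})_x\xi$ then gives $\Gamma(\xi,\eta)_x=\Gamma(\eta,\xi)_x$. I do not anticipate a serious obstacle; the only conceptually delicate point is keeping track of which copy of $\eta$ is the constant vector in $\cE$ and which is the vector field $\peta$, which is exactly the bookkeeping that the three identities of the preceding lemma are designed to automate.
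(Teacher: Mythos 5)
Your proposal is correct, and it reaches the conclusion by a slightly different route than the paper. The paper's own proof reads the first claim directly off the closed-form expression \cref{eq:GammaForm}: there $\mrGamma(\xi,\eta)_x = (\rD_{\xi}\Pi_{\sfg})_x\eta + \Gamma(\xi,\eta)_x$ appears as $\tfrac12\Pi_{\sfg,x}\sfg_x^{-1}(\cdots)$, which is manifestly in $\cT_x\rM$, and then $\Pi_{\sfg,x}\Gamma(\xi,\eta)_x = \mrGamma(\xi,\eta)_x$ follows because $\Pi_{\sfg,x}(\rD_{\xi}\Pi_{\sfg})_x\eta = 0$. You instead bypass \cref{eq:GammaForm} entirely and work only from the defining property \cref{eq:LvC} applied to the canonical extensions $\pxi,\peta$: the identity $(\rD_{\pxi}\peta)_x=(\rD_{\xi}\Pi_{\sfg})_x\eta$ identifies $\mrGamma(\xi,\eta)_x$ with $(\nabla_{\pxi}\peta)_x$, tangency is then automatic, and applying $\Pi_{\sfg,x}$ together with $\Pi_{\sfg,x}(\rD_{\xi}\Pi_{\sfg})_x\eta=0$ from \cref{eq:proj_deriv} gives $\mrGamma=\Pi_{\sfg}\Gamma$. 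What your route buys is independence from the particular formula for $\Gamma$ (it works for any Christoffel function straight from \cref{eq:LvC}), and it proves the third claim $\mrGamma(\xi,\eta)_x=(\nabla_{\pxi}\peta)_x$ as the first step rather than as an afterthought; the paper's route is shorter because the formula has already been established. The symmetry argument — torsion-freeness plus $[\pxi,\peta]_x=0$ plus $(\rD_{\xi}\Pi_{\sfg})_x\eta=(\rD_{\eta}\Pi_{\sfg})_x\xi$ — is the same in both. No gaps.
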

\begin{proof} This follows from \cref{prop:emb_exists} and \cref{eq:GammaForm}. $\Gamma(\xi, \eta)_x = \Gamma(\eta, \xi)_x$ follows from torsion-freeness of the Levi-Civita connection.
\end{proof}
In the following examples, for an ambient matrix space $\cE$ we always use the trace inner product, $\langle \omega, \omega\rangle_{\cE} = \Tr\omega\omega^{\sfT}$ for $\omega\in\cE$ as the base inner product.

\begin{example}\label{ex:son}
  For the special orthogonal group $\SOO(n)$ of matrices satisfying $UU^{\sfT} = \dI_n$ in $\cE=\R^{n\times n}$, with determinant $1$, define the metric $\langle\omega, \omega\rangle_{\sfg} = \frac{1}{2}\Tr(\omega \omega^{\sfT})$ for $\omega\in\cE$, and the metric operator $\sfg = \frac{1}{2}\dI_n$. The tangent space at $U\subset\SOO(n)$ consists of matrices $\eta \in \cE$ satisfying $U^{\sfT}\eta + \eta^{\sfT}U= 0$. Applying \cref{lem:projprop} with $\cE_{\rN} =\oo(n)$, the space of antisymmetric matrices, and $\rN\delta = U\delta$ for $\delta\in\oo(n)$, then $\rN^{\sfT}\omega = \frac{1}{2}(U^{\sfT}\omega - \omega^{\sfT}U)=\asym(U^{\sfT}\omega)$, where $\asym$ is the antisymmetrize operator. From here the projection is $\Pi\omega = \frac{1}{2}(\omega - U\omega^{\sfT}U)$ and the Christoffel function is just $-(\rD_{\xi}\Pi)\eta = \frac{1}{2}(\xi \eta^{\sfT}U + U\eta^{\sfT}\xi)$.
\end{example}
\begin{example}\label{ex:sasaki}Let $\rM=\cT S^{n-1}$ be the tangent bundle of the unit sphere $S^{n-1}$, consisting of pairs of $\R^n$-vectors $(x, v)$ with $x^{\sfT}x = 1, x^{\sfT}v = 0$. The ambient space is $\cE = \R^{2n}$. Consider the Sasaki metric \cite{Sasaki}, given by the operator
  \begin{equation}\sfg_{(x, v)}\begin{bmatrix} \omega_{\frm} \\ \omega_{\frt}\end{bmatrix} =\begin{bmatrix}\dI_n & vx^{\sfT} \\0 & \dI_n
    \end{bmatrix}
    \begin{bmatrix}\dI_n & 0 \\xv^{\sfT} & \dI_n
    \end{bmatrix}\begin{bmatrix} \omega_{\frm} \\ \omega_{\frt}\end{bmatrix} = \begin{bmatrix} \dI_n +vv^{\sfT} & vx^{\sfT}\\xv^{\sfT} & \dI_n
      \end{bmatrix}\begin{bmatrix} \omega_{\frm} \\ \omega_{\frt}\end{bmatrix}
  \end{equation}
  for $(\omega_{\frm}^{\sfT}, \omega_{\frt}^{\sfT})^{\sfT}\in \R^{2n}$. To avoid repeated use of the adjoint $\sfT$, we will write a vector in $\R^{2n}$ as $(\omega_{\frm}, \omega_{\frt})$ only, but will let the matrices operate on column vectors. Then, $\sfg^{-1}$ is given by the matrix $\sfg^{-1} =\begin{bmatrix} \dI_n & -vx^{\sfT}\\-xv^{\sfT} & \dI_n + xv^{\sfT}vx^{\sfT} \end{bmatrix}$. The tangent space of $\rM$ consists of pairs of vectors $(\Delta_{\frm}, \Delta_{\frt})$ satisfying the conditions $x^{\frt}\Delta_{\frm} = 0, \Delta_{\frm}^{\sfT} v + x^{\sfT}\Delta_{\frt} = 0$. The normal space at $(x, v)$ consists of vectors $(\delta_{\frm}, \delta_{\frt})$ satisfying $\langle\sfg(\delta_{\frm}, \delta_{\frt}), (\Delta_{\frm}, \Delta_{\frt})\rangle_{\R^{2n}} = 0$. The constraints show that $\langle(x, 0), (\Delta_{\frm}, \Delta_{\frt})\rangle_{\R^{2n}} = 0$, $\langle(v, x), (\Delta_{\frm}, \Delta_{\frt})\rangle_{\R^{2n}} = 0$. Thus, a normal vector will have the form $\rN(a_0, a_1)$, with $\rN$ is the linear map from $\cE_{\rN} = \R^2$ to the normal space at $(x, v)$ such that
  $$\sfg\rN\begin{bmatrix}a_0 \\ a_1\end{bmatrix} = \begin{bmatrix}x & v\\0& x \end{bmatrix}\begin{bmatrix} a_0\\ a_1\end{bmatrix}$$
so $\rN$ is given by the matrix $\begin{bmatrix} x &  0\\ 0 &x\end{bmatrix}$, since $v^{\sfT}x =0$. By direct computation, $\rN^{\sfT}\sfg\rN= \dI_2$, so the projection to the normal space is given by $\rN(\dI_2)^{-1}\rN^{\sfT}\sfg = \begin{bmatrix} xx^{\sfT} & 0 \\xv^{\sfT} & xx^{\sfT}\end{bmatrix}$. The projection to the tangent space is given by
\begin{equation} \Pi_{(x, v)} =  \begin{bmatrix}\dI_n- xx^{\sfT} & 0 \\-xv^{\sfT} & \dI_n-xx^{\sfT}
        \end{bmatrix}\end{equation}
For three tangent vectors $\xi = (\xi_{\frm}, \xi_{\frt}), \eta = (\eta_{\frm}, \eta_{\frt}), \phi=(\phi_{\frm}, \phi_{\frt})$, as $\rD_{\phi}\sfg\xi =((\phi_{\frt} v^{\sfT} + v\phi_{\frt}^{\sfT})\xi_{\frm} + (\phi_{\frt} x^{\sfT} + v\phi_{\frm}^{\sfT})\xi_{\frt}, (\phi_{\frm}v^{\sfT} + x\phi_{\frt}^{\sfT})\xi_{\frm} )$, we can take $\cX(\xi, \eta) = ( \xi_{\frt}\eta_{\frm}^{\sfT}v +
\eta_{\frt}\xi_{\frm}^{\sfT}v, \eta_{\frm}(\xi_{\frm}^{\sfT}v + \xi_{\frt}^{\sfT}x) +  \xi_{\frm}(\eta_{\frm}^{\sfT}v + \eta_{\frt}^{\sfT}x))$. We can compute the Christoffel function by \cref{eq:GammaForm}. As is well-known, it is easier to compute covariant derivatives on lifts of vector fields from $S^{n-1}$, we will see the general result in \cref{sec:nat_metric}.
\end{example}  

An interesting example is that of a Stiefel manifold $\St{p}{n}$, consisting of matrices $Y\in \R^{n\times p}$ ($n >p$ are two positive integers) with $Y^{\sfT}Y = \dI_p$. In this case, we can define a metric operator of the form $\sfg\omega =  \omega +(\alpha-1)YY^{\sfT}\omega$ for $\omega\in\R^{n\times p}$. This is a metric operator if $\alpha > 0$. In \cite{Nguyen2020a}, we derived the Levi-Civita connection and projection for this metric using the metric operator approach. This example is interesting, as when $\alpha < 1$, $\sfg$ is not positive-definite for some values of $Y\in\cE$, thus the usual Riemannian embedding approach cannot be used on the full $\cE$. The metric operator approach avoids this difficulty, see details in \cite{Nguyen2020a}. See also \cite{ExtCurveStiefel}, where the authors use a pseudo-Riemannian metric on $\SOO(n)\times \SOO(k)$, which induces a Riemannian metric on $\St{p}{n}$ for $\alpha > 0$. Finally, we can realize this metric as a quotient metric on $\SOO(n)/\SOO(n-p)$ with a left-invariant metric on $\SOO(n)$.
\subsection{Submersed ambient structure}
Recall (\cite{ONeil1983}, Definition 7.44) a Riemannian submersion $\qq:\rM\to\cB$ between two manifolds $\rM$ and $\cB$ is a smooth, onto map, such that the differential $d\qq$ is onto at every point $x\in\rM$, the fiber $\qq^{-1}(\qq x)$ is a Riemannian submanifold of $\rM$ containing $x$, and $d\qq$ preserves scalar products of vectors normal to fibers. In particular, quotient space by free and proper action of a group of isometries is a Riemannian submersion. In a Riemannian submersion, the tangent bundle $\cT\rM$ of $\rM$ has a decomposition $\cT\rM = \cV\rM\oplus\cH\rM$, where $\cV\rM$ is the vertical bundle, defined to be the tangent space of the submanifold $\qq^{-1}(\qq x)$ at each $x\in\rM$, and $\cH\rM$ is its orthogonal complement. By the submersion assumption, each tangent vector of $\cB$ at $\qq x\in \cB$ has a unique inverse image in $\cT_x\rM$ that is orthogonal to $\cV_x\rM$, called the horizontal lift. Denoted by $\ttH$ the projection from $\cE$ to $\cH\rM$, we will call it the {\it horizontal projection}. The {\it vertical projection} $\ttV$ from $\cE$ to $\cV\rM$ is defined similarly, and we have $\ttH + \ttV = \Pi_{\sfg}$. Tangent vectors in $\cH\rM$ and $\cV\rM$ are called {\it horizontal and vertical vectors} respectively. By the submersion assumption, we have the following lemma
\begin{lemma}Let $\qq:\rM\to\cB$ be a Riemannian submersion, where $(\rM, \sfg, \cE)$ is an embedded ambient structure with the projection $\Pi_{\sfg}$ and a Christoffel function $\GammaM$. If $v_{\cB}$ and $w_{\cB}$ are vector fields on $\cB$, which lift to horizontal vector fields $v_{\cH}$ and $w_{\cH}$ then the horizontal lift of $\nabla^{\cB}_{v_B}w_{\cB}$ is $\nabla^{\cH}_{v_{\cH}} w_{\cH} :=\ttH\nabla_{v_{\cH}} w_{\cH}$, or
\begin{equation}\label{eq:gamma_sub}
  \begin{gathered}
\ttH\nabla_{v_{\cH}} w_{\cH}=  \ttH(\rD_{v_{\cH}}w_{\cH} + \GammaM(v_{\cH}, w_{\cH})) = \rD_{v_{\cH}}w_{\cH} +\GammaH(v_{\cH}, w_{\cH})\\
\GammaH(v_{\cH}, w_{\cH}) := -(\rD_{v_{\cH}}\ttH)w_{\cH} + \ttH\GammaM(v_{\cH}, w_{\cH})  \end{gathered}
\end{equation}
We also have $\GammaH(v_{\cH}, w_{\cH})=  -(\rD_{v_{\cH}}\ttH)w_{\cH} + \ttH\mrGamma_{\rM}(v_{\cH}, w_{\cH})$.
\end{lemma}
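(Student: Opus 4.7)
The plan is to combine the classical O'Neil characterization of the Levi-Civita connection under a Riemannian submersion with the embedded-ambient expression for $\nabla$ via $\GammaM$ from \cref{prop:Levi}, and then rearrange using the derivative-of-projection trick flagged in the preceding remark. First I would invoke the standard fact (see \cite{ONeil1983}, Lemma 7.45) that for horizontal lifts $v_{\cH}, w_{\cH}$ of base vector fields $v_{\cB}, w_{\cB}$, the horizontal lift of $\nabla^{\cB}_{v_{\cB}} w_{\cB}$ is exactly $\ttH \nabla_{v_{\cH}} w_{\cH}$. Substituting the defining identity $\nabla_{v_{\cH}} w_{\cH} = \rD_{v_{\cH}} w_{\cH} + \GammaM(v_{\cH}, w_{\cH})$ from \cref{prop:Levi} and applying $\ttH$ yields the first equality in \cref{eq:gamma_sub}.

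Next, I would exploit the fact that $w_{\cH}$ is horizontal, i.e., $w_{\cH}(x) = \ttH_x w_{\cH}(x)$ pointwise on $\rM$, so that $w_{\cH}$ coincides with $\ttH w_{\cH}$ as an $\cE$-valued function on $\rM$. Differentiating this identity in the direction $v_{\cH}$ via the product rule (the derivative-of-projection trick) gives
\[
\rD_{v_{\cH}} w_{\cH} = (\rD_{v_{\cH}} \ttH) w_{\cH} + \ttH (\rD_{v_{\cH}} w_{\cH}),
\]
which rearranges to $\ttH \rD_{v_{\cH}} w_{\cH} = \rD_{v_{\cH}} w_{\cH} - (\rD_{v_{\cH}} \ttH) w_{\cH}$. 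Substituting into the expression $\ttH(\rD_{v_{\cH}} w_{\cH} + \GammaM(v_{\cH}, w_{\cH}))$ collects exactly the terms in the definition of $\GammaH$, establishing the second equality.

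For the final reformulation in terms of $\mrGamma_{\rM}$, I would use \cref{eq:mrGamma0}, which states $\mrGamma_{\rM}(\xi, \eta) = \Pi_{\sfg}\GammaM(\xi, \eta)$ for tangent vectors $\xi,\eta$, together with the fact that $\ttH \Pi_{\sfg} = \ttH$. The latter holds because $\cH\rM \subset \cT\rM$ and both $\ttH$ and $\Pi_{\sfg}$ are $\sfg$-orthogonal projections, so composing $\Pi_{\sfg}$ with $\ttH$ kills the normal-to-$\rM$ component while leaving the horizontal part intact. Consequently $\ttH \GammaM(v_{\cH}, w_{\cH}) = \ttH \mrGamma_{\rM}(v_{\cH}, w_{\cH})$, yielding the stated alternative form.

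The only genuinely non-routine ingredient is the first step, the classical submersion identity $\nabla^{\cH}_{v_{\cH}} w_{\cH} = \ttH \nabla_{v_{\cH}} w_{\cH}$; everything else is pointwise algebra in $\cE$. If one preferred not to appeal to \cite{ONeil1983}, one could give a direct Koszul argument using that $d\qq$ is a linear isometry on horizontal vectors and that $[v_{\cH}, w_{\cH}]^{\cH}$ is $d\qq$-related to $[v_{\cB}, w_{\cB}]$, but citing O'Neil is cleaner and in line with the paper's conventions.
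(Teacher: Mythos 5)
Your proof is correct and follows essentially the same route as the paper: cite O'Neil's Lemma 7.45 for the identity $\nabla^{\cH}_{v_{\cH}}w_{\cH} = \ttH\nabla_{v_{\cH}}w_{\cH}$, then apply the derivative-of-projection trick to $w_{\cH} = \ttH w_{\cH}$ to extract $\GammaH$. The only cosmetic difference is in the last step, where you use $\mrGamma_{\rM}(v_{\cH}, w_{\cH}) = \Pi_{\sfg}\GammaM(v_{\cH}, w_{\cH})$ together with $\ttH\Pi_{\sfg} = \ttH$, whereas the paper verifies $\ttH(\rD_{v_{\cH}}\Pi_{\sfg})w_{\cH} = 0$ directly; the two are equivalent by \cref{eq:mrGamma0}.
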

\begin{proof} The lift of $\nabla^{\cB}_{v_B}w_{\cB}$ is $\ttH\nabla_{v_{\cH}} w_{\cH}$ follows from \cite{ONeil1983}, Lemma 7.45. The formula for $\GammaH$ follows from the facts $\ttH\Pi = \ttH$ and $\ttH w_{\cH} = w_{\cH}$, so
$$\ttH\rD_{v_{\cH}}w_{\cH} = \ttH\rD_{v_{\cH}}(\Pi_{\sfg} w_{\cH}) = \rD_{v_{\cH}}(\ttH\Pi_{\sfg} w_{\cH}) - (\rD_{v_{\cH}}\ttH)\Pi_{\sfg} w_{\cH} 
  $$
or $\rD_{v_{\cH}} w_{\cH} - (\rD_{v_{\cH}}\ttH) w_{\cH}$. The alternative formula for $\GammaH$ follows from
$$\begin{gathered}
\ttH(\rD_{v_{\cH}} \Pi_{\sfg})w_{\cH} = (\rD_{v_{\cH}}(\ttH \Pi_{\sfg}))w_{\cH} -(\rD_{v_{\cH}}\ttH)\Pi_{\sfg} w_{\cH} =0
  \end{gathered}$$
\end{proof}
\begin{remark}
  In general, $\GammaH(\xi, \eta) \neq \GammaH(\eta, \xi)$, because $(\rD_{\xi}\ttH){\eta} \neq (\rD_{\eta}\ttH)\xi$. The difference $(\rD_{\xi}\ttH){\eta} - (\rD_{\eta}\ttH)\xi$ is a vertical tangent vector (we can prove using the derivative of projection trick that $\Pi((\rD_{\xi}\ttH){\eta} - (\rD_{\eta}\ttH)\xi) = (\rD_{\xi}\ttH){\eta} - (\rD_{\eta}\ttH)\xi$  and $\ttH((\rD_{\xi}\ttH){\eta} - (\rD_{\eta}\ttH)\xi) =0$). It is $2\rA_{\xi}\eta$, where $\rA_{\xi}\eta$ is the O'Neil tensor, which will be defined in \cref{subsec:Rsubmerse}.

We will define a {\it submersed ambient structure} as a tuple $(\rM, \qq, \cB, \sfg, \cE)$ where $(\rM, \sfg, \cE)$ is an embedded ambient structure, and $\qq:\rM\to\cB$ is a Riemannian submersion. The results in this section apply to submersed ambient structures.
\end{remark}
Expressions for $\GammaH$ could be lengthy, and there may exist a simpler expression for $\GammaH$ that is only valid for horizontal vectors. To calculate curvatures, we will need  $\GammaH$  to relate to $\GammaM$ as in \cref{eq:gamma_sub}, and we cannot use an expression that is valid only for horizontal vectors.

\begin{example} If $\sum_{j=0}^q d_i = n$, with $d_i > 0$ is an integer partition of $n$, 
consider the action of $\SOD$ (the group of block-diagonal orthogonal matrices of size $n\times n$ with determinant $1$) acting by right multiplication on $\SOO(n)$. In this case, $\rM = \SOO(n)$ with the metric $\sfg=\frac{1}{2}\dI_n$ as in \cref{ex:son}, while the flag manifold is the quotient $\cB = \SOO(n)/\SOD$ under this action, with the case $q=1$ corresponding to a Grassmann manifold. Let $\frk$ denote the Lie algebra $\sod$ of the stabilizer group $\SOD$. It is a subalgebra of $\oo(n)$ consisting of block-diagonal anti-symmetric matrices. Its orthogonal complement is the subspace $\sbb$ of antisymmetric matrices with diagonal blocks (corresponding to $\frk$) equal to zero.

The vertical space at $U\in \SOO(n)$ consists of matrices $U\diag(b_0,\cdots b_q)= UD$ with $U\in\SOO(n)$ and $D=\diag(b_0,\cdots b_q)\in\frk$ denotes the block diagonal matrix with diagonal blocks $b_i\in \oo(d_i)\subset \R^{d_i\times d_i}$,  for $i\in \{0,\cdots q\}$. Let $\cE_{\rN} := \frk$, define $\rN:\cE_{\rN}\to\cE =\R^{n\times n}$ by setting $\rN(b_0,\cdots, b_q) = U\diag(b_0,\cdots b_q)$, let $K_i := \diag(0_{d_0\times d_0},\cdots, \dI_{d_i},\cdots, 0_{d_q\times d_q})$ then $\rN^{\sfT}\omega = \frac{1}{2}\sum_{i=0}^q K_i(U^{\sfT}\omega - \omega^{\sfT}U)K_i\in \frk$ (which is the operator symmetrizing $U^{\sfT}\omega$ then taking the diagonal part). By \cref{lem:projprop}, the projection $\ttV_U$ to the vertical space at $U$ is $\frac{1}{2}\sum_{i=0}^q UK_i(U^{\sfT}\omega - \omega^{\sfT}U)K_i$, hence the horizontal projection is
\begin{equation}
  \ttH_U\omega = \frac{1}{2}(\omega - U\omega^{\sfT}U - \sum_{i=0}^q UK_i(U^{\sfT}\omega - \omega^{\sfT}U)K_i)
\end{equation}  
since $\Pi\omega = \frac{1}{2}(\omega - U\omega^{\sfT}U)$ from \cref{subsec:subman}. A matrix $\eta$ is horizontal at $U$ if and only if $U^{\sfT}\eta$ is antisymmetric and $K_iU^{\sfT}\eta K_i = 0$ for $i\in \{0\cdots q\}$. It is more intuitive to translate this to the picture at the identity. For any matrix $X$ in $\R^{n\times n}$, let $X_{\frk}$ denote the block diagonal
matrix in $\R^{n\times n}$ with the same diagonal blocks as $X$, (corresponding to the structure of $\frk$), and $X_{\frb} = X - X_{\frk}$, for example when $q=2$
$$
  X = \begin{bmatrix} * & * & *\\
    * & * & *\\
    * & * & *
    \end{bmatrix}\;\;\; X_{\frk} = \begin{bmatrix} * & 0 & 0\\
    0 & * & 0\\
    0 & 0 & *
  \end{bmatrix}\;\;\;
  X_{\frb} = \begin{bmatrix} 0 & * & *\\
    * & 0 & *\\
    * & * & 0
    \end{bmatrix}
$$
The tangent space of $\SOO(n)$ at $\dI_n$ is identified with $\oo(n)$, the vertical projection of an antisymmetric matrix $Z$ at $\dI_n$ is just $Z_{\frk}$, and the horizontal projection is  $Z_{\frb}$. The projection $\ttH_U$ is $U\asym(U^{\sfT}\omega)_{\frb}$, where $\asym$ is the antisymmetrize operator, $\asym X = 1/2(X-X^{\sfT})$. This picture could be generalized to other homogeneous spaces, ($\frk$ and $\frb$ are written as $\mathfrak{h}$ and $\mathfrak{m}$ in \cite{KobNom}, for example. We rename the subspaces to avoid notational conflict in this paper).

We have $\mrGamma = 0$, thus, a horizontal Christoffel function $\GammaH$ is simply $-(\rD_{\xi}\ttH)\omega$
\begin{equation}\label{eq:flag_gamma_omg}
  \GammaH(\xi,\omega) = \frac{1}{2}( \xi\omega^{\sfT}U + U\omega^{\sfT}\xi+ \sum_{i=0}^q \xi K_i(U^{\sfT}\omega - \omega^{\sfT}U)K_i) + UK_i(\xi^{\sfT}\omega - \omega^{\sfT}\xi)K_i)
\end{equation}  
If $\omega = \eta$ is a horizontal vector then $\eta^{\sfT}U = -U^{\sfT}\eta$ and $K_i U^{\sfT}\eta K_i = 0$, thus
\begin{equation}\label{eq:Axe}
  \rD_{\xi}\ttH\eta - \rD_{\eta}\ttH\xi = -U\sum_{i=0}^q  K_i (\xi^{\sfT}\eta - \eta^{\sfT}\xi) K_i = -U(\xi^{\sfT}\eta - \eta^{\sfT}\xi)_{\frk}
\end{equation}
Translating to the identity, \cref{eq:Axe} is $U[U^{\sfT}\xi, U^{\sfT}\eta]_{\frk}$ and 
for $A, B\in \R^{n\times n}$,
\begin{equation}\label{eq:UAB}\GammaH(UA, UB) = \frac{1}{2}U(AB^{\sfT} +B^{\sfT}A + A(B-B^{\sfT})_{\frk} + (A^{\sfT}B - B^{\sfT}A)_{\frk})
\end{equation}
Denote by $\kappa_A$ the invariant vector field $X\mapsto XA$ for $X\in\SOO(n), A\in \oo(n)$ with $A_{\frk}=0$, then for $A, B\in\oo(n)$ with $A_{\frk} = B_{\frk} = 0$, $(\nabla_{\kappa_A} \kappa_B)_U = (\rD_{\kappa_A}\kappa_B)_U + \GammaH(UA, UB) = UAB -\frac{1}{2}U(AB+BA+[A,B]_{\frk}) = \frac{1}{2}(\kappa_{[A,B]_{\frb}})_U$, (\cite{KobNom}, theo. II.10.3.3).
\end{example}
\section{Curvature formulas}
\subsection{Curvature formula for an embedded manifold}
As explained, Christoffel functions are not unique for an ambient structure $(\rM, \sfg, \cE)$, but any choice of Christoffel function could be used to evaluate the curvature tensor:
\begin{theorem}\label{theo:cur_embed} Let $\rM$ be a submanifold of $\cE$, with a metric operator $\sfg: \rM\to\fL(\cE,\cE)$. Let $\Gamma:\rM\to \fL(\cE\otimes \cE,\cE)$ be a Christoffel function of $\sfg$. Then the Riemannian curvature of $\rM$ is given by one of the following:
    \begin{equation}\label{eq:rc1}
      \RcM_{\xi,\eta}\phi = -(\rD_{\xi}\Gamma)(\eta, \phi) + (\rD_{\eta}\Gamma)(\xi, \phi)-\Gamma(\xi, \Gamma(\eta, \phi)) +\Gamma(\eta, \Gamma(\xi, \phi))
    \end{equation}
    \begin{equation}\label{eq:rc1a}
      \RcM_{\xi,\eta}\phi = -(\rD_{\xi}\Gamma)(\eta, \phi) + (\rD_{\eta}\Gamma)(\xi, \phi)-\Gamma(\Gamma(\phi, \eta), \xi)) +\Gamma(\Gamma(\phi, \xi), \eta)
    \end{equation}    
where $\xi, \eta, \phi$ are three tangent vectors to $\rM$ at $x\in \rM$, $(\rD_{\xi}\Gamma)(\eta, \phi), (\rD_{\eta}\Gamma)(\xi, \phi)$ denote the directional derivatives, and all expressions are evaluated at $x$. With $\mathring{\Gamma}(\xi, \phi) =  (\rD_{\xi}\Pi) \phi +\Gamma(\xi, \phi)$ as in \cref{eq:mrGamma0}, then 
    \begin{equation}\label{eq:rc2}
      \RcM_{\xi,\eta}\phi = -(\rD_{\xi}\mrGamma)(\eta, \phi) +\
      (\rD_{\eta}\mrGamma)(\xi, \phi) -\Gamma(\xi,\Gamma(\eta, \phi)) +
        \Gamma(\eta,\Gamma(\xi, \phi))
    \end{equation}
\end{theorem}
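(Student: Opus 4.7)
I would reduce the computation to the fixed point $x \in \rM$ by extending $\xi, \eta, \phi$ to vector fields $p_\xi, p_\eta, p_\phi$ via $p_\xi(y) = \Pi_{\sfg, y}\xi$ and analogously, then compute $\RcM_{\xi, \eta}\phi$ from the standard definition of curvature via $\nabla$. By the preceding lemma, $[p_\xi, p_\eta]_x = 0$, which both eliminates the $\nabla_{[\cdot,\cdot]}$ term in the curvature and forces the ambient second derivative $\rD_\xi \rD_{p_\eta}p_\phi|_x - \rD_\eta \rD_{p_\xi}p_\phi|_x$ to vanish, since $\rD$ is the flat connection on $\cE$.

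Next, I would use $\nabla_X Y = \rD_X Y + \Gamma(X, Y)$ to expand $\nabla_{p_\xi}\nabla_{p_\eta}p_\phi|_x$. Leibniz applied to $\rD_\xi[\Gamma(p_\eta, p_\phi)]$ at $x$ yields $(\rD_\xi\Gamma)(\eta, \phi) + \Gamma((\rD_\xi\Pi_\sfg)\eta, \phi) + \Gamma(\eta, (\rD_\xi\Pi_\sfg)\phi)$, where the middle summand vanishes via $\Pi_\sfg(\rD_\xi\Pi_\sfg)\eta = 0$ combined with the first-argument extension $\Gamma(\omega_1, \omega_2) = \Gamma(\Pi_\sfg\omega_1, \omega_2)$. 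The remaining term $\Gamma(\xi, \mrGamma(\eta, \phi))$ coming from $\Gamma(\xi, (\nabla_{p_\eta}p_\phi)_x)$ admits two equally natural rewritings: expanding $\mrGamma(\eta, \phi) = (\rD_\eta\Pi_\sfg)\phi + \Gamma(\eta, \phi)$ leads to \cref{eq:rc1} after antisymmetrization, the derivative-of-projection cross terms cancelling pairwise with their $\xi \leftrightarrow \eta$ partners; alternatively, using the tangent-vector symmetry of $\Gamma$ together with $\mrGamma(\eta, \phi) = \mrGamma(\phi, \eta)$ and the first-argument extension yields $\Gamma(\xi, \mrGamma(\eta, \phi)) = \Gamma(\Gamma(\phi, \eta), \xi)$, leading to \cref{eq:rc1a}.

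For \cref{eq:rc2}, I would substitute $\mrGamma = \rD\Pi_\sfg + \Gamma$ in the linear part of \cref{eq:rc1}. After extending $\Pi_\sfg$ to a tubular neighborhood of $\rM$ in $\cE$, the correction $(\rD_\xi\mrGamma)(\eta, \phi) - (\rD_\xi\Gamma)(\eta, \phi)$ becomes the second ambient derivative $\rD_\xi\rD_\eta\Pi_\sfg|_x \cdot \phi$, which is symmetric in $(\xi, \eta)$ by Clairaut's theorem on mixed partials. Hence $(\rD_\xi\mrGamma)(\eta, \phi) - (\rD_\eta\mrGamma)(\xi, \phi) = (\rD_\xi\Gamma)(\eta, \phi) - (\rD_\eta\Gamma)(\xi, \phi)$, and \cref{eq:rc2} follows from \cref{eq:rc1}.

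The main obstacle is the careful bookkeeping of the derivative-of-projection cross terms in the Leibniz expansions, especially reconciling the two arrangements of $\Gamma(\xi, \mrGamma(\eta, \phi))$ that produce \cref{eq:rc1} versus \cref{eq:rc1a}; this relies crucially on the symmetries $[p_\xi, p_\eta]_x = 0$, $\Pi_\sfg(\rD_\xi\Pi_\sfg)\eta = 0$, $(\rD_\xi\Pi_\sfg)\eta = (\rD_\eta\Pi_\sfg)\xi$, and the symmetry $\Gamma(\xi, \eta) = \Gamma(\eta, \xi)$ of the Christoffel function on tangent vectors, all from the preceding lemma and proposition.
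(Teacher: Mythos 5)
Your plan reproduces the paper's own proof in all essentials: the same extensions $p_\xi(y)=\Pi_{\sfg,y}\xi$, the same use of $[p_\xi,p_\eta]_x=0$ to kill both the bracket term and the iterated flat derivatives, the same Leibniz bookkeeping for \cref{eq:rc1} and \cref{eq:rc1a}, and the same symmetry-of-mixed-partials argument $(\rD_\xi\rD_\eta\Pi_{\sfg})\phi=(\rD_\eta\rD_\xi\Pi_{\sfg})\phi$ for \cref{eq:rc2}. One caveat: you dispose of the cross term $\Gamma((\rD_\xi\Pi_{\sfg})\eta,\phi)$ by invoking the first-argument extension $\Gamma(\omega_1,\omega_2)=\Gamma(\Pi_{\sfg}\omega_1,\omega_2)$, but that identity is a feature of the particular Christoffel function constructed in \cref{prop:Levi}, not part of the definition, and the theorem is stated for an arbitrary Christoffel function; the paper instead cancels $-\Gamma(\rD_{p_\xi}p_\eta,p_\phi)+\Gamma(\rD_{p_\eta}p_\xi,p_\phi)$ pairwise using $(\rD_\xi\Pi_{\sfg})\eta=(\rD_\eta\Pi_{\sfg})\xi$ from \cref{eq:proj_deriv}, which works for any $\Gamma$. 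Since you already list that symmetry among your tools, the repair is immediate, but as written the individual-vanishing claim (and the analogous step in your route to \cref{eq:rc1a}) is not justified at the stated level of generality.
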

We use \cref{eq:rc2} when the directional derivative of $\mrGamma$ is easier to compute than that of $\Gamma$, for example, if $\sfg$ is constant then $\mrGamma=0$. Note that to use \cref{eq:rc2}, derivatives of $\mrGamma$ have to be computed with $\mrGamma$ satisfying \cref{eq:mrGamma0}, if we simplify $\mrGamma$, we must make sure $\Gamma$ is simplified accordingly, and vice-versa.
\begin{proof} Let $\xi, \eta, \phi$ be three tangent vectors at $x\in\rM$, identified with elements of $\cE$. Define three vector fields on $\rM$ by setting $\pxi(y) = \Pi(y)\xi, \peta(y)= \Pi(y)\eta, \pphi(y) = \Pi(y)\phi$ for $y\in\rM$. By \cref{eq:proj_deriv}, $[\pxi, \peta]_x = 0$ and
$$\begin{gathered}
    (\RcM_{\xi\eta }\phi)_x = (\nabla_{[\pxi, \peta]} \pphi - \nabla_{\pxi} \nabla_{\peta} \pphi + \nabla_{\peta} \nabla_{\pxi} \pphi)_x\\
    =-\{\rD_{\pxi}(\rD_{\peta}\pphi +\Gamma(\peta, \pphi))\}_{x} -\Gamma(\pxi, (\rD_{\peta}\pphi +\Gamma(\peta, \pphi)))_{x} +\\
    \{\rD_{\peta}(\rD_{\pxi}\pphi +\Gamma(\pxi, \pphi))\}_{x} + \Gamma(\peta, (\rD_{\pxi}\pphi +\Gamma(\pxi, \pphi)))_{x}\end{gathered}
$$
We have $(-\rD_{\pxi}\rD_{\peta}\pphi + \rD_{\peta}\rD_{\pxi}\pphi)_{x} = (\rD_{[\pxi, \peta]}\pphi)_{x} = 0$, thus
\begin{equation}\label{eq:rcm_alt}
  \begin{gathered}(\RcM_{\xi\eta }\phi)_x = -\{\rD_{\pxi}(\Gamma(\peta, \pphi))\}_{x} -\Gamma(\pxi, (\rD_{\peta}\pphi +\Gamma(\peta, \pphi)))_{x} +\\
\{\rD_{\peta}(\Gamma(\pxi, \pphi))\}_{x} + \Gamma(\peta, (\rD_{\pxi}\pphi +\Gamma(\pxi, \pphi)))_{x}\end{gathered}\end{equation}
As $\Gamma$ is linear in the $\cE$ variables, we have
$$\rD_{\pxi}(\Gamma(\peta, \pphi)) = (\rD_{\pxi}\Gamma)(\peta, \pphi) + \Gamma(\rD_{\pxi}\peta, \pphi) + \Gamma(\peta, \rD_{\pxi}\pphi)
$$
Substitute the above and the similar combination for $(\peta, \pxi, \pphi)$ to \cref{eq:rcm_alt}, the terms $-\Gamma(\rD_{\pxi}\peta, \pphi)_x$ and $\Gamma(\rD_{\peta}\pxi, \pphi)_x$ from the similar combination cancels as $(\rD_{\pxi}\peta)_x = (\rD_{\peta}\pxi)_x$ by \cref{eq:proj_deriv}. Thus
$$\begin{gathered}(\RcM_{\xi\eta }\phi)_x = -(\rD_{\pxi}\Gamma)(\peta, \pphi)_x - \Gamma(\peta, \rD_{\pxi}\pphi)_x -\Gamma(\pxi, (\rD_{\peta}\pphi +\Gamma(\peta, \pphi)))_{x}\\
+(\rD_{\peta}\Gamma)(\pxi, \pphi)_x + \Gamma(\pxi, \rD_{\peta}\pphi)_x
  + \Gamma(\peta, (\rD_{\pxi}\pphi +\Gamma(\pxi, \pphi)))_{x}=\\
 -(\rD_{\pxi}\Gamma)(\peta, \pphi)_x  -\Gamma(\pxi, \Gamma(\peta, \pphi))_{x} +
(\rD_{\peta}\Gamma)(\pxi, \pphi)_x 
  + \Gamma(\peta, \Gamma(\pxi, \pphi))_{x}
\end{gathered}
$$
which gives us \cref{eq:rc1}. In \cref{eq:rcm_alt},
$\Gamma(\pxi, (\rD_{\peta}\pphi +\Gamma(\peta, \pphi)))_{x} = \Gamma((\rD_{\peta}\pphi +\Gamma(\peta, \pphi)), \pxi)_{x}$ because each variable is a tangent vector, similarly
$\Gamma(\peta, (\rD_{\pxi}\pphi +\Gamma(\pxi, \pphi)))_{x} = \Gamma((\rD_{\pxi}\pphi +\Gamma(\pxi, \pphi)), \peta)_{x}$, and in the preceding calculation, the affected terms are 
$-\Gamma((\rD_{\peta}\pphi +\Gamma(\peta, \pphi)), \pxi)_{x} + \Gamma((\rD_{\pxi}\pphi +\Gamma(\pxi, \pphi)), \peta)_{x}$, or $-\Gamma(\Gamma(\peta, \pphi), \pxi)_{x} + \Gamma(\Gamma(\pxi, \pphi), \peta)_{x}$ which gives us \cref{eq:rc1a}.

From $(\rD_{\xi}\rD_{\eta}\Pi)\phi - (\rD_{\eta}\rD_{\xi}\Pi)\phi =0$, we deduce \cref{eq:rc2}.
\end{proof}
We now give a version of the Gauss-Codazzi equation for a metric operator.
\begin{proposition}\label{prop:gauss_cod} Let $(\rM, \sfg, \cE)$ be an embedded ambient structure and $\Pi=\Pi_{\sfg}$ the associated projection. Let $(\GammaE)_x$ be a bilinear map from $\cT_x\rM\times\cE$ to $\cE$ satisfying $\GammaE(\xi, \eta)_x = \GammaE(\eta, \xi)$ for all pair of tangent vectors $\xi, \eta$, and for all $\omega\in\cE$
  \begin{equation}\label{eq:metriccomp}
  \langle \eta, (\rD_{\xi}\sfg)_x \omega\rangle_{\cE} =\langle \GammaE(\xi, \eta)_x, \sfg_x\omega\rangle_{\cE} + \langle \eta, \sfg_x\GammaE(\xi, \omega)_x\rangle_{\cE}
\end{equation}
  and $\GammaE$ is smooth as a function from $\rM$ to $\cL(\cE\otimes\cE, \cE)$. Let $\ttX, \ttY$ be two vector fields on $\rM$ and $\tts$ be a $\cE$-valued function on $\rM$. Define a connection on $\rM\times \cE$ by $\nabla^{\cE}_{\ttX} \tts =   \rD_{\ttX} \tts + \GammaE(\ttX, \tts)$. Then $\nabla^{\cE}$ satisfies $\nabla^{\cE}_{\ttX} \ttY - \nabla^{\cE}_{\ttY}\ttX = [\ttX, \ttY]$ and
  \begin{equation}\label{eq:metriccomp2}
    \rD_{\ttX}\langle\ttY, \sfg \tts \rangle_{\cE} = \langle\nabla^{\cE}_{\ttX}\ttY, \sfg \tts\rangle_{\cE} + \langle \ttY, \sfg \nabla^{\cE}_{\ttX}\tts\rangle_{\cE}
  \end{equation}
Also, $\Gamma(\xi, \omega) := -(\rD_{\xi}\Pi)\omega + \Pi\GammaE(\xi, \omega)$ could be extended to a Christoffel function for $\sfg$, thus, for the Levi-Civita connection $\nabla$ of the induced metric, we have
  \begin{equation}\label{eq:proj_compab}
    \nabla_{\ttX}\ttY = \rD_{\ttX}\ttY  -(\rD_{\ttX}\Pi)\ttY + \Pi\GammaE(\ttX, \ttY)
\end{equation}    
  Let $\xi, \eta, \phi$ be tangent vectors on $\rM$ at $x$. Define the second fundamental form
  \begin{equation}\Two(\xi, \eta) = \GammaE(\xi, \eta) - \Gamma(\xi, \eta) = (\rD_{\xi}\Pi)\eta+(\dI_{\cE}-\Pi)\GammaE(\xi, \eta)\end{equation}
where $\dI_{\cE}$ denotes the identity map of $\cE$. Then
$\Two(\xi, \eta) =\Two(\eta, \xi)$ and $\Pi\Two(\xi, \eta) = 0$. Consider $\Two_{\xi}:\eta\mapsto \Two(\xi, \eta)$ as a linear map from $\cT_x\rM$ to $\cE$. Then its adjoint $\Two_{\xi}^{\dagger}$ as a map from $\cE$ to $\cT_x\rM$ under the inner products induced by $\sfg$ is given by:
  \begin{equation}\label{eq:Twodual}
\Two_{\xi}^{\dagger}\omega = \Two^{\dagger}(\xi, \omega)= (\rD_{\xi}\Pi)(\dI_{\cE}-\Pi) \omega - \Pi\GammaE(\xi,  (\dI_{\cE}-\Pi)\omega) =-\Gamma(\xi, (\dI_{\cE}-\Pi)\omega)
  \end{equation}
for $\omega\in\cE$. Define:
\begin{equation}
  \RcE_{\xi, \eta}\phi = -(\rD_{\xi}\GammaE)(\eta, \phi) + (\rD_{\eta}\GammaE)(\xi, \phi) - \GammaE(\xi, \GammaE(\eta, \phi)) + \GammaE(\eta, \GammaE(\xi, \phi)) 
\end{equation}
Then the Gauss-Codazzi equation holds:
\begin{equation}\label{eq:gaussco31}
    \RcM_{\xi \eta}\phi = \Pi\RcE_{\xi, \eta}\phi + \Two^{\dagger}(\eta,\Two(\xi, \phi)) - \Two^{\dagger}(\xi, \Two(\eta, \phi))
\end{equation}
\end{proposition}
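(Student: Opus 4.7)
My plan is to decompose $\nabla = \nabla^{\cE} - \Two$ on tangent vector fields and then derive the Gauss--Codazzi equation by expanding $\RcM$ in terms of $\RcE$ and $\Two$. The essential tools will be the derivative-of-projection trick, self-adjointness of $\sfg\Pi$ under $\langle\rangle_{\cE}$ from \cref{lem:projprop}, and the commuting vector field extensions of \cref{eq:proj_deriv}.

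First I would dispatch the elementary claims. Torsion-freeness of $\nabla^{\cE}$ reduces to symmetry of $\GammaE$ combined with the bracket formula $[\ttX,\ttY] = \rD_{\ttX}\ttY - \rD_{\ttY}\ttX$ for tangent vector fields in $\cE$, and \cref{eq:metriccomp2} follows from the product rule together with two applications of \cref{eq:metriccomp}. For the Christoffel function claim, the derivative-of-projection trick yields $\nabla_{\ttX}\ttY = \Pi\nabla^{\cE}_{\ttX}\ttY$ for tangent $\ttY$, which is manifestly torsion-free, and pushing $\Pi$ across an $\sfg$-inner product via self-adjointness of $\sfg\Pi$ upgrades the $\sfg$-compatibility of $\nabla^{\cE}$ given by \cref{eq:metriccomp2} to $\sfg$-compatibility of $\nabla$; Levi-Civita uniqueness then identifies $\nabla$. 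Symmetry of $\Two$ follows from symmetry of $\GammaE$ and \cref{eq:proj_deriv}, and $\Pi\Two = 0$ follows from $\Pi(\rD_{\xi}\Pi)\eta = 0$ for tangent $\eta$ (derivative-of-projection trick on $\Pi^2 = \Pi$) together with $\Pi(\dI_{\cE}-\Pi) = 0$.

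For the adjoint formula \cref{eq:Twodual}, both sides of the defining pairing vanish on the tangent component of $\omega$ (LHS by $\Pi\Two=0$ and self-adjointness of $\sfg\Pi$; RHS because $\Two^{\dagger}(\xi,\cdot)$ kills tangent inputs through the factor $\dI_{\cE}-\Pi$). For the normal component $\omega = (\dI_{\cE}-\Pi)\omega$, I would expand $\langle\eta,\sfg\Two^{\dagger}(\xi,\omega)\rangle_{\cE}$ through the definition, invoke \cref{eq:metriccomp} on the $\Pi\GammaE$ term, and use the differentiated form of self-adjointness of $\sfg\Pi$ to convert the result into $\langle(\rD_{\xi}\Pi)\eta, \sfg\omega\rangle_{\cE}$, matching the corresponding piece of $\langle\Two(\xi,\eta),\sfg\omega\rangle_{\cE}$.

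The main obstacle is the Gauss--Codazzi calculation itself. I would extend $\xi,\eta,\phi$ to the commuting fields $\pxi,\peta,\pphi$ of \cref{eq:proj_deriv}, then expand $\nabla_{\pxi}\nabla_{\peta}\pphi$ using $\nabla = \nabla^{\cE} - \Two$ at both the outer and inner levels (leveraging linearity of $\nabla^{\cE}$ on the trivial bundle $\rM\times\cE$) to obtain $\nabla^{\cE}_{\pxi}\nabla^{\cE}_{\peta}\pphi - \nabla^{\cE}_{\pxi}\Two(\peta,\pphi) - \Two(\pxi, \nabla_{\peta}\pphi)$. Antisymmetrizing in $\xi,\eta$ and using $[\pxi,\peta]_x = 0$ to kill the bracket term yields
\begin{equation*}
\RcM(\xi,\eta)\phi = \RcE(\xi,\eta)\phi + [\nabla^{\cE}_{\pxi}\Two(\peta,\pphi) - \nabla^{\cE}_{\peta}\Two(\pxi,\pphi)]_x + [\Two(\pxi,\nabla_{\peta}\pphi) - \Two(\peta,\nabla_{\pxi}\pphi)]_x.
\end{equation*}
Applying $\Pi$ (which fixes the tangent-valued LHS), the last bracket dies by $\Pi\Two = 0$. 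It then remains to identify $\Pi\nabla^{\cE}_{\pxi}\Two(\peta,\pphi)|_x$ with $-\Two^{\dagger}(\xi,\Two(\eta,\phi))$; writing $\Pi\nabla^{\cE}_{\pxi} = \Pi\rD_{\pxi} + \Pi\GammaE(\pxi,\cdot)$ and using $\Pi\Two=0$ to convert $\Pi\rD_{\pxi}\Two(\peta,\pphi)$ into $-(\rD_{\pxi}\Pi)\Two(\peta,\pphi)$ reproduces the formula of \cref{eq:Twodual} with the correct sign, since $\Two(\eta,\phi)$ already lies in the range of $\dI_{\cE}-\Pi$. The chief difficulty is bookkeeping --- tracking which expressions are tangent versus normal at each stage and inserting the derivative-of-projection trick at the right moments.
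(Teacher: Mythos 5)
Your proposal is correct, and all of its ingredients check out, but the route to the main identity \cref{eq:gaussco31} differs from the paper's in direction. The paper starts from the right-hand side of \cref{eq:gaussco31}, expands $\Pi\RcE$ and $\Two^{\dagger}(\cdot,\Two(\cdot,\cdot)) = -\Gamma(\cdot,\Two(\cdot,\cdot))$ entirely in terms of the Christoffel functions $\GammaE$ and $\Gamma$, and uses the derivative-of-projection trick together with $\Pi\GammaE(\xi,\omega) = \Gamma(\xi,\omega) + (\rD_{\xi}\Pi)\omega$ to collapse everything onto the already-established curvature formula \cref{eq:rc1} of \cref{theo:cur_embed}; it is a term-by-term verification anchored to that earlier theorem. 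You instead work forward from the definition of $\RcM$ via the connection-level decomposition $\nabla = \nabla^{\cE} - \Two$ on the commuting frames $\pxi,\peta,\pphi$, which is the classical second-fundamental-form derivation of the Gauss equation; it makes the structure of \cref{eq:gaussco31} transparent and does not need \cref{eq:rc1}, at the cost of having to justify that the antisymmetrized $\nabla^{\cE}$-second derivative evaluated on the $p$-extensions equals $\RcE$ at $x$ (the paper only remarks on this after its proof, and the justification is the same cancellation as in \cref{theo:cur_embed}: $(\rD_{\pxi}\peta)_x = (\rD_{\peta}\pxi)_x$ and $[\pxi,\peta]_x=0$). Your treatment of \cref{eq:Twodual} also differs slightly --- you split $\omega$ into tangent and normal parts and differentiate the self-adjointness of $\sfg\Pi$, whereas the paper differentiates the orthogonality relation $\langle(\dI_{\cE}-\Pi)\omega,\sfg\ttY\rangle_{\cE}=0$ along $\ttX$ --- but both are routine and equivalent. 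The identification $\Pi\nabla^{\cE}_{\pxi}\Two(\peta,\pphi)|_x = -\Two^{\dagger}(\xi,\Two(\eta,\phi))$, which is the crux of your argument, is exactly right: $\Pi\Two\equiv 0$ converts $\Pi\rD_{\pxi}\Two(\peta,\pphi)$ into $-(\rD_{\xi}\Pi)\Two(\eta,\phi)$, and since $\Two(\eta,\phi)$ is already normal the result matches \cref{eq:Twodual} with the correct sign. (One trivial nit: \cref{eq:metriccomp2} needs only a single application of \cref{eq:metriccomp}, with $\eta=\ttY$ tangent and $\omega=\tts$ arbitrary, not two.)
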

\begin{proof}Equation \ref{eq:metriccomp2} is equivalent to
  $$\langle\ttY, (\rD_{\ttX}\sfg) \tts\rangle_{\cE} = \langle\GammaE(\ttX, \ttY), \sfg\tts \rangle_{\cE} + \langle\ttY, \sfg\GammaE(\ttX, \tts) \rangle_{\cE}
  $$
  which follows from  \ref{eq:metriccomp}. $\nabla^{\cE}_{\ttX}\ttY -\nabla^{\cE}_{\ttY}\ttX = [\ttX, \ttY]$ follows from the expression of $\nabla^{\cE}$ and the symmetry of $\Gamma^{\cE}$ when evaluated on vector fields. The right-hand side of \cref{eq:proj_compab} is a vector field, since
  $$\Pi(\rD_{\ttX}\ttY  -(\rD_{\ttX}\Pi)\ttY + \Pi\GammaE(\ttX, \ttY)) = \Pi\rD_{\ttX}\ttY  + \Pi\GammaE(\ttX, \ttY) =\rD_{\ttX}\ttY-
  (\rD_{\ttX}\Pi)\ttY + \Pi\GammaE(\ttX, \ttY) $$
  by the derivative of projection trick, thus we have a connection, and the right-hand side could be written as $\Pi\nabla^{\cE}_{\ttX}\ttY$. Compatibility with metric for $\nabla$ follows from compatibility of $\nabla^{\cE}$, as for three vector fields $\ttX, \ttY, \ttZ$
  $$\langle \Pi\nabla^{\cE}_{\ttX}\ttY, \sfg\ttZ\rangle_{\cE} +
  \langle \ttY, \sfg \Pi\nabla^{\cE}_{\ttX}\ttZ\rangle_{\cE}
=\langle \nabla^{\cE}_{\ttX}\ttY, \sfg\ttZ\rangle_{\cE} +
  \langle \ttY, \sfg \nabla^{\cE}_{\ttX}\ttZ\rangle_{\cE} = \rD_{\ttX}\langle \ttY, \sfg \ttZ\rangle_{\cE}
  $$
For an ambient vector $\omega$, $\langle (\dI_{\cE} - \Pi)\omega, \sfg \ttY\rangle_{\cE} = 0$ hence
  $$\begin{gathered}0 = \rD_{\ttX}\langle (\dI_{\cE} - \Pi)\omega, \sfg \ttY\rangle_{\cE} =
  \langle \nabla^{\cE}_{\ttX}(\dI_{\cE} - \Pi)\omega, \sfg \ttY\rangle_{\cE} + \langle (\dI_{\cE} - \Pi)\omega, \sfg  \nabla^{\cE}_{\ttX}\ttY\rangle_{\cE}=\\
  \langle-(\rD_{\ttX}\Pi)\omega +\GammaE((\dI_{\cE}-\Pi)\omega, \sfg\ttY)\rangle_{\cE} + 
  \langle \omega, \sfg (\dI_{\cE} - \Pi) \nabla^{\cE}_{\ttX}\ttY\rangle_{\cE}
\end{gathered}$$  
Note the last term is $\langle \omega, \sfg \Two(\ttX, \ttY)\rangle_{\cE}$, thus
$$\Two^{\dagger}(\ttX, \omega) = -\Pi(-(\rD_{\ttX}\Pi)\omega +\GammaE(\ttX, (\dI_{\cE}-\Pi)\omega)) = -\Gamma(\ttX, (\dI_{\cE}-\Pi)\omega)$$
which gives us \cref{eq:Twodual}. We note if $\nu$ is normal, $\Pi\nu = 0$ then $\Two^{\dagger}(\xi, \nu) = -\Gamma(\xi,\nu)$. Expanding the right-hand side of \cref{eq:gaussco31}
  $$\begin{gathered}\Pi(-\rD_{\xi}(\GammaE(\eta, \phi)) + \rD_{\eta}(\GammaE(\xi, \phi)) - \GammaE(\xi, \GammaE(\eta, \phi)) + \GammaE(\eta, \GammaE(\xi, \phi))) -\\
    \Gamma(\eta, \GammaE(\xi, \phi) - \Gamma(\xi, \phi)) +\Gamma(\xi, \GammaE(\eta, \phi)-\Gamma(\eta, \phi))\end{gathered}$$
Expand the first line, using $\Pi\rD_{\xi}(\GammaE(\eta, \phi)) = \rD_{\xi}(\Pi\GammaE(\eta, \phi)) - (\rD_{\xi}\Pi)\GammaE(\eta, \phi)$ and 
$\Pi\GammaE(\xi, \omega) = \Gamma(\xi, \omega) + (\rD_{\xi}\Pi)\omega$
then permute the roles of $\xi$ and $\eta$
$$\begin{gathered} -\rD_{\xi}(\Pi\GammaE(\eta, \phi)) + (\rD_{\xi}\Pi)\GammaE(\eta, \phi)
+\rD_{\eta}(\Pi\GammaE(\xi, \phi)) - (\rD_{\eta}\Pi)\GammaE(\xi, \phi)
\\- \Gamma(\xi, \GammaE(\eta, \phi))-(\rD_{\xi}\Pi) \GammaE(\eta, \phi))
+ \Gamma(\eta, \GammaE(\xi, \phi)) +(\rD_{\eta}\Pi) \GammaE(\xi, \phi))
-\\\Gamma(\eta, \GammaE(\xi, \phi) - \Gamma(\xi, \phi)) +\Gamma(\xi, \GammaE(\eta, \phi)-\Gamma(\eta, \phi))=\\
 -\rD_{\xi}(\Gamma(\eta, \phi)) + \rD_{\eta}(\Gamma(\xi, \phi)) -\Gamma(\xi,\Gamma(\eta, \phi)) +
\Gamma(\eta,\Gamma(\xi, \phi))
\end{gathered}$$
which is $\RcM_{\xi\eta}\phi$. We have used $-\rD_{\xi}(\Pi\GammaE(\eta, \phi))  +\rD_{\eta}(\Pi\GammaE(\xi, \phi)) =  -\rD_{\xi}(\Gamma(\eta, \phi)) -\rD_{\xi}(\rD_\eta\Pi\phi)+ \rD_{\eta}(\Gamma(\xi, \phi)) +\rD_{\eta}(\rD_{\xi}\Pi\phi)= -\rD_{\xi}(\Gamma(\eta, \phi)) + \rD_{\eta}(\Gamma(\xi, \phi))$.
\end{proof}
The equation is often given in the $(0, 4)$ form:
\begin{equation}
  \langle\RcM_{\xi \eta}\phi, \sfg\zeta\rangle_{\cE} = \langle\RcE_{\xi\eta}\phi, \sfg\zeta\rangle_{\cE} + \langle\Two(\xi, \phi), \sfg\Two(\eta, \zeta)\rangle_{\cE} - \langle\Two(\xi, \zeta),\sfg\Two(\eta, \phi)\rangle_{\cE}
\end{equation}
$\GammaE$ could be constructed by extending the metric operator $\sfg$ to a region near $\rM$ in $\cE$ then applying the usual Christoffel formula. $\RcE_{\xi\eta}\phi$ could also be calculated as $(\nabla_{[\ttX, \ttY]}^{\cE}\ttZ - \nabla_{\ttX}^{\cE}\nabla_{\ttY}^{\cE}\ttZ + \nabla_{\ttY}^{\cE}\nabla_{\ttX}^{\cE}\ttZ)_x$ for three vector fields $\ttX, \ttY, \ttZ$ such that $\ttX_x = \xi, \ttY_x = \eta, \ttZ_x = \phi$, and as before we can choose the vector fields to be $\pxi, \peta, p_{\phi}$.

For a Riemannian embedding in $\cE$, $\GammaE$ is zero and only the derivative of the projection needs to be evaluated. Otherwise, $\GammaE$ could be more complicated than $\Gamma$. As mentioned, the relationship between the Gauss-Codazzi equation and metric connections on subbundles is discussed in \cite{taylor2011partial}, appendix C. Results of \cref{subsec:Rsubmerse} below could also be considered from this point of view.

\begin{example}
Continue with $\rM=\SOO(n)$, consider $U\in\SOO(n)$ and let $\xi=UA, \eta=UB, \phi=UC$ be three tangent vectors at $U$, with $A, B, C\in\oo(n)$, and $\Gamma$ from \cref{ex:son}
$$\begin{gathered}\Gamma(\xi, \Gamma(\eta, \phi)) = \frac{1}{4}\{\xi(\eta\phi^{\sfT}U+U\phi^{\sfT}\eta)^{\sfT} U + U(\eta\phi^{\sfT}U+U\phi^{\sfT}\eta)^{\sfT}\xi\}=\\
  \frac{1}{4}\{\xi U^{\sfT}\phi\eta^{\sfT}U + \xi\eta^{\sfT}\phi + \phi\eta^{\sfT}\xi + U\eta^{\sfT}\phi U^{\sfT}\xi\}\\
  = \frac{1}{4}\{UA C(-B) + UA(-B)C + UC(-B)A + U(-B)CA\}
  \end{gathered}
  $$
Using \cref{eq:rc2}, the curvature is $\frac{1}{4}U[[A, B], C]$, as is well-known.
\end{example}
\subsection{Curvature formulas for a Riemannian submersion}\label{subsec:Rsubmerse}
The following lemma expresses the O'Neil tensor in \cite{ONeil1966} in terms of projections and Christoffel functions. We mostly follow the original paper. In a curvature calculation, we need to evaluate the Christoffel function on ambient, or not necessarily horizontal vectors, so the expression for the Christoffel function has to be valid on the whole tangent space, we cannot use simplified formulas that are valid for horizontal vectors only.
\begin{lemma}\label{lem:subdual}
  Let $\rM$ be a Riemannian manifold and $\cH\oplus\cV\subset \cT\rM$ be two orthogonal subbundles of $\cT\rM$. Let $\ttH$ and $\ttV$ be the projection operators from $\cT\rM$ to $\cH$ and $\cV$, respectively. Let $\nabla$ be the Levi-Civita connection of a Riemannian metric $\langle\rangle_{R}$, and $c_1, c_2$ be two vector fields. Then $\ttV\nabla_{c_1}(\ttH c_2)$ is a $(1, 2)$ tensor. For fixed tangent vectors $\xi, \eta$
  at $x\in\rM$ and two vector fields $c_1, c_2$ such that $c_1(x)=\xi, c_2(x)=\eta$, 
  the map $\rA_{\xi}:\eta\mapsto (\ttV\nabla_{c_1}(\ttH c_2))_{x}$ maps $\cT_x\rM$ to $\cT_x\rM$ for $x\in\rM$, and induce an operator on vector fields also denoted by $\rA_{c_1}$.
  Its adjoint $\rAd_{c_1}$ in Riemannian inner product is given by $-\ttH\nabla_{c_1}\ttV$, that is:
  \begin{equation}\label{eq:orthadj}
    \langle \ttH\nabla_{c_1}(\ttV c_3), c_2 \rangle_{R} = - \langle  c_3, \ttV\nabla_{c_1}(\ttH c_2) \rangle_{R}
  \end{equation}
  for all vector fields $c_2, c_3$. Further, if $(\rM, \sfg, \cE)$ is an embedded ambient structure inducing the Riemannian metric $\langle\rangle_R$, and $\Gamma$ is a Christoffel function, with $\mathring{\Gamma}(c_1, c_2 ) = \Gamma(c_1, c_2) + (\rD_{c_1}\Pi)c_2$. Then we have:
  \begin{equation}\label{eq:rAV}
    \rA_{c_1} c_2 = -(\rD_{c_1}\ttV)\ttH c_2 + \ttV\mrGamma(c_1, \ttH c_2)
  \end{equation}
   \begin{equation}
    \rA^{\dagger}_{c_1} c_3 = (\rD_{c_1}\ttH)\ttV c_3 - \ttH\mrGamma(c_1, \ttV c_3)
   \end{equation}
   For a subbundle $\cF$ of $\cT\rM$, let $\Pi_{\cF}$ be the projection to $\cF$. Set
   \begin{equation}\label{eq:gammaF}
     \Gamma_{\cF}(\xi, \omega) = -(\rD_{\xi}\Pi_{\cF})\omega + \Pi_{\cF}\mrGamma(\xi, \omega)
   \end{equation}
   In this notation, $\rA_{c_1}c_2 = \Gamma_{\cV}(c_1, \ttH c_2)$ and $\rA^{\dagger}_{c_1} c_2 = -\Gamma_{\cH}(c_1, \ttV c_2)$, or $\rA_{\xi}\eta = \Gamma_{\cV}(\xi, \ttH \eta)$ and $\rA^{\dagger}_{\xi} \eta = -\Gamma_{\cH}(\xi, \ttV \eta)$ for two tangent vectors $\xi, \eta$ at $x$.

If $\cH = \cH\rM$ is the horizontal bundle of a Riemannian submersion $\qq:\rM\to\cB$ and $c_1$ and $c_2$ are two horizontal vector fields with $c_1(x) = \xi, c_2(x)= \eta$ then
   \begin{equation}\label{eq:oneilLie}
     2\rA_{\xi}\eta = (\ttV[c_1, c_2])_x = (\rD_{\xi}\ttH)_x\eta - (\rD_{\eta}\ttH)_x\xi
     \end{equation}
\end{lemma}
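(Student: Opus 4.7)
The plan is to establish the three parts in sequence: (i) the $C^{\infty}$-linearity of $(c_1, c_2)\mapsto \ttV\nabla_{c_1}(\ttH c_2)$ and the adjoint relation \cref{eq:orthadj}; (ii) the ambient formula \cref{eq:rAV} for $\rA$, its $\rA^{\dagger}$ analogue, and the $\GammaV$, $\GammaH$ reformulations; (iii) the Lie bracket identity \cref{eq:oneilLie} in the Riemannian submersion setting. Parts (i) and (ii) work from the bare data of two orthogonal subbundles, and only (iii) will exploit the submersion hypothesis.

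For (i), I would first check tensoriality in $c_2$: with $f\in C^{\infty}(\rM)$, the Leibniz rule gives $\nabla_{c_1}(f\ttH c_2) = (c_1 f)\ttH c_2 + f\nabla_{c_1}(\ttH c_2)$; the first summand is horizontal and is killed by $\ttV$, so $\ttV\nabla_{c_1}(f\ttH c_2) = f\,\ttV\nabla_{c_1}(\ttH c_2)$. Linearity in $c_1$ is automatic from $\nabla$. For the adjoint formula, I would differentiate the orthogonality identity $\langle \ttV c_3, \ttH c_2\rangle_R = 0$ along $c_1$, invoke metric compatibility of $\nabla$, and then use the $\cH\oplus\cV$-decomposition to move $\ttH$ and $\ttV$ under the pairing (components out of $\cH$ or $\cV$ pair trivially with $c_2$ or $c_3$ respectively).

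For (ii), the idea is to expand $\nabla_{c_1}(\ttH c_2) = \rD_{c_1}(\ttH c_2) + \Gamma(c_1, \ttH c_2)$ and apply $\ttV$. The derivative-of-projection trick together with $\ttV\ttH = 0$ (because $\cV\perp\cH$) converts the first term into $-(\rD_{c_1}\ttV)\ttH c_2$. For the Christoffel term, $\ttV\Pi = \ttV$ (since $\cV\subset\cT\rM$, the normal-to-$\cT\rM$ piece is annihilated by $\ttV$) combined with $\Pi\Gamma(\xi,\eta) = \mrGamma(\xi,\eta)$ on tangent inputs gives $\ttV\Gamma(c_1,\ttH c_2) = \ttV\mrGamma(c_1,\ttH c_2)$; adding the two pieces yields \cref{eq:rAV}. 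The symmetric computation using $\ttH\ttV = 0$ and $\ttH\Pi = \ttH$ produces the $\rA^{\dagger}$ formula, and the $\GammaV$, $\GammaH$ reformulations follow directly from the definition in \cref{eq:gammaF}.

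For (iii), I would take the canonical horizontal extensions $c_i(y) := \ttH_y\xi_i$ with $\xi_1 = \xi$, $\xi_2 = \eta$ (horizontal, so $c_i(x) = \xi_i$). Since the seeds are constant in $\cE$, the ambient Lie bracket at $x$ computes to $[c_1, c_2]_x = (\rD_\xi\ttH)_x\eta - (\rD_\eta\ttH)_x\xi$, and the preceding remark shows this difference already lies in $\cV_x\rM$, hence is fixed by $\ttV$. Torsion-freeness of $\nabla$ combined with $\ttH c_i = c_i$ then gives $\ttV[c_1,c_2]_x = \rA_{c_1}c_2|_x - \rA_{c_2}c_1|_x$. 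The hard step --- and the main obstacle --- is the O'Neil skew-symmetry $\rA_{c_1}c_2 = -\rA_{c_2}c_1$, which genuinely requires the Riemannian submersion structure and not merely the orthogonality of $\cH$ and $\cV$. I would prove it via the Koszul formula applied to basic horizontal lifts of vector fields from $\cB$, using that $\langle c_1, c_2\rangle_R$ descends to $\cB$ (so its vertical derivative vanishes) and that $[c, V]$ is vertical for basic horizontal $c$ and vertical $V$. Combining these ingredients yields $2\rA_\xi\eta = \ttV[c_1,c_2]_x = (\rD_\xi\ttH)_x\eta - (\rD_\eta\ttH)_x\xi$.
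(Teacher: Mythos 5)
Your proposal is correct and follows essentially the same route as the paper: tensoriality and the adjoint identity from metric compatibility of $\nabla$, the ambient formulas via the derivative-of-projection trick together with $\ttV\ttH=0$ and $\ttV\Pi=\ttV$ (the paper handles the $-(\rD_{c_1}\Pi)\ttH c_2$ term explicitly where you fold it into $\ttV\Gamma=\ttV\mrGamma$, but these are the same computation), and the final equality of \cref{eq:oneilLie} by evaluating $[c_1,c_2]_x$ on the canonical extensions $c_i(y)=\ttH_y\xi_i$. The only divergence is the identity $2\rA_{\xi}\eta=(\ttV[c_1,c_2])_x$, which the paper simply cites as Lemma 2 of O'Neill while you supply the standard Koszul-formula argument on basic horizontal lifts (legitimate here since $\rA$ and $\ttV[\cdot,\cdot]$ are tensorial on horizontal fields, so proving it for basic lifts suffices); this just makes the step self-contained.
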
  
\begin{proof} We have
  $$\ttV\nabla_{c_1}(\ttH (fc_2)) = (\rD_{c_1}f) \ttV \ttH c_2 + f\ttV\nabla_{c_1}(\ttH c_2) = f\ttV\nabla_{c_1}(\ttH c_2)$$
  So the map $(c_1, c_2)\to \ttV\nabla_{c_1}(\ttH c_2)$ is a tensor. From compatibility with metric: $$0 = \rD_{c_1}\langle\ttH c_3, \ttV c_2 \rangle_R= \langle \nabla_{c_1}\ttH c_3, \ttV c_2 \rangle_R + \langle \ttH c_3, \nabla_{c_1}\ttV c_2 \rangle_R$$
  where $\langle\ttH c_2, \ttV c_3\rangle_R=0$ by orthogonality. This implies \cref{eq:orthadj}. When the Riemannian metric on $\rM$ is induced by a metric operator $\sfg$
  $$\ttV\nabla_{c_1}(\ttH c_2) = \ttV(\rD_{c_1}(\ttH c_2) -(\rD_{c_1}\Pi)(\ttH c_2) + \mrGamma(c_1, \ttH c_2))$$
  Expand the first two terms using the derivative of projection trick
  $$\begin{gathered}\ttV\rD_{c_1}(\ttH c_2) = \rD_{c_1}(\ttV\ttH c_2) - (\rD_{c_1}\ttV)\ttH c_2 = - (\rD_{c_1}\ttV)\ttH c_2 \\
    \ttV(\rD_{c_1}\Pi)(\ttH c_2) = (\rD_{c_1}\ttV ) \ttH c_2 - (\rD_{c_1}\ttV)\Pi\ttH c_2 =  0
  \end{gathered}$$
Hence $\ttV\nabla_{c_1}\ttH c_2 =  - (\rD_{c_1}\ttV)\ttH c_2 +\ttV\mrGamma(c_1, \ttH c_2)
  $, and we can switch the role of $\ttV$ and $\ttH$ for the dual formula.

For \cref{eq:oneilLie}, the first part follows from Lemma 2 of \cite{ONeil1966}, while the last equality follows by defining $c_1(y) = \ttH_y\xi$ and $c_2(y) = \ttH_y\eta$, in this case $[c_1, c_2]_x$ is vertical by the derivative of projection trick.
\end{proof}
To calculate the curvature in the following theorem, we repeat that $\GammaH$ must be evaluated from \cref{eq:gammaF} for ambient vectors, if $\GammaH$ is only valid for horizontal vectors the calculation may not be valid.
\begin{theorem}\label{theo:rsub} Assume $(\rM, \sfg, \cE)$ is an embedded ambient structure and there is a bundle decomposition $\cT\rM = \cH\rM\oplus\cV\rM$. For $x\in\rM$, if $\xi, \eta, \phi\in\cH_x\rM$ are three horizontal vectors at $x$), let $\rA_{\xi}$ be the operator defined by $\rA
  _{\xi}\omega=\ttV\nabla_{\xi}(\ttH\omega) = \GammaV(\xi, \ttH\omega)$ for $\omega\in\cE$ as in \cref{lem:subdual}, and $\rAd_{\xi}$ be its adjoint (thus if $\omega \in \cT_x \rM$ then $\rAd_{\xi}\omega=-\ttH\nabla_{\xi}\ttV\omega = -\GammaH(\xi, \ttV\omega)$), with all expressions evaluated at $x$. Set
    \begin{equation}\label{eq:cursubmer}
    \RcH_{\xi\eta}\phi := 2\rAd_{\phi}\rA_{\xi}\eta  -(\rD_{\xi}\GammaH)(\eta, \phi) + (\rD_{\eta}\GammaH)(\xi, \phi)-\GammaH(\xi, \GammaH(\eta, \phi)) +
    \GammaH(\eta, \GammaH(\xi, \phi))
    \end{equation}
Then $(\RcH_{\xi\eta}\phi)_x$ is in $\cH_x$. $\RcH$ satisfies the O'Neil's equations:
      \begin{equation}\label{eq:ONeil13}
      \RcH_{\xi\eta}\phi = \ttH \RcM_{\xi\eta}\phi + 2 \rAd_{\phi} \rA_{\xi}\eta - \rAd_{\xi} \rA_{\eta}\phi +  \rAd_{\eta} \rA_{\xi}\phi
      \end{equation}
where $\RcM$ is the curvature tensor of $\rM$. Thus, if $\cH=\cH\rM$ is the horizontal bundle from a Riemannian submersion $\qq:\rM\to\cB$, $\RcH$ given by \cref{eq:cursubmer} is the horizontal lift of the Riemannian curvature tensor on $\cB$.
Alternatively,
\begin{equation}\label{eq:cursubmer2}
  \begin{gathered}
    \RcH_{\xi\eta}\phi = 2\rAd_{\phi}\rA_{\xi}\eta
-(\rD_{\xi}(\ttH\mrGamma(\eta, \phi)) +\
      (\rD_{\eta}(\ttH\mrGamma(\xi, \phi))\\
-\GammaH(\xi, \GammaH(\eta, \phi)) +
\GammaH(\eta, \GammaH(\xi, \phi))
\end{gathered}
  \end{equation}
\end{theorem}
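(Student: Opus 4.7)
The plan is to prove the three assertions in order: that \cref{eq:cursubmer} computes the horizontal lift of $R^{\cB}$ (so $\RcH$ takes values in $\cH_x$), then the O'Neil identity \cref{eq:ONeil13}, and finally the alternative form \cref{eq:cursubmer2}. Once \cref{eq:ONeil13} is in hand, horizontality of $\RcH$ is immediate because $\ttH \RcM$ is horizontal and each term involving $\rAd$ maps into $\cH$ by the defining formula $\rAd_{c_1} c_3 = -\ttH \nabla_{c_1} \ttV c_3$.

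First I would replace the pointwise extensions $\pxi(y) = \ttH_y \xi$ used in the embedded proof of \cref{theo:cur_embed} by \emph{basic} horizontal vector-field extensions $\tilde{\xi}, \tilde{\eta}, \tilde{\phi}$ of $\xi, \eta, \phi$, i.e., horizontal fields $\qq$-related to vector fields on $\cB$. The crucial property of basic fields is that $[V, \tilde{\phi}]$ is vertical for every vertical $V$, so by \cref{lem:subdual} one has
\begin{equation*}
\ttH \nabla_V \tilde{\phi} \;=\; \ttH \nabla_{\tilde{\phi}} V \;=\; -\rAd_{\phi} V \quad \text{at } x.
\end{equation*}
Since $\nabla^{\cH} = \ttH \nabla$ on horizontal fields is the horizontal lift of the Levi-Civita connection on $\cB$, the horizontal lift of $R^{\cB}(\xi_{\cB}, \eta_{\cB}) \phi_{\cB}$ at $x$ equals
\begin{equation*}
\bigl( \nabla^{\cH}_{\ttH[\tilde{\xi}, \tilde{\eta}]} \tilde{\phi} \;-\; \nabla^{\cH}_{\tilde{\xi}} \nabla^{\cH}_{\tilde{\eta}} \tilde{\phi} \;+\; \nabla^{\cH}_{\tilde{\eta}} \nabla^{\cH}_{\tilde{\xi}} \tilde{\phi} \bigr)_x,
\end{equation*}
with $\ttH[\tilde{\xi}, \tilde{\eta}]_x = [\tilde{\xi}, \tilde{\eta}]_x - 2 \rA_{\xi} \eta$ by \cref{eq:oneilLie}.

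Next I would expand the double covariant derivatives using $\nabla^{\cH}_U W = \rD_U W + \GammaH(U, W)$ (valid for any $U$ and horizontal $W$ by \cref{eq:gamma_sub}) and organize the antisymmetrized sum exactly as in the embedded proof: the mixed $\rD \rD \tilde{\phi}$ terms combine to $-\rD_{[\tilde{\xi}, \tilde{\eta}]} \tilde{\phi}$, the $\GammaH$ terms containing $\rD_{\tilde{\xi}} \tilde{\eta}$ and $\rD_{\tilde{\eta}} \tilde{\xi}$ combine to $-\GammaH([\tilde{\xi}, \tilde{\eta}], \tilde{\phi})$, and the $\GammaH(\cdot, \rD_{\cdot} \tilde{\phi})$ terms cancel in pairs. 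Writing $\nabla^{\cH}_{\ttH[\tilde{\xi}, \tilde{\eta}]} \tilde{\phi} = \rD_{[\tilde{\xi}, \tilde{\eta}]} \tilde{\phi} + \GammaH([\tilde{\xi}, \tilde{\eta}], \tilde{\phi}) - 2 \ttH \nabla_{\rA_{\xi} \eta} \tilde{\phi}$, the bracket contributions cancel and the vertical-input identity converts the remaining piece into $2 \rAd_{\phi} \rA_{\xi} \eta$, yielding \cref{eq:cursubmer}. For \cref{eq:ONeil13} I would then compute $\ttH \RcM(\xi, \eta) \phi$ with the same basic extensions via the orthogonal decomposition $\nabla_{\tilde{\eta}} \tilde{\phi} = \nabla^{\cH}_{\tilde{\eta}} \tilde{\phi} + \rA_{\eta} \phi$; the vertical piece contributes $\ttH \nabla_{\tilde{\xi}}(\rA_{\eta} \phi) = -\rAd_{\xi} \rA_{\eta} \phi$ (and after swapping, $-\rAd_{\eta} \rA_{\xi} \phi$), so subtracting $\ttH \RcM$ from the expression of $\RcH$ just obtained cancels all $\rD$ and $\GammaH$ contributions and isolates exactly the three $\rAd \rA$ terms of \cref{eq:ONeil13}.

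Finally, \cref{eq:cursubmer2} follows by substituting $\GammaH(U, W) = -(\rD_U \ttH) W + \ttH \mrGamma(U, W)$ into $-(\rD_{\xi} \GammaH)(\eta, \phi) + (\rD_{\eta} \GammaH)(\xi, \phi)$: the cross term $(\rD_{\xi} \rD_{\eta} \ttH) \phi - (\rD_{\eta} \rD_{\xi} \ttH) \phi$ vanishes by commutativity of second-order directional derivatives, and a Leibniz reassembly of the remainder reproduces $-\rD_{\xi}(\ttH \mrGamma(\eta, \phi)) + \rD_{\eta}(\ttH \mrGamma(\xi, \phi))$. The main technical subtlety, and the reason I must switch from $\pxi$ to basic extensions, is precisely the vertical-input identity $\ttH \nabla_V \tilde{\phi} = -\rAd_{\phi} V$: it requires $[V, \tilde{\phi}]$ to be vertical, which fails in general for $\pxi$ because $\ttH_y \xi$ need not be $\qq$-related to any field on $\cB$. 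Since every quantity in \cref{eq:cursubmer} depends only on the values $\xi, \eta, \phi$ at $x$, the resulting formula is independent of this choice.
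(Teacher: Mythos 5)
Your proposal is correct, but it takes a genuinely different route from the paper's proof. The paper treats the classical $(1,3)$ O'Neil identity \cref{eq:ONeil13} as an external input (Theorem 2 of the cited O'Neil paper) and reduces the whole theorem to an algebraic verification: it expands $\ttH\RcM_{\xi\eta}\phi$ via the embedded curvature formula \cref{eq:rc2}, writes $\rAd_{\xi}\rA_{\eta}\phi=-\GammaH(\xi,\GammaV(\eta,\phi))$, and uses the projection identities $\ttH\Gamma(\xi,\omega)=(\rD_{\xi}\ttH)\Pi_{\sfg}\omega+\GammaH(\xi,\omega)$ and $\ttH\rD_{\xi}(\mrGamma(\eta,\phi))=\rD_{\xi}(\ttH\mrGamma(\eta,\phi))-(\rD_{\xi}\ttH)\mrGamma(\eta,\phi)$ to collapse the right-hand side of \cref{eq:ONeil13} into \cref{eq:cursubmer}. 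That computation never uses integrability of $\cV$ or the existence of a base manifold, so it establishes the identity between \cref{eq:cursubmer} and the right side of \cref{eq:ONeil13} for an arbitrary orthogonal decomposition $\cT\rM=\cH\rM\oplus\cV\rM$, with the submersion invoked only at the end to identify $\RcH$ with the lift of $R^{\cB}$. You instead rederive everything from first principles with basic horizontal extensions: the cancellation pattern you describe (the $\rD\rD$ terms giving $-\rD_{[\txi,\teta]}\tilde{\phi}$, the $\GammaH(\rD_{\cdot}\cdot,\tilde{\phi})$ terms assembling into $-\GammaH([\txi,\teta],\tilde{\phi})$, the pairwise cancellation of $\GammaH(\cdot,\rD_{\cdot}\tilde{\phi})$, and the conversion of $-2\ttH\nabla_{\rA_{\xi}\eta}\tilde{\phi}$ into $2\rAd_{\phi}\rA_{\xi}\eta$ via verticality of $[V,\tilde{\phi}]$) is sound, and your comparison with $\ttH\RcM$ computed on the same extensions does recover \cref{eq:ONeil13}. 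What your approach buys is self-containedness — O'Neil's theorem comes out as a by-product rather than being quoted — and a conceptually transparent reason for the $2\rAd_{\phi}\rA_{\xi}\eta$ term (it is exactly the vertical part of the bracket entering $\nabla_{[\txi,\teta]}$). What it costs is generality: basic extensions and the verticality of $[V,\tilde{\phi}]$ require an actual submersion, so your argument proves \cref{eq:ONeil13} only in that case, whereas the theorem as stated (and the paper's proof) asserts it for any orthogonal bundle decomposition. Your treatment of \cref{eq:cursubmer2} via cancellation of the symmetric second derivative $(\rD_{\xi}\rD_{\eta}\ttH)\phi$ coincides with the paper's.
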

Equation \ref{eq:ONeil13} is the $(1, 3)$ form of the classical O'Neil's equation in \cite{ONeil1966}.
\begin{proof} We expand the right-hand side of \cref{eq:ONeil13}, every expression will be evaluated at $x\in\rM$
  \begin{equation}\label{eq:tmp1}
    \begin{gathered}\ttH(-(\rD_{\xi}(\mrGamma(\eta, \phi)) +
      \rD_{\eta}(\mrGamma(\xi, \phi)) -\Gamma(\xi,\Gamma(\eta, \phi)) +
    \Gamma(\eta,\Gamma(\xi, \phi)) ) -\\
    2\GammaH(\phi, \GammaV(\xi, \eta)) +\GammaH(\xi, \GammaV(\eta, \phi)) - \GammaH(\eta, \GammaV(\xi, \phi))
\end{gathered}
  \end{equation}
We will reduce it to \cref{eq:cursubmer}. We have, with $\omega = \Gamma(\eta, \phi)$, using \cref{eq:gammaF}
  $$\begin{gathered}\ttH\Gamma(\xi, \omega) = -\ttH(\rD_{\xi}\Pi_{\sfg})\omega +\ttH\mrGamma(\xi, \omega) =
  -\ttH(\rD_{\xi}\Pi_{\sfg})\omega +\GammaH(\xi, \omega) +(\rD_{\xi}\ttH)\omega =\\
  -(\rD_{\xi}(\ttH\Pi_{\sfg}))\omega +(\rD_{\xi}\ttH)\Pi_{\sfg}\omega+\GammaH(\xi, \omega) +(\rD_{\xi}\ttH)\omega = (\rD_{\xi}\ttH)\Pi_{\sfg}\omega+\GammaH(\xi, \omega)
  \end{gathered}
  $$
  Hence, $\ttH\Gamma(\xi, \Gamma(\eta, \phi)) = (\rD_{\xi}\ttH)\Pi_{\sfg}\Gamma(\eta, \phi)+\GammaH(\xi, \Gamma(\eta, \phi))$. By product rule
  $$\begin{gathered}
    \ttH\rD_{\xi}(\mrGamma(\eta, \phi))= \rD_{\xi}(\ttH\mrGamma(\eta, \phi)) - (\rD_{\xi}\ttH)\mrGamma(\eta, \phi)  \\
  \end{gathered}
  $$
and permuting the role of $\xi$ and $\eta$ the first line of \cref{eq:tmp1} is:
$$\begin{gathered}
  -\rD_{\xi}(\ttH\mrGamma(\eta, \phi)) + (\rD_{\xi}\ttH)\mrGamma(\eta, \phi)
  +\rD_{\eta}(\ttH\mrGamma(\xi, \phi)) - (\rD_{\eta}\ttH)\mrGamma(\xi, \phi)\\
   -(\rD_{\xi}\ttH)\Pi_{\sfg}\Gamma(\eta, \phi)-\GammaH(\xi, \Gamma(\eta, \phi)) 
    +(\rD_{\eta}\ttH)\Pi_{\sfg}\Gamma(\xi, \phi)+\GammaH(\eta, \Gamma(\xi, \phi))
  \\=
-\rD_{\xi}(\ttH\mrGamma(\eta, \phi)) -\GammaH(\xi, \Gamma(\eta, \phi))
+\rD_{\eta}(\ttH\mrGamma(\xi, \phi)) +\GammaH(\eta, \Gamma(\xi, \phi))
\end{gathered}
$$
Where we have used \cref{eq:mrGamma0}. Combine with the second line of \cref{eq:tmp1}:
$$\begin{gathered}-\rD_{\xi}(\ttH\mrGamma(\eta, \phi)) -\GammaH(\xi, \Gamma(\eta, \phi)-\GammaV(\eta, \phi)
+\rD_{\eta}(\ttH\mrGamma(\xi, \phi)) +\\
\GammaH(\eta, \Gamma(\xi, \phi)-\GammaV(\xi, \phi)) +2\GammaH(\phi, \GammaV(\xi, \eta)
\end{gathered}
$$
which reduces to the right-hand side of \cref{eq:cursubmer}, as $\Gamma(\eta, \phi) = \GammaV(\eta, \phi) + \GammaH(\eta, \phi)$ since $\Pi_{\sfg} =\ttH + \ttV$.

When $\cH$ is the horizontal lift of a Riemannian submersion, $\rA_{\xi}\eta$ is antisymmetric from Lemma 2 of \cite{ONeil1966} or \cref{eq:oneilLie}, $\rA_{\xi}\phi = -\rA_{\phi}\xi$. From Theorem 2 of \cite{ONeil1966}, the horizontal lift of the Riemannian curvature tensor on $\cB$ satisfies \cref{eq:ONeil13}.
\end{proof}
\begin{example}\label{ex:flag_curv}
  Continuing with our example of flag manifolds, as before, let $\frk = \sod$, $\frb$ is its orthogonal complement in $\oo(n)$, consisting of antisymmetric matrices with zero diagonal blocks. For any antisymmetric matrix $X$, let $X_{\frk}$ be the block-diagonal component of $X$ (block size determined by $\frk$), and $X_{\frb} = X - X_{\frk}$. Consider $U\in \SOO(n)$ and three horizontal tangent vectors $\xi = UA, \eta = UB, \phi = UC$ at $U$, with $A, B, C\in\frb$. Let us compute \cref{eq:ONeil13} first. From \cref{eq:Axe}, the term $2\rAd_{\phi}\rA_{\xi}\eta$ is $-\GammaH(UC, U[A, B]_{\frk})$. Using \cref{eq:UAB}, for an antisymmetric block diagonal matrix $D$, $\GammaH(UC, UD) = \frac{1}{2}U[C, D]_{\frb}$,
So $\rAd_{\phi}\rA_{\xi}\eta = (1/4)U[[A, B]_{\frk}, C]$ (also note $[[A, B]_\frk, C] = [[A, B]_{\frk}, C]_{\frb}$), and \cref{eq:ONeil13} gives the following formula for lift of the curvature of flag manifolds at $U\in\SOO(n)$
$$\RcH_{\xi, \eta}\phi = \frac{1}{4}U\{[[A, B], C]_{\frb} +2[[A, B]_{\frk}, C]  - [[B, C]_{\frk},  A] - [[C, A]_{\frk}, B]\}$$
or more formally, if $\cL_U$ is the left multiplication by $U$ and $d\cL_U$ its differential
\begin{equation}\label{eq:flag_curv}
\RcH_{\xi, \eta}\phi = \frac{1}{4}d\cL_U\{[[A, B], C]_{\frb} +2[[A, B]_{\frk}, C]  - [[B, C]_{\frk},  A] - [[C, A]_{\frk}, B]\}
\end{equation}
This formula has a generalization to naturally reductive homogeneous spaces, which we will review shortly. Alternatively, to use \cref{eq:cursubmer2}, from \cref{eq:UAB}
$$\begin{gathered}U^{\sfT}\GammaH(\xi, \GammaH(\eta, \phi)) = -\frac{1}{4}A(BC+CB-[B,C]_{\frk}) - \frac{1}{4}(BC+CB-[B,C]_{\frk})A \\
  -\frac{1}{2}\{A[B,C]_{\frk} +\frac{1}{4}[A, BC+CB+[B,C]_{\frk}]_{\frk}
\end{gathered}$$
and a lengthy but routine computation eventually gives us \cref{eq:flag_curv}.
\end{example}
\begin{remark}
We briefly review a few main facts about naturally reductive homogeneous spaces, used later in \cref{sec:jac}. Follow \cite{KobNom}, (where $\frm, \frk$ and $\frb$ are denoted by $\mathfrak{k}, \mathfrak{h}$ and $\frm$ respectively), we call a homogeneous space $\cB = \rM/\cK$ a naturally reductive homogeneous space where $\rM$ is a Lie group, $\cK$ is a closed subgroup with Lie algebras $\frm, \frk$ and $\frm = \frk\oplus\frb$, such that $[\frk, \frb]\subset \frb$, and the subspace $\frb$ is equipped with an $\Ad(\cK)$-invariant (positive-definite) non-degenerate symmetric bilinear form $\langle\rangle_{\frb}$ satisfying
\begin{equation}\label{eq:natural}\langle X, [Z, Y]_{\frb}\rangle_{\frb} + \langle [Z, X]_{\frb}, Y\rangle_{\frb} = 0\text{ for } X, Y, Z\in\frb
\end{equation}
here, for $W\in \frm$, $W_{\frk}$ and $W_{\frb}$ are components of $W = W_{\frk} +W_{\frb}$ in the decomposition $\frm = \frk\oplus\frb$. The form $\langle\rangle_{\frb}$ induces an invariant Riemannian metric on $\cB$. If $\langle\rangle_{\frb}$ is induced from a bi-invariant positive-definite inner product on $\frm$ and $\frb$ is orthogonal to $\frk$ then
\cref{eq:natural} is satisfied, in particular, flag manifolds are naturally reductive. For $A\in\frb$ and $U\in \rM$, if $\cL_U$ is the operator of left multiplication by $U$, $d\cL_UA$ is a tangent vector at $U$, which we will sometimes denote by $UA$. Denote by $\kappa_A$ the invariant vector field $X\mapsto d\cL_XA$ on $\rM$. The naturally reductive assumption implies (\cite{KobNom}, theorem 3.3)
\begin{equation}
  \nabla^{\cH}_{\kappa_A}\kappa_B := \ttH\nabla_{\kappa_A}\kappa_B = \frac{1}{2}\kappa_{[A,B]_{\frb}}
\end{equation}
From \cite{KobNom}, proposition II.10.3.4, we have (note the opposite sign convention)
\begin{equation}\label{eq:natret} 4\RcH_{\xi, \eta}\phi = d\cL_U\{4[[A, B]_{\frk}, C] -  [A, [B, C]_{\frb}]_{\frb} -\
            [B, [C, A]_{\frb}]_{\frb} + 2[[A, B]_{\frb}, C]_{\frb}\}
\end{equation}
We show it is equivalent to \cref{eq:flag_curv}. We have $4[[A, B]_{\frk}, C] + 2[[A, B]_{\frb}, C]_{\frb} = 2[[A, B]_{\frk}, C] + 2[[A, B], C]_{\frb}$, as $[[A, B]_{\frk}, C]\in\frb$. Expand $2[[A, B], C]_{\frb} = 2[[A, C], B]_{\frb} + 2[A,[B, C]]_{\frb}$, the curly bracket of the above becomes
$$\begin{gathered}2[[A, B]_{\frk}, C] + 2[[A, C], B]_{\frb} + 2[A,[B, C]]_{\frb} -[A, [B, C]_{\frb}]_{\frb} - [B, [C, A]_{\frb}]_{\frb}=\\
  2[[A, B]_{\frk}, C] + [B, [C, A]]_{\frb} + [A,[B, C]]_{\frb} -[[B, C]_{\frk}, A] - [[C, A]_{\frk}, B]=\\
    2[[A, B]_{\frk}, C] + [[A, B], C]]_{\frb} -[[B, C]_{\frk}, A] - [[C, A]_{\frk}, B]
\end{gathered}$$
using the Jacobi identity again in the last expression. Thus, \cref{eq:flag_curv} is an alternative formula for the curvature of naturally reductive homogeneous spaces.
\end{remark}
\section{Double tangent bundle}
\subsection{Tangent bundle of a tangent bundle in embedded ambient structure}
The embedding $\rM\subset \cE$ as differentiable manifolds allows us to identify the tangent bundle of $\rM$ with a subspace of $\cE^2 = \cE\oplus\cE$. If $\rM$ is defined by a system of equations, we can differentiate them to derive the defining equation for $\cT\rM$. We have seen in \cref{ex:sasaki}, if $\rM$ is the unit sphere with defining equation $x^{\sfT}x = 1$, then $\cT\rM$ is considered as a pair $(x, v)$ with $x^{\sfT}x = 1$ and $x^{\sfT}v = 0$, the second equation is linear in $v$, obtained by taking the directional derivative of $x^{\sfT}x - 1$.

In general, a tangent vector to $\cT\rM$ could be considered as an element in $\cE^2$. Corresponding to the $\mathfrak{m}$anifold and $\mathfrak{t}$angent components $x$ and $v$ of $\cT\rM$, a tangent vector $\tDelta$ to $\cT\rM$ at $(x, v)$ has two components $\Delta_{\frm}$ and $\Delta_{\frt}$, $\tDelta= (\Delta_{\frm}, \Delta_{\frt})\in \cE^2$. In the case of the sphere, the constraints on $\Delta_{\frm}$ and $\Delta_{\frt}$ are $x^{\sfT}\Delta_{\frm} = 0$ and $\Delta_{\frm}^{\sfT}v + x^{\sfT}\Delta_{\frt} = 0$.

Instead of working with specific constraints, our approach will be to define the double tangent space $\cT\cT\rM$ via the projection operators $\Pi_{\sfg}$.

\begin{proposition}\label{prop:TTM} Let $(\rM, \sfg, \cE)$ be an embedded ambient structure. The tangent bundle $\cT\rM$ of $\rM$ is a submanifold of $\cE^2$ consisting of pairs $(x, v)$ with $x\in \rM$, $v\in \cE$ such that $\Pi_{\sfg, x}v = v$. The tangent bundle $\cT\cT\rM$ of $\cT\rM$ is a submanifold of $\cE^4$ consisting of quadruples $(x, v, \Delta_{\frm}, \Delta_{\frt})\in\cE^4$ with $x\in \rM$ satisfying
  \begin{equation}\label{eq:TTM}
    \begin{gathered}
      \Pi_{\sfg, x} v = v\\
      \Pi_{\sfg, x} \Delta_{\frm} = \Delta_{\frm}\\
      (\rD_{\Delta_{\frm}}\Pi_{\sfg, x})v + \Pi_{\sfg, x}\Delta_{\frt} = \Delta_{\frt}
\end{gathered}    
  \end{equation}
In particular, $\Delta_{\frm}$ is a tangent vector at $x$. If $v = 0$ or $\Delta_{\frm}=0$ then $\Delta_{\frt}$ is also a tangent vector at $x$. If $(x, v, \Delta_{\frm}, \Delta_{\frt})\in\cT\cT\rM$ then $(x, \Delta_{\frm}, v, \Delta_{\frt})\in\cT\cT\rM$.
\end{proposition}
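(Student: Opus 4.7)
The plan is to derive the three constraints by differentiating the curve definition of a tangent vector to $\cT\rM$, then to read off the two corollaries (special cases $v=0$ or $\Delta_{\frm}=0$, and the swap symmetry) directly from the equations.

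First I would justify the description of $\cT\rM\subset\cE^2$. The embedding $\rM\subset\cE$ identifies $\cT_x\rM$ with a subspace of $\cE$, and a vector $v\in\cE$ lies in this subspace precisely when it is fixed by the projector $\Pi_{\sfg,x}$; smoothness of $\Pi_{\sfg}$ as a map $\rM\to\fL(\cE,\cE)$ (\cref{prop:emb_exists}, \cref{prop:Levi}) together with smoothness of the inclusion $\rM\subset\cE$ implies that the vanishing set of $(x,v)\mapsto v-\Pi_{\sfg,x}v$ cuts out $\cT\rM$ as an embedded submanifold of $\cE^2$.

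Next, a tangent vector to $\cT\rM$ at $(x,v)$ is represented by a smooth curve $t\mapsto (x(t),v(t))\in\cT\rM$ with $(x(0),v(0))=(x,v)$, and $(\Delta_{\frm},\Delta_{\frt}):=(\dot x(0),\dot v(0))\in\cE^2$. Differentiating the membership condition $x(t)\in\rM$ at $t=0$ gives $\Delta_{\frm}\in\cT_x\rM$, i.e.\ $\Pi_{\sfg,x}\Delta_{\frm}=\Delta_{\frm}$. Differentiating the second condition $\Pi_{\sfg,x(t)}v(t)=v(t)$ at $t=0$ and applying the product rule to the operator-valued function $\Pi_{\sfg}$ yields
\begin{equation*}
(\rD_{\Delta_{\frm}}\Pi_{\sfg})_x v + \Pi_{\sfg,x}\Delta_{\frt} = \Delta_{\frt},
\end{equation*}
which is precisely the third equation of \cref{eq:TTM}. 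Conversely, given $(x,v,\Delta_{\frm},\Delta_{\frt})$ satisfying \cref{eq:TTM}, one can realize it as the velocity of an explicit curve $(x(t),\Pi_{\sfg,x(t)}(v+t\Delta_{\frt}))$, where $x(t)$ is any curve in $\rM$ with $\dot x(0)=\Delta_{\frm}$; a short computation using the product rule, the fact that $\Pi_{\sfg}$ is idempotent, and the derivative-of-projection trick recovers the prescribed velocity.

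The two corollaries are then immediate. If $v=0$, the derivative term $(\rD_{\Delta_{\frm}}\Pi_{\sfg})_x v$ vanishes and the third equation reduces to $\Pi_{\sfg,x}\Delta_{\frt}=\Delta_{\frt}$; if instead $\Delta_{\frm}=0$, the derivative term vanishes by linearity in $\Delta_{\frm}$, and the same conclusion holds. For the swap $(x,v,\Delta_{\frm},\Delta_{\frt})\mapsto(x,\Delta_{\frm},v,\Delta_{\frt})$, the first two equations of \cref{eq:TTM} are exchanged and still hold, while the third swapped equation becomes $(\rD_v\Pi_{\sfg})_x\Delta_{\frm}+\Pi_{\sfg,x}\Delta_{\frt}=\Delta_{\frt}$. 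Since both $v$ and $\Delta_{\frm}$ are tangent vectors at $x$, the symmetry $(\rD_{\Delta_{\frm}}\Pi_{\sfg})_x v = (\rD_v\Pi_{\sfg})_x\Delta_{\frm}$ from \cref{eq:proj_deriv} applies, and the swapped equation coincides with the original. The main obstacle is conceptual rather than technical: one must be comfortable identifying $\cT\cT\rM$ as a subset of $\cE^4$ via the double embedding, and must recognize that the symmetry statement is not a new computation but exactly the last identity in \cref{eq:proj_deriv}, which holds because both arguments are tangent vectors at the same base point.
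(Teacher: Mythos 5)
Your proposal is correct and follows essentially the same route as the paper: differentiate the defining constraints to get \cref{eq:TTM}, exhibit the curve $(x(t),\Pi_{\sfg,x(t)}(v+t\Delta_{\frt}))$ for the converse (the paper takes $x(t)=\Exp_x t\Delta_{\frm}$, but any curve with $\dot x(0)=\Delta_{\frm}$ works), and reduce the swap symmetry to the identity $(\rD_{\Delta_{\frm}}\Pi_{\sfg})_x v=(\rD_v\Pi_{\sfg})_x\Delta_{\frm}$ from \cref{eq:proj_deriv}. No gaps.
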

Let $\gamma(t)$ be the geodesic associated with the metric $\sfg$ on $\rM$. We will use the notation $\Exp$ to denote the exponential map, with $\Exp_x v = \gamma(1)$ where $\gamma$ is the geodesic with $\gamma(0) = x, \dot{\gamma}(0) = v$. If the manifold is not complete, $\gamma(1)$ may not exist, but below we look at $\Exp_x tv$, which exists if $t$ is small enough.
\begin{proof} The statement $v\in\cT_x\rM$ if and only if $\Pi_{\sfg, x}v = v$ is from the definition of the projection. The constraint on a tangent vector $(\Delta_{\frm}, \Delta_{\frt})\in\cT_{(x, v)}\cT\rM$ at $(x, v) \in \cT\rM$ follows by differentiating the constraints on $x$ and $v$. The condition $x\in\rM$ implies $\Delta_{\frm}\in \cT_x\rM$, or $\Pi_{\sfg, x} \Delta_{\frm} = \Delta_{\frm}$. The condition $\Pi_{\sfg, x} v = v$ implies $(\rD_{\Delta_{\frm}}\Pi_{\sfg, x})v + \Pi_{\sfg, x}\Delta_v = \Delta_v$. Conversely, assuming the pair $(\Delta_{\frm}, \Delta_v)$ satisfies the conditions of \cref{eq:TTM}. Consider the curve $c(t) = (\Exp_xt\Delta_{\frm}, \Pi_{\Exp_xt\Delta_{\frm}} (v+t\Delta_v))\in\cE^2$. It is clear that it is a curve on $\cT\rM$, with $c(0) = (x, v)$ and $\dot{c}(0) = (\Delta_{\frm}, (\rD_{\Delta_{\frm}}\Pi_{\sfg})_xv + \Pi_{\sfg, x}\Delta_v) = (\Delta_{\frm}, \Delta_v)$, thus $(\Delta_{\frm}, \Delta_v)$ is a tangent vector to $\cT\rM$. The last paragraph is clear, with the last statement follows from $(\rD_{\Delta_{\frm}}\Pi_{\sfg, x})v = (\rD_{v}\Pi_{\sfg, x})\Delta_{\frm}$.
\end{proof}
Define the map $\rU$ from $\cT\rM$ to $\cE^4$ by $\rU(x, v)= (x, v, 0, v)\in\cE^4$, then $\rU(x, v)$ satisfies \cref{eq:TTM}, so $\rU(x, v) \in\cT_{(x, v)}\cT\rM$. It is the familiar {\it canonical vector field}. The map $\frj:(x, v, \Delta_{\frm}, \Delta_{\frt})\mapsto (x, \Delta_{\frm}, v, \Delta_{\frt})\in\cT\cT\rM$ is the {\it canonical flip}.

We will write $\pi:\cT\rM\to\rM$ for the tangent bundle projection. We now introduce the {\it connection map} following \cite{Dombrowski1962,GudKap}, where it is defined via parallel transport. It is shown (Lemma 3.3 of \cite{GudKap} or section 3 of \cite{Dombrowski1962}) that it is a linear bundle map $\rC$ from $\cT\cT\rM$ to $\cT\rM$, satisfying, for all $x\in \rM$, $\Delta \in \cT_{x}\rM$
\begin{equation}\label{eq:conn_map} \rC((dZ)_{x}\Delta) = (\nabla_{\Delta}Z)_x = (\rD_{\Delta}z)_x + \Gamma(\Delta, z(x))_{x}
\end{equation}  
for all vector fields $Z: x\mapsto (x, z(x))$ defined on a geodesic $\gamma(t)$ from $x$ on $\rM$ with $\dot{\gamma}(0) = \Delta$.  The statement in \cite{GudKap} is for vector fields on $\rM$, but the proof only requires a vector field along a curve.
\begin{lemma}\label{lem:conn}
  The connection map $\rC:\cT\cT\rM\to\cT\rM$ at $(x, v) \in \cT\rM$ is given by
  \begin{equation}  (\Delta_{\frm}, \Delta_{\frt})\mapsto \Delta_{\frt} + \Gamma(\Delta_{\frm}, v)_x\end{equation}
    The map $(\Delta_{\frm}, \Delta_{\frt})\mapsto (\Delta_{\frm}, \rC_{(x, v)}(\Delta_{\frm}, \Delta_{\frt}))$ is a bijection between $\cT_{(x, v)}\cT\rM$ and $(\cT_x\rM)^2$. Alternatively, the map $(\Delta_{\frm}, \Delta_{\frt})\mapsto (\Delta_{\frm}, \Delta_{\frt} - (\rD_{\Delta_{\frm}}\ttH)_xv)$ is also a bijection between $\cT_{(x, v)}\cT\rM$ and $(\cT_x\rM)^2$.
\end{lemma}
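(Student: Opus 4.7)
The plan is to prove the three claims in order, using the curve constructed in the proof of \cref{prop:TTM} together with the defining property \cref{eq:conn_map} of $\rC$. For the formula for $\rC$, I would realize a given $(\Delta_{\frm}, \Delta_{\frt}) \in \cT_{(x,v)}\cT\rM$ as $\dot{c}(0)$ of the explicit curve $c(t) = (\Exp_x t\Delta_{\frm}, \Pi_{\Exp_x t\Delta_{\frm}}(v + t\Delta_{\frt}))$ used in \cref{prop:TTM}. The base curve $\gamma(t) = \Exp_x t\Delta_{\frm}$ is a geodesic with $\dot{\gamma}(0) = \Delta_{\frm}$, and writing $c(t) = (\gamma(t), z(t))$ we read off $z(0) = v$ and $\rD_{\Delta_{\frm}} z = \Delta_{\frt}$ directly from the definition of $\dot{c}(0)$. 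Applying \cref{eq:conn_map} to the vector field $Z$ along $\gamma$ with values $(\gamma(t), z(t))$ then gives $\rC(\Delta_{\frm}, \Delta_{\frt}) = \rD_{\Delta_{\frm}} z + \Gamma(\Delta_{\frm}, v)_x = \Delta_{\frt} + \Gamma(\Delta_{\frm}, v)_x$.

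For the first bijection, I would first check that $\rC(\Delta_{\frm}, \Delta_{\frt})$ lies in $\cT_x\rM$: combining the third constraint of \cref{eq:TTM} with \cref{eq:mrGamma0} rewrites the formula as $\rC(\Delta_{\frm}, \Delta_{\frt}) = \Pi_{\sfg, x}\Delta_{\frt} + \mrGamma(\Delta_{\frm}, v)_x$, which is manifestly tangent to $\rM$ at $x$. For surjectivity/injectivity onto $(\cT_x\rM)^2$, given $(\Delta_{\frm}, w) \in (\cT_x\rM)^2$ I would set $\Delta_{\frt} := w - \Gamma(\Delta_{\frm}, v)_x$ and verify the third condition of \cref{eq:TTM}; applying $\Pi_{\sfg, x}$ to this $\Delta_{\frt}$ and using \cref{eq:mrGamma0} reduces the required identity $(\rD_{\Delta_{\frm}}\Pi_{\sfg})_x v + \Pi_{\sfg, x}\Delta_{\frt} = \Delta_{\frt}$ to the tautology $\mrGamma(\Delta_{\frm}, v)_x - \Gamma(\Delta_{\frm}, v)_x = (\rD_{\Delta_{\frm}}\Pi_{\sfg})_x v$. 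Uniqueness of $\Delta_{\frt}$ is immediate from the explicit inversion formula, giving the bijection.

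For the alternative bijection $(\Delta_{\frm}, \Delta_{\frt}) \mapsto (\Delta_{\frm}, \Delta_{\frt} - (\rD_{\Delta_{\frm}}\Pi_{\sfg})_x v)$, the third constraint of \cref{eq:TTM} says exactly that $\Delta_{\frt} - (\rD_{\Delta_{\frm}}\Pi_{\sfg})_x v = \Pi_{\sfg, x}\Delta_{\frt}$, which is in $\cT_x\rM$, so the map lands in $(\cT_x\rM)^2$. Conversely, given $(\Delta_{\frm}, w) \in (\cT_x\rM)^2$, I would set $\Delta_{\frt} := w + (\rD_{\Delta_{\frm}}\Pi_{\sfg})_x v$ and invoke the derivative of projection trick applied to $\Pi_{\sfg}^2 = \Pi_{\sfg}$: since $\Pi_{\sfg, x} v = v$, one gets $\Pi_{\sfg, x}(\rD_{\Delta_{\frm}}\Pi_{\sfg})_x v = 0$, so $\Pi_{\sfg, x}\Delta_{\frt} = w$, which yields the third constraint of \cref{eq:TTM} and completes the bijection.

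There is no real obstacle here; the proof is essentially bookkeeping. The only potentially delicate point is the choice of realizing curve for a tangent vector to $\cT\rM$: to invoke \cref{eq:conn_map} we need the base curve to be a geodesic, and the curve built in \cref{prop:TTM} provides exactly such a realization, so that step is handled for free.
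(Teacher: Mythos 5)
Your proposal is correct and follows essentially the same route as the paper's proof: the same realizing curve $c(t) = (\Exp_x t\Delta_{\frm}, \Pi_{\Exp_x t\Delta_{\frm}}(v+t\Delta_{\frt}))$ fed into \cref{eq:conn_map}, the same explicit inverse $\Delta_{\frt} = w - \Gamma(\Delta_{\frm}, v)_x$ verified against the third constraint of \cref{eq:TTM} via \cref{eq:mrGamma0}, and the same reading of $\ttH$ as $\Pi_{\sfg}$ in the alternative map. Your direct check of the alternative bijection (versus the paper's one-line observation that the two identifications differ by the tangent vector $\mrGamma(\Delta_{\frm}, v)_x$) is only a cosmetic difference.
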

\begin{proof} Write $\Pi_x$ for $\Pi_{\sfg, x}$. The curve $c(t) = (\Exp_xt\Delta_{\frm}, \Pi_{\Exp_xt\Delta_{\frm}}(v+t\Delta_{\frt}))$ on $\cT\rM$ gives us a vector field $Z: \Exp_xt\Delta_{\frm}\mapsto c(t)$ along the geodesic $\Exp_xt\Delta_{\frm}$. At $x=\pi c(0)$, $(dZ)_x\Delta_{\frm} =
  (\Delta_{\frm}, (\rD_{\Delta_{\frm}}\Pi)_x v + \Pi_x\Delta_{\frt}) = (\Delta_{\frm}, \Delta_{\frt})$, hence the left-hand side of \cref{eq:conn_map} is $\rC(\Delta_{\frm}, \Delta_{\frt})$ and the right-hand side is $\Delta_{\frt} + \Gamma(\Delta_{\frm}, v)_x$.

To show $f:(\Delta_{\frm}, \Delta_{\frt})\mapsto (\Delta_{\frm}, \rC_{(x, v)}(\Delta_{\frm}, \Delta_{\frt}))$ is injective, if $(\delta_{\frm}, \delta_{\frt})$ is such that $f(\delta_{\frm}, \delta_{\frt}) = f(\Delta_{\frm}, \Delta_{\frt})$ then $\delta_{\frm} = \Delta_{\frm}$, and it is clear from the affine format of $\rC$ that $\Delta_{\frt} = \delta_{\frt}$. To show it is onto, take $(\Delta_{\frm}, \Delta_{\frc})\in \cT_x\rM^2$. Define $\Delta_{\frt} := \Delta_{\frc} - \Gamma(v, \Delta_{\frm})_x$. We can verify the tangent relation
  $$(\rD_{\Delta_{\frm}}\Pi) v + \Pi(\Delta_{\frc} - \Gamma(v, \Delta_{\frm})_x) =
\Pi\Delta_{\frc} - \Gamma(v, \Delta_{\frm})_x = \Delta_{\frt}
  $$
from \cref{eq:mrGamma0}, and $\Pi\Delta_{\frc} = \Delta_{\frc}$. It is clear $\rC(\Delta_{\frm}, \Delta_{\frc} - \Gamma(v, \Delta_{\frm})_x) =\Delta_{\frc}$, so $f$ is onto.  The alternate identification with $(\cT_x\rM)^2$ is also clear, as the difference between them is $(\rD_{\Delta_{\frm}}\ttH)_x v + \Gamma(\Delta_{\frm}, v) = \mrGamma(\Delta_{\frm}, v)\in \cT_x\rM$.
\end{proof}
The connection map appears in the initial condition for Jacobi fields and plays a pivotal role in natural metrics on tangent bundles.

\begin{example}
For our $\rM=\SOO(n)\subset\cE$ example, recall if $U\in\rM$, $\eta \in \cT_U\rM$ if and only if $U^{\sfT}\eta + \eta^{\sfT}U= 0$, and the projection is $\Pi\omega = \frac{1}{2}(\omega - U\omega^{\sfT}U)$ for $\omega \in \cE$. The equation $\Pi\omega=\omega$ is equivalent to $U^{\sfT}\eta + \eta^{\sfT}U= 0$. The defining equations for $\tDelta = (\Delta_{\frm}, \Delta_{\frt})\in\cT\cT\rM$ at $(U, \eta)\in \cT\rM$ are $\Pi\Delta_{\frm} = \Delta_{\frm}$ and
$$-\frac{1}{2}\Delta_{\frm}\eta^{\sfT}U - \frac{1}{2}U\eta^{\sfT}\Delta_{\frm} + \Pi\Delta_{\frt} = \Delta_{\frt}$$
The expressions are simpler if we translate to the identity, with $\eta = UA, \Delta_{\frm} = UB$ where $A$ and $B$ are antisymmetric matrices. We can set $\Pi\Delta_{\frt} = UD$, and hence $\Delta_{\frt} = U\{\frac{1}{2}(BA+AB) + D\}$ for an antisymmetric matrix $D$.

The connection map is $(\Delta_{\frm}, \Delta_{\frt})\mapsto \Delta_{\frt} + \frac{1}{2}(\Delta_{\frm}\eta^{\sfT}U + U\eta^{\sfT}\Delta_{\frm}) = UD$.
\end{example}
\subsection{Tangent bundle of horizontal space in submersed ambient structures}\label{sec:subm_tangent}
We have an analogous result for submersed ambient structures. The main actors, to be introduced subsequently, have their relationship described in \cref{fig:HMB}.
\begin{figure}
  \centering
  \begin{tikzpicture}[node distance={17mm}, scale=.70]
    \node (1){$\cV\cH\rM$};
    \node[right of=1] (2){$\cQ\cH\rM$};
    \node[right of=2] (3){$\cT\cH\rM$};
    \node[right of=3] (4){$\cT\cT\cB$};
    \node[below of=3] (5){$\cH\rM$};
    \node[below of=1] (9){$\cV\rM$};
    \node[left of=9] (10){$\cT\rM$};
    \node[right of=5] (6){$\cT\cB$};    
    \node[below of=5] (7){$\rM$};
    \node[below of=6] (8){$\cB$};
    \fill (1) --node[]{$\oplus$}(2);
    \fill (2) --node[]{$=$}(3);
    \fill (10) --node[]{$=$}(9);
    \fill (9) --node[]{$\oplus$}(5);    
    \draw[->] (3) -- (4);
    \draw[->] (9) -- node[left]{$\rmb$}(1);
    \draw[->] (5) -- node[left]{$\pi_{|\cH\rM}$}(7);
    \draw[->] (6) -- node[left]{$\pi_{\cB}$}(8);
    \draw[->] (7) -- node[above]{$\qq$}(8);
    \draw[->] (5) -- node[above]{$d\qq$}(6);
    \draw[->] (3) -- node[above]{$d^2\qq$}(4);
    \draw[->] (5) to [bend right=10] node[above]{$\rmq$}(2);
    \draw[->] (5) to [bend left=10] node[below]{$\rmp$}(2);
    \draw[->] (2) to [bend right=30] node[below]{$\rCa$}(5);
    \draw[->] (3) -- node[left]{$d\pi$}(5);
    \draw[->] (4) -- node[left]{$d\pi_{\cB}$}(6);
    \draw[->] (10) -- node[left]{$\pi$}(7);    
  \end{tikzpicture}
  \caption{Relationship between bundles in a Riemannian submersion}
\label{fig:HMB}
\end{figure}
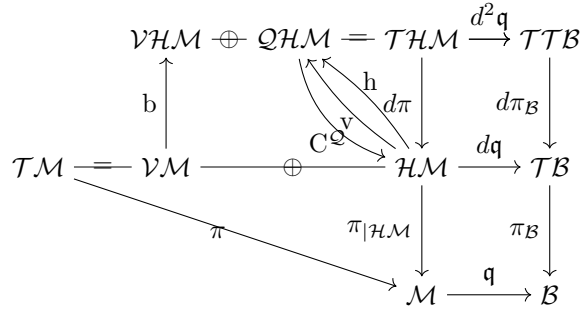
The main idea is if $\qq:\rM\to\cB$ is a Riemannian submersion, then $d\qq:\cH\rM\to\cT\cB$ is a differentiable submersion, with the vertical space $\cV\cH\rM$ having an explicit description via the map $\rmb$ which we will explain here. When the submersion is a quotient by a right action of a group of isometries $P$, then $P$ also acts on the tangent bundle. If $\psi$ belongs to the Lie algebra of $P$, with $(\rM, \sfg, \cE)$ is an embedded ambient structure and $\psi$ and $\exp t\psi$ act as operators on $\cE$ for $t\in \R$, the action of $\exp(t\psi)$ on the tangent bundle is given by $(x, v)\exp(t\psi) = (x\exp(t\psi), v\exp(t\psi))$ for $(x, v)\in\cT\rM$. The action maps the vertical space at $x$ to the vertical space at $x\exp(t\psi)$, hence the horizontal space at $x$ to that at $x\exp(t\psi)$. So if $v$ is horizontal, $(x\exp(t\psi), v\exp(t\psi))$ is a curve on $\cH\rM$ and differentiating, $(x\psi, v\psi)\in \cT_{(x, v)}\cH\rM$. The map $\rmb$ represents the correspondence $(x, x\psi)\mapsto (x, v, x\psi, v\psi)$ (note $v\psi$, like $\Delta_{\frt}$ in the previous section, does not belong to $\cT_x\rM$). We show this correspondence extends to submersions in general and could be defined using projections, thus providing an explicit decomposition of $\cT\cH\rM$ to vertical and horizontal spaces. Eventually, we will equip $\cT\cH\rM$ with a metric to make $d\qq_{|\cH\rM}:\cH\rM\to\cT\cB$ a Riemannian submersion, so the bundle $\cQ\cH\rM$ described below is the horizontal bundle of this submersion.
\begin{proposition}\label{prop:QHM}
Let  $(\rM, \qq, \cB, \sfg, \cE)$ be a submersed ambient structure with $\qq:\rM\rightarrow\cB$ is a Riemannian submersion. Let $\cT\cB$ be the tangent bundle of $\cB$ and $\cH\rM$ the horizontal bundle of $\rM$ in the submersion. Then $\cH\rM$ could be considered as a submanifold of $\cE^2$ consisting of pairs $(x, v)$ with $x\in\rM$ and $\ttH v = v$. The map $d\qq_{|\cH\rM}: \cH\rM\to \cT\cB$ is a differentiable submersion, with fiber at $(b, v_b)$ the submanifold $\{(x, v_{b,x})|x\in \qq^{-1}b\}$ where $v_{b, x}$ denotes the horizontal lift of $v_b$ at $x$. The tangent bundle $\cT\cH\rM$ of $\cH\rM$ is a submanifold of $\cE^4$ consisting of quadruples $(x, v, \delta_{\frm}, \delta_{\frt})\in\cE^4$ with $x\in\rM, v\in \cH_x\rM$ and
  \begin{equation}\label{eq:submerg}
    \begin{gathered}
      \Pi_x \Delta_{\frm} = \Delta_{\frm}\\
      (\rD_{\Delta_{\frm}}\ttH_x) v + \ttH_x\Delta_{\frt} = \Delta_{\frt}
\end{gathered}    
  \end{equation}
The bundle map $\ttQ:\cT\cH\rM\to\cT\cH\rM$ over $\cH\rM$, mapping $(\Delta_{\frm}, \Delta_{\frt})\in \cT_{(x, v)}\cH\rM$ to $(\ttH_x\Delta_{\frm}, (\rD_{\ttH_x\Delta_{\frm}}\ttH_x) v + \ttH_x\Delta_{\frt})$ is idempotent, $\ttQ^2=\ttQ$, with image a subbundle $\cQ\cH\rM$ of $\cT\cH\rM$ with fibers over $(x, v)$ vectors $(\delta_{\frm}, \delta_{\frt})$ satisfying
    \begin{equation}\label{eq:submerg_h}
    \begin{gathered}
      \ttH_x \delta_{\frm} = \delta_{\frm}\\
      (\rD_{\delta_{\frm}}\ttH)_x v + \ttH_x\delta_{\frt} = \delta_{\frt}
\end{gathered}    
    \end{equation}
    For each vertical vector $\epsilon_{\frm}\in \cV_x\rM$, there exists a unique vector $\epsilon_{\frt}\in\cE$ such that $(x, v, \epsilon_{\frm}, \epsilon_{\frt})\in\cV\cH\rM$, where $\cV\cH\rM$ is the vertical subbundle of the tangent bundle $\cT\cH\rM$ under the differentiable submersion $d\qq_{|\cH\rM}$. Its fiber $\cV_{(x, v)}\cH\rM$ at $(x, v) \in \cH\rM$ is the subspace of $\cT_{(x, v)}\cH\rM$ that $d^2\qq_{|\cH\rM} := d(d\qq_{|\cH\rM})$ maps to the zero tangent vector at $\cT_{d\qq(x, v)}\cT\cB$. We have
    \begin{equation}\label{eq:rmb}
      \epsilon_{\frt} = (\rD_{\epsilon_{\frm}}\ttH)_x v  - (\rD_{v}\ttH)_x\epsilon_{\frm} =  (\rD_{v}\ttV)_x\epsilon_{\frm} - (\rD_{\epsilon_{\frm}}\ttV)_x v =   \GammaH(v, \epsilon_{\frm})-\GammaH(\epsilon_{\frm}, v)
\end{equation}
Thus, $\rB_v:\epsilon_{\frm} \mapsto \epsilon_{\frt}$ defines a linear map from $\cV_x\rM$ to $\cE$. $(\rD_{v}\ttH)_x\epsilon_{\frm}$ is a horizontal tangent vector, $(\rD_{\epsilon_{\frm}}\ttV)_x v$ is a vertical tangent vector of $\rM$. For each $(x, v)\in \cH\rM$, the  map $\rmb$, mapping $\epsilon_{\frm}$ to $(\epsilon_{\frm}, \epsilon_{\frt})$ is a bijection between $\cV_x\rM$ and $\cV_{(x, v)}\cH\rM$.
      
  We have a differentiable bundle decomposition $\cT\cH\rM = \cQ\cH\rM \oplus \cV\cH\rM$, decomposing $(\Delta_{\frm}, \Delta_{\frt})\in \cT_{(x, v)}\cH\rM$ to
  \begin{equation}
\begin{gathered}
    (\Delta_{\frm}, \Delta_{\frt})= (\ttH_x\Delta_{\frm}, \Delta_{\frt} - (\rB_v\ttV_x\Delta_{\frm})) +  (\ttV_x\Delta_{\frm},  \rB_v(\ttV_x\Delta_{\frm}))
\end{gathered}
\end{equation}
  At each fiber, the bundle map $d(d\qq)_{|\cH\rM})_{|\cQ\cH\rM}:\cQ_{(x, v)}\cH\rM\mapsto \cT_{d\qq (x, v)}\cT\cB$ is a linear bijection. Both $\rmb$ and $\cQ\cH\rM$ (hence $\ttQ$) are intrinsic, they are only dependent on the submersion $\qq:\rM\to\cB$.
\end{proposition}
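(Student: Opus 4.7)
The plan is to progress through the stated claims in order, using the projection-derivative algebra of Sections 3 and 4. First, $\cH\rM\subset\cE^2$ is the locus of $x\in\rM$, $\ttH_x v=v$; since $\ttH$ is a smooth idempotent of locally constant rank $\dim\cB$, this is a submanifold, and differentiating the constraints along a curve yields \eqref{eq:submerg}. The converse---any pair satisfying \eqref{eq:submerg} is realized by a curve---mimics \cref{prop:TTM} via $(x(t),\ttH_{x(t)}(v+t\Delta_\frt))$ with $x(t)$ a geodesic through $x$ in direction $\Delta_\frm$. The submersion property of $d\qq_{|\cH\rM}\colon\cH\rM\to\cT\cB$ follows because $d\qq_x$ restricts to a linear bijection $\cH_x\rM\to\cT_{\qq x}\cB$ by definition of Riemannian submersion; its fibers are then identified with $\qq^{-1}b$ via $y\mapsto(y,v_{b,y})$ by uniqueness of horizontal lifts.

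For $\rmb$, given $\epsilon_\frm\in\cV_x\rM$, take a curve $x(t)\subset\qq^{-1}(\qq x)$ with $\dot x(0)=\epsilon_\frm$ and let $v(t)$ be the horizontal lift of $v_b:=d\qq_x v$, so that $(\epsilon_\frm,\dot v(0))$ generates $\cV_{(x,v)}\cH\rM$ as $\epsilon_\frm$ varies. Differentiating $\ttH_{x(t)}v(t)=v(t)$ at $t=0$ gives $\dot v(0)-\ttH_x\dot v(0)=(\rD_{\epsilon_\frm}\ttH)v$, so only the horizontal component $\ttH_x\dot v(0)$ remains. To pin it down, realize $v(t)=\bar v(x(t))$ for $\bar v$ the horizontal lift of a vector field $\tilde v$ on $\cB$ extending $v_b$, and write $\bar v(y)=\ttH_y\hat v(y)$ for a compatible ambient extension $\hat v\colon\rM\to\cE$ with $\hat v(x)=v$ and $d\qq_y\hat v(y)=\tilde v(\qq y)$; differentiating the last identity in direction $\epsilon_\frm$ and using the verticality of $\epsilon_\frm$ (so the base-field contribution vanishes) yields a relation that, after applying the bijection $d\qq_x|_{\cH_x\rM}$, forces $\ttH_x\dot v(0)=-(\rD_v\ttH)\epsilon_\frm$, giving the first form of \eqref{eq:rmb}. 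Equivalence of the three forms is algebra: the $\ttH\leftrightarrow\ttV$ interchange uses $\ttH+\ttV=\Pi$ combined with $(\rD_{\epsilon_\frm}\Pi)v=(\rD_v\Pi)\epsilon_\frm$ from \eqref{eq:proj_deriv}, and the $\GammaH$ form uses $\GammaH(\xi,\omega)=-(\rD_\xi\ttH)\omega+\ttH\mrGamma(\xi,\omega)$ with symmetry of $\mrGamma$ on tangent inputs to cancel the $\mrGamma$ contributions. Horizontality and tangency of $(\rD_v\ttH)\epsilon_\frm$, and vertical-tangency of $(\rD_{\epsilon_\frm}\ttV)v$, each reduce to one derivative-of-projection identity on $\ttH^2=\ttH$ (or $\ttV^2=\ttV$) and $\ttH=\Pi\ttH$. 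Bijectivity of $\rmb\colon\cV_x\rM\to\cV_{(x,v)}\cH\rM$ follows from injectivity, dimension matching, and the observation that any element of $\cV_{(x,v)}\cH\rM$ has first component in $\ker d\qq_x=\cV_x\rM$.

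For $\ttQ$ and the decomposition, the key identity is $\ttH(\rD_\omega\ttH)v=0$ whenever $\ttH v=v$ (from differentiating $\ttH^2=\ttH$ and applying to $v$), which immediately gives $\ttQ^2=\ttQ$ and confirms that the image satisfies \eqref{eq:submerg_h}. The decomposition formula is verified directly: the sum $(\ttH\Delta_\frm,\Delta_\frt-\rB_v\ttV\Delta_\frm)+\rmb(\ttV\Delta_\frm)$ trivially equals $(\Delta_\frm,\Delta_\frt)$ via $\Pi\Delta_\frm=\Delta_\frm$; the $\cV$-piece is in $\cV\cH\rM$ by construction; and the $\cQ$-piece satisfies \eqref{eq:submerg_h}, which reduces, via \eqref{eq:submerg} and the identity $\ttH\rB_v\epsilon_\frm=-(\rD_v\ttH)\epsilon_\frm$, to the linearity identity $(\rD_{\ttH\Delta_\frm}\ttH)v+(\rD_{\ttV\Delta_\frm}\ttH)v=(\rD_{\Delta_\frm}\ttH)v$. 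Directness follows from $\cH_x\rM\cap\cV_x\rM=0$ applied to first components, and bijectivity of $d(d\qq_{|\cH\rM})|_{\cQ\cH\rM}$ then follows by a dimension count. Intrinsicness is immediate from the intrinsic characterizations: $\rmb(\epsilon_\frm)$ is the unique vertical vector in $\cV_{(x,v)}\cH\rM$ with first component $\epsilon_\frm$ (and $\cV\cH\rM$ is the intrinsic kernel of $d(d\qq_{|\cH\rM})$), while $\cQ_{(x,v)}\cH\rM$ is intrinsically the subspace of $\cT_{(x,v)}\cH\rM$ whose first component lies in $\cH_x\rM$. The main obstacle I anticipate is the explicit pinning-down of $\ttH_x\dot v(0)$: the constraint $d\qq_{x(t)}v(t)=v_b$ must be differentiated against an ambient realization of $d\qq$, which requires the horizontal-lift extension described above; once handled, the symmetry in \eqref{eq:proj_deriv} and the derivative-of-projection trick reduce the remaining arguments to routine manipulations.
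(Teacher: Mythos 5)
Your architecture tracks the paper's for most of the proposition: the curve realizing \cref{eq:submerg}, the idempotency of $\ttQ$ via $\ttH_x(\rD_{\omega}\ttH)_xv=0$, the direct verification of the decomposition $\cT\cH\rM=\cQ\cH\rM\oplus\cV\cH\rM$, and the intrinsicness argument are all essentially what the paper does. The one place you genuinely diverge is the derivation of \cref{eq:rmb}, and that is where there is a gap.

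You parametrize $\cV_{(x,v)}\cH\rM$ by curves $(x(t),v(t))$ in the fiber of $d\qq_{|\cH\rM}$ and must compute $\ttH_x\dot v(0)$. Differentiating $d\qq_y\hat v(y)=\tilde v(\qq y)$ in the vertical direction $\epsilon_{\frm}$ gives $d\qq_x(\rD_{\epsilon_{\frm}}\hat v)=-\mathsf{Hess}\qq(\epsilon_{\frm},v)$; but $\rD_{\epsilon_{\frm}}\hat v$ is an ambient vector, not tangent to $\rM$, so this relation does not determine $d\qq_x(\ttH_x\rD_{\epsilon_{\frm}}\hat v)$, and ``applying the bijection $d\qq_x|_{\cH_x\rM}$'' cannot identify $\ttH_x\dot v(0)$ with $-(\rD_v\ttH)_x\epsilon_{\frm}$ unless you separately know that $d\qq_{\frt,x}$ sends $(\rD_{\epsilon_{\frm}}\ttH)_xv-(\rD_v\ttH)_x\epsilon_{\frm}$ to $-\mathsf{Hess}\qq(\epsilon_{\frm},v)$ --- which is precisely the identity \cref{eq:VHess} the whole step is trying to establish, so the argument as sketched is circular. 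The paper breaks the circle with a different auxiliary curve, $e(t)=(\Exp_x tv,\ttV_{\Exp_x tv}\epsilon_{\frm})$ in $\cV\rM$, whose image under $d\qq$ is the zero section along $\Exp_{\qq x}\,t\,d\qq_{\frt,x} v$; differentiating at $t=0$ yields $d\qq_{\frt,x}(\rD_v\ttV)_x\epsilon_{\frm}+\mathsf{Hess}\qq(\epsilon_{\frm},v)=0$, from which \cref{eq:VHess} and then \cref{eq:rmb} follow by exactly the projection algebra you describe. Alternatively, your route can be closed without any Hessian: extend $\epsilon_{\frm}$ to the vertical field $E:y\mapsto\ttV_y\epsilon_{\frm}$ and let $\bar v$ be the horizontal lift field of $\tilde v$; then $[E,\bar v]$ is $\qq$-related to $[0,\tilde v]=0$, hence vertical, so $\ttH_x\rD_{\epsilon_{\frm}}\bar v=\ttH_x\rD_vE=\ttH_x(\rD_v\ttV)_x\epsilon_{\frm}=-(\rD_v\ttH)_x\epsilon_{\frm}$. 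One of these supplements is needed; as written the pinning-down step does not go through. (A secondary thinness: you infer the submersion property of $d\qq_{|\cH\rM}$ from the description of its fibers rather than from surjectivity of $d^2\qq_{|\cH\rM}$, which the paper constructs directly; your later dimension count does recover it once $\cV\cH\rM=\ker d^2\qq_{|\cH\rM}$ is known, but that again rests on \cref{eq:rmb}.)
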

Note that we have not defined a metric on $\cH\rM$, so the decomposition $\cT\cH\rM = \cQ\cH\rM \oplus \cV\cH\rM$ is not yet an orthogonal decomposition. We note both equations
\cref{eq:submerg} and \cref{eq:submerg} have the property that if $(\delta_{\frm}, \delta_{\frt})$ satisfies them, and $\delta_{1}$ is a horizontal vector, then $(\delta_{\frm}, \delta_{\frt} +\delta_1)$ also satisfies them. We will use 
$\ttH_x(\rD_{\Delta_{\frm}}\ttH_x) v= (\rD_{\Delta_{\frm}}(\ttH_x)^2)v - (\rD_{\Delta_{\frm}}\ttH_x)\ttH_xv = 0$ for any tangent vector $\Delta_{\frm}$ and horizontal vector $v$ in the following.
\begin{proof} The descriptions of $\cH\cB$ and $\cT\cH\cB$ are similar to the tangent bundle case, the curve used to prove $(\Delta_{\frm}, \Delta_{\frt})$ satisfying \cref{eq:submerg} is horizontal is
  $$c(t) = (\Exp_xt\Delta_{\frm}, \ttH_{\Exp_xt\Delta_{\frm}} (v+t\Delta_{\frt}))\in\cE^2$$
  Since $\qq$ is a submersion, $d\qq:\cT\rM\to\cT\cB$ is surjective everywhere, $d\qq_{|\cH\rM}$ is also surjective. Let $(\cB, \sfg_{\cB}, \cE_{\cB})$ be an ambient space with a metric operator of $\cB$, thus $\qq$ could be considered a map from $\rM$ to $\cE_{\cB}$, and $d\qq_{|\cH\rM}$ a map from $\cH\rM$ to $\cE_{\cB}^2$ mapping $(x, v)$ to $(\qq(x), d_{\frt}\qq(x, v))$ where $d_{\frt}(x, v)$ denotes the tangent component of $d\qq_{|\cH\rM}$. For an element $(\gamma_{\mathfrak{b}}, \gamma_{\frt})\in\cT_{d\qq(x, v)}\cT\cB$, let $\Delta_{\frm}\in \cH_x\rM$ be the horizontal lift of $\gamma_{\frb}$. Then $(\Delta_{\frm}, (\rD_{\Delta_{\frm}}\ttH)_x v)$ satisfies the last equation of \cref{eq:submerg} hence belongs to $\cT_{(x, v)}\cH\rM$, the second component of its image under $d^2\qq_{|\cH\rM} := d(d\qq_{|\cH\rM})$ differs from $\gamma_{\frt}$ by a tangent vector in $\cT_{\qq(x)}\cB$, which lifts to a horizontal vector $\Delta_1\in\cH_x\rM$, and hence $(\Delta_{\frm}, (\rD_{\Delta_{\frm}}\ttH)_x v +\Delta_1)\in \cH_{x}\rM$ maps to $(\gamma_{\mathfrak{b}}, \gamma_{\frt})\in\cT_{d\qq(x, v)}\cT\cB$. Thus, $d^2\qq_{|\cH\rM}$ is surjective, and $d\qq_{|\cH\rM}$ is a differentiable submersion.

We can verify directly $\ttQ$ is idempotent, as $\ttH_x$ is idempotent and
  $$(\rD_{\ttH_x\Delta_{\frm}}\ttH_x) v + \ttH_x((\rD_{\ttH_x\Delta_{\frm}}\ttH_x) v + \ttH_x\Delta_{\frt}) = (\rD_{\ttH_x\Delta_{\frm}}\ttH_x) v + \ttH_x\Delta_{\frt}
  $$
The description in \cref{eq:submerg_h} of $\cQ\cH\rM$ is clear from idempotency.

Since $\qq$ maps $\rM$ to $\cB$, it could be considered a map from $\rM$ to $\cE_{\cB}$ (an ambient space of $\cB$). Extend $\qq$ to a smooth map on an open subset of $\cE$ near $\rM$, thus we have an extension of $d\qq$ to a map from $\rM\times\cE$ to $\cB\times \cE_{\cB}$, and the second derivative $d^2\qq$ would map $(x, v, \Delta_{\frm}, \Delta_{\frt})\in \cT\cT\rM$ to $(\qq(x), d\qq_{\frt, x} v, d\qq_{\frt, x} \Delta_{\frm},  d\qq_{\frt, x}\Delta_{\frt} + \mathsf{Hess}\qq(\Delta_{\frm}, v)$, the last component is the result of taking the directional derivative of $d_{\frt, x}\qq$, considered as a function from $\rM\times\cE$ to $\cE$, in direction $(\Delta_{\frm}, \Delta_{\frt})$, where $\mathsf{Hess}\qq(\Delta_{\frm}, v)$ denotes the Hessian $\cE_{\cB}$-valued bilinear form of $\qq$.

If $\epsilon_{\frm}\in \cV_x\rM$, consider the curve $e(t) = (\Exp_x tv, \ttV_{\Exp_x tv}\epsilon_{\frm})\in \cV\rM$. The velocity curve $\dot{e}(t)$ is a curve in $\cT\cV\rM$, which evaluates at $t=0$ to be $\dot{e}(0) = (x, \epsilon_{\frm}, v, (\rD_{v} \ttV)_x\epsilon_{\frm})\in\cE^4$. Via the differential of $d\qq$, $\dot{e}(0)$ maps to
$$d^2\qq \dot{e}(0) = (\qq{x}, 0,  d\qq_{\frt, x}v, d\qq_{\frt, x}(\rD_{v} \ttV)_x\epsilon_{\frm} + \mathsf{Hess}\qq(\epsilon_{\frm}, v))_x$$
On the other hand, the differential $d\qq$ maps $e(t)$ to the curve $(\Exp_{\qq(x)} td\qq_{\frt}(x, v), 0)\in \cT\cB$, and the velocity curve of $d\qq e$ at $t=0$ has components $(\qq(x), 0, d\qq_{\frt, x}v, 0)$. This gives us the equality
$$d\qq_{\frt, x}(\rD_{v} \ttV)_x\epsilon_{\frm} + \mathsf{Hess}\qq(\epsilon_{\frm}, v)_x=0$$
But $\ttV_x (\rD_{\epsilon_{\frm}}\ttV_x) v = (\rD_{\epsilon_{\frm}}\ttV)_x v - (\rD_{\epsilon_{\frm}}\ttV)_x\ttV_x v = \rD_{\epsilon_{\frm}}\ttV_x v$ using the derivative of projection trick, (as $v$ is a horizontal vector, we apologize for the possible confusion), so $\rD_{\epsilon_{\frm}}\ttV_x v$ is vertical, hence we have
\begin{equation}\label{eq:VHess}d\qq_{\frt, x}\{(\rD_{v} \ttV)\epsilon_{\frm}
- (\rD_{\epsilon_{\frm}} \ttV)v\}
+ \mathsf{Hess}\qq(\epsilon_{\frm}, v)=0\end{equation}
But $(\rD_{v} \ttV)\epsilon_{\frm} - (\rD_{\epsilon_{\frm}} \ttV)v =
(\rD_{\epsilon_{\frm}} \ttH)v - (\rD_{v} \ttH)\epsilon_{\frm} $ as $\Pi_{\sfg} = \ttV + \ttH$, and on $\rM$ $(\rD_{\epsilon_{\frm}}\Pi_{\sfg})_xv = (\rD_{v}\Pi_{\sfg})_x\epsilon_{\frm}$. This adjustment ensures $\hat{\epsilon} := (x, v, \epsilon_{\frm}, (\rD_{\epsilon_{\frm}} \ttH)_xv - (\rD_{v} \ttH)_x\epsilon_{\frm})$ is in $\cT\cH\rM$ by direct verification. Thus, \cref{eq:VHess} shows $d^2\qq_{|\cH\rM}\hat{\epsilon} =  0 \in \cT_{d\qq(x, v)}\cT\cB$, hence, it belongs to $\cV_{x, v}\cH\rM$. The last equality of \cref{eq:rmb} follows by expanding $\GammaH$ using its definition. The statement that $(\rD_{v}\ttH)_x\epsilon_{\frm} $ is horizontal is proved by verifying $\ttH_x(\rD_{v}\ttH)_x\epsilon_{\frm} =
(\rD_{v}\ttH^2)_x\epsilon_{\frm} - (\rD_{v}\ttH)_x(\ttH_x \epsilon_{\frm}) = (\rD_{v}\ttH)_x\epsilon_{\frm}$.

If another $\epsilon_{\frt}$ is with the same property that $(\epsilon_{\frm}, \epsilon_{\frt})$ maps to zero, then it differs from the constructed vector by a horizontal vector, which maps to zero. From the bijectivity of horizontal projections, we have the uniqueness of $\epsilon_{\frt}$.

The decomposition $\cT\cH\rM = \cQ\cH\rM \oplus \cV\cH\rM$ is now clear, the bijectivity on fibers of $\cQ\cH\rM$ to $\cT\cT\cB$ follows from the bijectivity of horizontal projections.

With $\pi:\cT\rM\to\rM$ is the tangent bundle projection, from \cref{fig:HMB}, the map $\rmb$ is intrinsic because $(\epsilon_{\frm}, \epsilon_{\frt})$ is described intrinsically as the only tangent vector on $\cT\cH\rM$ such that $d\pi(\epsilon_{\frm}, \epsilon_{\frt})=\epsilon_{\frm}$ and $d^2\qq(\epsilon_{\frm}, \epsilon_{\frt})$ is a zero vector in $\cT\cT\cB$, while $\cQ_{(x, v)}\cH\rM$ could be described as the space of vectors in $\cT_{(x, v)}\cH\rM$ mapped to $\cH_x\rM$ under $d\pi$.
\end{proof}
\begin{proposition}\label{prop:frjH} Let $\rA$ be the O'Neil tensor. The map $\frjH$ defined by
  \begin{equation}\frjH:(x, v, \Delta_{\frm}, \Delta_{\frt})\mapsto (x, \ttH_x\Delta_{\frm}, v, (\rD_v\ttH)_x\Delta_{\frm} +\ttH_x \Delta_{\frt})
= (x, \ttH_x\Delta_{\frm}, v, 2\rA_{v}\Delta_{\frm} + \Delta_{\frt})
  \end{equation}
  maps $\cT\cH\rM$ to $\cQ\cH\rM$. Restricting to $\cQ\cH\rM$, it is an involution. It corresponds to the canonical flip of $\cT\cT\cB$, that is, if $d^2\qq$ maps $(x, v, \delta_{\frm}, \delta_{\frt})\in \cQ\cH\rM$ to $(b, w, \delta_b, \delta_w)\in\cT\cT\cB$ then it maps $(x, \delta_{\frm}, v, (\rD_v\ttH)_x\delta_{\frm} +\ttH_x \delta_{\frt})$ to $(b, \delta_b, b, \delta_w)$.
\end{proposition}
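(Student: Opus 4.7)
The plan is to check, in turn, that $\frjH$ lands in $\cQ\cH\rM$, that the two expressions for the fourth coordinate agree on $\cQ\cH\rM$, that the restriction to $\cQ\cH\rM$ is an involution, and that $\frjH$ intertwines with the canonical flip on $\cT\cT\cB$ under $d^2\qq$. All four verifications rely on the derivative-of-projection trick, \cref{eq:oneilLie}, \cref{eq:rmb}, and the explicit formula for $d^2\qq$ extracted from the proof of \cref{prop:QHM}.

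For the well-definedness, I would write the image of $(x, v, \Delta_{\frm}, \Delta_{\frt})$ as $(x, v', \Delta_{\frm}', \Delta_{\frt}')$ with $v' = \ttH_x\Delta_{\frm}$ and $\Delta_{\frm}' = v$, both horizontal, so the first line of \cref{eq:submerg_h} is automatic. Expanding $\ttH_x\Delta_{\frt}'$ and applying the identity $\ttH_x(\rD_v\ttH) = \rD_v\ttH - (\rD_v\ttH)\ttH_x$ then collapses the second line of \cref{eq:submerg_h} to a tautology. On $\cQ\cH\rM$ the vector $\Delta_{\frm}$ is already horizontal, so \cref{eq:oneilLie} yields $(\rD_v\ttH)\Delta_{\frm} - (\rD_{\Delta_{\frm}}\ttH)v = 2\rA_v\Delta_{\frm}$, and combining with $\ttH_x\Delta_{\frt} = \Delta_{\frt} - (\rD_{\Delta_{\frm}}\ttH)v$ from \cref{eq:submerg} converts $(\rD_v\ttH)\Delta_{\frm} + \ttH_x\Delta_{\frt}$ into $2\rA_v\Delta_{\frm} + \Delta_{\frt}$, reconciling the two formulas. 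The involution is then direct: applying $\frjH$ twice swaps the second and third coordinates and swaps them back (using that $\ttH_x v = v$ and $\ttH_x\Delta_{\frm} = \Delta_{\frm}$), while the iterated fourth coordinate is $(\rD_{\Delta_{\frm}}\ttH)v + \ttH_x[(\rD_v\ttH)\Delta_{\frm} + \ttH_x\Delta_{\frt}]$; the projection trick combined with $\Delta_{\frm}$ horizontal gives $\ttH_x(\rD_v\ttH)\Delta_{\frm} = 0$, and the $\cQ\cH\rM$ constraint on the original tuple returns $\Delta_{\frt}$.

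For the canonical flip correspondence, I would exploit two facts from the proof of \cref{prop:QHM}: the formula $d^2\qq: (x, v, \Delta_{\frm}, \Delta_{\frt})\mapsto (\qq(x), d\qq\,v, d\qq\,\Delta_{\frm}, d\qq\,\Delta_{\frt} + \mathsf{Hess}\,\qq(\Delta_{\frm}, v))$, and the $\rmb$-identity $d\qq\,\rB_v\epsilon + \mathsf{Hess}\,\qq(\epsilon, v) = 0$ for $\epsilon\in\cV_x\rM$. The second and third components of $d^2\qq\circ\frjH$ are immediate since $d\qq$ kills $\cV\rM$. For the fourth component I would substitute $\ttH_x\Delta_{\frt} = \Delta_{\frt} - (\rD_{\Delta_{\frm}}\ttH)v$, decompose $\Delta_{\frm} = h + \epsilon$, use \cref{eq:oneilLie} to see $d\qq[2\rA_v h] = 0$, identify $(\rD_v\ttH)\epsilon - (\rD_\epsilon\ttH)v$ as $-\rB_v\epsilon$ via \cref{eq:rmb}, and invoke the symmetry of the Hessian to split $\mathsf{Hess}\,\qq(\Delta_{\frm}, v) = \mathsf{Hess}\,\qq(v, \ttH_x\Delta_{\frm}) + \mathsf{Hess}\,\qq(v, \epsilon)$; the $\rmb$-identity then cancels the surviving $-d\qq\,\rB_v\epsilon$ against $\mathsf{Hess}\,\qq(v, \epsilon)$. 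The main obstacle is this final bookkeeping: the $\rD\ttH$, Hessian, and decomposition terms must be arranged in exactly the right order so that the $\rmb$-identity is applicable, and the symmetry of the Hessian inserted at the correct point so the vertical contribution of $\Delta_{\frm}$ supplies precisely the missing Hessian correction demanded by the canonical flip.
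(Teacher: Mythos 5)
Your proof is correct and follows essentially the same route as the paper's: the derivative-of-projection trick to land in $\cQ\cH\rM$, \cref{eq:oneilLie} together with the $\cT\cH\rM$ constraint to reconcile the two formulas and obtain the involution, and the explicit $d^2\qq$ formula plus the symmetry of the Hessian and the verticality of $2\rA_v\delta_{\frm}$ for the flip correspondence. The only divergence is that in the last step you also treat a vertical component of $\Delta_{\frm}$ via the $\rmb$-identity; this is harmless but vacuous for the stated claim, since the correspondence is only asserted for elements of $\cQ\cH\rM$, where $\delta_{\frm}$ is already horizontal.
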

\begin{proof}  First, we prove $(x, \ttH_x\Delta_{\frm}, v, (\rD_v\ttH)\Delta_{\frm} + \ttH\Delta_{\frt})$ is in $\cQ_{(x, \ttH_x\Delta_{\frm})}\cH\rM$, using \cref{eq:submerg}. Using the derivative of projection trick
  $$(\rD_{v}\ttH)_x \ttH_x\Delta_{\frm} + \ttH_x\{(\rD_v\ttH)_x\Delta_{\frm} + \ttH_x\Delta_{\frt}\} = 
  (\rD_v\ttH^2)_x\Delta_{\frm} + \ttH_x\Delta_{\frt}$$
  which verifies the last condition in \cref{eq:submerg}. By \cref{eq:oneilLie}
$$(\rD_v\ttH)_x\Delta_{\frm} + \ttH_x\Delta_{\frt} = 2\rA_v\Delta_{\frm} + (\rD_{\Delta_{\frm}}\ttH)_x +\ttH_x\Delta_{\frt} = 2\rA_v\Delta_{\frm} + \Delta_{\frt}$$
  It is an involution because $\rA_{v}\xi = -\rA_{\xi}v$ , or
  $$(\rD_{\Delta_{\frm}}\ttH)_x v + \ttH_x((\rD_v\ttH)_x\Delta_{\frm} +\ttH_x \Delta_{\frt}) = (\rD_{\Delta_{\frm}}\ttH)_x v +\ttH_x\Delta_{\frt} = \Delta_{\frt}
  $$
  To show it corresponds to the canonical flip of $\cT\cT\cB$, note, if $\cE_{\cB}$ is an embedded ambient space of $\cB$ and consider $d\qq$ as a map to $\cE_{\cB}^2$, let $d\qq_{\frt, x}$ be its tangent component, $d\qq(x, v) = (\qq(b), d\qq_{\frt, x} v)$, then for $(x, v, \delta_{\frm}, \delta_{\frt})\in\cQ\cH\rM$
  $$d^2\qq(x, v, \delta_{\frm}, \delta_{\frt}) = (\qq(x), d\qq_{\frt, x} v, d\qq_{\frt, x}\delta_{\frm},   d\qq_{\frt, x}\delta_{\frt} + \mathsf{Hess}\qq(\delta_{\frm}, v)) = (b, w, \delta_b, \delta_w)$$
The components of $d^2\qq(x, \delta_{\frm}, v, (\rD_v\ttH)_x\delta_{\frm} +\ttH_x\delta_{\frt})$ are $b, \delta_b, w$, and
$$\begin{gathered}
(d\qq_{\frt, x}\{(\rD_v\ttH)_x\delta_{\frm} +\ttH_x\delta_{\frt}\} + \mathsf{Hess}\qq(v, \delta_{\frm})) = \delta_w + d\qq_{\frt, x}\{(\rD_v\ttH)_x\delta_{\frm}+\ttH_x\delta_{\frt} -\delta_\frt\}
\end{gathered}
    $$
  But $(\rD_v\ttH)_x\delta_{\frm}+\ttH_x\delta_{\frt} - \delta_\frt= (\rD_v\ttH)_x\delta_{\frm}- (\rD_{\delta_{\frm}}\ttH)_xv=2\rA_v\delta_{\frm}$ is vertical, so it maps to zero in $\cT\cB$.
\end{proof}
We now define the horizontal connection map $\rCa$.
\begin{lemma}\label{lem:connHor} If $\tdelta = (\delta_{\frm}, \delta_{\frt})\in \cQ_{(x, v)}\cH\rM$, then 
  $\rCa_{(x, v)}\tdelta := \delta_{\frt} + \GammaH(\delta_{\frm}, v)$ is in $\cH_x\rM$. The map $(\delta_{\frm}, \delta_{\frt})\mapsto (\delta_{\frm}, \rCa_{(x, v)}(\delta_{\frm}, \delta_{\frt}))$ is a bijection between $\cQ_{(x, v)}\cH\rM$ and $(\cH_x\rM)^2$. The map $(\delta_{\frm}, \delta_{\frt})\mapsto (\delta_{\frm}, \delta_{\frt} - (\rD_{\delta_{\frm}}\ttH) v)$ is also a bijection between these spaces. We have the compatibility equation on $\cQ\cH\rM$
  \begin{equation}d\qq_{|\cH\rM}\rCa = \rC d^2\qq
    \end{equation}
\end{lemma}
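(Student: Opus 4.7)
The plan is to verify the four assertions by combining the formula $\GammaH(\xi,\omega)=-(\rD_{\xi}\ttH)\omega+\ttH\mrGamma(\xi,\omega)$ from \cref{eq:gamma_sub}, the defining constraints of $\cQ\cH\rM$ in \cref{eq:submerg_h}, and the derivative-of-projection trick. For horizontality of $\rCa\tdelta$ with $\tdelta=(\delta_{\frm},\delta_{\frt})\in\cQ_{(x,v)}\cH\rM$, I would substitute $(\rD_{\delta_{\frm}}\ttH)v=\delta_{\frt}-\ttH_x\delta_{\frt}$ (from \cref{eq:submerg_h}) into $\GammaH(\delta_{\frm},v)$ to obtain $\rCa_{(x,v)}\tdelta=\ttH_x\delta_{\frt}+\ttH_x\mrGamma(\delta_{\frm},v)\in\cH_x\rM$.

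For the bijection claim, injectivity of the map $f:(\delta_{\frm},\delta_{\frt})\mapsto(\delta_{\frm},\rCa_{(x,v)}\tdelta)$ is immediate from its affine form in $\delta_{\frt}$. For surjectivity, given any $(\delta_{\frm},\delta_{\frc})\in(\cH_x\rM)^2$, set $\delta_{\frt}:=\delta_{\frc}-\GammaH(\delta_{\frm},v)$ and check directly using $\ttH_x(\rD_{\delta_{\frm}}\ttH)v=0$ that $(\delta_{\frm},\delta_{\frt})$ satisfies \cref{eq:submerg_h} while $\rCa\tdelta=\delta_{\frc}$. The alternative assignment $(\delta_{\frm},\delta_{\frt})\mapsto(\delta_{\frm},\delta_{\frt}-(\rD_{\delta_{\frm}}\ttH)v)$ differs from $f$ in its second component by the horizontal vector $\ttH_x\mrGamma(\delta_{\frm},v)$, so bijectivity transfers to it as well.

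For the compatibility identity $d\qq_{|\cH\rM}\rCa=\rC\,d^2\qq$, represent $\tdelta\in\cQ_{(x,v)}\cH\rM$ by a curve $c(t)=(\gamma(t),V(t))$ in $\cH\rM$ with $c(0)=(x,v)$ and $\dot c(0)=\tdelta$; since $\ttH_x\delta_{\frm}=\delta_{\frm}$, the base curve $\gamma$ is horizontal at $t=0$ and $V$ is a horizontal vector field along $\gamma$. Projecting under $d\qq$, the curve $t\mapsto(\qq\gamma(t),d\qq V(t))$ in $\cT\cB$ has initial velocity $(\delta_b,\delta_w):=d^2\qq\,\tdelta$, so by \cref{eq:conn_map}, $\rC(\delta_b,\delta_w)=\nabla^{\cB}_{\delta_b}(d\qq V)|_{t=0}$. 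Invoking \cref{eq:gamma_sub}, namely that $\ttH\nabla^{\rM}=\rD+\GammaH$ is the horizontal lift of $\nabla^{\cB}$, the right-hand side equals $d\qq\bigl(\delta_{\frt}+\GammaH(\delta_{\frm},v)\bigr)=d\qq\,\rCa\tdelta$ as desired.

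The main obstacle is this last step: justifying that $\GammaH$ evaluated on the pointwise data $(\delta_{\frm},v)$ truly matches the result of applying the horizontal Levi-Civita connection to the vector field $V(t)$ along $\gamma$. This relies on the tensorial character of $\GammaH$ established in \cref{prop:Levi} (its value at $x$ depends only on $\delta_{\frm}$ and $V(0)=v$), together with the fact that $V$ can be extended to a locally defined horizontal vector field on $\rM$ so that \cref{eq:gamma_sub} applies literally.
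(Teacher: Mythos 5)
Your proposal is correct and follows essentially the same route as the paper: horizontality of $\rCa\tdelta$ via \cref{eq:submerg_h} together with the derivative-of-projection identity $\ttH_x(\rD_{\delta_{\frm}}\ttH)_xv=0$, the bijection argument mirroring the embedded case of \cref{lem:conn}, and compatibility by lifting a vector field along a geodesic and using that $\ttH\nabla$ is the horizontal lift of $\nabla^{\cB}$ (\cref{eq:gamma_sub}). The only cosmetic difference is that the paper writes down the explicit lifted field $(\Exp_x t\delta_{\frm}, \ttH_{\Exp_x t\delta_{\frm}}(v+t\delta_{\frt}))$, which also settles the tensoriality concern you raise at the end.
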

\begin{proof} We have
  $$\ttH_x(\delta_{\frt} + \GammaH(\delta_{\frm}, v)) = \ttH_x\delta_{\frt} + \ttH_x\GammaH(\Delta_{\frm}, v) = \delta_{\frt} -(\rD_{\delta_{\frm}}\ttH)_x v+ \ttH_x\GammaH(\delta_{\frm}, v)$$
  The last expression reduces to $\delta_{\frt} +\GammaH(\delta_{\frm}, v)$, thus $\delta_{\frt} +\GammaH(\delta_{\frm}, v)$ is $\cH_x\rM$. The next two statements are proved similar to the embedded case. Since $\rCa$ maps to a horizontal vector, to prove compatibility, if $d^2\qq(x, v, \delta_\frm, \delta_\frt)$ maps to $(b, w, \delta_b, \delta_w)$, the vector field $(\Exp_b t\delta_b, \Pi^\cB_{\Exp_b t\delta_b}(w+t\delta_w))$ along the geodesic $\Exp_b t\delta_b$ lifts to $(\Exp_w t\delta_\frm, \ttH_{\Exp_v t\delta_\frm}(v+t\delta_\frt))$ along the geodesic $\Exp_x t\delta_\frm$, from here $\rC_{(b, w)} (\delta_b, \delta_w)$ lifts to $\rCa_{(x, v)}(\delta_\frt, \delta_\frm)$.
\end{proof}
\begin{example}
Continuing with our example of a flag manifold, with $\rM = \SOO(n)$ and $\cB = \SOO(n)/\SOD$, a vertical vector at $U\in\SOO(n)$ is of the form $\epsilon_{\frm} = U\diag(b_0, \cdots, b_q)$, and $\cH\rM$ consists of pairs $(U, \eta)$ where $U^{\sfT}\eta$ is antisymmetric, with zero diagonal blocks. As explained, we expect $\rB_{\eta} \epsilon_{\frm}$ to be $\eta \diag(b_0, \cdots, b_q)=\eta U^{\sfT}\epsilon_{\frm}$, as it is indeed invariant with respect to the induced action. Note by \cref{eq:UAB}, for two tangent (not necessarily horizontal) vectors to $\SOO(n)$ of form $UA$ and $UB$, with $A$ and $B$ antisymmetric matrices $(\rD_{UA}\ttH)_xUB = \frac{1}{2}(U(AB+BA-2AB_{\frk} + (AB-BA)_{\frk}))$ (we recall $X_{\frk}$ means taking the block diagonals of a matrix $X$).
From here, if $\eta = UA$ is horizontal ($A_{\frk} = 0$) and $\epsilon_{\frm} = UB$ with $B_{\frk} = B$, we have $(AB)_{\frk} = (BA)_{\frk} = 0$ and
$$\rB_{\eta}\epsilon_{\frm} =  (\rD_{\epsilon_{\frm}}\ttH)_U\eta - (\rD_{\eta}\ttH)_U\epsilon_{\frm}= UAB =\eta U^{\sfT}\epsilon_{\frm}
$$
as expected, and $\cV_{(U, \eta)}\cH\rM$ consists of vectors of the form $(\epsilon_{\frm}, \eta U^{\sfT}\epsilon_{\frm})$. The space $\cQ_{(U, \eta)}\cH\rM$ could be identified affinely with two copies of $\cH\rM$, for example $\delta_{\frm} = UC, \delta_1 = \ttH_x\delta_{\frt} = UD$ with two antisymmetric matrices $C$ and $D$ such that $C_{\frk} = D_{\frk} = 0$, then $\delta_{\frt} = U\{\frac{1}{2}(CA+AC + [C, A]_{\frk}) + D\}$.
The canonical flip would map $(U, UA, UC, U\{\frac{1}{2}(CA+AC + [C, A]_{\frk}) + D\})$ to $(U, UC, UA, U\{\frac{1}{2}(CA+AC + [A, C]_{\frk}) + D\})$ as $2\rA_{\eta}\delta_{\frm} = U[A, C]_{\frk}$.
\end{example}
\subsection{Jacobi fields}
\subsection{Jacobi fields of embedded spaces}
It is known Jacobi fields are derivatives of the exponential map, so if the exponential map is known explicitly, Jacobi fields should also be known explicitly. The following proposition assumes a simplified condition and shows how a Jacobi field and its time derivative, identified as $\cE$-valued functions, can be evaluated in our embedded manifold setup.

\begin{proposition} Let $(\rM, \sfg, \cE)$ be a Riemannian manifold with metric operator $\sfg$ on an ambient space $\cE$ with Christoffel function $\Gamma$ and $I\subset \R$ be an interval containing $0$. Let $\gamma: \cT\rM\times I\to \rM, (x, v,t) \mapsto \Exp_x tv$ be the geodesic family with initial condition $\gamma(x, v; 0) = x, \dot{\gamma}(x, v; 0) = v$, and assume $\gamma$ is defined on $\cT\rM\times I$. Then $\gamma$ is a smooth map from $\cT\rM\times I$ to $\rM$. We write $\gamma(t)$ for $\gamma(x, v;t)$ when $(x, v)$ is fixed and understood. Let $(x, v, \Delta_{\frm}, \Delta_{\frt})$ be a tangent vector to $\cT\rM$ at $(x, v) \in \cT\rM$, and for fixed $t$, let $J(t)= J_{(x,v, \Delta_\frm, \Delta_\frt)}(t)$ be the tangent component of $d\gamma$, that is $d\gamma: (x, v, \Delta_{\frm}, \Delta_{\frt}, t) \mapsto (\gamma(t), J(t))_{(x, v, \Delta_{\frm}, \Delta_{\frt})}\in \cT\rM\subset \cE^2$, thus, $J(t) = (\partial^{\cT\rM}_{\Delta_{\frm}, \Delta_{\frt}}\gamma)(t)_{x, v}$, the directional derivative of $\gamma$ in direction $(\Delta_{\frm}, \Delta_{\frt})$ at $(x, v)$. Then $\frJ(t):=(\gamma(t), J(t))$ is a vector field along the curve $\gamma(x, v; t)$ satisfying:
  \begin{equation}\begin{gathered}
      J(0) = \Delta_{\frm}\\
      \dot{J}(0) = \Delta_{\frt}\\
      (\nabla_{d/dt})^2J(t) = \rR_{J(t), \dot{\gamma}(t)} \dot{\gamma}(t)
    \end{gathered}
  \end{equation}
  Thus $\frJ$ is the Jacobi field with the given initial conditions. For any $t$, $\dot{\frJ}(t) =(\gamma(t), J(t), \dot{\gamma}(t), \dot{J}(t))$ belongs to $\cT\cT\rM$, in particular $\dot{\frJ}(0) = (x, \Delta_{\frm}, v, \Delta_{\frt})$, the canonical flip of $(x, v,\Delta_{\frm}, \Delta_{\frt})$. Alternatively, $\Delta_{\frc}:=\Delta_{\frt} + \Gamma(\Delta_{\frm}, v)_x= \rC_{(x, v)}(\Delta_{\frm}, \Delta_{\frt})$ is tangent to $\rM$ and the initial condition could be written as
    \begin{equation}\begin{gathered}
      J(0) = \Delta_{\frm}\\
      \nabla_{d/dt}J(0) = \Delta_{\frc}
      \end{gathered}
    \end{equation}
    for two tangent vectors $\Delta_{\frm}$ and $\Delta_{\frc}$ to $\rM$ at $x$.
\end{proposition}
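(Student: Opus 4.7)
My plan is to translate the standard variation-of-geodesics argument into the ambient formalism of the paper, in four short steps.

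First, smoothness of $\gamma$ on $\cT\rM\times I$ follows from smooth dependence of ODE solutions on initial data: in the ambient formulation the geodesic equation is $\ddot{\gamma}+\Gamma(\dot{\gamma},\dot{\gamma})_{\gamma}=0$, a smooth second-order ODE in $\cE$ whose solutions remain on $\rM$ once $(x,v)\in\cT\rM$ is imposed. Since $d\gamma$ lands in $\cT\rM\subset\cE^{2}$, we automatically have $J(t)\in\cT_{\gamma(t)}\rM$, so $\frJ$ really is a vector field along $\gamma$. The initial values then follow from commuting ordinary partial derivatives: $J(0)=\partial_{(\Delta_{\frm},\Delta_{\frt})}\gamma(x,v;0)=\Delta_{\frm}$, and $\dot{J}(0)=\partial_{(\Delta_{\frm},\Delta_{\frt})}\dot{\gamma}(x,v;0)=\partial_{(\Delta_{\frm},\Delta_{\frt})}v=\Delta_{\frt}$.

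Second, for the Jacobi equation I would pick any smooth curve $c(s)$ in $\cT\rM$ with $c(0)=(x,v)$ and $\dot{c}(0)=(\Delta_{\frm},\Delta_{\frt})$, and form the two-parameter map $V(s,t):=\gamma(c(s);t)\in\rM\subset\cE$. Each $t$-slice is a geodesic, so $\nabla_{t}V_{t}=0$ identically in $s$. The two key ingredients are that $\partial_{s}$ and $\partial_{t}$ commute on ordinary partials (giving $\partial_{s}V_{t}=\partial_{t}V_{s}$), and that the symmetry $\Gamma(\xi,\eta)=\Gamma(\eta,\xi)$ on tangent vectors, stated just after \cref{eq:mrGamma0}, yields the torsion-free identity $\nabla_{s}V_{t}=\nabla_{t}V_{s}$. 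Expanding $\nabla_{s}\nabla_{t}V_{t}-\nabla_{t}\nabla_{s}V_{t}$ directly with the ambient formulas $\nabla_{s}Z=\partial_{s}Z+\Gamma(V_{s},Z)$ and $\nabla_{t}Z=\partial_{t}Z+\Gamma(V_{t},Z)$, and comparing term by term with \cref{eq:rc1}, produces the curvature identity $\nabla_{t}\nabla_{s}V_{t}-\nabla_{s}\nabla_{t}V_{t}=\rR_{V_{s},V_{t}}V_{t}$ along $V$. Combined with $\nabla_{s}\nabla_{t}V_{t}=0$ and $\nabla_{s}V_{t}=\nabla_{t}V_{s}$, this gives $\nabla_{t}^{2}V_{s}=\rR_{V_{s},V_{t}}V_{t}$; restricting to $s=0$ is the stated equation. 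Uniqueness of ODE solutions then identifies $\frJ$ as the Jacobi field with the prescribed initial data.

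Third, $\dot{\frJ}(t)=(\gamma(t),J(t),\dot{\gamma}(t),\dot{J}(t))$ is by construction the velocity of the smooth curve $\frJ(t)=(\gamma(t),J(t))$ on $\cT\rM$, so it lies in $\cT\cT\rM$ and its defining constraints \cref{eq:TTM} hold automatically. Evaluating at $t=0$ yields $\dot{\frJ}(0)=(x,\Delta_{\frm},v,\Delta_{\frt})$, which is exactly the canonical flip of $(x,v,\Delta_{\frm},\Delta_{\frt})$ from after \cref{prop:TTM}. The alternative initial-condition form is then immediate from \cref{lem:conn}: $\nabla_{d/dt}J(0)=\dot{J}(0)+\Gamma(\dot{\gamma}(0),J(0))_{x}=\Delta_{\frt}+\Gamma(v,\Delta_{\frm})_{x}=\rC_{(x,v)}(\Delta_{\frm},\Delta_{\frt})$, and that lemma already shows this is a tangent vector to $\rM$ at $x$. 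The main technical hazard I anticipate is the sign bookkeeping for $\rR$: the paper's convention in \cref{eq:rc1} has the opposite sign from the usual textbook one, so one must read off from \cref{eq:rc1} (using $[V_{s},V_{t}]=0$) that the correct identity is $\nabla_{t}\nabla_{s}V_{t}-\nabla_{s}\nabla_{t}V_{t}=\rR_{V_{s},V_{t}}V_{t}$ and not the reverse, in order to land on the right-hand side of the Jacobi equation as stated.
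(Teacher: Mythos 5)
Your proposal is correct and follows essentially the same route as the paper: both arguments realize $J$ as the transverse derivative of a geodesic variation $\gamma(c(s);t)$, read off $J(0)$ and $\dot J(0)$ by differentiating the initial data, and obtain the alternative initial condition from the connection map of \cref{lem:conn}. The only difference is one of detail: where the paper simply cites the standard fact that a geodesic variation produces a Jacobi field, you re-derive the equation $(\nabla_{d/dt})^2 J = \rR_{J,\dot\gamma}\dot\gamma$ from the ambient formulas and \cref{eq:rc1}, with the sign convention handled correctly.
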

Here, $\rC$ is the connection map. The two formulations of the initial conditions are equivalent by \cref{lem:conn}. The Jacobi field $J_0(t) = \dot{\gamma}(t)$ corresponds to the initial data $\Delta_{\frm} = v, \Delta_{\frt} = - \Gamma(v, v), \Delta_{\frc} = 0$ (as $\ddot{\gamma} + \Gamma(\dot{\gamma}, \dot{\gamma}) = 0$). The Jacobi field $J_1(t) = t\dot{\gamma}(t)$ corresponds to the initial data $\Delta_{\frm} = 0, \Delta_{\frt} = v = \Delta_{\frc}$.

The theorem should still work with some modifications in the situation where $\gamma$ is not defined on the whole $\cT\rM\times I$, but the initial data for geodesics belong to a subset of $\cT\rM$, satisfying conditions as in the setup of a geodesic variation.

\begin{proof} That $\gamma$ is a smooth map when it is defined follows from the Gr{\"o}nwall inequality as is standard in the theory of differential equations.
  
  Let $\alpha(s, t) = \gamma({\Exp_{x, s\Delta_{\frm}}}, \Pi_{\Exp_{x, s\Delta_{\frm}}}(v + s\Delta_{\frt});t)$. Then $\alpha(0, t)= \gamma(x, v; t)$ is a geodesic, thus $\alpha(s, t)$ is a geodesic variation. Hence $\frac{\partial}{\partial s}\alpha(s, t)|_{s=0}$ is a Jacobi field, which is $\partial^{T\rM}_{d/ds\Exp_{x, s\Delta_{\frm}}(s=0),d/ds(\Pi_{\Exp_{x, s\Delta_{\frm}}}(v + s\Delta_{\frt}))(s=0)}\gamma_{|(x, v; t)}$, and simplifies to $\partial^{T\rM}_{\Delta_{\frm},(\rD_{\Delta_{\frm}}\Pi)_xv + \Pi_x\Delta_{\frt}}\gamma(x, v; t)  = \partial^{T\rM}_{\Delta_{\frm},\Delta_{\frt}}\gamma(x, v; t)$. For initial conditions, first, $J(0) = \partial^{T\rM}_{\Delta_{\frm},\Delta_{\frt}}\gamma(x, v; 0)=\Delta_{\frm}$, as $\gamma(x, v, 0) = x$. As  $\dot{\gamma}(x, \Delta; 0)=\Delta$ for any tangent vector $\Delta$, $\dot{J}(0)= \partial^{T\rM}_{\Delta_{\frm},\Delta_{\frt}}\dot{\gamma}(x, v; 0) =
  \partial^{T\rM}_{\Delta_{\frm},\Delta_{\frt}}((x, \Delta)\mapsto \Delta) = \Delta_{\frt}$. It follows $(\nabla_{\dot{\gamma}(0)})J(0) = \dot{J}(0) + \Gamma(\dot{\gamma}(x, v, 0), J(0)) = \Delta_{\frt} + \Gamma(v, \Delta_{\frm})_{x}=\Delta_{\frc}$.

The remaining statements about $\frJ$ and $\dot{\frJ}$ are just standard statements about differentials and velocities of curves on a manifold.
\end{proof}
\begin{example}
Continuing with the example $\rM := \SOO(n)\subset \cE := \R^{n\times n}$. The geodesic for this metric is $\gamma(U, \eta; t) = U\exp(tU^{\sfT}\eta)$, or $\Exp_U t\eta=U\exp(tU^{\sfT}\eta)$. We have already characterized $\cT\cT\rM$ previously.

The above proposition states that the Jacobi field $J(t)$ with $J(0)=\Delta_{\frm}, \dot{J}(0) = \Delta_{\frt}$ is just the directional derivative of $U\exp(U^{\sfT}\eta)$ in the tangent direction $(\Delta_{\frm}, \Delta_{\frt})$, by the chain rule it is
\begin{equation}\label{eq:jac_son}
J(t) = J(U, \eta, \Delta_{\frm}, \Delta_{\frt}; t) = \Delta_{\frm} \exp(tU^{\sfT}\eta) + tU \frL_{\exp}(tU^{\sfT}\eta, \Delta_{\frm}^{\sfT}\eta + U^{\sfT}\Delta_{\frt})
\end{equation}
where for two square matrices $A$ and $E$, $\frL_{\exp}(A, E)$ denotes the Fr{\'e}chet derivative of $\exp$ at $A$ in direction $E$, as reviewed in \cref{sec:frechet}. For the exponential function, it is well-known $\frL_{\exp}(A, E) = \exp A \sum_{n=0}^{\infty} \frac{(-1)^n}{(n+1)!}\ad_A^n E$. However, as will be reviewed, $\frL_{\exp}(A, E)$ could be evaluated more efficiently by Pad{\'e} approximant.
If $\eta=UA, \Delta_{\frm} = UB, \Pi\Delta_{\frt} = UD$ then $\Delta_{\frm}^{\sfT}\eta + U^{\sfT}\Delta_{\frt} = \frac{1}{2}(AB-BA)+ D$
\begin{equation}\begin{gathered}
    J(t) = U\exp(tA)\{\exp(-tA)B\exp(tA) + t\sum_{n=0}^{\infty} \frac{(-1)^n}{(n+1)!}t^{n}\ad_A^n (\frac{1}{2}[A, B]+ D)\}\\
    = UB\exp(tA) + tU\frL_{\exp}(tA, \frac{1}{2}[A, B] +D)
\end{gathered}    
\end{equation}  
It is clear $J(t)$ is tangent to $\SOO(n)$ at $U\exp(tA)$. The expression on the first line extends to any compact Lie group with bi-invariant metric.
\end{example}
\subsection{Jacobi fields and Riemannian submersion}\label{sec:jac}
Let $(\rM, \qq, \cB, \sfg, \cE)$ be a submersed ambient structure of the submersion $\qq: \rM\to\cB$. Denote by $\nabla$ and $\nabla^{\cB}$ the Levi-Civita connections on $\rM$ and $\cB$, respectively. We recall for two vector fields $\ttX$ and $\ttY$ on $\cB$, the horizontal lift of $\nabla^{\cB}_{\ttX}\ttY$ is $\ttH\nabla_{\bar{\ttX}}\bar{\ttY}$, where $\bar{\ttX}$ and $\bar{\ttY}$ are horizontal lifts of $\ttX$ and $\ttY$. We will use the notation $\nabla^{\cH}_{\bar{\ttX}}\bar{\ttY}:=\ttH\nabla_{\bar{\ttX}}\bar{\ttY}$.
Recall $\frj^{\cH}: (x, v, \Delta_{\frm}, \Delta_{\frt}) = (x, \ttH_x\Delta_{\frm}, v, 2\rA_v\Delta_{\frm} + \Delta_{\frt})$ from $\cT\cH\rM$ to $\cQ\cH\rM$ defined in \cref{prop:frjH} is the canonical flips when restricted to $\cQ\cH\rM$. The construction below is independent of the embedding $\cB\subset \cE_{\cB}$, it allows us to lift Jacobi fields on $\cT\cB$ to curves on $\cH\rM$. See \cite{ONeil1983,LeeRiemann,Gallier} for background materials.

\begin{theorem}\label{theo:jacsub} Let $(\rM, \qq, \cB, \sfg, \cE)$ be a submersed ambient structure of the Riemannian submersion $\qq:\rM\mapsto\cB$, with horizontal bundles $\cH\rM$ and horizontal projection $\ttH$. Let $\cE_{\cB}$ be an inner product space containing $\cB$, so $\cT\cB$ and $\cT\cT\cB$ are considered as subspaces of $\cE_{\cB}^2$ and $\cE_{\cB}^4$. For $(x, v, \Delta_{\frm}, \Delta_{\frt})\in \cT\cH\rM\subset\cE^4$, let $\gamma(t) = \gamma(x,v, ;t) = \Exp_x tv$ be the geodesic family on $\rM$ with initial conditions $\gamma(0) = x, \dot{\gamma}(0) = v$ and let
  $J^{\cH}(t) = \ttH_{\gamma(t)}(\partial^{\cT\rM}_{x, v, \Delta_{\frm}, \Delta_{\frt}}\gamma)(t)$, $\nu_{\frm} = \ttH_x\Delta_{\frm}$, $\nu_{\frt} = (\rD_v\ttH)\Delta_{\frm} + \ttH\Delta_{\frt}$ and $\RcH$ be the horizontal lift of the curvature tensor, then $\frJ^{\cH}(t) := (\gamma(t), J^{\cH}(t))$ is a curve in $\cH\rM\subset\cE^2$ satisfying the Jacobi field equation
  \begin{equation}\label{eq:JacoH}
    \begin{gathered}
      J^{\cH}(0) = \nu_{\frm}\\
      \dot{J}^{\cH}(0) =  \nu_{\frt}\\
      (\nabla^{\cH}_{d/dt})^2 J^{\cH}(t) = \rR^{\cH}_{J^{\cH}(t), \dot{\gamma}(t)}\dot{\gamma}(t)
    \end{gathered}
  \end{equation}
$\dot{\frJ}^{\cH}(t) = (\gamma(t), J^{\cH}(t), \dot{\gamma}(t), \dot{J}^{\cH}(t))$ is a curve in $\cQ\cH\rM$ with
\begin{equation}\dot{\frJ}^{\cH}(0) = (x, \nu_{\frm}, v, \nu_{\frt}) = \frj^{\cH}(x, v, \Delta_{\frm}, \Delta_{\frt})
\end{equation}
Thus, $d\qq$ maps $\frJ^{\cH}$ to the Jacobi field $\frJ^{\cB}$on $\cT\cB$ with $\dot{\frJ}^\cB(0) = d^2\qq(\frjH(x, v, \Delta_{\frm}, \Delta_{\frt}))$. Conversely, for any Jacobi field $\frJ^{\cB}$ on $\cT\cB$ along the geodesic $\gamma^{\cB}(t) = \Exp^{\cB}_b tw$ on $\cB$, with $\dot{\frJ}^\cB(0) = (b, \Delta_b, w, \Delta_w)\in \cT\cT\cB$, let $(x, \nu_{\frm}, v, \nu_{\frt})\in \cQ\cH\rM$ be the unique vector in $\cQ\cH\rM$ such that $d^2\qq (x, \nu_{\frm}, v, \nu_{\frt}) = (b, \Delta_b, v, \Delta_w)$, then
\begin{equation}J^{\cH}(t) = \ttH_{\gamma(t)}(\partial^{\cT\rM}_{x, v, \nu_{\frm},\nu_{\frt} +2\rA_{\nu_{\frm}}v}\gamma^{\cH})(t)
\end{equation}
  satisfies \cref{eq:JacoH} and thus $\frJ^{\cH}$ is the lift of the Jacobi field $\frJ^{\cB}$ from $\cT\cB$ to $\cH\rM$ with the given initial condition. The initial conditions could also be stated as
\begin{equation}\label{eq:submerse_Jac_C}
  \begin{gathered}
      J^{\cH}(0) = \nu_{\frm}\\
      (\ttH_x\nabla_{d/dt}J^{\cH})(0) = \nu_{\frt}+\GammaH(v, \nu_{\frm})_x = \rCa_{(x, v)}(\nu_{\frm}, \nu_{\frt})=:\nu_\frc
      \end{gathered}
\end{equation}
\end{theorem}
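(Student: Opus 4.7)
The plan is to build a geodesic variation on $\rM$ consisting of horizontal geodesics, whose projection via $\qq$ is a geodesic variation on $\cB$. Since $(x, v, \Delta_\frm, \Delta_\frt) \in \cT\cH\rM$, pick a smooth curve $s \mapsto (x(s), v(s))$ in $\cH\rM$ with $(x(0), v(0)) = (x, v)$ and velocity $(\Delta_\frm, \Delta_\frt)$ at $s = 0$. Because a geodesic on $\rM$ with horizontal initial velocity remains horizontal (the classical fact used to define horizontal lifts of geodesics from $\cB$), the variation $\alpha(s, t) := \Exp_{x(s)}(t v(s))$ is, for each fixed $s$, a horizontal geodesic, so $\qq \circ \alpha$ is a genuine geodesic variation on $\cB$. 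Setting $K(t) := \partial_s \alpha(s,t)|_{s=0}$, the embedded-case Jacobi proposition proved just above identifies $K(t)$ with $(\partial^{\cT\rM}_{x,v,\Delta_\frm,\Delta_\frt}\gamma)(t)$ and shows $K$ is a Jacobi field on $\rM$ with $K(0) = \Delta_\frm$ and $\dot K(0) = \Delta_\frt$. Since the vertical component of $K$ lies in $\ker d\qq$, we have $d\qq \cdot K = d\qq \cdot \ttH K$, and as $d\qq \cdot K$ is the Jacobi field on $\cB$ arising from $\qq \circ \alpha$, the horizontal projection $J^\cH(t) = \ttH_{\gamma(t)} K(t)$ is its horizontal lift along $\gamma$.

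The initial conditions follow by direct differentiation. Clearly $J^\cH(0) = \ttH_x \Delta_\frm = \nu_\frm$. Differentiating $J^\cH(t) = \ttH_{\gamma(t)} K(t)$ in $t$ and evaluating at $t = 0$ gives
\begin{equation*}
\dot J^\cH(0) = (\rD_v \ttH)_x K(0) + \ttH_x \dot K(0) = (\rD_v \ttH)_x \Delta_\frm + \ttH_x \Delta_\frt = \nu_\frt.
\end{equation*}
A short calculation using the derivative-of-projection trick verifies that the quadruple $\dot{\frJ}^\cH(t) = (\gamma(t), J^\cH(t), \dot\gamma(t), \dot J^\cH(t))$ satisfies the defining conditions \cref{eq:submerg_h} of $\cQ\cH\rM$ at the base point $(\gamma(t), J^\cH(t))$ for every $t$, and at $t = 0$ matches $\frjH(x, v, \Delta_\frm, \Delta_\frt)$ from \cref{prop:frjH}. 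The horizontal Jacobi equation $(\nabla^\cH_{d/dt})^2 J^\cH = \RcH_{J^\cH, \dot\gamma}\dot\gamma$ then reduces to the ordinary Jacobi equation for $d\qq \cdot J^\cH$ on $\cB$, combined with the already-established fact in \cref{theo:rsub} that $\nabla^\cH$ and $\RcH$ are horizontal lifts of $\nabla^\cB$ and the curvature tensor of $\cB$ along horizontal curves. The alternative form \cref{eq:submerse_Jac_C} drops out by expanding $\nabla_{d/dt}J^\cH(0) = \dot J^\cH(0) + \Gamma(v, \nu_\frm)_x$, applying $\ttH_x$, and using $\ttH_x \Gamma(v, \nu_\frm) = \GammaH(v, \nu_\frm) + (\rD_v\ttH)_x \nu_\frm$ from \cref{eq:gamma_sub} together with $\ttH_x \nu_\frt = \nu_\frt - (\rD_v\ttH)_x \nu_\frm$ to recover the $\rCa$ expression via \cref{lem:connHor}.

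For the converse direction, given a Jacobi field $\frJ^\cB$ on $\cT\cB$ with $\dot{\frJ}^\cB(0) = (b, \Delta_b, w, \Delta_w)$, the fiberwise bijectivity of $d^2\qq_{|\cH\rM} : \cQ_{(x,v)}\cH\rM \to \cT_{d\qq(x,v)}\cT\cB$ established in \cref{prop:QHM} uniquely recovers $(x, \nu_\frm, v, \nu_\frt) \in \cQ\cH\rM$, and the forward construction then produces the desired lift. The main obstacle I anticipate is careful bookkeeping around differentiating the $\cE$-valued projection $\ttH_{\gamma(t)} K(t)$: one must verify that the $(\rD_v\ttH)_x \Delta_\frm$ correction appearing in $\nu_\frt$ is precisely what forces $\dot{\frJ}^\cH(0)$ to land in the subbundle $\cQ\cH\rM$ rather than merely in the larger $\cT\cH\rM$, and one must cleanly separate the pointwise time derivative $\dot J^\cH$ from the covariant derivative $\nabla^\cH_{d/dt} J^\cH$ when matching the $\rCa$ form of the initial data.
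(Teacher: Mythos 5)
Your proposal is correct and follows essentially the same route as the paper: both reduce to the embedded-case Jacobi proposition, push the geodesic family forward through $\qq$ (your explicit horizontal geodesic variation $\alpha(s,t)=\Exp_{x(s)}(tv(s))$ is just the variation underlying the paper's chain-rule identity $\gamma^{\cB}=\qq\gamma$), identify $J^{\cH}=\ttH K$ as the horizontal lift of the resulting Jacobi field on $\cB$, and then obtain the differential equation from the lifting properties of $\nabla^{\cH}$ and $\RcH$ and the converse from the fiberwise bijectivity of $d^2\qq$ on $\cQ\cH\rM$. Your computations of $\dot J^{\cH}(0)$ and of the $\rCa$ form of the initial data match the paper's.
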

\begin{proof}It is clear $J^{\cH}(0) = \ttH_x\Delta_{\frm}$, set $J(t) =\partial^{T\rM}_{x, v, \Delta_{\frm}, \Delta_{\frt}}\gamma(t)$ then
  $$\dot{J}^{\cH}(0) = (\rD_{\dot{\gamma}(0)}\ttH)_x J(0) + \ttH_x J(0) = (\rD_v\ttH)_x\Delta_{\frm} + \ttH_x\Delta_t=\Delta_{\frt}$$
For the alternate initial condition \cref{eq:submerse_Jac_C}, if $\Gamma$ is the Christoffel function on $\rM$
  $$\begin{gathered}\ttH_x(\nabla_{\dot{\gamma}(0)}{J}^{\cH})(0) = \ttH_x (\dot{J}^{\cH}(0) + \Gamma(v, J^{\cH}(0))_x) =\\ \ttH_x\{(\rD_v\ttH)_x\Delta_{\frm} + \ttH_x\Delta_{\frt} + \Gamma(v, \ttH_x\Delta_{\frm})_x\}\\
=   (\rD_v\ttH)_x\Delta_{\frm} - (\rD_v\ttH)_x\ttH_x\Delta_{\frm}+ \ttH_x\Delta_{\frt} +\ttH_x\Gamma(v, \ttH_x\Delta_{\frm})_x
    =\\ \ttH_x\Delta_{\frt} + (\rD_v\ttH)_x\Delta_{\frm} + \GammaH(v, \ttH_x\Delta_{\frm})_x = \nu_{\frt} + \GammaH(v, \nu_{\frm})_x=\nu_{\frc}
    \end{gathered}
  $$
  For the differential equation, let $(\Delta_{\frm}, \Delta_{\frt})$ be a tangent vector to $\cH\rM$ at  $(x, v)\in \cH\rM$ and set $d\qq(x, v) = (b, w)\in \cT\cB$, $d^2\qq(x, v, \Delta_{\frm}, \Delta_{\frt}) = (b, w, \Delta_b, \Delta_w)$. We have $\gamma^{\cB} = \qq\gamma$, hence, the chain rule gives:
  $$(\gamma^{\cB}(t), \partial^{\cT\cB}_{b, w, \Delta_{b}, \Delta_{w}}\gamma^{\cB}(t)) =  d\qq (\gamma(t), \partial^{\cT\rM}_{x, v, \Delta_{\frm}, \Delta_{\frt}}\gamma(t))$$
  Thus, $d\qq$ maps the tangent vector $\partial^{\cT\rM}_{x, v, \Delta_{\frm}, \Delta_{\frt}}\gamma(t)$ at $\gamma(t)$  to $\partial^{\cT\cB}_{b, w, \Delta_{b}, \Delta_{w}}\gamma^{\cB}(t)$ at $\gamma^{\cB}(t)$, hence $J^{\cB}(t):=\partial^{\cT\cB}_{b, w, \Delta_{b}, \Delta_{w}}\gamma^{\cB}$ lifts horizontally to $J^{\cH}(t) :=\ttH_x\partial^{\cT\rM}_{x, v, \Delta_{\frm}, \Delta_{\frt}}\gamma$. Both $(\nabla^{\cB}_{\dot{\gamma}^{\cB}})^2{J}^{\cB}(t)$ and $\rR^{\cB}_{J^{\cB}(t), \dot{\gamma}^{\cB}(t)}\dot{\gamma}^{\cB}(t)$ are in $\cT\cB$, with lifts $(\nabla^{\cH}_{\dot{\gamma}^{\cH}})^2{J}^{\cH}(t)$ and $\rR^{\cH}_{J^{\cH}(t), \dot{\gamma}(t)}\dot{\gamma}(t)$ respectively. By linearity of the lift, we have the differential equation \cref{eq:submerse_Jac_C}. The argument shows $\frJ^{\cH}$ is the lift of $\frJ^{\cB}$. The remaining statements on the initial condition follows from the correspondence between $\frjH$ with the canonical flip on $\cT\cT\cB$ in \cref{prop:frjH}.
\end{proof}
\begin{example}
Continuing with flag manifolds, recall geodesics on $\SOO(n)$ are of the form $\gamma(U, \eta; t) = U\exp(tU^{\sfT}\eta)$ for $(U, \eta) \in \cT\SOO(n)$. With the initial data given by $\eta = UA, \nu_{\frm} = UC$, $\nu_{\frt} =U(\frac{1}{2}(AC + CA + [A, C]_{\frk}) + E)$ (for $UE = \ttH_x\nu_{\frt}=\nu_{\frc}$ in this case), then $\delta_{\frm} = UC, \delta_{t} = U(\frac{1}{2}(AC + CA + [C, A]_{\frk}) + E)$ with $A, C, E$ are horizontal, hence $J^{\cH}(t) = \ttH_{\gamma(t)}J(t)$ is
\begin{equation}\begin{gathered}
    U\exp(tA)\{\exp(-tA)C\exp(tA) + t\exp(-tA) \frL_{\exp}(tA, \frac{1}{2}[A, C]_{\frb} + E)\}_{\frb}=\\
    U\exp(tA)\{C+ \sum_{n=1}^{\infty} \frac{(-1)^{n-1}}{n!}t^n(\ad_A^{n-1} (\frac{1}{2}[A, C]_{\frb}+ E-[A, C]) \}_{\frb}
\end{gathered}    
\end{equation}
\end{example}
This could be generalized to naturally reductive homogeneous spaces in the next theorem. Let $\rM$ be a Lie group with Lie algebra $\frm$, identified with the tangent space at the identity of $\rM$. For $U\in \rM$, let $\cL_U$ be the left multiplication by $U$. From \cite{KobNom}, chapter 10, section 2, geodesics on a naturally reductive homogeneous space lifts to a one-parameter exponential family $U\exp(tA)$ for $U\in \rM, A\in \frb$.
\begin{theorem}\label{theo:jacobi_submerse_nat} If $\cB=\rM/\cK$ is a naturally reductive homogeneous space, with $\rM$ and $\cK$ are Lie groups with Lie algebras $\frm$ and $\frk$, with a decomposition $\frm = \frk \oplus \frb$ such that $[\frb, \frk]\subset \frb$ and an $\ad(\frk)$-invariant and naturally reductive metric $\langle\rangle_{\frb}$ on $\frb$. Thus, any element $S\in\frm$ has a decomposition $S = S_{\frb} + S_{\frk}$. Let $\gamma(t) = U\exp(tA)$ be a horizontal geodesic on $\rM$, the lift of a geodesic $\gamma^{\cB}(t)$ on $\cB$, with $U=\gamma(0)\in\rM$ and $A\in\frb$. A Jacobi field $\frJ^{\cB}$ along $\gamma^{\cB}$ lifts to a horizontal vector field $\frJ^{\cH}$ along $\gamma(t)$ on $\rM$. Assume $\frJ^{\cH}(0) = d\cL_UC$, $\ttH\nabla^{\cH}_{d/dt}\frJ^{\cH}(0)=d\cL_UE$ for $C, E\in \frb$, let  $F(t)$ be the function from $\R$ to $\frb$ such that $\frJ^{\cH}(t) = d\cL_{\gamma(t)}F(t)$, then $F(t)$ satisfies
    \begin{equation}\label{eq:FJacobi}
      \ddot{F}(t) + [A, \dot{F}(t)]_{\frb} - [A,[A,F(t)]_{\frk}] = 0
\end{equation}
and the lift of the Jacobi field is
\begin{equation}\label{eq:jafieldH} \frJ^{\cH}(t) =  d\cL_{\gamma(t)}\{C+ t\cZ(t\ad_A) (\frac{1}{2}[A, C]_{\frb}+ E-[A, C]) \}_{\frb}
\end{equation}
with $\cZ(x) = \frac{1-\exp(-x)}{x}=\sum_{n=0}^{\infty} \frac{(-1)^{n}}{(n+1)!}x^n$.
\end{theorem}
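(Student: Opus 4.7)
The plan is to reduce the Jacobi equation to the linear ODE \cref{eq:FJacobi} via the left-invariant trivialization $d\cL_{\gamma(t)}\colon\frb\to\cH_{\gamma(t)}\rM$, and then exhibit \cref{eq:jafieldH} as its unique solution with the given initial data.

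By \cref{theo:jacsub}, $\frJ^{\cB}$ lifts to a horizontal Jacobi field $\frJ^{\cH}$ along $\gamma(t)=U\exp(tA)$ satisfying $(\nabla^{\cH}_{d/dt})^{2}\frJ^{\cH}=\RcH_{\frJ^{\cH},\dot\gamma}\dot\gamma$, with $\dot\gamma(t)=d\cL_{\gamma(t)}A$. Since $d\cL_{\gamma(t)}$ carries $\frb$ bijectively onto $\cH_{\gamma(t)}\rM$, write $\frJ^{\cH}(t)=d\cL_{\gamma(t)}F(t)$ with $F(t)\in\frb$. Expanding $F$ in a basis of $\frb$ and applying the Leibniz rule together with the naturally reductive identity $\nabla^{\cH}_{\kappa_{X}}\kappa_{Y}=\tfrac12\kappa_{[X,Y]_{\frb}}$ (for $X,Y\in\frb$, recalled in the remark preceding the theorem) gives
\begin{equation*}
\nabla^{\cH}_{d/dt}\frJ^{\cH}(t)=d\cL_{\gamma(t)}\bigl(\dot F(t)+\tfrac12[A,F(t)]_{\frb}\bigr),
\end{equation*}
and since the parenthesised vector is again in $\frb$, iterating yields
\begin{equation*}
(\nabla^{\cH}_{d/dt})^{2}\frJ^{\cH}(t)=d\cL_{\gamma(t)}\bigl(\ddot F+[A,\dot F]_{\frb}+\tfrac14[A,[A,F]_{\frb}]_{\frb}\bigr).
\end{equation*}
This display also translates the initial data $\frJ^{\cH}(0)=d\cL_UC,\ \ttH\nabla^{\cH}_{d/dt}\frJ^{\cH}(0)=d\cL_UE$ into $F(0)=C$ and $\dot F(0)=E-\tfrac12[A,C]_{\frb}$.

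For the right-hand side, plug $\xi=d\cL_{\gamma(t)}F(t),\ \eta=\phi=d\cL_{\gamma(t)}A$ into \cref{eq:natret} (reinterpreting the dummy letters $A,B,C$ there as $F(t),A,A$). The term with $[A,A]$ vanishes, and the Lie-bracket antisymmetries $[[F,A]_{\frb},A]_{\frb}=[A,[A,F]_{\frb}]_{\frb}$ and $[[F,A]_{\frk},A]=[A,[A,F]_{\frk}]$ reduce the remainder to
\begin{equation*}
\RcH_{\frJ^{\cH},\dot\gamma}\dot\gamma=d\cL_{\gamma(t)}\bigl([A,[A,F]_{\frk}]+\tfrac14[A,[A,F]_{\frb}]_{\frb}\bigr).
\end{equation*}
Equating with the previous display and cancelling the common $\tfrac14[A,[A,F]_{\frb}]_{\frb}$ produces exactly \cref{eq:FJacobi}.

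Finally, to verify \cref{eq:jafieldH}, set $H:=\tfrac12[A,C]_{\frb}+E-[A,C]=E-\tfrac12[A,C]_{\frb}-[A,C]_{\frk}$ and $K(t):=t\cZ(t\ad_A)H$, so $F(t)=C+K(t)_{\frb}$. Term-by-term differentiation of the defining power series gives $\dot K(t)=e^{-t\ad_A}H$, whence $\ddot K=-[A,\dot K]$ and $\dot K+[A,K]=H$; the initial values $F(0)=C$ and $\dot F(0)=H_{\frb}=E-\tfrac12[A,C]_{\frb}$ follow at once. Splitting $\dot K=\dot K_{\frb}+\dot K_{\frk}$ and using $[\frb,\frk]\subset\frb$ to obtain $[A,\dot K]_{\frb}=[A,\dot K_{\frb}]_{\frb}+[A,\dot K_{\frk}]$, one computes $\ddot F+[A,\dot F]_{\frb}=-[A,\dot K_{\frk}]$; similarly $[A,F]_{\frk}=[A,C]_{\frk}+[A,K_{\frb}]_{\frk}$, so \cref{eq:FJacobi} is equivalent to $[A,\dot K_{\frk}+[A,C]_{\frk}+[A,K_{\frb}]_{\frk}]=0$. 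But projecting the identity $\dot K+[A,K]=H$ onto $\frk$ (noting $[A,K_{\frk}]\in\frb$) yields $\dot K_{\frk}+[A,K_{\frb}]_{\frk}+[A,C]_{\frk}=0$ outright, so the ODE holds and uniqueness of the linear Cauchy problem finishes the proof. The main obstacle is the systematic bookkeeping of $\frb/\frk$-projections in the bracket identities, in particular the exchange of projections with brackets involving $A$ enabled by $[\frb,\frk]\subset\frb$.
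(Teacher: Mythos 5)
Your proposal is correct and follows essentially the same route as the paper's proof: the same left-trivialization formulas for $\nabla^{\cH}_{d/dt}$ and its square (the paper's \cref{eq:nablaF} and \cref{eq:nbl2}), the same curvature evaluation on $(F,A,A)$, and the same power-series identities for $\cZ$. The only differences are organizational — you derive \cref{eq:FJacobi} first as the translated Jacobi equation (via \cref{eq:natret}) and then solve the linear Cauchy problem, whereas the paper substitutes the candidate $F$ directly and verifies \cref{eq:FJacobi} and the Jacobi equation for it simultaneously using the equivalent form \cref{eq:flag_curv}.
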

The case $C=A, E=0$ corresponds to the Jacobi field $d\cL_{\gamma(t)}A=\dot{\gamma}(t)$, the case $C=0, E= A$ corresponds to the Jacobi field $td\cL_{\gamma(t)}A = t\dot{\gamma}(t)$. Equation \ref{eq:FJacobi} appeared in \cite{Rauch,Chavel,Ziller}.  We will use the notations $\nabla_{d/dt}^{\cH}$ for $\ttH\nabla_{d/dt}$ and $\cZ_{x=P}=\cZ(P)$ for an operator $P$.
\begin{proof}  Let $F(t)$ be a smooth function from $\R$ to $\frb$. Then we have for $A \in \frb$
  \begin{equation}\label{eq:nablaF}
    \nabla^{\cH}_{d/dt}d\cL_{\gamma(t)}F(t) = d\cL_{\gamma(t)}\{\dot{F}(t) + \frac{1}{2}[A, F(t)]_{\frb}\}
  \end{equation}
  this follows from the fact that for a fixed $t$, $d\cL_{\gamma(s)} F(t)$ and $\dot{\gamma}(s)$ are invariant vector fields along $\gamma$ (in the variable $s$), hence, by the naturally reductive assumption $(\nabla_{d/ds}^{\cH}d\cL_{\gamma(s)}F(t))_{s=t} = \frac{1}{2}d\cL_{\gamma(t)}[A, F(t)]_{\frb}$. On the other hand, and at $s=t$, $d\cL_{\gamma(s)}(F(s)-F(t))$ is the zero tangent vector, thus $(\nabla^{\cH}_{d/ds}d\cL_{ \gamma(s)}(F(s)-F(t))_{s=t}  = \lim_{s\to t}d\cL_{ \gamma(s)}\frac{1}{s-t}(F(s)-F(t)) = d\cL_{\gamma(t)}\dot{F}(t)$, and together we have \cref{eq:nablaF}. Repeating this, we have
  \begin{equation}\label{eq:nbl2}
    (\nabla^{\cH})^2_{d/dt}\gamma(t)F(t) = d\cL_{\gamma(t)}\{\ddot{F}(t) + [A, \dot{F}(t)]_{\frb} + \frac{1}{4}[A, [A, F(t)]_{\frb}]_{\frb}\}
\end{equation}    
Now we use $F(t) = C + (t\cZ_{x=t\ad_A}G)_{\frb}$ where $G := \frac{1}{2}[A, C]_{\frb}+ E-[A, C]$. Differentiate $1-\exp(-x) = x\cZ(x)$ we have $\exp(-x) = \cZ(x) + x \cZ'(x)$ and
  $$\dot{F}(t) = (\cZ_{x=t\ad_A}G +t\ad_A\cZ'_{x=t\ad_A}G)_{\frb} = (\exp(-t\ad_A)G)_{\frb}$$
hence  $\ddot{F}(x) = -(\ad_A\exp(-t\ad_A)G)_{\frb}= - [A, (\exp(-t\ad_A)G)_\frb + (\exp(-t\ad_A)G)_\frk]_\frb$. Noting $G_{\frk} = -[A, C]_{\frk}$, $\exp(-tx) = 1-tx\cZ(tx)$
$$\begin{gathered}\ddot{F}(t) + [A, \dot{F}(t)]_{\frb} = -[A, (\exp(-t\ad_A)G)_{\frk}]_{\frb} =
  -[A, G_{\frk}-t(\ad_A\cZ_{x=t\ad_A}G)_{\frk}]_{\frb}\\=  [A[A, t\cZ_{x=t\ad_A}G+C]_{\frk}]
  = [A[A, (t\cZ_{x=t\ad_A}G)_{\frb}+C]_{\frk}]
\end{gathered}$$
which is \cref{eq:FJacobi} since $[A, t\cZ_{x=t\ad_A}G+C]_{\frk} = [A, (t\cZ_{x=t\ad_A}G)_{\frb}+C]_{\frk}$, because $[\frb, \frk]\subset\frb$ implies $[A, (t\cZ_{x=t\ad_A}G)_{\frk}]_{\frk}=0$.
From here, the inside of the curly brackets on the right-hand side of \cref{eq:nbl2} is
$$\begin{gathered}
  [A[A, (t\cZ_{x=t\ad_A}G)_{\frb}+ C]_{\frk}]_{\frb} + \frac{1}{4}[A, [A, (t\cZ_{x=t\ad_A}G)_{\frb} +C]_{\frb}]_{\frb}=\\
  \frac{3}{4}[A[A, (t\cZ_{x=t\ad_A}G)_{\frb}+ C]_{\frk}]_{\frb}   + \frac{1}{4}[A[A, (t\cZ_{x=t\ad_A}G)_{\frb} +C]]_{\frb}
\end{gathered}
$$
which is $\frac{3}{4}[A, [A, F(t)]_{\frk}] +\frac{1}{4}[A[A,F(t)]]_{\frb}$. On the other hand, from \cref{eq:flag_curv}, which, as mentioned, is equivalent to the curvature formula for naturally reductive homogeneous space in \cref{eq:natret}
$$\RcH_{J^{\cH}(t), \dot{\gamma}(t)}\dot{\gamma}(t) = \frac{1}{4}d\cL_{\gamma(t)}\{[[F(t), A], A]_{\frb} +2[[F(t), A]_{\frk}, A] - [[A, F(t)]_{\frk}, A]\}
$$
and the result follows from anticommutativity of Lie brackets.
\end{proof}
We finish this section with the well-known connection between Killing fields and Jacobi fields. For naturally reductive homogeneous spaces, some results in the next lemma are well-known (\cite{Ziller}) but the formula for the Jacobi field helps clarify them.

We assume the setup of \cref{theo:jacobi_submerse_nat}. If $X\in\frm$, then the Killing field or {\it isotropic Jacobi field} associated with $X$ is $d\cL_{\gamma(t)}(\exp(-t\ad_A)X)_{\frb}$, which is also a Jacobi field, if we substitute $C = X_{\frb}, E = -\frac{1}{2}[A, X_{\frb}]_{\frb} - [A, X_{\frk}]$ in \cref{eq:jafieldH} and note $\exp(-x) = 1 - x\cZ(x)$. A Jacobi field is called isotropic if it arises from $X\in\frm$ in this manner. We will use the notation $\frJ^{\cH}_{U, C, A, E}$ to denote the (lift of the) Jacobi field with initial condition $\frJ^{\cH}(0) = d\cL_U C$, $\nabla^{\cH}_{d/dt}\frJ^{\cH}(0) = d\cL_U E$ along the geodesic $\gamma(t) = \Exp_U (td\cL_UA)= U\exp(tA)$ for $A, C, E\in\frb$ and $U\in\rM$.
\begin{lemma}\label{lem:tech_zil} Let $\cB = \rM/\cK$ be a connected naturally reductive homogeneous space as in \cref{theo:jacobi_submerse_nat}.\hfill\break
  1) A Jacobi field $\frJ^{\cH}_{U, C, A, E}$ along the geodesic $\gamma(t) = \Exp_U (d\cL_UtA)$ is isotropic if and only if there is $X=X_{\frb} + X_\frk\in \frm$ such that $C = X_{\frb}, E =-\frac{1}{2}[A, C]_{\frb}-[A, X_{\frk}]$. In particular, $\frJ^{\cH}_{U, 0, A, E}$ for $C\in\frb$ is isotropic if and only if $E=-[A, D]$ for $D\in\frk$. Also, $\frJ^{\cH}_{U, C, A, -\frac{1}{2}[A, C]_\frb}$ is isotropic.
  \hfill\break 
  2) For all $A\in \frb$, the map $\ad_A^{\frb}: X\mapsto [A, X]_{\frb}$ from $\frb$ to itself is anti-self-adjoint under $\langle\rangle_{\frb}$, thus its eigenvalues are either zero or purely imaginary. If $\lambda\sqrt{-1}$ is a purely imaginary eigenvalue of $\ad_A^{\frb}$ as an operator on $\frb$, with eigenvector $V + \sqrt{-1}V_*$ with $V, V_*\in\frb$, then $[A,V] =-\lambda V_* +D_*, [A, V_*] = \lambda V + D$ for $D, D_*\in \frk$. If $[A,D_*] = [A, D] = 0$ then
    \begin{equation}\label{eq:zjac}
    (\cZ(t\ad_A) V)_{\frb} = \frac{\sin t\lambda}{t\lambda} V - t\frac{1- \cos t\lambda }{(t\lambda)^2}[A, V]_\frb
    \end{equation}
    Thus, $t(\cZ(t\ad_A) V)_{\frb}=0$ for $t = \frac{2k\pi}{\lambda}$, $k\in \Z$ and the Jacobi field $\frJ^{\cH}_{U, 0, A, V}$ vanishes at those points. In particular, if $[A, \frk] = 0$, this formula holds.
\end{lemma}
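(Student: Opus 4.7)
\medskip

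\noindent\emph{Proof proposal.} For part (1), the plan is to read off the initial data of the canonical isotropic family directly. If $X=X_\frb+X_\frk\in\frm$, set $F(t)=(\exp(-t\ad_A)X)_\frb$, so the isotropic Jacobi field associated to $X$ is $d\cL_{\gamma(t)}F(t)$. Clearly $F(0)=X_\frb$, so $C=X_\frb$. Applying \cref{eq:nablaF} at $t=0$ gives $E=\dot F(0)+\tfrac12[A,F(0)]_\frb$. Differentiating $\exp(-t\ad_A)X$ at $0$ yields $-[A,X]$, whose $\frb$-component, using $[\frb,\frk]\subset\frb$ so that $[A,X_\frk]\in\frb$, equals $-[A,X_\frb]_\frb-[A,X_\frk]$. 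Substituting gives $E=-\tfrac12[A,C]_\frb-[A,X_\frk]$, proving necessity. For sufficiency, given $(C,E)$ of this form, take $X:=C+X_\frk\in\frm$; the isotropic Jacobi field it produces has the same initial data as $\frJ^{\cH}_{U,C,A,E}$, hence agrees with it by uniqueness of solutions to \cref{eq:FJacobi}. The two special cases follow by choosing $X_\frb=0, X_\frk=D$ for the first, and $X_\frk=0$ for the second.

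For part (2), the anti-self-adjointness of $\ad_A^{\frb}:\frb\to\frb$ is a direct restatement of the naturally reductive condition \cref{eq:natural} (take $Z=A$); hence the complexified operator has purely imaginary spectrum. If $\lambda\sqrt{-1}$ is an eigenvalue with complex eigenvector $V+\sqrt{-1}V_*$ with $V,V_*\in\frb$, then separating real and imaginary parts of $\ad_A^{\frb}(V+\sqrt{-1}V_*)=\lambda\sqrt{-1}(V+\sqrt{-1}V_*)$ gives $[A,V]_\frb=-\lambda V_*$ and $[A,V_*]_\frb=\lambda V$, so $[A,V]=-\lambda V_*+D_*$ and $[A,V_*]=\lambda V+D$ for some $D,D_*\in\frk$.

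Now assume $[A,D]=[A,D_*]=0$. The key step is an induction showing
\begin{equation*}
(\ad_A^{2k}V)_\frb=(-1)^k\lambda^{2k}V,\qquad (\ad_A^{2k+1}V)_\frb=(-1)^k\lambda^{2k}[A,V]_\frb.
\end{equation*}
The inductive step tracks the $\frk$-contamination: at each step the $\frk$-component is a multiple of $D$ or $D_*$, and the hypothesis $[A,D]=[A,D_*]=0$ ensures it is killed by the next application of $\ad_A$, so the recursion closes on the two-dimensional span of $V,[A,V]_\frb$ after projection. Substituting into $\cZ(t\ad_A)V=\sum_{n\ge 0}\frac{(-1)^n}{(n+1)!}t^n\ad_A^nV$ and projecting to $\frb$, the even terms sum to $\sum_{k\ge 0}\frac{(-1)^k(t\lambda)^{2k}}{(2k+1)!}V=\frac{\sin(t\lambda)}{t\lambda}V$ and the odd terms sum to $-\frac{t}{(t\lambda)^2}\sum_{k\ge 0}\frac{(-1)^k(t\lambda)^{2k+2}}{(2k+2)!}[A,V]_\frb=-t\frac{1-\cos(t\lambda)}{(t\lambda)^2}[A,V]_\frb$, giving \cref{eq:zjac}.

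For the vanishing claim, both $\sin(t\lambda)$ and $1-\cos(t\lambda)$ vanish at $t=2k\pi/\lambda$, so $t(\cZ(t\ad_A)V)_\frb=0$ there; the statement about $\frJ^{\cH}_{U,0,A,V}$ then follows from \cref{eq:jafieldH} with $C=0,E=V$, which evaluates to $d\cL_{\gamma(t)}(t\cZ(t\ad_A)V)_\frb$. The case $[A,\frk]=0$ is automatic: any $D,D_*\in\frk$ satisfy $[A,D]=[A,D_*]=0$. The main obstacle is the bookkeeping in the induction, where the $\frk$-components must be written explicitly in terms of $D,D_*$ to ensure they are annihilated at the next step; once this recursion is in hand, everything else is series manipulation.
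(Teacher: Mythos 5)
Your proof is correct and follows essentially the same route as the paper's: part (1) by matching the initial data of the isotropic field $d\cL_{\gamma(t)}(\exp(-t\ad_A)X)_{\frb}$ against $(C,E)$ (you compute $E$ via \cref{eq:nablaF} and close sufficiency by ODE uniqueness, where the paper matches coefficients in \cref{eq:jafieldH} and substitutes directly — a cosmetic difference), and part (2) by the identical induction on powers of $\ad_A$ with the $\frk$-contamination killed by $[A,D]=[A,D_*]=0$, followed by the same even/odd series split.
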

\begin{proof} For 1), we compare the values and first time-derivatives of two functions 
  $\{C+ t\cZ_{x=t\ad_A} (\frac{1}{2}[A, C]_{\frb}+ E-[A, C]) \}_{\frb}$ and $(\exp(-t\ad_A)X)_{\frb}$ at $t= 0$ and get $C = X_{\frb}$ and $\frac{1}{2}[A, C]_{\frb}+ E-[A, C] =-[A, X]_{\frb}$, which gives us the relation between $X$, $C$ and $E$, and conversely, a direct substitution proves that the first function reduces to the second for $C$ and $E$ satisfying the conditions of 1. From here, when $C =0$, $E=-[A, X_\frk]$ and when $X_\frk=0$ we get $E=-\frac{1}{2}[A, C]_\frb$.

  For 2)For $A, B, C\in \frb$, from the naturally reductive assumption,
$$\langle [A, B]_{\frb}, C\rangle_{\frb} + \langle B, [A, C]_{\frb} \rangle_{\frb} = 0$$
  hence $\ad_A^{\frb}$ is anti-self-adjoint.
  
 If $\ad_A^{\frb}(V + \sqrt{-1}V_*) = \sqrt{-1}\lambda(V + \sqrt{-1}V_*)$, we have $[A, V]_{\frb} = - \lambda V_*$ and $[A, V_*]_{\frb} = \lambda V$, hence $[A, V] = -\lambda V_* + D_*$, and $[A, V_*] = \lambda V + D$, where $V, V_*\in \frb$, for some $D_*, D\in \frk$. We have
  $$[A, [A, V]] = -\lambda [A, V_*] + [A, D_*] = -\lambda^2 V + \lambda D$$
by assumption. By induction, $\ad_A^{2n}V = (-1)^n \lambda^{2n} V + c_nD$ for $c_n\in\R$, $\ad_A^{2n+1}V = (-1)^n \lambda^{2n}[A, V]$. Thus
$$(\cZ(t\ad_A) V)_{\frb} =\sum_{n=0}^{\infty}\frac{(-1)^n}{(2n+1)!}t^{2n}\lambda^{2n}V
- \sum_{n=0}^{\infty} \frac{(-1)^n}{(2n+2)!}t^{2n+1}\lambda^{2n}[A, V]_\frb 
$$
which gives us \cref{eq:zjac}.
\end{proof}
\begin{remark} In \cite{Ziller}, the author suggested that manifolds satisfying the condition
\begin{itemize}
\item[$(ZC)$] $\cB=\rM/\cK$ is a naturally reductive homogeneous space and all twice vanishing Jacobi fields of $\cB$ are isotropic.
\end{itemize}  
are locally symmetric spaces. The case of 3-symmetric naturally reductive manifolds was settled in \cite{Gonzalez}, where \cref{eq:zjac} appeared. Lemma \ref{lem:tech_zil} helps to show, for example, if $\frb$ contains an element $A$ such that $[A, \frk] = 0$ and $[A, \frb]_{\frb}\neq 0$ then $(ZC)$ is not satisfied, as in that case, $\ad_A^{\frb}$ must have an imaginary eigenvalue with eigenvector $V+\sqrt{-1}V_*$ satisfies \cref{eq:zjac}, and $\frJ^{\cH}_{U, 0, A, V}$ is isotropic by $(ZC)$. Hence $V = [A, D]=0$ for some $D\in\frk$ by 1). In particular, the Stiefel manifold $\SOO(n)/\SOO(n-p)$ with the bi-invariant metric on $\SOO(n)$ for integers $n>p$ does not satisfy $(ZC)$. In this case, $\cK=\SOO(n-p)\subset\rM=\SOO(n)$, with $\cK$ identified with the bottom right diagonal $(n-p)\times (n-p)$ block. Here, $\frm = \oo(n) = \frk\oplus\frb_0\oplus\frb_1$ where $\frk$ is formed by the bottom right $(n-p)\times(n-p)$ blocks, $\frb_0\subset\oo(n)$ formed by the top left $p\times p$ diagonal block, $\frb_1\subset \oo(n)$ is the space with those diagonal blocks vanishes and $\frb = \frb_0\oplus\frb_1$. Then $[\frb_0, \frk] = 0$ but $[\frb_0, \frb]_{\frb} \neq 0$.
\end{remark}  
\section{Natural metrics on tangent bundles}\label{sec:nat_metric}
In \cite{Sasaki}, Sasaki introduced a metric on the tangent bundle of a manifold which makes the bundle projection a submersion. Later works, including \cite{Dombrowski1962,KowalSeki}, clarify our understanding of this metric. In \cite{CheegerGromoll} Cheeger and Gromoll proposed a method to construct complete, non-negative metrics on vector bundles on compact homogeneous spaces (thus satisfies the condition of the {\it soul theorem}). This method was extended in \cite{TriMuss} to give a complete metric on tangent bundles of complete manifolds, which the authors named the Cheeger-Gromoll metric. Sasaki and Cheeger-Gromoll metrics are examples of natural metrics, their Levi-Civita connections and curvatures could be computed from those of the base manifold metric \cite{GudKap}. We will start with a general setup, then focus on a metric family inspired by \cite{Abbassi2005,BLW}, which includes both the Sasaki and Cheeger-Gromoll metrics.

The following proposition defines the vertical and horizontal spaces of the fibration $\pi: \cT\rM\to\rM$.
\begin{proposition} With the same notation as \cref{prop:TTM}, let $\pi:\cT\rM\to \rM$, $\pi: (x, v)\mapsto x$ with $(x, v)\in \cT\rM\subset\cE^2$ be the tangent bundle projection map. Its differential $d\pi:\cT\cT\rM\to \cT\rM$ maps $(x, v, \Delta_{\frm}, \Delta_{\frt})$ to $(x, \Delta_{\frm})$, thus its kernel, the {\bf $\pi$-vertical space} at $(x, v)$ consists of elements $(x, v, 0, \Delta_{\frt})\in\cE^4$, where $\Delta_{\frt}$ is tangent to $\rM$ at $x$ (i.e.\ $\Pi_x\Delta_{\frt} = \Delta_{\frt}$). Let $\rC:\cT\cT\rM\to \cT\rM$ be the connection map $(x, v, \Delta_{\frm}, \Delta_{\frt})\mapsto (x, \Delta_{\frt} + \Gamma(v, \Delta_{\frm})_x$ in \cref{lem:conn}. The {\bf $\pi$-horizontal space} at $(x, v)$, defined as the kernel of $\rC_{(x, v)}$ consists of vectors satisfying $\Delta_{\frt} + \Gamma(v, \Delta_{\frm})_x=0$. $\cT_{(x, v)}\cT\rM$ decomposes to vertical and horizontal components as follows:
  \begin{equation}\label{eq:TTM_decomp}
(\Delta_{\frm}, \Delta_{\frt}) = (0, \Delta_{\frt} + \Gamma(\Delta_{\frm}, v)_{x}) + (\Delta_{\frm}, -\Gamma(\Delta_{\frm}, v)_x)
  \end{equation}
\end{proposition}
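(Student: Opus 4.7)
The plan is to verify the three assertions---the formula for $d\pi$, the characterizations of the $\pi$-vertical and $\pi$-horizontal fibers, and the decomposition \cref{eq:TTM_decomp}---by direct translation of the definitions, using \cref{lem:conn} and the identity \cref{eq:mrGamma0} as the only nontrivial inputs.

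First I would observe that $\pi$ is the restriction to $\cT\rM\subset\cE^2$ of the linear projection $\cE^2\to\cE$, $(x,v)\mapsto x$. Since a tangent vector to $\cT\rM$ at $(x,v)$ is an element $(\Delta_{\frm},\Delta_{\frt})\in\cE^2$ by \cref{prop:TTM}, its differential $d\pi$ is the corresponding linear restriction and sends $(x,v,\Delta_{\frm},\Delta_{\frt})\mapsto (x,\Delta_{\frm})$. Setting $\Delta_{\frm}=0$ in \cref{eq:TTM}, the third constraint collapses to $\Pi_x\Delta_{\frt}=\Delta_{\frt}$, yielding the claimed description of $\ker(d\pi)_{(x,v)}$. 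For the horizontal space, \cref{lem:conn} supplies the formula $\rC_{(x,v)}(\Delta_{\frm},\Delta_{\frt}) = \Delta_{\frt}+\Gamma(\Delta_{\frm},v)_x$, whose kernel is cut out by $\Delta_{\frt}+\Gamma(\Delta_{\frm},v)_x=0$, matching the statement after noting that $\Gamma$ is symmetric on pairs of tangent vectors.

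Next I would verify the decomposition by checking that each summand is a genuine element of $\cT_{(x,v)}\cT\rM$ and lies in the asserted subspace, and that the two sum to $(\Delta_{\frm},\Delta_{\frt})$. The vertical summand $(0,\Delta_{\frt}+\Gamma(\Delta_{\frm},v)_x)$ is in $\ker d\pi$ tautologically, and it satisfies the tangency constraint of \cref{eq:TTM} because $\Delta_{\frt}+\Gamma(\Delta_{\frm},v)_x$ is tangent at $x$---it is the value of $\rC$, which lands in $\cT_x\rM$ by \cref{lem:conn}. The horizontal summand $(\Delta_{\frm},-\Gamma(\Delta_{\frm},v)_x)$ is annihilated by $\rC$ by construction; to confirm it lies in $\cT_{(x,v)}\cT\rM$ I must check that $(\rD_{\Delta_{\frm}}\Pi)v+\Pi(-\Gamma(\Delta_{\frm},v)_x) = -\Gamma(\Delta_{\frm},v)_x$, and this is exactly the identity $(\rD_{\Delta_{\frm}}\Pi)v+\Gamma(\Delta_{\frm},v) = \Pi\,\Gamma(\Delta_{\frm},v)$ recorded in \cref{eq:mrGamma0}. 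Adding the two summands recovers $(\Delta_{\frm},\Delta_{\frt})$ trivially.

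The only place where anything beyond bookkeeping is needed is the verification that the horizontal summand satisfies the tangency constraint; this is the main (and rather minor) obstacle, and it is resolved by the key identity \cref{eq:mrGamma0} relating $\Gamma$, $\mrGamma$, and $\rD\Pi$. Everything else is a direct reading of \cref{prop:TTM} and \cref{lem:conn}.
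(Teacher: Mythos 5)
Your proposal is correct and follows the same route as the paper, which simply asserts that the statements for $\pi$ and $d\pi$ are straightforward and that the first summand of \cref{eq:TTM_decomp} is visibly vertical and the second horizontal. You have merely filled in the details the paper omits, and your identification of \cref{eq:mrGamma0} as the identity needed to check that the horizontal summand satisfies the tangency constraint of \cref{eq:TTM} is exactly the right (and only nontrivial) ingredient.
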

\begin{proof}The statements for $\pi$ and $d\pi$ are straightforward. It is clear the first component of \cref{eq:TTM_decomp} is vertical and the second is horizontal.
\end{proof}
\begin{remark}\label{rem:rmv_rmh}
Consider $(x, v)\in \cT\rM$. If $\xi \in\cT_{x}\rM$, define the horizontal lift $\xi^{\rmh}\in \cT_{(x, v)}\cT\rM$ by $\xi^{\rmh}:=(\xi, -\Gamma(\xi, v)_x)$, which is in the horizontal space, the kernel of $\rC_{(x, v)}$. Define the vertical lift $\xi^\rmv$ by $\xi^\rmv:=(0, \xi)\in\cT_{(x, v)}\cT\rM$. Let $\ttX$ and $\ttY$ be two vector fields on $\rM$, we use the same notations $\rmh, \rmv$ to denote the corresponding lifts of vector fields. Applying derivative rules:
\begin{equation} (\rD_{\ttX^{\rmh}}\ttY^{\rmh})_{(x, v)} = ((\rD_{\ttX}\ttY)_{x}, (-\rD_{\ttX}\Gamma)(\ttY)_{x}+\Gamma(\ttY, \Gamma(\ttX, v))_{x} -\Gamma((\rD_{\ttX}\ttY))_{x}, v)
  \end{equation}
  \begin{equation} \rD_{\ttX^{\rmh}}\ttY^{\rmv} = (0, \rD_{\ttX}\ttY)
  \end{equation}
  \begin{equation} \rD_{\ttX^{\rmv}}\ttY^{\rmh} = (0, -\Gamma(\ttX, \ttY))
    \end{equation}
  \begin{equation} \rD_{\ttX^{\rmv}}\ttY^{\rmv} = 0
  \end{equation}
From here we have the Dombrowski Lie bracket relations, using our global formula for curvature
  \begin{equation} [\ttX^{\rmh}, \ttY^{\rmh}]_{(x, v)} = ([\ttX, \ttY]_{(x, v)}, \rR_{\ttX_x, \ttY_x}v -\Gamma([\ttX, \ttY], v)_{x})
  \end{equation}
  \begin{equation} [\ttX^{\rmh}, \ttY^{\rmv}] = (\nabla_{\ttX}\ttY)^{\rmv}
    \end{equation}
  \begin{equation} [\ttX^{\rmv}, \ttY^{\rmv}] = 0
    \end{equation}
\end{remark}
We now state a purely linear algebra lemma for metric and projection, which gives us the main idea of the natural metric construction
\begin{lemma}\label{lem:two_func} Let $\cE, \langle\rangle_{\cE}$ be an inner product space, and $\cT\subset\cE$, $\cT_2\subset\cE^2$ be a pair of vector subspaces of $\cE$ and $\cE^2$. Assume $f_1:\cE^2\to\cE, f_2:\cE^2\to\cE$ are two linear maps such that $f_1(\cT_2)=\cT, f_2(\cT_2) = \cT$  and $f_1\oplus f_2=[f_1^{\sfT}, f_2^{\sfT}]^{\sfT}$ is invertible in $\cE^2$ and restricts to a bijection between $\cT_2$ and $\cT^2$. Let $\sfg_1, \sfg_2$ be two positive-definite operators on $\cE$, and $\Pi_1, \Pi_2$ be the projections of $\cE$ to $\cT$ with respect to the inner products defined by these operators. Then the operator
  \begin{equation}\sfG:= \begin{bmatrix}f_1^{\sfT}, f_2^{\sfT}\end{bmatrix}\begin{bmatrix}\sfg_1 & 0 \\ 0 &\sfg_2\end{bmatrix}\begin{bmatrix}f_1\\ f_2\end{bmatrix}
\end{equation}        
  which maps $\tomega\in\cE^2$ to $f_1^{\sfT}\sfg_1 f_1\tomega + f_2^{\sfT}\sfg_2f_2\tomega$ is a positive definite operator, which defines an inner product on $\cE^2$. The projection to $\cT_2$ under $\sfG$ is given by
  \begin{equation}\Pi_{\sfG} = \begin{bmatrix}f_1\\ f_2\end{bmatrix}^{-1}\begin{bmatrix}\Pi_1 & 0 \\ 0 &\Pi_2\end{bmatrix}\begin{bmatrix}f_1\\ f_2\end{bmatrix}
  \end{equation}
or for $\tomega\in \cE^2$, $\Pi_{\sfG}\tomega = (f_1\oplus f_2)^{-1}(\Pi_1 f_1 \tomega, \Pi_2 f_2 \tomega)$. Under this metric, the kernels of $f_1$ and $f_2$ are orthogonal complement subspaces.
\end{lemma}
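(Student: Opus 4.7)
The plan is to treat $F := f_1 \oplus f_2 : \cE^2 \to \cE^2$ as a change of coordinates and reduce the three claims (positive-definiteness, the projection formula, orthogonality of the kernels) to one-line verifications in the "diagonal" coordinates given by $F$.

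First, for positive-definiteness, I would just unfold the definition:
\begin{equation*}
\langle \tomega, \sfG\tomega\rangle_{\cE^2} = \langle f_1\tomega, \sfg_1 f_1\tomega\rangle_{\cE} + \langle f_2\tomega, \sfg_2 f_2\tomega\rangle_{\cE} \geq 0,
\end{equation*}
with equality forcing $f_1\tomega = f_2\tomega = 0$, i.e.\ $F\tomega = 0$, hence $\tomega = 0$ since $F$ is invertible by assumption.

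The heart of the argument is the projection formula. The key observation is that for any $(a,b)\in\cE^2$, $F^{-1}(a,b)$ is by definition the unique vector $\nu\in\cE^2$ with $f_1\nu = a$ and $f_2\nu = b$; consequently $f_1\circ F^{-1}$ and $f_2\circ F^{-1}$ are the two coordinate projections on $\cE^2$. Using this, I would verify in turn: (i) the proposed map $\Pi_{\sfG}\tomega = F^{-1}(\Pi_1 f_1\tomega, \Pi_2 f_2\tomega)$ lands in $\cT_2$ because $(\Pi_1 f_1\tomega, \Pi_2 f_2\tomega)\in \cT\times\cT = \cT^2$ and $F^{-1}$ restricts to a bijection $\cT^2\to\cT_2$; (ii) idempotency follows from $f_i\Pi_{\sfG}\tomega = \Pi_i f_i\tomega$ together with $\Pi_i^2 = \Pi_i$; (iii) for any $\teta\in\cT_2$ I compute
\begin{equation*}
\langle \Pi_{\sfG}\tomega - \tomega, \sfG\teta\rangle_{\cE^2} = \sum_{i=1}^{2}\langle \Pi_i f_i\tomega - f_i\tomega, \sfg_i f_i\teta\rangle_{\cE} = 0,
\end{equation*}
since $f_i\teta\in\cT$ and $\Pi_i$ is the $\sfg_i$-projection onto $\cT$. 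Together, (i)--(iii) uniquely characterize $\Pi_{\sfG}$ as the $\sfG$-orthogonal projection onto $\cT_2$.

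Finally, for the last statement, orthogonality of $\ker f_1$ and $\ker f_2$ under $\sfG$ is immediate: if $\tomega_1\in\ker f_1$ and $\tomega_2\in\ker f_2$ then both summands in $\langle \tomega_1,\sfG\tomega_2\rangle_{\cE^2}$ vanish. For complementarity, $\ker f_1 \cap \ker f_2 = \ker F = \{0\}$, and given $\tomega\in\cE^2$ with $(a,b) := F\tomega$ the decomposition $\tomega = F^{-1}(a,0) + F^{-1}(0,b)$ writes $\tomega$ as a sum of an element of $\ker f_2$ and an element of $\ker f_1$, proving $\cE^2 = \ker f_1 \oplus \ker f_2$. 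There is no real obstacle; the only subtle point is remembering that $\Pi_i$ is the projection with respect to $\sfg_i$ (not the Euclidean one), which is exactly what is needed in step (iii) above.
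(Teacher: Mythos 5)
Your proposal is correct and follows essentially the same route as the paper: positive-definiteness from invertibility of $f_1\oplus f_2$, membership of the image in $\cT_2$ from the bijection $\cT^2\to\cT_2$, the projection property verified by expanding $\langle\Pi_{\sfG}\tomega,\sfG\teta\rangle_{\cE^2}$ blockwise and using $f_i\teta\in\cT$ together with the defining property of the $\sfg_i$-projections, and orthogonality of the kernels by the same direct computation. The only difference is cosmetic: for complementarity the paper counts dimensions via rank--nullity, whereas you exhibit the explicit decomposition $\tomega=F^{-1}(a,0)+F^{-1}(0,b)$, which is slightly cleaner.
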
  
\begin{proof}It is clear from the bijective assumption of $(f_1\oplus f_2)$ that $\sfG$ is positive-definite, and it is clear from construction that the image of $\Pi_{\sfG}$ is in $\cT_2$. To show $\Pi_{\sfG}$ is the projection, note for $\tomega\in \cE^2$ and $\teta\in \cT_2$
  $$\begin{gathered}\langle \Pi_{\sfG}\omega, \sfG \teta\rangle_{\cE^2} = \langle\sfG \Pi_{\sfG}\omega,  \teta\rangle_{\cE^2} = \langle \begin{bmatrix}f_1^{\sfT}, f_2^{\sfT}\end{bmatrix}\begin{bmatrix}\sfg_1\Pi_1 & 0 \\ 0 &\sfg_2\Pi_2\end{bmatrix}\begin{bmatrix}f_1\\ f_2\end{bmatrix}\tomega, \teta\rangle_{\cE^2} = \\
       \langle \sfg_1\Pi_1f_1\tomega, f_1\teta \rangle_{\cE} + \langle \sfg_2\Pi_2f_2\tomega, f_2\teta\rangle_{\cE}=
      \langle f_1\tomega, \sfg_1\Pi_1f_1\teta \rangle_{\cE} + \langle f_2\tomega, \sfg_2\Pi_2 f_2\teta\rangle_{\cE}\\=      \langle f_1\tomega, \sfg_1 f_1\teta \rangle_{\cE} + \langle f_2\tomega, \sfg_2 f_2\teta\rangle_{\cE}       
\end{gathered}        
  $$
  where aside from the abuse of the notation to write operator expressions in matrix notation, we use the facts that $\sfG$, $\sfg_1\Pi_1, \sfg_2\Pi_2$ are self-adjoint together with  $f_i\teta\in \cT$ for $i=1,2$. The last expression is $\langle \tomega, \sfG\teta\rangle_{\cE^2}$. Since $f_1\oplus f_2$ is a bijection, $f_1$ and $f_2$ are surjective onto $\cE$. Thus, by the rank-nullity theorem, the kernels of $f_1$ and $f_2$ each has dimension $\dim\cE$. If $\tomega_1\in \Null(f_1), \tomega_2\in \Null(f_2)$ then
$$\langle \tomega_1,\sfG \tomega_2\rangle_{\cE^2}
=\langle f_1\tomega_1, \sfg_1 f_1\tomega_2 \rangle_{\cE} + \langle f_2\tomega_1,  \sfg_2f_2\tomega_2\rangle_{\cE}=0
$$
Thus $\Null(f_1)$ is orthogonal to $\Null(f_2)$, therefore $\cE^2=\Null(f_1)\oplus \Null(f_2)$
\end{proof}  
If $\omega\in \cE$, we will use the notation $\Gamma[\omega]$ to denote the operator from $\rM$ to $\fL(\cE, \cE)$, mapping $x\in \rM$ to the operator $\eta\mapsto \Gamma[\omega]_{x}\eta :=\Gamma(\eta, \omega)_{x}$. For each $x$, $\Gamma[\omega]_{x}$ is a linear map from $\cE$ to itself so we can define the adjoint $\Gamma^{\sfT}[\omega]_{x}$ under $\langle\rangle_{\cE}$, and we define the operator-valued function $\Gamma^{\sfT}[\omega]: x\to \Gamma^{\sfT}[\omega]_x$ from $\rM$ to $\fL(\cE, \cE)$. For a vector field $\ttX$ we use the notations $\Gamma[\ttX]$ and $\Gamma[\ttX]_{|x}$ to define the operator-valued function evaluated as $\Gamma(\nu, \ttX_{x})_{x}$ for $x\in \rM, \nu\in \cE$. We will use block matrix notation to define operators on $\cE^2$ to $\cE^2$, where an operator block $\begin{bmatrix} A & B \\ C & D\end{bmatrix}$ acts on $\tDelta = (\Delta_{\frm}, \Delta_{\frt})$ by $(A \Delta_{\frm} + B \Delta_{\frt}, C\Delta_{\frm} + D\Delta_{\frt})$ (thus we think of $\tDelta$ as a vertical vector for the operator action but write it in horizontal form for convenience). We use the subscript $(x, v)$ to denote the value of a function at point $(x, v)\in \cE^2$, for example, $\sfG_{(x, v)}$ or $\hsfg_{(x, v)}$ below.      
  \begin{theorem}\label{theo:SCGMT}Let $(\rM, \sfg, \cE)$ be an embedded ambient structure of a manifold $\rM$. Let $\hsfg$ be a positive-definite operator-valued function from $\cT\rM$ to $\cE$. Let $\sfG$ be the operator-valued function from $\cT\rM$ to $\fL(\cE^2, \cE^2)$ defined by
  \begin{equation}\label{eq:sfG_metric}
    \begin{gathered}\sfG_{(x, v)} =  \begin{bmatrix} \dI  & \Gamma^{\sfT}[v]_{x}\\
0 & \dI
  \end{bmatrix}\begin{bmatrix} \sfg_x & 0 \\ 0 & \hsfg_{(x, v)}\end{bmatrix} \begin{bmatrix} \dI  & 0\\
\Gamma[v]_{x} & \dI
  \end{bmatrix}
  \end{gathered}
  \end{equation}
for $(x, v) \in \cT\rM$. Then $(\cT\rM, \sfG, \cE^2)$ is an embedded ambient structure, and
\begin{equation}\begin{gathered}\sfG^{-1}_{(x, v)} =
    \begin{bmatrix} \dI  & 0\\
-\Gamma[v]_{x} & \dI
    \end{bmatrix}
    \begin{bmatrix}\sfg^{-1}_x & 0 \\
      0 & \hsfg^{-1}_{(x, v)}
      \end{bmatrix}
    \begin{bmatrix} \dI  & -\Gamma^{\sfT}[v]_{x}\\
0 & \dI
  \end{bmatrix}
  \end{gathered}
\end{equation}
The metric induced by $\sfG$ is natural, this means for $(\Delta_{\frm}, \Delta_{\frt}) \in \cT_{(x, v)}\cT\rM$, the vertical component $(0, \Delta_{\frt} + \Gamma(\Delta_{\frm}, v)_x$ and horizontal component $(\Delta_{\frm}, -\Gamma(\Delta_{\frm}, v)_x$ are orthogonal and $\pi:\cT\rM\to\rM$ is a Riemannian submersion. Let $\Pi_{\sfg}$ and $\Pi_{\hsfg}$ be the projections corresponding to $\sfg$ and $\hsfg$, then the projection $\Pi_{\sfG}$ from $\cE^2$ to $\cT_{(x, v)}\cT\rM$ corresponding to $\sfG$ is
    \begin{equation}\label{eq:bundl_proj}
      \Pi_{\sfG,(x, v)} = \begin{gathered}
        \begin{bmatrix} \dI  & 0\\
-\Gamma[v]_{x} & \dI
  \end{bmatrix}\begin{bmatrix} \Pi_{\sfg, x} & 0 \\ 0 & \Pi_{\hsfg, x, v}\end{bmatrix}
        \begin{bmatrix} \dI  & 0\\
\Gamma[v]_x & \dI
        \end{bmatrix} 
        \end{gathered}
    \end{equation}
In other words, for ${\tomega} = (\omega_x, \omega_v)\in \cE^2$, its projection to $\cT_{(x, v)}\cT\rM$ is
    \begin{equation}\label{eq:bundl_proj2}
      \Pi_{\sfG, (x, v)}(\omega_{\frm}, \omega_{\frt})  = (\Pi_{\sfg, x}\omega_{\frm},
        (-\Gamma[v]_{x}\Pi_{\sfg, x} + \Pi_{\hsfg, x, v}\Gamma[v]_{x})\omega_{\frm} + \Pi_{\hsfg, x, v}\omega_{\frt})
\end{equation}
A Christoffel function $\Gamma_{\sfG}$, evaluated at $\txi = (\xi_x, \xi_v)$, $\teta = (\xi_x, \xi_v)$ in $\cE^2$ is given by $-\rD_{\txi}\Pi_{\sfG}\teta + \mrGamma_{\sfG}(\txi, \teta)$ with
    \begin{equation}
      \begin{gathered}
      \mrGamma_{\sfG}(\txi, \teta) = \Pi_{\sfG}\sfG^{-1}\rK_{\sfG}(\txi, \teta)\\
\rK_{\sfG}(\txi, \teta) = \frac{1}{2}
  ((\rD_{\txi}\sfG)\teta + (\rD_{\teta}\sfG)\txi -\cX_{\sfG}(\txi, \teta))
    \end{gathered}
    \end{equation}
    where $\cX_{\sfG}(\txi, \teta)$ satisfies $\langle \tphi, \cX_{\sfG}(\txi, \teta)\rangle_{\cE^2} = \langle\txi, \rD_{\tphi}\sfG\teta\rangle_{\cE^2}$ for all tangent vectors $\txi, \teta, \tphi$ of $\cT\rM$.
\end{theorem}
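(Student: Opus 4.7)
The plan is to read the statement as an instance of \cref{lem:two_func} bolted onto the tangent bundle, then invoke \cref{prop:Levi} for the Christoffel function.

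First, I would dispatch positive-definiteness and the inverse formula by reading the definition as a congruence: \cref{eq:sfG_metric} presents $\sfG_{(x,v)} = M^{\sfT} D M$ where
\[
M = \begin{bmatrix} \dI & 0 \\ \Gamma[v]_x & \dI \end{bmatrix}, \qquad D = \begin{bmatrix} \sfg_x & 0 \\ 0 & \hsfg_{(x,v)} \end{bmatrix}.
\]
Block-triangularity with identity diagonal makes $M$ invertible with $M^{-1} = \begin{bmatrix} \dI & 0 \\ -\Gamma[v]_x & \dI \end{bmatrix}$, and $D$ is block-diagonal positive-definite, so $\sfG$ is positive-definite. The claimed $\sfG^{-1}$ is then $M^{-1} D^{-1} M^{-\sfT}$, which is exactly the displayed formula. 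Smoothness in $(x,v)$ is inherited from smoothness of $\sfg$, $\hsfg$, and $\Gamma$.

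Next I would apply \cref{lem:two_func} with $\cE \leftsquigarrow \cE$, $\cT \leftsquigarrow \cT_x\rM$, $\cT_2 \leftsquigarrow \cT_{(x,v)}\cT\rM \subset \cE^2$, the operators $\sfg_1 = \sfg_x, \sfg_2 = \hsfg_{(x,v)}$, and the two linear maps $f_1(\omega_\frm, \omega_\frt) = \omega_\frm$, $f_2(\omega_\frm, \omega_\frt) = \Gamma[v]_x\omega_\frm + \omega_\frt$. Note $f_1 \oplus f_2 = M$ is invertible on $\cE^2$. On $\cT_2$, $f_1$ is $d\pi$ and $f_2$ coincides with the connection map $\rC_{(x,v)}$, and \cref{lem:conn} gives that $(f_1, f_2)$ restricts to a bijection $\cT_2 \to (\cT_x\rM)^2$; in particular $f_1(\cT_2) = f_2(\cT_2) = \cT_x\rM$. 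The hypotheses of \cref{lem:two_func} are met, and the conclusion of that lemma is precisely our $\sfG$, with projection
\[
\Pi_{\sfG,(x,v)} = M^{-1}\begin{bmatrix} \Pi_{\sfg,x} & 0 \\ 0 & \Pi_{\hsfg,(x,v)} \end{bmatrix} M,
\]
which unfolds to \cref{eq:bundl_proj} and component-wise to \cref{eq:bundl_proj2}. This simultaneously confirms $(\cT\rM, \sfG, \cE^2)$ is an embedded ambient structure (since $\Pi_{\sfG}$ restricted to $\cT_2$ is the identity).

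Naturality follows from the orthogonal kernel statement in \cref{lem:two_func}: $\Null(f_1) \cap \cT_2$ is exactly the $\pi$-vertical subspace $\{(0,\Delta_\frt) : \Pi_x \Delta_\frt = \Delta_\frt\}$, while $\Null(f_2) \cap \cT_2$ is exactly the $\pi$-horizontal subspace $\{(\Delta_\frm, -\Gamma(\Delta_\frm,v)_x)\}$. Hence the decomposition \cref{eq:TTM_decomp} is $\sfG$-orthogonal. For the Riemannian submersion claim, apply $M$ to a horizontal lift $\xi^{\rmh} = (\xi, -\Gamma(\xi,v)_x)$ to obtain $(\xi, 0)$; then $\langle \xi^{\rmh}, \sfG \eta^{\rmh}\rangle_{\cE^2} = \langle \xi, \sfg_x \eta\rangle_\cE$, showing $d\pi$ is an isometry on the horizontal subbundle.

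Finally, the Christoffel-function formula is simply \cref{prop:Levi} applied to the ambient structure $(\cT\rM, \sfG, \cE^2)$: a choice of $\cX_{\sfG}$ satisfying $\langle \tphi, \cX_{\sfG}(\txi,\teta)\rangle_{\cE^2} = \langle \txi, (\rD_{\tphi}\sfG)\teta\rangle_{\cE^2}$ for tangent $\txi,\teta,\tphi$ feeds into \cref{eq:GammaForm} and yields $\Gamma_{\sfG}(\txi,\teta) = -(\rD_{\txi}\Pi_{\sfG})\teta + \mrGamma_{\sfG}(\txi,\teta)$ with the stated $\mrGamma_{\sfG} = \Pi_{\sfG}\sfG^{-1}\rK_{\sfG}$; no further verification is required. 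The only step that needs any care is recognizing that $f_2$, while defined a priori on all of $\cE^2$, restricts to the intrinsic connection map on $\cT_2$; once that is noted, every other piece is linear algebra or a citation.
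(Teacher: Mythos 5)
Your proposal is correct and follows essentially the same route as the paper: instantiate \cref{lem:two_func} with $f_1=d\pi$, $f_2=\rC_{(x,v)}$ (extended to $\cE^2$ as $(\omega_\frm,\omega_\frt)\mapsto\Gamma[v]_x\omega_\frm+\omega_\frt$), read off positive-definiteness, the inverse, the projection, and the orthogonality of the kernels, then cite \cref{prop:Levi} for the Christoffel function. Your added details — the explicit congruence $M^{\sfT}DM$ for the inverse and the check that $M$ sends $\xi^{\rmh}$ to $(\xi,0)$ for the submersion claim — are exactly what the paper's terser proof leaves implicit.
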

Thus, $\sfG$ is diagonal after being transformed by the combination of $\pi$ and the connection map. Direct calculations show $\Pi_{\sfG}\sfG^{-1}$, $\rD_{\txi}\Pi_{\sfG}$ and $\Pi_{\sfG}\sfG^{-1}\rD_{\txi}\sfG$ also have simpler forms after that transformation. For computational purposes, $\rD_{\txi}\sfG$ can be evaluated by block, and $\cX_{\sfG}$ can be evaluated from index-raising expressions of derivatives of $\sfg, \hsfg$ and $\Gamma$.

\begin{proof} Applying \cref{lem:two_func} with $\cT = \cT_x\rM$, $\cT_2= \cT_{(x, v)}\cT\rM$, $f_1 = (d\pi)_{(x, v)}$ and $f_2$ the connection map $\rC_{(x, v)}$, we get the first four equations in the theorem, the kernels of $d\pi$ and of the connection map are orthogonal, and it is clear $\pi$ is a submersion from the expression of $\sfG$. The remaining statements about the Christoffel function follows from \cref{prop:Levi}.
\end{proof}
There are choices of $\hsfg$ such that the projection $\Pi_{\hsfg}$ is the same as $\Pi_{\sfg}$. While this could be done more generally, we focus on a subfamily of the $g$-natural metrics in \cite{KowalSeki,Abbassi2005}, that contains both the Sasaki and the Cheeger-Gromoll metric.

We denote $\alpha(t) = \alpha_t, \beta(t) =\beta_t$ for scalar functions $\alpha$ and $\beta$ in the following.
\begin{proposition}\label{prop:nat_TTM2} Let $\alpha$ be a smooth, positive scalar function from $\R_{\geq 0}$ to $\R_{> 0}$ and $\beta$ is a non-negative smooth function from $\R_{\geq 0}$ to $\R_{\geq 0}$ . Define the operator-valued function $\hsfg:\cT\rM\mapsto \fL(\cE, \cE)$ for $\omega\in\cE$ by:
\begin{equation}\label{eq:BLW}
\hsfg_{(x, v)}\omega =  \alpha_{\|v\|^2_{\sfg, x}}\sfg_x\omega + \beta_{\|v\|^2_{\sfg, x}} \langle v,\omega\rangle_{\sfg, x}\ \sfg_x v
\end{equation}
then  $\Pi_{\sfg, x}$ is the projection of $\hsfg_{(x, v)}$. The inverse of $\hsfg$ is given by
\begin{equation}\label{eq:hsfginv}
  \hsfg_{(x, v)}^{-1}\omega = \frac{1}{\alpha_{\|v\|^2_{\sfg, x}}}\sfg_x^{-1}\omega -\frac{\beta_{\|v\|^2_{\sfg, x}}}{\alpha_{\|v\|^2_{\sfg, x}}(\alpha_{\|v\|^2_{\sfg, x}} +\beta_{\|v\|^2_{\sfg, x}}\|v\|_{\sfg, x}^2)} \langle v, \omega\rangle_{\cE}v
\end{equation}
In that case, if $\Gamma$ is a Christoffel function of $\sfg$, and $\nabla$ is the Levi-Civita connection associated with $\sfg$, for three vector fields $\ttX, \ttY, \ttZ$ of $\rM$ we have
\begin{equation}\label{eq:hsfg_compa}
  \rD_{\ttX^{\rmh}}\langle \ttY\circ\pi, \hsfg(\ttZ\circ\pi)\rangle_{\cE} =
\langle \nabla_{\ttX}\ttY, \hsfg\ttZ\rangle_{\cE} + \langle \ttY, \hsfg\nabla_{\ttY}\ttZ\rangle_{\cE}
\end{equation}
Recall the horizontal lift of a tangent vector $\delta$ at $x$ to $(x, v)$ is given by $\delta^{\rmh}=(\delta, -\Gamma(\delta, v)_x)$, the vertical lift is $\delta^{\rmv} = (0, \delta)$, and the connection map is $\rC_{(x, v)}\txi = \xi_{\frt} +\Gamma(\xi_{\frm}, v)_x$ for a tangent vector $\txi\in \cT_{(x, v)}\cT\rM$. A Christoffel function $\Gamma_{\sfG}$ of $\sfG$ could be evaluated at two tangent vectors $\txi =(\xi_{\frm}, \xi_{\frt}), \teta = (\eta_{\frm}, \eta_{\frt})$ at $(x, v)$ as
  \begin{equation}\label{eq:GammasfGDec}
    \Gamma_{\sfG}(\txi, \teta) =
    \Gamma_{\sfG}(\xi_{\frm}^{\rmh}, \eta_{\frm}^{\rmh}) +
    \Gamma_{\sfG}(\xi_{\frm}^{\rmh}, (\rC\teta)^{\rmv}) +
\Gamma_{\sfG}((\rC\txi)^{\rmv}, \eta_{\frm}^{\rmh}) +
\Gamma_{\sfG}((\rC\txi)^{\rmv}, (\rC\teta)^{\rmv})
  \end{equation}
  \begin{equation}\label{eq:GammasfGparts}
 \begin{gathered}
       \Gamma_{\sfG}(\xi_{\frm}^{\rmh}, \eta_{\frm}^{\rmh}) = (\Gamma(\xi_{\frm}, \eta_{\frm}), -\Gamma(\Gamma(\xi_{\frm},\eta_{\frm}), v) + (\rD_{\xi_{\frm}}\Gamma)(\eta_{\frm}, v)-\\\Gamma( \eta_{\frm}, \Gamma(\xi_{\frm}, v)) + \frac{1}{2}\rR_{\xi_{\frm}, \eta_{\frm}}v)\\
\Gamma_{\sfG}(\xi_{\frm}^{\rmh}, (\rC\teta)^{\rmv}) =
(-\frac{\alpha}{2}\rR_{v, \rC\teta}\xi_{\frm}, \frac{\alpha}{2}\Gamma(\rR_{v, \rC\teta}\xi_{\frm}, v) +
\Gamma(\xi_{\frm}, \rC\teta))\\
\Gamma_{\sfG}((\rC\txi)^{\rmv}, \eta_{\frm}^{\rmh}) = (-\frac{\alpha}{2}\rR_{v,\rC\txi}\eta_{\frm}, \frac{\alpha}{2}\Gamma(\rR_{v,\rC\txi}\eta_{\frm}, v)
+\Gamma(\rC\txi, \eta_{\frm}))\\
\Gamma_{\sfG}((\rC\txi)^{\rmv}, (\rC\teta)^{\rmv}) = (0,\frac{\alpha'}{\alpha}(\|v\|_{\sfg}^2) \{\langle v, \sfg\rC\teta\rangle_{\cE} \rC\txi + \langle v,\sfg \txi\rangle_{\cE}\rC\teta\} + \rF v)
    \end{gathered}
  \end{equation}
where the operator-valued functions $\Gamma$, its directional derivatives, $\rR$ and $\sfg$ are evaluated at $x$, $\rC$ is evaluated at $(x, v)$ and the functions $\alpha, \alpha', \beta, \beta'$ are evaluated at $\|v\|_{\sfg}^2$ and $\rF := (\alpha + \|v\|_{\sfg}^2\beta)^{-1}\{(\beta-\alpha')\langle\rC\txi, \rC\teta\rangle_{\sfg} + (\beta'-2\frac{\alpha'}{\alpha}\beta)\langle v, \rC\txi)\rangle_{\sfg}\langle v, \rC\teta)\rangle_{\sfg}\}$.
\end{proposition}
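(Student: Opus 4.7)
The plan is to establish the five assertions of the proposition in the order stated, moving from purely algebraic statements about $\hsfg$ to the Koszul-style computation of $\Gamma_\sfG$.

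For the claim that $\Pi_{\sfg,x}=\Pi_{\hsfg,(x,v)}$, I would note that because $v\in\cT_x\rM$, both $\sfg_x\eta$ and $\sfg_x v$ lie in $\sfg_x(\cT_x\rM)$ for any tangent vector $\eta$, hence so does $\hsfg_{(x,v)}\eta=\alpha\sfg_x\eta+\beta\langle v,\eta\rangle_{\sfg,x}\sfg_x v$. The defining identity $\langle\omega-\Pi_{\sfg,x}\omega,\sfg_x\xi\rangle_\cE=0$ for $\xi\in\cT_x\rM$ then gives $\langle\omega-\Pi_{\sfg,x}\omega,\hsfg_{(x,v)}\eta\rangle_\cE=0$, so by \cref{lem:projprop}, $\Pi_{\sfg,x}$ is also the $\hsfg$-projection. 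The inversion formula \cref{eq:hsfginv} is a Sherman--Morrison update of $\alpha\sfg$ along the rank-one direction $\sfg v$, verified by direct substitution.

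For the compatibility \cref{eq:hsfg_compa}, the key observation is that $\|v\|^2_{\sfg,x}$ is constant along every horizontal lift. Applying the pointwise identity $\rD_\ttX\sfg(\xi,\eta)=\langle\Gamma(\ttX,\xi),\sfg\eta\rangle_\cE+\langle\xi,\sfg\Gamma(\ttX,\eta)\rangle_\cE$ (which is metric-compatibility of $\nabla$ rewritten in the ambient formalism, cf.\ \cref{prop:gauss_cod}) with $\xi=\eta=v$, together with $\ttX^\rmh=(\ttX,-\Gamma(\ttX,v))$, yields $\rD_{\ttX^\rmh}\|v\|^2_{\sfg,x}=0$. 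Thus $\alpha,\beta$ are constant along $\ttX^\rmh$, and the identity reduces to two pieces: the $\alpha$-part reduces to the standard compatibility $\rD_\ttX\langle\ttY,\sfg\ttZ\rangle=\langle\nabla_\ttX\ttY,\sfg\ttZ\rangle+\langle\ttY,\sfg\nabla_\ttX\ttZ\rangle$, while the $\beta$-part reduces to $\rD_{\ttX^\rmh}\langle v,\sfg\ttZ\rangle_\cE=\langle v,\sfg\nabla_\ttX\ttZ\rangle_\cE$, which is another application of the same algebraic identity with the $-\Gamma(\ttX,v)$ term in the horizontal lift cancelling the $\langle\Gamma(\ttX,v),\sfg\ttZ\rangle_\cE$ contribution from $\rD_\ttX\sfg$.

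The decomposition \cref{eq:GammasfGDec} is immediate from bilinearity of $\Gamma_\sfG$ and the trivial identity $\txi=\xi_\frm^\rmh+(\rC\txi)^\rmv=(\xi_\frm,-\Gamma(\xi_\frm,v))+(0,\xi_\frt+\Gamma(\xi_\frm,v))$. For the four explicit pieces of \cref{eq:GammasfGparts}, I would extend $\xi_\frm$, $\eta_\frm$, $\rC\txi$, $\rC\teta$ to local vector fields on $\rM$ and apply the Koszul formula for the Levi-Civita connection of $\sfG$, testing against horizontal and vertical lifts. The Dombrowski bracket identities in \cref{rem:rmv_rmh}, the compatibility \cref{eq:hsfg_compa}, and the orthogonality relations $\langle\ttX^\rmh,\ttA^\rmv\rangle_\sfG=0$, $\langle\ttA^\rmv,\ttE^\rmv\rangle_\sfG=\langle\ttA,\hsfg\ttE\rangle_\cE$ at $(x,v)$ reduce the Koszul expression to an algebraic combination of $\nabla$, $\Gamma$, $\rR$, and derivatives of $\alpha,\beta$. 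Subtracting $\rD_\txi\teta$ and repackaging as pairs in $\cE^2$ produces the first three formulas, where the $\rR$-term in each arises precisely from the curvature piece $(\rR_{\ttX,\ttY}v)^\rmv$ of $[\ttX^\rmh,\ttY^\rmh]$.

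The most delicate step will be the purely vertical--vertical piece, which determines the operator $\rF$. Here $\alpha$ and $\beta$ acquire nontrivial derivatives via $\rD_{\ttA^\rmv}\|v\|^2_\sfg=2\langle v,\sfg\ttA\rangle_\cE$, and the rank-one inverse \cref{eq:hsfginv} re-introduces the $v$-direction when one inverts $\hsfg$ in the Koszul expression. Assembling the coefficients $(\alpha+\|v\|^2_\sfg\beta)^{-1}(\beta-\alpha')$ and $(\alpha+\|v\|^2_\sfg\beta)^{-1}(\beta'-2\alpha'\beta/\alpha)$ requires careful bookkeeping; this is where I expect the main obstacle to lie. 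As the statement indicates, this piece can be cross-checked against the general $g$-natural metric formulas of \cite{KowalSeki,Abbassi2005,GudKap} specialised to the two-parameter subfamily \cref{eq:BLW}, giving an independent verification.
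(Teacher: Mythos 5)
Your proposal is correct and follows essentially the same route as the paper: the projection and inverse claims are handled by the same direct algebra, the compatibility identity \cref{eq:hsfg_compa} by the same observation that $\|v\|_{\sfg}^2$ is constant along horizontal lifts (so that the $-\Gamma(\ttX,v)$ term in the lift cancels the $\rD_{\ttX}\sfg$ contribution), and the Christoffel formulas by the Koszul\slash Dombrowski computation that the paper actually carries out in the more general submersed setting of \cref{theo:NaturalSubmerse}, to which it defers this part of the proof. The vertical--vertical bookkeeping you flag as the main obstacle is precisely the computation done there, using $\rD_{\ttX^{\rmp}}f(\|\rU\|_{\sfg}^2)=2f'(\|\rU\|_{\sfg}^2)\langle\ttX,\rU\rangle_{\sfg}$ together with $\hsfg^{-1}\sfg v=(\alpha+\|v\|_{\sfg}^2\beta)^{-1}v$ from \cref{eq:hsfginv}, and it goes through without surprises.
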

It is known the case $\alpha = 1$, $\beta=0$ corresponds to the Sasaki metric and
$\Gamma_{\sfG}((\rC\txi)^{\rmv}, (\rC\teta)^{\rmv}) = (0,0)$ in this case. The case $\alpha(t) = \beta(t) = (1 +t)^{-1}$ corresponds to the Cheeger-Gromoll metric \cite{TriMuss}. In that case, the coefficient $\rF$ in $\Gamma_{\sfG}((\rC\txi)^{\rmv}, (\rC\teta)^{\rmv})_{\frt}$ equals
$$\frac{2+\|v\|_{\sfg}^2}{(1+\|v\|_{\sfg}^2)^2}\langle\rC\txi, \rC\teta\rangle_{\sfg} + \frac{1}{(1+\|v\|_{\sfg}^2)^2}\langle v, \rC\txi)\rangle_{\sfg}\langle v, \rC\teta)\rangle_{\sfg}$$
Parametrizing $\alpha, \beta$ by additional parameters provides subfamilies of metrics, for example, those considered in \cite{BLW}. In the following, recall the canonical vector field $\rU$ on $\cT\rM$ is the vector field defined by $\rU(x, v) = (x, v, 0, v)$ for $(x, v)\in\cT\rM$.

\begin{proof} Unless stated otherwise, we will evaluate expressions at $(x, v)$, thus, avoid showing variables, if possible, to shorten expressions. Direct computation shows
  $$\begin{gathered}
    \langle\hsfg\omega_1,\Pi_{\sfg} \omega_2\rangle =     \langle\Pi_{\sfg}^{\sfT}\hsfg\omega_1, \omega_2\rangle = \alpha_{\|v\|^2_{\sfg}}\langle\Pi^{\sfT}_{\sfg} \sfg\omega_1, \omega_2\rangle_{\cE} + \beta_{\|v\|^2_{\sfg}} \langle v,\Pi_{\sfg}^{\sfT}\sfg\omega_1\rangle_{\cE}\langle\sfg v, \omega_2\rangle_{\cE}\\
 = \alpha_{\|v\|^2_{\sfg}}\langle \sfg\Pi_{\sfg}\omega_1, \omega_2\rangle_{\cE} + \beta_{\|v\|^2_{\sfg}} \langle v,\sfg\omega_1\rangle_{\cE}\langle\sfg v, \omega_2\rangle_{\cE}
  \end{gathered}$$
  where we use self-adjointness of $\sfg\Pi_{\sfg}$, and the fact that $v\in \cT_x\rM$. The last expression is $\langle\hsfg\Pi_{\sfg}\omega_1, \omega_2\rangle$, so $\hsfg\Pi_{\sfg}$ is self-adjoint. The formula for $\hsfg^{-1}$ is the operator form of the Sherman-Morrison matrix identity, proved by direct substitution.

We can verify \cref{eq:hsfg_compa} by a direct calculation, for vector fields $\ttX, \ttY, \ttZ$, noting
  $$(\rD_{(\ttX, -\Gamma(\ttX, \rU))}\|\rU\|_{\sfg}^2)_{(x, v)} = 2\langle v, \sfg_x \Gamma(\ttX, v)_x\rangle_{\cE} + 2\langle v, \sfg_x(-\Gamma(\ttX_x, v)_x)\rangle_{\cE}=0$$
Hence $(\rD_{(\ttX, -\Gamma(\ttX, \rU))}\alpha(\|\rU\|_{\sfg}^2))_{(x, v)}=(\rD_{(\ttX, -\Gamma(\ttX, \rU))}\beta(\|\rU\|_{\sfg}^2))_{(x, v)}=0$, thus
$$\begin{gathered}(\rD_{(\ttX, -\Gamma(\ttX, \rU))}\{\alpha(\|\rU\|_{\sfg}^2)\langle \ttY, \sfg\ttZ\rangle_{\cE} +\beta(\|\rU\|_{\sfg}^2)\langle \rU, \sfg\ttY\rangle_{\cE}\langle \rU, \sfg\ttZ\rangle_{\cE}\})_{(x, v)}=\\
  \alpha(\|v\|_{\sfg, x}^2)\{\langle \nabla_{\ttX}\ttY, \sfg\ttZ\rangle_{\cE} +
  \langle \ttY, \sfg\nabla_{\ttX}\ttZ\rangle_{\cE}\}_x +\\
  \beta(\|v\|_{\sfg, x}^2)\{\langle v, \sfg\nabla_{\ttX}\ttY\rangle_{\cE}\langle v,\sfg \ttZ\rangle_{\cE}+\langle \Gamma(v, \ttX), \sfg\ttY\rangle_{\cE}\langle v,\sfg\ttZ\rangle_{\cE}
  +\langle -\Gamma(v, \ttX), \sfg\ttY\rangle_{\cE}\langle v,\sfg\ttZ\rangle_{\cE}\\
  +\langle v, \sfg\ttY\rangle_{\cE}\langle v, \sfg\nabla_{\ttX}\ttZ\rangle_{\cE}
  +\langle v, \sfg\ttY\rangle_{\cE}\langle \Gamma(\ttX, v), \sfg\ttZ\rangle_{\cE}
  +\langle v, \sfg\ttY\rangle_{\cE}\langle -\Gamma(\ttX, v), \sfg\ttZ\rangle_{\cE}  \}_x
\end{gathered}$$
which, writing $\alpha$ and $\beta$ for their values at $\|v\|_{\sfg, x}^2$, could be rearranged to
$$\{\langle \nabla_{\ttX}\ttY,\alpha\sfg\ttZ +\beta\langle v, \sfg\ttZ\rangle_{\cE}v \rangle_{\cE} + \langle \alpha\sfg \ttY +\beta\langle v, \sfg\ttY\rangle_{\cE} , \nabla_{\ttX}\ttZ\rangle_{\cE}\}_x
$$
which is the right-hand side of \cref{eq:hsfg_compa}.

For the Christoffel function, we will formulate a more general result in \cref{theo:NaturalSubmerse} and will provide the rest of the proof.
\end{proof}
For a Riemannian submersion, we have the following
\begin{proposition}\label{prop:projH} Let $(\rM, \qq, \cB, \sfg, \cE)$ be a submersed ambient structure of $\cB$. By \cref{prop:QHM}, $d\qq_{|\cH\rM}\to\cT\cB$ is a differentiable submersion, with the vertical bundle $\cV\cH\rM$ and at $(x, v)\in \cH\rM$, $\cV_{(x, v)}\cH\rM$ consists of vectors of form $(\xi, (\rB_v\xi)_x)\in \cT\cH\rM$, with $\xi\in \cV_x\rM$ and $\rB_v$ defined in \cref{prop:QHM}. Set $\rB(\phi, v)_x := (\rD_{\ttV\phi}\ttH) v - (\rD_v\ttH)\ttV\phi$ for $\phi\in \cT_x\rM$, we have $\rB(\epsilon, v)_x = \rB_v\epsilon$ for $\epsilon\in \cV_x\rM$ and $\rB(\xi, v)_x = 0$ if $\xi\in \cH\rM$. Extend $\rB$ to a smooth bilinear map from $\rM$ to $\fL(\cE\otimes \cE$, $\cE)$. Let $\hsfg$ be a positive-definite operator-valued function from $\cH\rM$ to $\cL(\cE, \cE)$, such that $\hsfg_{(x_1, v_1)} = \hsfg_{(x_2, v_2)}$ if $d\qq(x_1, v_1) = d\qq(x_2, v_2)$. Define
  \begin{equation}\begin{gathered}
(\Gammaa)_{(x, v)}\omega =  \Gammaa[v]_x\omega := \GammaH(\ttH\omega, v)_x - \rB(\omega, v)_x
      \end{gathered}
    \end{equation}
  \begin{equation}\label{eq:Hproj}
    \sfG_{\cQ, (x, v)} :=  \begin{bmatrix} \dI  & (\Gammaa)^{\sfT}[v]_{x}\\
0 & \dI
  \end{bmatrix}\begin{bmatrix} \sfg_x & 0 \\ 0 & \hsfg_{(x, v)}\end{bmatrix} \begin{bmatrix} \dI  & 0\\
\Gammaa[v]_{x} & \dI
  \end{bmatrix}
  \end{equation}
Then $\Gammaa$ is an operator-valued function from $\cH\rM$ to $\fL(\cE, \cE)$ and $\sfGQ$ is a metric operator from $\cH\rM$ to $\cE^2$, defining a Riemannian metric on $\cH\rM$.

Let $\sfg_{\cB}$ be the metric on $\cB$ in the Riemannian submersion. For $(x, v)\in \cH\rM$, let $(b, w) = d\qq(x, v)
\in \cT\cB$, $\hsfg$ induces an inner product $\hsfg_{\cB, (b, w)}$ on $\cT_b\cB$, evaluated on two tangent vectors $\xi^{\cB}$ and $\xi^{\cB}\in\cT_b\cB$ as $\langle \xi^{\rM}, \hsfg_{(x, v)}\eta^{\rM}\rangle_{\cE}$ where $\xi^{\rM}$ and $\eta^{\rM}$ are horizontal lifts of $\xi^{\cB}$ and $\eta^{\cB}$. The inner products $\hsfg_{\cB, (b, w)}$ is well-defined, independent of the lifts. With the associated metric tensors $\sfg_{\cB}$ and $\hsfg_{\cB}$, $\cT\cB$ could be equipped with the metric $\sfG_\cB$ as defined in \cref{theo:SCGMT}. Under the metrics $\sfGQ$ and $\sfG_{\cB}$, the bundle projection $d\qq_{|\cH\rM}:\cH\rM\to\cT\cB$ is a Riemannian submersion.

Let $\ttH_{\sfg, x}$ and $\ttH_{\hsfg, (x, v)}$ be the projections of $\cE$ to $\cH_x\rM$ under the inner products $\sfg_x$ and $\hsfg_{(x, v)}$, respectively. Under $\sfGQ$, the projection of $\cE^2$ to $\cQ_x\cH\rM$ is
    \begin{equation}\label{eq:bundl_projH}
      \ttH_{\sfG,(x, v)} = \begin{gathered}
        \begin{bmatrix} \dI  & 0\\
-\Gammaa[v]_{x} & \dI
  \end{bmatrix}\begin{bmatrix} \ttH_{\sfg, x} & 0 \\ 0 & \ttH_{\hsfg, x}\end{bmatrix}
        \begin{bmatrix} \dI  & 0\\
\Gammaa[v]_x & \dI
        \end{bmatrix} 
        \end{gathered}
    \end{equation}
As a Riemannian metric on $\cH\rM$, $\sfG_{\cQ}$ is only dependent on the values of $\sfg$ and $\hsfg$ evaluated on tangent vectors of $\rM$.

The bundle $\cV\cH\rM$ is the vertical bundle, and $\cQ\cH\rM$ is the horizontal bundle of $\cH\rM$ under the Riemannian submersion $d\qq_{\cH\rM}$. Let $\ttQ$ be the idempotent map defining $\cQ\cH\rM\subset\cT\cH\rM$ in \cref{prop:QHM}, then $\ttQ$ is the projection from $\cT\cH\rM$ to $\cQ\cH\rM$ under $\sfGQ$, thus $\ttQ$ is the restriction of $\ttH_{\sfGQ}$ to $\cT\cH\rM$.
\end{proposition}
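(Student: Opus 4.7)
The proof has three pillars: structural identities showing $\sfGQ$ is a Riemannian metric with the desired orthogonal decomposition; the explicit projection formula; and the Riemannian submersion property of $d\qq_{|\cH\rM}$. Smoothness of $\Gammaa$ is inherited from smoothness of $\GammaH$ and $\rB$ (the latter built from derivatives of $\ttH$ as in \cref{prop:QHM}). Positive-definiteness of $\sfGQ$ on $\cE^2$ is immediate from the factored form in \cref{eq:Hproj}, $\sfGQ = P^{\sfT}\,\diag(\sfg, \hsfg)\,P$ with $P$ the invertible unit lower-triangular operator on $\cE^2$ determined by $\Gammaa[v]_x$ in the off-diagonal block; restricting to $\cT\cH\rM$ gives a genuine Riemannian metric on $\cH\rM$.

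The central structural identity is the $\sfGQ$-orthogonality of $\cV\cH\rM$ and $\cQ\cH\rM$. For a vertical tangent vector $(\epsilon_\frm, \rB_v\epsilon_\frm)\in\cV\cH\rM$ with $\epsilon_\frm\in\cV_x\rM$, the key computation is $\Gammaa[v]\epsilon_\frm = \GammaH(\ttH\epsilon_\frm, v) - \rB(\epsilon_\frm, v) = -\rB_v\epsilon_\frm$ since $\ttH\epsilon_\frm = 0$, so $P(\epsilon_\frm, \rB_v\epsilon_\frm) = (\epsilon_\frm, 0)$. Pairing with $P\tdelta = (\delta_\frm, \rCa\tdelta)$ for $\tdelta = (\delta_\frm, \delta_\frt)\in\cQ\cH\rM$ under $\diag(\sfg, \hsfg)$ reduces to $\langle\epsilon_\frm, \sfg\delta_\frm\rangle_\cE$, which vanishes by $\sfg$-orthogonality of $\cV_x\rM$ and $\cH_x\rM$. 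Combined with the decomposition $\cT\cH\rM = \cV\cH\rM \oplus \cQ\cH\rM$ from \cref{prop:QHM}, this identifies the two summands as the vertical and horizontal bundles of $d\qq_{|\cH\rM}$. The projection formula \cref{eq:bundl_projH} then follows by parametrizing $\cQ\cH\rM$ via $\rN:(\alpha, \beta)\in(\cH_x\rM)^2\mapsto(\alpha, \beta-\GammaH(\alpha, v))$ (a bijection by \cref{lem:connHor}) and applying \cref{lem:projprop}; alternatively, one verifies directly that the right-hand side is idempotent, acts as identity on $\cQ\cH\rM$ using the alternative expression $\GammaH(\delta_\frm, v)+(\rD_{\delta_\frm}\ttH)v = \ttH\mrGamma(\delta_\frm, v)$, and vanishes on $\cV\cH\rM$ via the cancellation above. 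Restricted to $\cT\cH\rM$, it coincides with the idempotent $\ttQ$ of \cref{prop:QHM} since both project along $\cV\cH\rM$ onto $\cQ\cH\rM$.

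For the Riemannian submersion claim, I show $d^2\qq$ restricts to an isometry from $(\cQ\cH\rM, \sfGQ)$ to $(\cT\cB, \sfG_\cB)$. On $\tdelta\in\cQ_{(x, v)}\cH\rM$ with $d^2\qq\tdelta = (b, w, \delta_b, \delta_w)$, the factored form yields $\|\tdelta\|_{\sfGQ}^2 = \|\delta_\frm\|_{\sfg}^2 + \|\rCa\tdelta\|_{\hsfg}^2$; the first summand matches $\|\delta_b\|_{\sfg_\cB}^2$ by the $\sfg$-isometric identification of horizontal lifts, and the second matches $\|\rC(\delta_b, \delta_w)\|_{\hsfg_\cB}^2$ via the compatibility $d\qq_{\frt, x}\rCa = \rC\,d^2\qq$ from \cref{lem:connHor} together with the definition of $\hsfg_\cB$. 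The intrinsic-on-tangent-vectors property follows because the expression above only involves $\sfg, \hsfg, \GammaH$ and $\ttH$ evaluated on tangent vectors of $\rM$. The main obstacle is the well-definedness of $\hsfg_{\cB, (b, w)}$: the hypothesis $\hsfg_{(x_1, v_1)} = \hsfg_{(x_2, v_2)}$ when $d\qq(x_1, v_1) = d\qq(x_2, v_2)$ gives equality of operators in $\fL(\cE, \cE)$, but horizontal lifts $\xi^{\rM_1}, \xi^{\rM_2}$ of the same $\xi^{\cB}\in\cT_b\cB$ at distinct points $x_1, x_2\in\qq^{-1}(b)$ are different vectors in $\cE$, so the invariance hypothesis must be strong enough (together with the submersion structure on $\sfg$) to ensure $\langle\xi^{\rM_1}, \hsfg\eta^{\rM_1}\rangle_\cE = \langle\xi^{\rM_2}, \hsfg\eta^{\rM_2}\rangle_\cE$.
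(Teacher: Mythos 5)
Your proof is correct and follows essentially the same route as the paper's: the paper packages the positive-definiteness and the projection formula into a single application of \cref{lem:two_func} with $f_1 = d\pi$, $f_2 = \rCa$, proves the $\cV$--$\cQ$ orthogonality by exactly your cancellation $\Gammaa(\epsilon, v)_x + \rB(\epsilon, v)_x = 0$ for vertical $\epsilon$, and obtains the submersion property from the isometry of $d^2\qq$ on $\cQ\cH\rM$ (your appeal to $d\qq\,\rCa = \rC\, d^2\qq$ from \cref{lem:connHor} makes this step more explicit than the paper's ``by construction''). Your closing caveat that the stated invariance hypothesis on $\hsfg$ does not by itself obviously give $\langle\xi^{\rM_1}, \hsfg\eta^{\rM_1}\rangle_{\cE} = \langle\xi^{\rM_2}, \hsfg\eta^{\rM_2}\rangle_{\cE}$ is a fair observation --- the paper leaves this to ``simple checks,'' and while it does hold for the $\hsfg$ actually used later (built from $\sfg$ and $\|v\|_{\sfg}^2$ as in \cref{prop:nat_TTM2}), for a general $\hsfg$ it is an implicit additional assumption rather than a gap in your argument.
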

\begin{proof}We will apply \cref{lem:two_func}, with $f_1(\omega_{\frm}, \omega_{\frt}) = \omega_{\frm}, f_2(\omega_{\frm}, \omega_{\frt}) = \Gammaa[v]_x\omega_{\frm} + \omega_{\frt}$, $\cT_2 = \cQ_x\cH\rM$, $\cT = \cH_x\rM$. Restricting to $\cT_2 = \cQ_x\cH\rM$, $f_1 = (d\pi_{|\cH\rM})_{(x, v)}$ and $f_2 = \rCa_{(x,v)}$, as by construction $\rB(\eta, v)_x = 0$ for a horizontal vector $\eta$. This gives us the statements that $\sfGQ$ defines a metric under \cref{eq:Hproj}.

Let us show that $\cV_x\cH\rM$ and $\cQ_{(x, v)}\cH\rM$ are orthogonal. Consider $(\epsilon, \rB(\epsilon, v)_x)$ in $\cV_x\cH\rM$ where $\epsilon\in\cV_x\rM$ is a vertical vector, and $\teta = (\eta_{\frm}, \eta_{\frt}) \in \cQ_{(x, v)}\cH\rM$
$$\begin{gathered}\langle (\epsilon, \rB(\epsilon, v)_x), (\sfG_{\cQ})_{(x, v)}\teta\rangle_{\cE^2} =  \langle (\epsilon, \Gammaa(\epsilon, v)_x +\rB(\epsilon, v)_x), (\sfg_x\eta_{\frm}, \hsfg_{(x, v)}(\Gammaa(\eta_{\frm}, v)_x +\eta_{\frt}))\rangle_{\cE^2} \\
=  \langle (\epsilon, 0), (\sfg_x\eta_{\frm}, \hsfg_{(x, v)}(\Gammaa(\eta_{\frm}, v)_x +\eta_{\frt}))\rangle_{\cE^2} = 0
\end{gathered}  $$
as we have constructed $\Gammaa$ such that $\Gammaa(\epsilon, v)_x +\rB(\epsilon, v)_x = 0$, and $\epsilon$ and $\eta_{\frm}$ are orthogonal by the assumption that $\eta_{\frm}$ is a horizontal vector. By construction, $d\qq_{|\cH\rM}$ is an isometry from $\cQ_{(x, v)}\cH\rM$ to $\cT_{(b, w)}\cT\cB$, hence $d\qq_{|\cH\rM}$ is a Riemannian submersion. The statements about the independence of the metric on $\cB$ with respect to the lift follow by simple checks, based on the assumptions of $\sfg$ and $\hsfg$. The statements about $\ttQ$ and $\cQ\cH\rM$ follow from the submersion property of $\sfGQ$.  
\end{proof}
We will recall the lift $\rmb$ in \cref{fig:HMB} in \cref{eq:rmb} and describe the lifts $\rmq, \rmp$.
\begin{definition}\label{def:pqb} For $(x, v)\in \cH\rM$, let $\eta$ be a horizontal vector at $x\in\rM$. Define the $\pi$-horizontal lift $\eta^{\rmq}:= (\eta,-\Gammaa(\eta, v))\in\cQ_{(x, v)}\cH\rM$ , the $\pi$-vertical lift $\eta^{\rmp}:= (0, \eta)\in\cQ_{(x, v)}\cH\rM$. For a vertical vector $\epsilon\in\cV_x\rM$, recall the $\qq$-vertical lift $\epsilon^{\rmb} =(\epsilon, (\rB_v\epsilon)_x) = (\epsilon, \rB(\epsilon, v)_x)\in \cV_{(x, v)}\cH\rM$. We define $\pi$-horizontal and $\pi$-vertical lifts of horizontal vector fields, as well as $\qq$-vertical lifts of vertical vector fields on $\rM$ similarly. We have $\rmq$ is a bijection between $\cH_x\rM$ and the nullspace of $(\rCa)_{|\cQ_{(x, v)}\cH\rM}$, $\rmp$ is a bijection between $\cH_x\rM$ and the nullspace of $d\pi_{_{|\cQ_{(x, v)}\cH\rM}}$, and $\rmb_{|\cV_x\rM}$ is a bijection between $\cV_x\rM$ and $\cV_{(x, v)}\cH\rM$.
\end{definition}
We have the following bracket formulas for lifts of horizontal vector fields.
\begin{lemma} Let $(\rM,\qq, \cB, \sfg, \cE)$ be a submersed ambient structure of the  Riemannian submersion $\qq:\rM\to\cB$  with horizontal bundle $\cH\rM$. For two horizontal vector fields $\ttX, \ttY$ on $\rM$ we have
  \begin{equation}\label{eq:Dom1}[\ttX^{\rmq}, \ttY^{\rmq}] = (\ttH[\ttX, \ttY])^{\rmq} +
(\RcH_{\ttX, \ttY}\rU)^{\rmp} + (\ttV[\ttX, \ttY])^{\rmb} 
  \end{equation}
  \begin{equation}[\ttX^{\rmq}, \ttY^{\rmp}] = (\nabla_{\ttX}\ttY)^{\rmp}
\end{equation}    
  \begin{equation}[\ttX^{\rmp}, \ttY^{\rmp}] = [\ttX, \ttY]^{\rmp}
\end{equation}    
\end{lemma}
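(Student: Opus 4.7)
The plan is to derive all three bracket identities by computing Euclidean directional derivatives of the lifts, viewed as $\cE^2$-valued functions on $\cH\rM\subset\cE^2$, and then antisymmetrizing. For vector fields $F,G$ on any embedded submanifold we have $[F,G] = \rD_F G - \rD_G F$ with the ambient directional derivative, so since \cref{def:pqb} furnishes explicit $\cE^2$-valued formulas for $\rmq$-, $\rmp$-, and $\rmb$-lifts, each bracket reduces to a mechanical expansion followed by bookkeeping in the decomposition $\cT\cH\rM = \cQ\cH\rM\oplus\cV\cH\rM$ of \cref{prop:QHM} together with the further splitting of $\cQ\cH\rM$ into $\rmq$- and $\rmp$-lifts from \cref{def:pqb}.

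I would first dispose of $[\ttX^{\rmp},\ttY^{\rmp}]$: both lifts have the form $(0,\ttZ(x))$ with no $v$-dependence while the direction $\ttX^{\rmp}$ has vanishing first component, so $\rD_{\ttX^{\rmp}}\ttY^{\rmp}=0$ identically, mirroring $[\ttX^{\rmv},\ttY^{\rmv}]=0$ from \cref{rem:rmv_rmh}. For $[\ttX^{\rmq},\ttY^{\rmp}]$, the only surviving contributions are $(0,\rD_{\ttX}\ttY)$ from differentiating $\ttY(x)$ in the first-component direction $\ttX$ of $\ttX^{\rmq}$, and $(0,\Gammaa(\ttX,\ttY))$ from differentiating the $v$-linear quantity $-\Gammaa(\ttX,v)$ in the fiber direction $\ttY$ of $\ttY^{\rmp}$. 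Since $\ttY$ is horizontal, $\Gammaa(\ttX,\ttY)=\GammaH(\ttX,\ttY)$, so after antisymmetrization the result is $(0,\rD_{\ttX}\ttY+\GammaH(\ttX,\ttY))=(\nabla^{\cH}_{\ttX}\ttY)^{\rmp}$ by \cref{eq:gamma_sub}.

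The substantial case is $[\ttX^{\rmq},\ttY^{\rmq}]$. The first component of the antisymmetrization is $\rD_{\ttX}\ttY-\rD_{\ttY}\ttX=[\ttX,\ttY]$, which I split as $\ttH[\ttX,\ttY]+\ttV[\ttX,\ttY]$; these match the first components of $(\ttH[\ttX,\ttY])^{\rmq}$ and $(\ttV[\ttX,\ttY])^{\rmb}$ by \cref{def:pqb}, and the $(\RcH_{\ttX,\ttY}\rU)^{\rmp}$ term contributes zero to the first component. The second component, after expansion (using $v$-linearity of $\Gammaa[v]$), is
\begin{equation*}
-(\rD_{\ttX}\Gammaa)(\ttY,v)+(\rD_{\ttY}\Gammaa)(\ttX,v)-\Gammaa([\ttX,\ttY],v)+\Gammaa(\ttY,\Gammaa(\ttX,v))-\Gammaa(\ttX,\Gammaa(\ttY,v)),
\end{equation*}
and must be shown to equal $-\Gammaa(\ttH[\ttX,\ttY],v)+\RcH_{\ttX,\ttY}v+\rB(\ttV[\ttX,\ttY],v)$. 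Substituting $\Gammaa(\omega,v)=\GammaH(\ttH\omega,v)-\rB(\omega,v)$ splits the $\Gammaa([\ttX,\ttY],v)$ term as $\Gammaa(\ttH[\ttX,\ttY],v)-\rB(\ttV[\ttX,\ttY],v)$, which already accounts for two of the three target contributions, so the remaining identity to verify is that the residual collapses to $\RcH_{\ttX,\ttY}v$; since $\ttX,\ttY$ are horizontal, $\Gammaa$ agrees with $\GammaH$ on those arguments and the residual is precisely the right-hand side of the curvature identity \cref{eq:cursubmer} once the term $2\rAd_v\rA_{\ttX}\ttY$ is accounted for using $2\rA_{\ttX}\ttY=\ttV[\ttX,\ttY]$ from \cref{eq:oneilLie}.

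The main obstacle is precisely this last matching: tracking how the $\rB$-corrections between $\Gammaa$ and $\GammaH$ interact with the O'Neil tensor term in \cref{eq:cursubmer}. The key observation is that $\Gammaa(\omega,v)-\GammaH(\omega,v)=-\rB(\omega,v)$ picks up a contribution only when $\omega$ carries a vertical component; after antisymmetrizing in $\ttX,\ttY$ the only place this non-horizontal component appears is in $\Gammaa([\ttX,\ttY],v)$, producing exactly the $\rB(\ttV[\ttX,\ttY],v)=\rB(2\rA_{\ttX}\ttY,v)$ term. This should reconcile cleanly with the $2\rAd_v\rA_{\ttX}\ttY$ contribution in \cref{eq:cursubmer} through the relation $\rAd_\phi\cdot=-\GammaH(\phi,\ttV\cdot)$ from \cref{lem:subdual}, completing the bracket identity as the $(\RcH_{\ttX,\ttY}\rU)^{\rmp}$ residue.
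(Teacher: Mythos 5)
Your overall route is the same as the paper's: expand the lifts as $\cE^2$-valued functions, antisymmetrize the ambient derivatives, peel off $([\ttX,\ttY],-\Gammaa([\ttX,\ttY],\rU)) = (\ttH[\ttX,\ttY])^{\rmq}+(\ttV[\ttX,\ttY])^{\rmb}$, and match the residual against \cref{eq:cursubmer}. The setup, the first components, and the mixed bracket are fine. But your final paragraph --- the only genuinely nontrivial step --- misallocates the terms, and the identity you need is asserted rather than proved. You claim that the discrepancy between $\Gammaa$ and $\GammaH$ ``appears only in $\Gammaa([\ttX,\ttY],v)$'' and that the resulting $\rB(\ttV[\ttX,\ttY],v)$ should reconcile with $2\rAd_v\rA_{\ttX}\ttY$. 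That cannot be right: $\rB(\ttV[\ttX,\ttY],v)$ is exactly the second component of $(\ttV[\ttX,\ttY])^{\rmb}$ and is therefore entirely consumed by that summand; it contributes nothing toward the curvature term. The actual source of $2\rAd_v\rA_{\ttX}\ttY$ is the derivative terms: even though $\Gammaa(\ttY,v)=\GammaH(\ttY,v)$ pointwise for horizontal $\ttY$, one has
\begin{equation*}
(\rD_{\ttX}\Gammaa)(\ttY,v)=(\rD_{\ttX}\GammaH)(\ttY,v)+\GammaH((\rD_{\ttX}\ttH)\ttY,v)-(\rD_{\ttX}\rB)(\ttY,v),
\end{equation*}
and the two correction terms do not vanish. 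After antisymmetrization they contribute $-\GammaH(\ttV[\ttX,\ttY],v)+(\rD_{\ttX}\rB)(\ttY,v)-(\rD_{\ttY}\rB)(\ttX,v)$, and the content of the paper's \cref{eq:needtoshow} is precisely that this equals $-\GammaH(v,\ttV[\ttX,\ttY])=2\rAd_{v}\rA_{\ttX}\ttY$; proving it requires the explicit form $\rB(\epsilon,v)=(\rD_{\ttV\epsilon}\ttH)v-(\rD_{v}\ttH)\ttV\epsilon$ together with $(\rD_{\ttX}\ttH)\ttY-(\rD_{\ttY}\ttH)\ttX=\ttV[\ttX,\ttY]$ a second time. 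Until that computation is carried out, your residual does not visibly reduce to the right-hand side of \cref{eq:cursubmer}, so the key matching is a gap, not a routine bookkeeping step.

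Two smaller points. Your computation of $[\ttX^{\rmp},\ttY^{\rmp}]$ yields $0$ (for the same reason that $[\ttX^{\rmv},\ttY^{\rmv}]=0$ in \cref{rem:rmv_rmh}), which does not match the displayed claim $[\ttX,\ttY]^{\rmp}$; you should flag this discrepancy explicitly rather than silently prove a different statement. And in the mixed bracket your answer is $(\nabla^{\cH}_{\ttX}\ttY)^{\rmp}$, which is indeed the only version for which the $\rmp$-lift lands in $\cQ\cH\rM$; a remark that the statement's $\nabla$ must be read as $\nabla^{\cH}$ would make the argument airtight.
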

\begin{proof}
Similar to \cref{rem:rmv_rmh}, the first component of $\rD_{\ttX^{\rmq}}\ttY^{\rmq}$ is $\rD_{\ttX}\ttY$, from here the first component of $[\ttX^{\rmq}, \ttY^{\rmq}]$ is $[\ttX, \ttY]$. The second (tangent) component of  $\rD_{\ttX^{\rmq}}\ttY^{\rmq}$ is
  $$-(\rD_{\ttX}\Gammaa)(\ttY, \rU) - \Gammaa(\rD_{\ttX}\ttY, \rU)
  + \Gammaa(\ttY, \Gammaa(\ttX, \rU))
  $$
where $\rU$ is the canonical vector field. Since $\ttX$ is horizontal, $\Gammaa(\ttX, \omega) = \GammaH(\ttX, \omega)$ for $\omega\in \cE$. Changing the role of $\ttY$ and $\ttX$, the tangent component of $[\ttX^{\rmq}, \ttY^{\rmq}]$ is
$$-(\rD_{\ttX}\Gammaa)(\ttY, \rU) -\Gammaa([\ttX, \ttY], \rU) +\GammaH(\ttY, \GammaH(\ttX, \rU)) +
(\rD_{\ttY}\Gammaa)(\ttX, \rU) -\GammaH(\ttX, \GammaH(\ttY, \rU))
$$
We split the left-hand side of \cref{eq:Dom1} to 
$([\ttX, \ttY], -\Gammaa([\ttX, \ttY], \rU))$ and $(0, -(\rD_{\ttX}\Gammaa)(\ttY, \rU) \\ +\GammaH(\ttY, \GammaH(\ttX, \rU)) +(\rD_{\ttY}\Gammaa)(\ttX, \rU) -\GammaH(\ttX, \GammaH(\ttY, \rU))$. As $[\ttX, \ttY]$ is a vector field, using the definition of $\rmq$ and $\rmb$,
$$\begin{gathered}
  ([\ttX, \ttY], -\Gammaa([\ttX, \ttY], \rU)) = (\ttV[\ttX, \ttY]+\ttH[\ttX, \ttY], -\GammaH(\ttH[\ttX, \ttY], \rU)+\rB([\ttX, \ttY], \rU))\\
  =(\ttH[\ttX, \ttY])^{\rmq} + (\ttV[\ttX, \ttY])^{\rmb}
\end{gathered}$$
as $\rB(\ttH[\ttX, \ttY], \rU)=0$. Expand
$$(\rD_{\ttX}\Gammaa)(\ttY, \rU)=(\rD_{\ttX}\GammaH)(\ttY, \rU) +\GammaH((\rD_{\ttX}\ttH)\ttY, \rU)_x - (\rD_{\ttX}\rB)(\ttY, \rU)$$
we need to show the remaining terms below is $\RcH_{\ttX, \ttY}\rU$:
$$\begin{gathered}
- (\rD_{\ttX}\GammaH)(\ttY, \rU)-\GammaH(\ttX, \GammaH(\ttY, \rU))
+(\rD_{\ttY}\GammaH)(\ttX, \rU)  +\GammaH(\ttY, \GammaH(\ttX, \rU))\\
-\GammaH((\rD_{\ttX}\ttH)\ttY, \rU)_x +\
\GammaH((\rD_{\ttY}\ttH)\ttX, \rU)_x  + (\rD_{\ttX}\rB)(\ttY, \rU) -
(\rD_{\ttY}\rB)(\ttX, \rU)
\end{gathered}$$
In the second line, $((\rD_{\ttX}\ttH)\ttY - (\rD_{\ttY}\ttH)\ttX)_x = \ttV_x[\ttX, \ttY]_x$. Compared with \cref{eq:cursubmer}, we need to show
\begin{equation}\label{eq:needtoshow}-\GammaH(\ttV_x[\ttX, \ttY]_x, v)_x -
(\rD_{\ttY}\rB)(\ttX, \rU)_x + (\rD_{\ttX}\rB)(\ttY, \rU)_x= -\GammaH(v, \ttV_x[\ttX, \ttY]_x)_x\end{equation}
as the rightmost expression is $(\rAd_{v} \ttV[\ttX, \ttY])_x$. Note that for $\phi\in\cT_{(x, v)}\cH\rM$, $\rB(\phi, v)_x = (\rD_{\ttV\phi}\ttH)_x v - (\rD_v\ttH)_x\ttV\phi$. Hence, set $\xi = \ttX_x, \eta = \ttY_x$
$$\begin{gathered} - (\rD_{\ttY}\rB)_x(\xi, v) + (\rD_{\ttX}\rB)_x(\eta, v) =\\
-  \rD_\eta\{(\rD_{\ttV\xi}\ttH)v - (\rD_{v}\ttH)\ttV\xi)\} + \rD_\xi\{(\rD_{\ttV\eta}\ttH)v - (\rD_{v}\ttH)\ttV\eta)\}  \\ 
=  -(\rD_{\ttV[\ttX, \ttY]}\ttH)_xv + ((\rD_{v}\ttH)\ttV[\ttX, \ttY])_x
  \end{gathered}
$$
Where we have used $(\rD_{\eta}\ttV)_x\xi - (\rD_{\xi}\ttV)_x\eta = (\rD_{\xi}\ttH)_x\eta - (\rD_{\eta}\ttH)_x\xi = 2\rA_{\xi}\eta = (\ttV[\ttX, \ttY])_x$. Substitute in \cref{eq:needtoshow} and expand $-\GammaH(\ttV_x[\ttX, \ttY]_x, v)_x = (\rD_{(\ttV[\ttX, \ttY])}\ttH)_x v -\ttH\mrGamma(\ttV[\ttX, \ttY], v)_x$ on the left-hand side, we finally get the right-hand side. The rest of the lemma is clear.
\end{proof}
In the following theorem, expressions are evaluated at one point $(x, v)\in\cH\rM$ under consideration and we will omit the point to keep the expressions compact. We write $\langle\rangle_{\sfg}, \langle\rangle_{\hsfg}, \langle\rangle_{\sfGQ}$ for the inner products using the corresponding operators.
\begin{theorem}\label{theo:NaturalSubmerse}Let $\tnabla$ be the Levi-Civita covariant derivative with respect to $\sfGQ$, $\ttQ$ be the horizontal projection from $\cT\cH\rM$ to $\cQ\cH\rM$ in \cref{prop:QHM} and $\nabla^{\cH}=\ttH\nabla$. At $(x, v)\in \cH\rM$, for horizontal vector fields $\ttX$ and $\ttY$ on $\rM$, we have
  \begin{equation}\label{eq:tnablaLift}
  \begin{gathered}
    \ttQ \tnabla_{\ttX^{\rmq}}\ttY^{\rmq} = (\nabla^{\cH}_{\ttX} \ttY)^{\rmq} + \frac{1}{2}(\RcH_{\ttX, \ttY} v)^{\rmp}\\
\ttQ\tnabla_{\ttX^{\rmq}}\ttY^{\rmp} = (\nabla^{\cH}_{\ttX}\ttY)^{\rmp} -\frac{\alpha}{2}(\RcH_{v, \ttY}\ttX)^{\rmq}\\    
    \ttQ\tnabla_{\ttX^{\rmp}}\ttY^{\rmq} = -\frac{\alpha}{2}(\RcH_{v, \ttX}\ttY)^{\rmq}\\
    \ttQ\tnabla_{\ttX^{\rmp}}\ttY^{\rmp} =(\frac{\alpha'}{\alpha}(\sfg( v, \ttX) \ttY + \sfg( v, \ttY)\ttX) +\rF v)^{\rmp}\\
\rF := \frac{(\beta-\alpha')\langle\ttX, \ttY\rangle_{\sfg} + (\beta'-2\alpha'\beta/\alpha)\langle v, \ttX\rangle_{\sfg}\langle v, \ttY\rangle_{\sfg}}{\alpha + \| v\|_{\sfg}^2\beta}
\end{gathered}
  \end{equation}
  where the scalar functions $\alpha, \alpha', \beta, \beta'$ are evaluated at $\|v\|_{\sfg}^2$. Let $\txi=(\xi_{\frm}, \xi_{\frt}),  \teta =(\eta_{\frm}, \eta_{\frt})$ be two tangent vectors in $\cQ_{(x, v)}\cH\rM$. A horizontal Christoffel function $\Gamma_{\sfGQ}^{\cH}$ corresponding to the horizontal component $\ttQ\tnabla$ of the Levi-Civita connection for the metric $\sfGQ$ is given by
  \begin{equation}\label{eq:GammaHsfGDec}\begin{gathered}
    \Gamma_{\sfGQ}^{\cH}(\txi, \teta) =
    \Gamma_{\sfGQ}^{\cH}(\xi_{\frm}^{\rmq}, \eta_{\frm}^{\rmq}) +
    \Gamma_{\sfGQ}^{\cH}(\xi_{\frm}^{\rmq}, (\rCa\teta)^{\rmp}) +\\
\Gamma_{\sfGQ}^{\cH}((\rCa\txi)^{\rmp}, (\eta_{\frm})^{\rmq}) +
\Gamma_{\sfGQ}^{\cH}((\rCa\txi)^{\rmp}, (\rCa\teta)^{\rmp})
\end{gathered}
  \end{equation}
  \begin{equation}\label{eq:GammaHsfGparts}
 \begin{gathered}
       \Gamma_{\sfGQ}^{\cH}(\xi_{\frm}^{\rmq}, \eta_{\frm}^{\rmq}) = (\GammaH(\xi_{\frm}, \eta_{\frm}), -\Gammaa(\GammaH(\xi_{\frm},\eta_{\frm}), v) + (\rD_{\xi_{\frm}}\Gammaa)( \eta_{\frm}, v)-\\\GammaH( \eta_{\frm}, \GammaH(\xi_{\frm}, v)) + \frac{1}{2}\RcH_{\xi_{\frm}, \eta_{\frm}} v)\\
\Gamma_{\sfGQ}^{\cH}(\xi_{\frm}^{\rmq}, (\rCa\teta)^{\rmp}) =
(-\frac{\alpha}{2}\RcH_{ v, \rCa\teta}\xi_{\frm}, \frac{\alpha}{2}\GammaH(\RcH_{ v, \rCa\teta}\xi_{\frm}, v) +
\GammaH(\xi_{\frm}, \rCa\teta))\\
\Gamma_{\sfGQ}^{\cH}((\rCa\txi)^{\rmp}, \eta_{\frm}^{\rmq}) = (-\frac{\alpha}{2}\RcH_{ v,\rCa\txi}\eta_{\frm}, \frac{\alpha}{2}\GammaH(\RcH_{ v,\rCa\txi}\eta_{\frm}, v)
+\GammaH(\rCa\txi, \eta_{\frm}))\\
\Gamma_{\sfGQ}^{\cH}((\rCa\txi)^{\rmp}, (\rCa\teta)^{\rmp}) = (0,\frac{\alpha'}{\alpha}\{ \langle v, \rCa\txi\rangle_{\sfg}\rCa\teta + \langle v, \rCa\teta\rangle_{\sfg} \rCa\txi\} +\rF v)
    \end{gathered}
  \end{equation}
  where $\rF$ is evaluated from \cref{eq:tnablaLift} with $\rCa\txi, \rCa\teta$ in place of $\ttX$, $\ttY$.
\end{theorem}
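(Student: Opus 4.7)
The plan is to derive the four identities of \cref{eq:tnablaLift} first, and then read off the Christoffel function decomposition \cref{eq:GammaHsfGDec}--\cref{eq:GammaHsfGparts} from them. Two approaches are viable. The first is direct: apply the Koszul formula for $\sfGQ$, exploiting the inner product identities $\langle \ttX^{\rmq}, \ttY^{\rmq}\rangle_{\sfGQ} = \langle \ttX, \ttY\rangle_{\sfg}$, $\langle \ttX^{\rmp}, \ttY^{\rmp}\rangle_{\sfGQ} = \langle \ttX, \hsfg\ttY\rangle_{\cE}$, and $\langle \ttX^{\rmq}, \ttY^{\rmp}\rangle_{\sfGQ} = 0$, together with the bracket relations for $[\ttX^{\rmq}, \ttY^{\rmq}]$, $[\ttX^{\rmq}, \ttY^{\rmp}]$, and $[\ttX^{\rmp}, \ttY^{\rmp}]$ proved immediately before the theorem. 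The vertical $\rmb$-component of $[\ttX^{\rmq}, \ttY^{\rmq}]$ is killed by $\ttQ$, so only the $\rmq$ and $\rmp$ parts enter.

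The second approach, which I would adopt as it is cleaner, reduces to Proposition \ref{prop:nat_TTM2} via the Riemannian submersion $d\qq_{|\cH\rM}: \cH\rM \to \cT\cB$ from Proposition \ref{prop:projH}. The metric $\sfG_{\cB}$ on $\cT\cB$ has exactly the form of \cref{eq:sfG_metric} built from $(\sfg_{\cB}, \hsfg_{\cB})$, so \cref{eq:GammasfGparts} applies directly on $\cB$. The lifts $\rmq$ and $\rmp$ on $\cH\rM$ are themselves horizontal lifts of $\rmh$ and $\rmv$ on $\cT\cB$: $d^2\qq(\ttX^{\rmq}) = (d\qq\ttX)^{\rmh}$ and $d^2\qq(\ttX^{\rmp}) = (d\qq\ttX)^{\rmv}$ at the corresponding points. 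Hence $\ttQ\tnabla_{\ttX^{\rmq}}\ttY^{\rmq}$ is the horizontal lift of $\nabla^{\cT\cB}_{(d\qq\ttX)^{\rmh}}(d\qq\ttY)^{\rmh}$; applying Proposition \ref{prop:nat_TTM2} on $\cB$ and lifting back via $\rmq$ and $\rmp$, together with the facts that $\rR^{\cB}$ lifts to $\RcH$ and $\nabla^{\cB}$ lifts to $\nabla^{\cH}$, yields the four formulas of \cref{eq:tnablaLift}.

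For the Christoffel function decomposition, I would extend $\txi, \teta \in \cQ_{(x, v)}\cH\rM$ to vector fields on $\cH\rM$ using $\rmq$- and $\rmp$-lifts of the field extensions $y \mapsto \ttH_y \xi_{\frm}$ already used in the embedded case. Lemma \ref{lem:connHor} gives the decomposition $\txi = \xi_{\frm}^{\rmq} + (\rCa\txi)^{\rmp}$, and the corresponding one for $\teta$, so bilinearity of $\Gamma_{\sfGQ}^{\cH}$ gives \cref{eq:GammaHsfGDec} at once. For each of the four cross-terms, I would compute $\rD_{\txi^{\text{ext}}}\teta^{\text{ext}}$ at $(x, v)$ by direct differentiation of the lift formulas (the $\rmq$-lift involves $\Gammaa[v]_x$, so product-rule derivatives of $\Gammaa$ and of $\alpha, \beta$ in the $v$-variable enter), subtract this from $\ttQ\tnabla_{\txi^{\text{ext}}}\teta^{\text{ext}}$ as given by \cref{eq:tnablaLift}, and collect terms to produce \cref{eq:GammaHsfGparts}.

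The main obstacle will be verifying the fourth line of \cref{eq:tnablaLift}, which carries all the dependence on $\alpha'$, $\beta$, $\beta'$ and captures the non-flatness of the fiber metric. This requires a submersion-adapted analogue of the compatibility identity \cref{eq:hsfg_compa} along $\rmq$-directions: for horizontal vector fields $\ttX, \ttY, \ttZ$ on $\rM$,
\begin{equation}
\rD_{\ttX^{\rmq}}\langle \ttY\circ\pi, \hsfg(\ttZ\circ\pi)\rangle_{\cE} = \langle \nabla^{\cH}_{\ttX}\ttY, \hsfg\ttZ\rangle_{\cE} + \langle \ttY, \hsfg\nabla^{\cH}_{\ttX}\ttZ\rangle_{\cE}.
\end{equation}
This uses the fiber-invariance assumption on $\hsfg$ together with the key fact that $\|v\|^2_{\sfg}$ is preserved along $\rmq$-lifts, which holds because $\Gammaa$ has been constructed so that $\langle v, \sfg \Gammaa(\ttX, v)\rangle_{\cE}$ cancels the naive derivative. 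Once this identity is in hand, the Koszul calculation reduces, modulo the $\rmb$-bracket corrections that die under $\ttQ$, to the embedded natural-metric computation from Proposition \ref{prop:nat_TTM2}.
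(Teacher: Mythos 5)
Your outline is essentially correct, but the route you say you would adopt is genuinely different from the paper's, and it carries one logical caveat. The paper follows your \emph{first} approach: it records the eight Koszul-type identities \cref{eq:qqq}--\cref{eq:ppp} for $\rmq$- and $\rmp$-lifts (modelled on Lemma 6.2 of Gudmundsson--Kappos, using the bracket relations of the preceding lemma; the $\rmb$-component of $[\ttX^{\rmq},\ttY^{\rmq}]$ is killed because $\cV\cH\rM\perp\cQ\cH\rM$), extracts the four lines of \cref{eq:tnablaLift} from them — the last one via the compatibility identity \cref{eq:hsfg_compa}, the rule $\rD_{\ttX^{\rmp}}f(\|\rU\|_{\sfg}^2)=2f'(\|\rU\|_{\sfg}^2)\langle\ttX,\rU\rangle_{\sfg}$, and the Sherman--Morrison form of $\hsfg^{-1}$ — and then obtains \cref{eq:GammaHsfGparts} exactly as you describe, by differentiating the lift formulas and subtracting from $\ttQ\tnabla$. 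Your preferred \emph{second} approach (push everything down through the Riemannian submersion $d\qq_{|\cH\rM}:\cH\rM\to\cT\cB$ of \cref{prop:projH}, using that $d^2\qq$ carries $\rmq$- and $\rmp$-lifts to the $\rmh$- and $\rmv$-lifts on $\cT\cB$ and that $\cQ\cH\rM$ is the horizontal bundle, then invoke \cref{prop:nat_TTM2} on $\cB$) is geometrically sound and arguably cleaner, but within this paper it is circular: the proof of the Christoffel-function content of \cref{prop:nat_TTM2} is explicitly deferred \emph{to} \cref{theo:NaturalSubmerse}, so you must supply the tangent-bundle base case independently — either by the same Koszul computation you list as your first option, or by citing the classical results of Kowalski--Sekizawa and Abbassi--Sarih. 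Two smaller points to tighten: (i) $\ttX^{\rmq}$ is the $\sfGQ$-horizontal lift of a field of the form $(d\qq\ttX)^{\rmh}$ only when $\ttX$ is basic, whereas \cref{eq:tnablaLift} is asserted for arbitrary horizontal fields, so the reduction to basic fields must be stated; (ii) your claim that $\|\rU\|^2_{\sfg}$ is preserved along $\rmq$-lifts is the right mechanism but needs to be carried out (for horizontal $\ttX$ one has $\Gammaa(\ttX,\cdot)=\GammaH(\ttX,\cdot)$, and the cancellation is the submersed analogue of the computation the paper performs when verifying \cref{eq:hsfg_compa}).
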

Again, the case $\alpha = 1, \beta = 0$ is the case of the Sasaki metric on $\cT\cB$, the case $\alpha=\beta = (1+t)^{-1}$ is that of the Cheeger-Gromoll metric.
\begin{proof}First, we have the following relations for three horizontal vector fields $\ttX, \ttY, \ttZ$ on $\rM$. We will not repeat the proof (identical to that of Lemma 6.2 of \cite{GudKap}, with the opposite sign convention for $\RcH$, using the Koszul formula):
  \begin{equation}\label{eq:qqq}\langle \tnabla_{\ttX^{\rmq}}\ttY^{\rmq}, \ttZ^{\rmq}\rangle_{\sfGQ} = \langle \nabla^{\cH}_{\ttX}\ttY, \ttZ\rangle_{\sfg}
  \end{equation}
  \begin{equation}\label{eq:qqp}2\langle \tnabla_{\ttX^{\rmq}}\ttY^{\rmq}, \ttZ^{\rmp}\rangle_{\sfGQ} = \langle (\RcH_{\ttX, \ttY} v)^{\rmp}, \ttZ^{\rmp}\rangle_{\sfGQ}
    \end{equation}
  \begin{equation}\label{eq:qpq}2\langle \tnabla_{\ttX^{\rmq}}\ttY^{\rmp}, \ttZ^{\rmq}\rangle_{\sfGQ} = -\langle (\RcH_{\ttX, \ttZ} v)^{\rmp}, \ttY^{\rmp}\rangle_{\sfGQ}
    \end{equation}
  \begin{equation}\label{eq:qpp}2\langle \tnabla_{\ttX^{\rmq}}\ttY^{\rmp}, \ttZ^{\rmp}\rangle_{\sfGQ} = \rD_{\ttX^{\rmq}}\langle\ttY^{\rmp}, \ttZ^{\rmp}\rangle_{\sfGQ} - \langle\ttY^{\rmp}, (\nabla^{\cH}_{\ttX}\ttZ)^{\rmp} \rangle_{\sfGQ} +\langle\ttZ^{\rmp}, (\nabla^{\cH}_{\ttX}\ttY)^{\rmp} \rangle_{\sfGQ}
  \end{equation}
      \begin{equation}2\label{eq:pqq}\langle\tnabla_{\ttX^{\rmp}}\ttY^{\rmq}, \ttZ^{\rmq}\rangle_{\sfGQ} = -\langle (\RcH_{\ttY, \ttZ}v)^{\rmp}, \ttX^{\rmp}\rangle_{\sfGQ}
    \end{equation}
      \begin{equation}2\label{eq:pqp}\langle\tnabla_{\ttX^{\rmp}}\ttY^{\rmq}, \ttZ^{\rmp}\rangle_{\sfGQ} = \rD_{\ttY^{\rmq}}\langle \ttZ^{\rmp}, \ttX^{\rmp} \rangle_{\sfGQ} - \langle\ttZ^{\rmp}, (\nabla^{\cH}_{\ttY}\ttX)^{\rmp}\rangle_{\sfGQ} - \langle\ttX^{\rmp}, (\nabla^{\cH}_{\ttY}\ttZ)^{\rmp}\rangle_{\sfGQ}
    \end{equation}
    \begin{equation}\label{eq:ppq}2\langle\tnabla_{\ttX^{\rmp}}\ttY^{\rmp}, \ttZ^{\rmq}\rangle_{\sfGQ} = -\rD_{\ttZ^{\rmq}}\langle\ttX^{\rmp}, \ttY^{\rmp} \rangle_{\sfGQ} + \langle\ttY^{\rmp}, (\nabla^{\cH}_{\ttZ}\ttX)^{\rmp}\rangle_{\sfGQ} +\langle\ttX^{\rmp}, (\nabla^{\cH}_{\ttZ}\ttY)^{\rmp} \rangle_{\sfGQ}
    \end{equation}
  \begin{equation}\label{eq:ppp}2\langle\tnabla_{\ttX^{\rmp}}\ttY^{\rmp}, \ttZ^{\rmp}\rangle_{\sfGQ} = \rD_{\ttX^{\rmp}}\langle\ttY^{\rmp}, \ttZ^{\rmp}\rangle_{\sfGQ} +\rD_{\ttY^{\rmp}}\langle\ttZ^{\rmp}, \ttX^{\rmp}\rangle_{\sfGQ} -\rD_{\ttZ^{\rmp}}\langle\ttX^{\rmp}, \ttY^{\rmp}\rangle_{\sfGQ}
  \end{equation}
  The first equality of \cref{eq:tnablaLift} follows from \cref{eq:qqq,eq:qqp} as $\sfGQ$ and $\sfg$ are related by a submersion. The remaining equalities are proved in a similar way to Proposition 8.2 of \cite{GudKap} and of theorem 2 of \cite{Abbassi2005}, which we present below. From \cref{eq:qpq}, with $\alpha, \beta$ evaluated at $\| v\|^2_{\sfg}$
  $$2\langle \tnabla_{\ttX^{\rmq}}\ttY^{\rmp}, \ttZ^{\rmq}\rangle_{\sfGQ} = \langle \RcH_{\ttZ, \ttX} v, \ttY\rangle_{\hsfg} = \alpha\langle \RcH_{\ttZ, \ttX} v, \ttY \rangle_{\sfg} +\beta\langle\ttY, v \rangle_{\sfg}\langle \RcH_{\ttZ, \ttX} v, v \rangle_{\sfg}
  $$
  which is $-\alpha\langle \RcH_{v, \ttY}\ttX, \ttZ \rangle_{\sfg}$, using Bianchi's identities. This gives us the $\rmq$-component of $\tnabla_{\ttX^{\rmq}}\ttY^{\rmp}$. From metric compatibility of $\nabla$, \cref{eq:hsfg_compa} and property of projection
  $$\rD_{\ttX^{\rmq}}\langle\ttY^{\rmp}, \ttZ^{\rmp}\rangle_{\sfGQ} =
  \langle(\nabla_{\ttX}\ttY)^{\rmp}, \ttZ^{\rmp}\rangle_{\sfGQ} + \langle\ttY^{\rmp}, (\nabla_{\ttX}\ttZ)^{\rmp}\rangle_{\sfGQ} =
\langle(\nabla^{\cH}_{\ttX}\ttY)^{\rmp}, \ttZ^{\rmp}\rangle_{\sfGQ} + \langle\ttY^{\rmp}, (\nabla^{\cH}_{\ttX}\ttZ)^{\rmp}\rangle_{\sfGQ}$$
From here and \cref{eq:qpp}, we get $\rmp$-component of $\tnabla_{\ttX^{\rmq}}\ttY^{\rmp}$ because
  $$
  2\langle \tnabla_{\ttX^{\rmq}}\ttY^{\rmp}, \ttZ^{\rmp}\rangle_{\sfGQ} =\langle(\nabla^{\cH}_{\ttX}\ttY)^{\rmp}, \ttZ^{\rmp}\rangle_{\sfGQ} + \langle\ttY^{\rmp}, (\nabla^{\cH}_{\ttX}\ttZ)^{\rmp}\rangle_{\sfGQ}
  - \langle\ttY^{\rmp}, (\nabla^{\cH}_{\ttX}\ttZ)^{\rmp} \rangle_{\sfGQ} +\langle\ttZ^{\rmp}, (\nabla^{\cH}_{\ttX}\ttY)^{\rmp} \rangle_{\sfGQ}
  $$
We will skip the calculation of $\tnabla_{\ttX^{\rmp}}\ttY^{\rmq}$ as it is similar. Expanding \cref{eq:ppq}, using $\sfGQ$-metric compatibility then use the just proved expressions for $\tnabla_{\ttZ^{\rmq}}\ttX^{\rmv}, \tnabla_{\ttZ^{\rmq}}\ttY^{\rmv}$, note that the $\rmq$ and $\rmv$ components are orthogonal
$$\begin{gathered}2\langle \tnabla_{\ttX^{\rmp}}\ttY^{\rmp}, \ttZ^{\rmq}\rangle_{\sfGQ} = -\langle(\nabla^{\cH}_{\ttZ}\ttX)^{\rmp}, \ttY^{\rmp} \rangle_{\sfGQ}
  +\frac{\alpha}{2}\langle(\RcH_{v, \ttX}\ttZ)^{\rmq}, \ttY^{\rmp} \rangle_{\sfGQ}
-\langle\ttX^{\rmp}, (\nabla^{\cH}_{\ttZ}\ttY)^{\rmp} \rangle_{\sfGQ}\\
+\frac{\alpha}{2}\langle \ttX^{\rmp}, (\RcH_{v, \ttY}\ttZ)^{\rmq} \rangle_{\sfGQ}  + \langle\ttY^{\rmp}, (\nabla^{\cH}_{\ttZ}\ttX)^{\rmp}\rangle_{\sfGQ} +\langle\ttX^{\rmp}, (\nabla^{\cH}_{\ttZ}\ttY)^{\rmp} \rangle_{\sfGQ} =0
  \end{gathered}$$
Next, for any real function $f$, $\rD_{\ttX^{\rmp}} f(\|\rU\|^2_{\sfg}) = 2f'(\|\rU\|^2_{\sfg})\langle \ttX, \rU\rangle_{\sfg}$. Write $\alpha, \beta, \alpha', \beta'$ for their values at $\|v\|^2_{\sfg}$ at the horizontal tangent point $(x, v)$:
$$\begin{gathered}\rD_{\ttX^{\rmp}}\langle\ttY^{\rmp}, \ttZ^{\rmp}\rangle_{\sfGQ} =
\rD_{\ttX^{\rmp}}\{\alpha(\|v\|^2_{\sfg})\langle\ttY, \ttZ\rangle_{\sfg} +
\beta(\|v\|^2_{\sfg})\langle\ttY, v\rangle_{\sfg}\langle \ttZ, v\rangle_{\sfg}
\} =\\
\{2\alpha'\langle\ttX, v \rangle_{\sfg}\langle\ttY, \ttZ\rangle_{\sfg} + 
2\beta'\langle\ttX, v \rangle_{\sfg}\langle\ttY, v\rangle_{\sfg}\langle \ttZ, v\rangle_{\sfg} +
\beta\{\langle\ttY, \ttX\rangle_{\sfg}\langle \ttZ, v\rangle_{\sfg} +
\langle\ttY, v\rangle_{\sfg}\langle \ttZ, \ttX\rangle_{\sfg}\}
\end{gathered}
$$
By \cref{eq:hsfginv}, $\hsfg^{-1}\sfg\omega = \frac{1}{\alpha}\omega - \frac{\beta}{\alpha(\alpha +\|v\|^2_{\sfg}\beta)}\langle v,\sfg\omega \rangle_{\cE}v$, in particular, $\hsfg^{-1}\sfg v = \frac{1}{\alpha + \|v\|^2_{\sfg}\beta}v$. Thus, from \cref{eq:ppp}, with two permutations of the above equality, the $\rmp$-component of $\ttQ\tnabla_{\ttX^{\rmp}}\ttY^{\rmp}$ is
$$\begin{gathered}\frac{1}{2}\hsfg^{-1}\sfg\{
  2\alpha'\langle\ttX, v \rangle_{\sfg}\ttY + 
2\beta'\langle\ttX, v \rangle_{\sfg}\langle\ttY, v\rangle_{\sfg}v +
\beta\langle\ttY, \ttX\rangle_{\sfg} v +
\beta\langle\ttY, v\rangle_{\sfg}\ttX +\\
  2\alpha'\langle\ttY, v \rangle_{\sfg}\ttX + 
2\beta'\langle\ttY, v \rangle_{\sfg}\langle\ttX, v\rangle_{\sfg}v +
\beta\langle\ttX, \ttY\rangle_{\sfg} v +
\beta\langle\ttX, v\rangle_{\sfg}\ttY
-\\
2\alpha'\langle\ttY, \ttX\rangle_{\sfg}v  -
2\beta'\langle\ttY, v\rangle_{\sfg}\langle \ttX, v\rangle_{\sfg}v -
\beta\langle \ttX, v\rangle_{\sfg}\ttY -
\beta\langle\ttY, v\rangle_{\sfg}\ttX\}=\\
\hsfg^{-1}\sfg\{\alpha'\langle\ttX, v \rangle_{\sfg}\ttY + \alpha'\langle\ttY, v \rangle_{\sfg}\ttX + (\beta'\langle\ttX, v \rangle_{\sfg}\langle\ttY, v\rangle_{\sfg} +
\beta\langle\ttY, \ttX\rangle_{\sfg}-\alpha'\langle \ttY, \ttX\rangle_{\sfg})v
\}=\\
\hsfg^{-1}\sfg\{\alpha'\langle\ttX, v \rangle_{\sfg}\ttY + \alpha'\langle\ttY, v \rangle_{\sfg}\ttX\} + (\beta'\langle\ttX, v \rangle_{\sfg}\langle\ttY, v\rangle_{\sfg} +
\beta\langle\ttY, \ttX\rangle_{\sfg}-\alpha'\langle \ttY, \ttX\rangle_{\sfg})\hsfg^{-1}\sfg v\\
=\frac{\alpha'}{\alpha}(\langle\ttX, v \rangle_{\sfg}\ttY +  \langle\ttY, v \rangle_{\sfg}\ttX) -\frac{\beta\alpha'}{\alpha(\alpha +\|v\|^2_{\sfg}\beta)}(2\langle\ttX, v \rangle_{\sfg}\langle\ttY, v\rangle_{\sfg})v\\
+\frac{1}{\alpha + \|v\|^2_{\sfg}\beta}(\beta'\langle\ttX, v \rangle_{\sfg}\langle\ttY, v\rangle_{\sfg} +
(\beta-\alpha')\langle\ttX, \ttY\rangle_{\sfg})v\\
=\frac{\alpha'}{\alpha}(\langle\ttX, v \rangle_{\sfg}\ttY + \langle\ttY, v \rangle_{\sfg}\ttX) +
\frac{
 (\beta'-2\beta\alpha'/\alpha)\langle\ttX, v \rangle_{\sfg}\langle\ttY, v\rangle_{\sfg}
+ (\beta-\alpha')\langle\ttX, \ttY\rangle_{\sfg}}{\alpha +\|v\|^2_{\sfg}\beta}v
  \end{gathered}
$$
This completes the proof of \cref{eq:tnablaLift}. To compute $\Gamma_{\sfG}^{\cH}$, we have \cref{eq:GammaHsfGDec} by linearity. Fix $(x, v)\in \cH\rM$. Let $\qtxi, \rtxi, \qteta, \rteta$ be horizontal vector fields on $\rM$ by defining $\qtxi(y) = \ttH_y\xi_{\frm}, \rtxi(y)= \ttH_y(\xi_{\frt}+\GammaH(\xi_{\frm}, v)_x), \qteta(y) = \ttH_y\eta_{\frm}, \rteta(y)= \ttH_y(\eta_{\frt}+\GammaH(\eta_{\frm}, v)_x)$ for $y\in\rM$, then $\xi_{\frm}^{\rmh}, (\rCa_{(x, v)}\txi)^{\rmv}, \eta_{\frm}^{\rmh}, (\rCa_{(x, v)}\teta)^{\rmv}$ are $\qtxi^{\rmh}, \rtxi^{\rmv}, \qteta^{\rmh}, \rteta^{\rmv}$ evaluated at $(x, v)$. We have
$$\begin{gathered}(\rD_{\qtxi^{\rmh}}\qteta^{\rmh})_{(x, v)} = \rD_{\qtxi^{\rmh}}(\qteta, -\Gammaa(\qteta, \rU))_{(x, v)} =\\
  ((\rD_{\qtxi}\qteta)_x, -(\rD_{\xi_{\frm}}\Gammaa)_x(\eta_{\frm}, v)  -\Gammaa((\rD_{\qtxi}\qteta)_x, v)_x+\Gammaa( \eta_{\frm}, \Gammaa(\xi_{\frm}, v))_x)
\end{gathered}$$
Using \cref{eq:tnablaLift}, $(\ttQ\tilde{\nabla}_{\qtxi^{\rmh}}\qteta^{\rmh})_{(x, v)} = (\nabla^{\cH}_{\qtxi}\qteta, -\Gammaa(\nabla^{\cH}_{\qtxi}\qteta, v) +\frac{1}{2}\RcH_{\xi_{\frm}, \eta_{\frm}}v)_x$, thus
$$\begin{gathered}
\Gamma^{\cH}_{\sfGQ}(\xi_{\frm}^{\rmh}, \eta_{\frm}^{\rmh})_{(x, v)} =
  (\ttQ\tilde{\nabla}_{\qtxi^{\rmh}}\qteta^{\rmh} - \rD_{\qtxi^{\rmh}}\qteta^{\rmh})_{(x, v)}=
  ((\nabla^{\cH}_{\qtxi}\qteta - \rD_{\qtxi^{\rmh}}\qteta)_{(x, v)},\\
  -\Gammaa(\nabla^{\cH}_{\qtxi}\qteta - \rD_{\qtxi^{\rmh}}\qteta, v) +\frac{1}{2}\RcH_{\xi_{\frm}, \eta_{\frm}}v +
(\rD_{\xi_{\frm}}\Gammaa)(\eta_{\frm}, v)-\GammaH(\eta_{\frm}, \GammaH(\xi_{\frm}, v)))
  \\
  =(\GammaH(\xi_{\frm}, \eta_{\frm}),\\
  -\Gammaa(\GammaH(\xi_{\frm}, \eta_{\frm}), v) +
(\rD_{\xi_{\frm}}\Gammaa)(\eta_{\frm}, v)-\GammaH(\eta_{\frm}, \GammaH(\xi_{\frm}, v)) +
  \frac{1}{2}\RcH_{\xi_{\frm}, \eta_{\frm}}v)
\end{gathered}
$$
where expressions are evaluated at $(x, v)$. This gives us the first equation in \cref{eq:GammaHsfGparts}. Similarly, the expression for $\Gamma^{\cH}_{\sfGQ}(\xi_{\frm}^{\rmh}, (\rCa\teta)^{\rmv})_{(x,v)}$ follows from
$$(\ttQ\nabla_{\qtxi^{\rmh}}\rteta^{\rmv})_{(x, v)} =(-\frac{\alpha}{2}\rR_{v, \rCa\teta}\xi_{\frm},
 \frac{\alpha}{2}\GammaH(\RcH_{v, \rCa\teta}\xi_{\frm}, v) + (\nabla^\cH_{\qtxi}\rteta))_x$$
$$(\rD_{\qtxi^{\rmh}}\rteta^{\rmv})_{(x, v)} = (0, (\rD_{\qtxi}\rteta)_x)$$
 Then, the formula for $\Gamma^{\cH}_{\sfGQ}((\rCa\txi)^{\rmv}, \eta_{\frm}^{\rmh})$ follows from
 $$(\rD_{\rtxi^{\rmv}}\qteta^{\rmh})_{(x, v)} = (0, -\GammaH(\rCa\txi, \eta_{\frm}))_{(x, v)}$$
 $$(\ttQ\nabla_{\rtxi^{\rmv}}\qteta^{\rmh})_{(x, v)} = -\frac{\alpha}{2}(\RcH_{v,\rCa\xi_{\frm}}\teta)^{\rmh}_{(x, v)}$$
 and $(\rD_{\rtxi^{\rmv}}\rteta^{\rmv})_{(x, v)}=0$ gives us the formula for $\Gamma^{\cH}_{\sfGQ}((\rCa\txi)^{\rmv}, (\rCa\teta)^{\rmv})$.
\end{proof}  
\section{Application to Grassmann manifold}\label{sec:grass}
The Grassmann manifold could be considered as the simplest example of a flag manifold, that we have realized as a quotient of the orthogonal group. Following \cite{Edelman_1999}, we will construct it as a quotient of the Stiefel manifold. The horizontal projection and Levi-Civita connection are well-known and will be reviewed briefly. We will show the computation of curvature, Jacobi fields and horizontal bundle metric in this section.

Let $n > p > 0$ be two positive integers. Recall a Stiefel manifold could be considered as a submanifold of $\R^{n\times p}$ of matrices satisfying the equation $Y^{\sfT}Y = \dI_p$, $Y\in \R^{n\times p}$. We will use the embedded metric on the Stiefel manifold $\sfg\omega = \omega$ for $\omega\in\cE=\R^{n\times p}$.
For an ambient vector $\omega \in \cE = \R^{n\times p}$, the projection to the tangent space of Stiefel manifold at $Y$ is $\Pi\omega = \omega-\frac{1}{2}(YY^{\ft}\omega + Y\omega^{\ft}Y)$ \cite{Edelman_1999}, and $\mrGamma = 0$.

The Grassmann manifold $\Gr{p}{n}$ could be considered as the quotient of $\St{p}{n}$ by a right action of the orthogonal group $\OO(p)$, that is under the equivalence $YQ\sim Y$ for $Y\in \St{p}{n}$ and $Q \in \OO(p)$. We will use the notation $\lb Y\rb$ to denote the equivalent class of $Y$. Thus, in our convention, $\rM = \St{p}{n}$ and $\cB= \Gr{p}{n}$, and we have a submersion $\qq:\rM\to\cB$. In this submersion, the vertical space consists of vectors $Yb$, where $b = -b^{\sfT}\in \oo(p)$. The vertical projection is therefore $\ttV\omega = \frac{1}{2}Y(Y^{\sfT}\omega-\omega^{\sfT}Y)$ using \cref{lem:projprop}, with the constant metric $\sfg\omega = \omega$ and the map $\rN:\oo(p)\to\R^{n\times p}=:\cE$, $\rN:b\mapsto Yb$ for $b\in\oo(p)$, and $\rN^{\sfT}\omega = \frac{1}{2}(Y^{\sfT}\omega - \omega^{\sfT}Y)$. Hence, the projection to the horizontal space is $\ttH\omega = (\Pi - \ttV)\omega = \omega-YY^{\ft}\omega$. A horizontal vector $\eta$ satisfies $Y^{\sfT}\eta = 0$. We try to keep the formulas compact and omit explicit subscripting $Y$ for $\Pi, \ttH$ and $\ttV$.

If $\xi$ is tangent to the Stiefel manifold and $\omega \in \cE$, $\GammaH(\xi, \omega) = -(\rD_{\xi}\ttH)\omega$
$$\GammaH(\xi, \omega) = Y\xi^{\ft}\omega +\xi Y^{\ft}\omega$$
and for horizontal vectors $\xi, \eta, \phi$, $Y^{\ft}\xi = Y^{\ft}\eta= Y^{\ft}\phi =0$, from \cref{eq:rAV}
\begin{equation}\label{eq:rA_grass}\rA_{\xi}\eta =-(\rD_{\xi}\ttV)\eta = -\frac{1}{2}(Y\xi^{\ft}\eta +\xi Y^{\ft}\eta -\xi\eta^{\ft} Y - Y\eta^{\ft}\xi)
=-\frac{1}{2}Y(\xi^{\ft}\eta - \eta^{\ft}\xi)\end{equation}
$$\rAd_{\phi}\rA_{\xi}\eta = -\GammaH(\phi,-\frac{1}{2}Y(\xi^{\ft}\eta - \eta^{\ft}\xi))=
(Y\phi^{\ft} +\phi Y^{\ft})\{\frac{1}{2}Y(\xi^{\ft}\eta - \eta^{\ft}\xi)\}=\frac{1}{2}(\phi\xi^{\ft}\eta-\phi\eta^{\ft}\xi)
$$
$$(\rD_{\xi}\GammaH)(\eta, \phi) = \xi\eta^{\ft}\phi +\eta\xi^{\ft}\phi$$
Thus $-(\rD_{\xi}\GammaH)(\eta, \phi) + (\rD_{\eta}\GammaH)(\xi, \phi) =0$ and
$\GammaH(\xi, \GammaH(\eta, \phi)) = (Y\xi^{\ft} +\xi Y^{\ft})(Y\eta^{\ft}\phi +\eta Y^{\ft}\phi)= \xi\eta^{\ft}\phi$. We now get the classical curvature formula for $\Gr{p}{n}$:
\begin{proposition} 
Let $\xi, \eta, \phi$ be three horizontal vectors at $Y\in\St{p}{n}$ as horizontal lifts of tangent vectors at $\lb Y\rb \in \Gr{p}{n}$. The lift of the Riemannian curvature tensor of $\Gr{p}{n}$ to $\St{p}{n}$ is given by:
\begin{equation}\label{eq:r13_grass}\RcH_{\xi, \eta}\phi= -\xi\eta^{\ft}\phi + \eta\xi^{\ft}\phi +\phi\xi^{\ft}\eta-\phi\eta^{\ft}\xi\end{equation}
If $(\Yperp | Y) \in \OO(n)$ and $\xi = \Yperp B_1, \eta = \Yperp B_2$, we have $\htK(\xi, \eta) = \langle\hcR_{\xi, \eta}\xi, \eta\rangle_{\sfg}$ is
  \begin{equation}\label{eq:sec_grass_cur}
    \begin{gathered}
    \htK_{\cH}(\xi, \eta) = \TrR(B_1B_1^{\ft}B_2B_2^{\ft} + B_2B_1^{\ft}B_1B_2^{\ft}  - 2B_1B_2^{\ft}B_1B_2^{\ft}) \\= ||B_2^{\ft}B_1 - B_1^{\ft}B_2||_F^2 + ||B_1B_2^{\ft} - B_2B_1^{\ft}||_F^2
\end{gathered}    
\end{equation}  
\end{proposition}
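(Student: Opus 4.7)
The plan is to apply the submersion curvature formula \cref{eq:cursubmer} using the building blocks that the paper has already assembled in the display equations immediately preceding the statement. All four ingredients are at hand: the O'Neil term $2\rAd_{\phi}\rA_{\xi}\eta = \phi\xi^{\ft}\eta - \phi\eta^{\ft}\xi$, the vanishing of the derivative combination $-(\rD_{\xi}\GammaH)(\eta,\phi) + (\rD_{\eta}\GammaH)(\xi,\phi) = 0$ (which holds because $(\rD_{\xi}\GammaH)(\eta,\phi) = \xi\eta^{\ft}\phi + \eta\xi^{\ft}\phi$ is symmetric in $\xi, \eta$), and the double-composition $\GammaH(\xi,\GammaH(\eta,\phi)) = \xi\eta^{\ft}\phi$. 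Substituting these into \cref{eq:cursubmer} and collecting the four surviving terms immediately yields \cref{eq:r13_grass}.

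For the sectional-curvature expression \cref{eq:sec_grass_cur}, I would specialize the $(1,3)$ formula to $\phi = \xi$, giving
\begin{equation*}
\RcH_{\xi,\eta}\xi = \eta\xi^{\ft}\xi + \xi\xi^{\ft}\eta - 2\xi\eta^{\ft}\xi.
\end{equation*}
Then compute $\htK_{\cH}(\xi,\eta) = \langle \RcH_{\xi,\eta}\xi, \eta\rangle_{\sfg}$ using $\sfg = \dI$, i.e.\ as $\Tr((\RcH_{\xi,\eta}\xi)\eta^{\ft})$. Writing $\xi = \Yperp B_1, \eta = \Yperp B_2$ and exploiting $\Yperp^{\ft}\Yperp = \dI_{n-p}$ (which follows from $(\Yperp | Y) \in \OO(n)$) collapses the ambient factors: every intermediate block $\xi^{\ft}\xi$, $\eta^{\ft}\eta$, $\xi^{\ft}\eta$, $\eta^{\ft}\xi$ reduces to the corresponding $B_i^{\ft}B_j$, and the outer $Y_{\perp}$ factors cancel cyclically. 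Cyclic rearrangement of the resulting three traces then produces the first form stated in \cref{eq:sec_grass_cur}.

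For the second equality, I would exploit that $X := B_2^{\ft}B_1 - B_1^{\ft}B_2$ and $Y := B_1 B_2^{\ft} - B_2 B_1^{\ft}$ are antisymmetric, so $\|X\|_F^2 = -\Tr(X^2)$ and $\|Y\|_F^2 = -\Tr(Y^2)$. Expanding each square gives four monomial traces per norm; pairing them cyclically (using $\Tr(A) = \Tr(A^{\ft})$ together with cyclic invariance, which identifies $\Tr(B_1^{\ft}B_2 B_1^{\ft}B_2) = \Tr(B_1 B_2^{\ft} B_1 B_2^{\ft})$ and similar pairs) reduces the sum to a linear combination of the same three traces that appear in the trace form, establishing the claimed identity.

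The main obstacle is purely bookkeeping: there are three algebraically distinct traces in play, namely $\Tr(B_1 B_1^{\ft} B_2 B_2^{\ft})$, $\Tr(B_2 B_1^{\ft} B_1 B_2^{\ft})$, and $\Tr(B_1 B_2^{\ft} B_1 B_2^{\ft})$, and one must track carefully which cyclic/transpose moves identify a given monomial with which of these three. There is no conceptual difficulty beyond faithfully applying \cref{eq:cursubmer} and managing the trace identities.
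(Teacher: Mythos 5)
Your route to \cref{eq:r13_grass} is exactly the paper's: plug the already-computed pieces ($2\rAd_{\phi}\rA_{\xi}\eta=\phi\xi^{\ft}\eta-\phi\eta^{\ft}\xi$, the vanishing derivative difference, and $\GammaH(\xi,\GammaH(\eta,\phi))=\xi\eta^{\ft}\phi$) into \cref{eq:cursubmer}, and then obtain the trace form of $\htK_{\cH}$ by the substitution $\xi=\Yperp B_1,\eta=\Yperp B_2$ with $\Yperp^{\ft}\Yperp=\dI_{n-p}$; the paper does the same, only more tersely (it substitutes general $\phi=\Yperp B_3$ and then says the sectional numerator ``follows from a substitution''). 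One concrete warning about the last step: if you actually carry out the trace bookkeeping you outline, writing $M=B_2^{\ft}B_1$ and $P=B_1B_2^{\ft}$, you get $\|M-M^{\ft}\|_F^2=2\Tr(B_1B_1^{\ft}B_2B_2^{\ft})-2\Tr(B_1B_2^{\ft}B_1B_2^{\ft})$ and $\|P-P^{\ft}\|_F^2=2\Tr(B_2B_1^{\ft}B_1B_2^{\ft})-2\Tr(B_1B_2^{\ft}B_1B_2^{\ft})$, so the sum of the two Frobenius norms equals \emph{twice} the trace expression (check $p=1$, $n-p=2$: the trace form gives $|a|^2|c|^2-(a\cdot c)^2$ while the norm form gives $2(a_1c_2-a_2c_1)^2$). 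The second equality in \cref{eq:sec_grass_cur} as printed is therefore missing a factor of $\frac12$; this is a defect of the stated formula rather than of your method, but your assertion that the pairing ``establishes the claimed identity'' should be amended to record the discrepancy rather than paper over it.
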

Note that the expression for $\htK_{\cH}$ is dependent on $\Yperp\Yperp^{\sfT} = \dI_n - YY^{\sfT}$, not on the choice of $\Yperp$. Without using \cref{eq:cursubmer} or \cref{eq:flag_curv}, the curvatures could be derived from the theory of symmetric spaces, where the above expression comes from the Lie bracket $[\tilde{B}_3[\tilde{B}_1\tilde{B}_2]]$ in the embedding $B_i\mapsto \tilde{B}_i=\begin{bmatrix}0 & -B_i^{\ft}\\B_i & 0\end{bmatrix}\in \oo(n)$.
\begin{proof}Equation (\ref{eq:r13_grass}) follows from the preceding calculation and \cref{eq:cursubmer}, which reduces to $2\rAd_{\phi}\rA_{\xi}\eta -\GammaH(\xi\GammaH(\eta, \phi)) + \GammaH(\eta, \GammaH(\xi, \phi))$, or $\phi(\xi^{\ft}\eta - \eta^{\ft}\xi) -\xi\eta^{\sfT}\phi + \eta\xi^{\sfT}\phi$.
  
  Let $\xi=Y_{\perp}B_1$, $\eta=Y_{\perp}B_2, \phi=Y_{\perp}B_3$, $B_1, B_2, B_3\in \R^{(n-p)\times p}$. Then
  $$Y_{\perp}\RcH_{\xi, \eta} \phi= -B_1B_2^{\ft}B_3 + B_2B_1^{\ft}B_3 + B_3B_1^{\ft}B_2-B_3B_2^{\ft}B_1$$
  The sectional curvature numerator in \cref{eq:sec_grass_cur} follows from a substitution.  
\end{proof}
We now describe the horizontal bundle $\cH\St{p}{n}\subset \cT\St{p}{n}$ of the submersion $\qq:\St{p}{n}\to\Gr{p}{n}$ and its structure as in \cref{sec:subm_tangent}. $\cH\St{p}{n}$ could be identified with a submanifold of $(\R^{n\times p})^2$ of pairs of matrices $(Y, \eta)$ satisfying $Y^{\sfT}Y=\dI_p$, $Y^{\sfT}\eta = 0$. Its tangent bundle could be considered as a quadruple $(Y, \eta, \Delta_{\frm}, \Delta_{\frt})$ with $\eta$ a horizontal vector, $\Delta_{\frm}$ a Stiefel-tangent vector, thus $Y^{\sfT}\Delta_{\frm} + \Delta^{\sfT}_{\frm}Y = 0$, and $\Delta_{\frm}^{\sfT} \eta + Y^{\sfT}\Delta_{\frt} = 0$. For $b\in\oo(p)$, $(\rD_{Yb}\ttH)\eta=0$ and the operator $\rB$ in \cref{prop:QHM} is
$$\rB(Yb, \eta) = (\rD_{Yb}\ttH)\eta - (\rD_{\eta}\ttH) Yb = \eta Y^{\sfT}Y b =   \eta b$$
as expected. The extension $\rB(\phi, \eta) = \eta Y^{\sfT}\phi$ satisfies $\rB(\phi, \eta) = 0$ if $\phi$ is horizontal and $\rB(Yb, \eta)= \eta b$, and we will use this expression to extend $\rB$ to $\cE^2$. The vertical bundle $\cV\cH\rM$ consists of quadruples $(Y, \eta, Yb, \eta b)$, while $\cQ\cH\rM$ consists of tuples $(Y, \eta, \delta_{\frm}, \delta_{\frt})$ with the same relation as $(\Delta_{\frm}, \Delta_{\frt})$ but now $\delta_{\frm}$ is horizontal. The connection map sends ($\delta_{\frm}, \delta_{\frt})$ to $\delta_{\frc} = \delta_{\frt} + Y\delta_{\frm}^{\sfT}\eta$. A horizontal vector in $\cQ\cH\rM$ is thus of the form $(Y, \eta, \delta_{\frm}, \delta_{\frc} -Y\delta_{\frm}^{\sfT}\eta)$ for three tangent vectors $\eta, \delta_{\frm}, \delta_{\frc}$.

In this Stiefel coordinate, from \cref{prop:frjH} and \cref{eq:rA_grass}, the canonical flip maps $(Y, \eta, \delta_{\frm}, \delta_{\frt})$ to $(Y, \delta_{\frm}, \eta, \delta_{\frt} + Y(\delta_{\frm}^{\sfT}\eta-\eta^{\sfT}\delta_{\frm}))$. From \cref{theo:jacsub}
\begin{proposition} Let $\csr(z)$ and $\ssr z$ be analytic continuations of $\cos z^{1/2}$ and $z^{-1/2} \sin z^{1/2}$ to entire functions. For $Y\in\St{p}{n}$ and $\eta$ a tangent vector at $Y$, a geodesics for the Grassmann manifold $\Gr{p}{n}$ lifts to a horizontal geodesics $\gamma(t)$ on the Stiefel manifold $\St{p}{n}$, with $(\gamma(0), \dot{\gamma}(0)) = (Y, \eta)$ as:
  \begin{equation}\label{eq:exp_grass}
\gamma(t) = \Exp_{Y}t\eta = Y \csr t^2\eta^{\ft}\eta + t\eta\ssr t^2 \eta^{\ft}\eta
  \end{equation}
The tangent component of the horizontal lift $\frJ^{\cH}(t)=(\gamma(t), J^{\cH}(t))$ of a Jacobi field on $\Gr{p}{n}$ to $\St{p}{n}$ with initial data $\dot{\frJ}^{\cH}(0) = (Y, \nu_{\frm}, \eta, \nu_{\frt})\in\cQ\cH\St{p}{n}$ is
  \begin{equation}
  \begin{gathered}
J^{\cH}(t) =  (\dI_n - \gamma(t)\gamma(t)^{\ft})\{
  \nu_{\frm}\csr t^2\eta^{\ft}\eta +
t\delta_{\frt}\ssr(t^2\eta^{\ft}\eta)
+\\ t^2 Y \frL_{\csr}(t^2\eta^{\ft}\eta, \eta^{\ft}\delta_{\frt} + \delta_{\frt}^{\ft}\eta) + t^3\eta \frL_{\ssr}(t^2\eta^{\ft}\eta, \eta^{\ft}\delta_{\frt} + \delta_{\frt}^{\ft}\eta)\}
  \end{gathered}
  \end{equation}
where $\delta_{\frt} = \nu_{\frt} - Y(\nu_{\frm}^{\sfT}\eta - \eta^{\sfT}\nu_{\frm})$ and for an analytic function $f(z) = \sum_{i=0}^\infty f_i z^i$, $\frL_f(A, E)$ denotes the Fr{\'e}chet derivative $\sum_{i=1}^\infty f_i (\sum_{b+c=i-1}A^bEA^c)$.
\end{proposition}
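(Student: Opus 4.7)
The plan is to apply \cref{theo:jacsub} to the classical Grassmann geodesic \cref{eq:exp_grass}. First I would verify that $\gamma(t) = Y\csr(t^2\eta^{\sfT}\eta) + t\eta\ssr(t^2\eta^{\sfT}\eta)$ is a horizontal geodesic on $\St{p}{n}$. Setting $A := \eta^{\sfT}\eta$, the identity $\csr(z)^2 + z\ssr(z)^2 = 1$ together with $Y^{\sfT}Y = \dI_p$ and $Y^{\sfT}\eta = 0$ yields $\gamma(t)^{\sfT}\gamma(t) = \dI_p$, so $\gamma(t) \in \St{p}{n}$. Using the scalar identities $\csr'(z) = -\frac{1}{2}\ssr(z)$ and $\ssr(z) + 2z\ssr'(z) = \csr(z)$, one obtains $\dot\gamma(t) = -tYA\ssr(t^2A) + \eta\csr(t^2A)$, which satisfies $\gamma(t)^{\sfT}\dot\gamma(t) = 0$ (horizontality along $\gamma$), and then $\ddot\gamma(t) = -\gamma(t)A$. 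Because $A$ is symmetric and $\gamma(t)^{\sfT}\gamma(t) = \dI_p$, a direct check shows $\Pi_{\gamma(t)}\ddot\gamma(t) = 0$, confirming that $\gamma$ is a Stiefel geodesic which descends to a geodesic on $\Gr{p}{n}$.

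Next, \cref{theo:jacsub} identifies the horizontal lift of the Jacobi field as $J^{\cH}(t) = \ttH_{\gamma(t)}(\partial^{\cT\rM}_{Y,\eta,\nu_{\frm},\nu_{\frt}+2\rA_{\nu_{\frm}}\eta}\gamma)(t)$. The O'Neil formula \cref{eq:rA_grass} gives $2\rA_{\nu_{\frm}}\eta = -Y(\nu_{\frm}^{\sfT}\eta - \eta^{\sfT}\nu_{\frm})$, so the shifted tangent component equals the $\delta_{\frt} = \nu_{\frt} - Y(\nu_{\frm}^{\sfT}\eta - \eta^{\sfT}\nu_{\frm})$ appearing in the proposition, and the lifted Jacobi field reduces to $J^{\cH}(t) = \ttH_{\gamma(t)}(\partial^{\cT\rM}_{Y,\eta,\nu_{\frm},\delta_{\frt}}\gamma)(t)$.

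Finally, I would differentiate the (formally extended) map $(Y',\eta') \mapsto Y'\csr(t^2(\eta')^{\sfT}\eta') + t\eta'\ssr(t^2(\eta')^{\sfT}\eta')$ at $(Y,\eta)$ in direction $(\nu_{\frm},\delta_{\frt})$. The linear terms in $Y'$ and in the prefactor $t\eta'$ contribute $\nu_{\frm}\csr(t^2\eta^{\sfT}\eta) + t\delta_{\frt}\ssr(t^2\eta^{\sfT}\eta)$. The variation of the argument $t^2(\eta')^{\sfT}\eta'$ in direction $\delta_{\frt}$ is $t^2(\eta^{\sfT}\delta_{\frt} + \delta_{\frt}^{\sfT}\eta)$, and applying the Fr{\'e}chet derivative chain rule, together with the linearity $\frL_f(A, cE) = c\,\frL_f(A,E)$ in the direction, produces the remaining terms $t^2 Y\frL_{\csr}(t^2\eta^{\sfT}\eta,\eta^{\sfT}\delta_{\frt}+\delta_{\frt}^{\sfT}\eta) + t^3\eta\frL_{\ssr}(t^2\eta^{\sfT}\eta,\eta^{\sfT}\delta_{\frt}+\delta_{\frt}^{\sfT}\eta)$. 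Applying $\ttH_{\gamma(t)} = \dI_n - \gamma(t)\gamma(t)^{\sfT}$ then delivers the stated formula. The main technical point throughout is that $\eta^{\sfT}\eta$ and $\eta^{\sfT}\delta_{\frt}+\delta_{\frt}^{\sfT}\eta$ generally do not commute, so $\frL_f$ cannot be reduced to $f'(\eta^{\sfT}\eta)\cdot(\text{direction})$; this noncommutativity is exactly why the result must be expressed through the Fr{\'e}chet derivatives $\frL_{\csr}$, $\frL_{\ssr}$ rather than by a scalar substitution.
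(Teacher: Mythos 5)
Your proposal is correct and follows essentially the same route as the paper: it verifies the closed-form horizontal geodesic by direct substitution into the lifted geodesic equation (one of the alternatives the paper itself suggests), identifies the differentiation direction $(\nu_{\frm},\delta_{\frt})$ via \cref{theo:jacsub} and the canonical flip using $2\rA_{\nu_{\frm}}\eta=-Y(\nu_{\frm}^{\sfT}\eta-\eta^{\sfT}\nu_{\frm})$, and then computes the horizontal projection of the directional derivative of the geodesic family by the Fr\'echet-derivative chain rule. Your version simply carries out in detail the computations the paper compresses or cites, and all the intermediate identities ($\csr^2+z\ssr^2=1$, $\csr'=-\tfrac12\ssr$, $\ssr+2z\ssr'=\csr$, $\dot\gamma^{\sfT}\dot\gamma=\eta^{\sfT}\eta$) check out.
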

We will review Fr{\'e}chet derivatives in \cref{sec:frechet}. As mentioned, when $f(x)=\exp(x)$, $\frL_{\exp}(A, E) = \exp(tA)((1-\exp(-x))/x)_{x=t\ad_A}E$. Fr{\'e}chet derivatives have the advantage that it is defined for all differentiable functions, and for analytic functions it has about three times the computational complexity of evaluating $f(A)$, so for any practical purpose it could be considered as a closed-form expression. We hope the expression of Jacobi fields in terms of Fr{\'e}chet derivatives is also useful theoretically. The Fr{\'e}chet derivatives $\frL_{\csr}$ and $\frL_{\ssr}$ are not available in numerical packages but are simple to implement. It is easy to verify the initial condition $J^{\cH}(0) = (\dI_n-YY^{\sfT})\nu_{\frm} = \nu_{\frm}$, and $\dot{J}^{\cH}(0) = -(Y\eta^{\sfT} +\eta Y^{\sfT})\nu_{\frm} + (\dI_n-YY^{\sfT})\delta_{\frt} =\delta_{\frc} - Y\eta^{\sfT}\nu_{\frm} = \delta_{\frt} + Y\nu_{\frm}^{\sfT}\eta - Y\eta^{\sfT}\nu_{\frm}=\nu_{\frt}$.

\begin{proof}The formula for $\gamma(t)$ is proved in \cite{NguyenGeodesic}, or follows from either direct substitution of \cref{eq:exp_grass} to the geodesic equation for the lift of the Grassmann, which is $\ddot{\gamma} + \gamma\dot{\gamma}^\sfT\dot{\gamma} = 0$ (a calculation similar to verification the geodesic equation of a sphere), or from the first $p$ columns of $U\exp t\hat{\eta}$ (the geodesic for $SO(n)$),  where $U = (Y | \Yperp^{\sfT})$, $\hat{\eta} = \begin{pmatrix} 0 & -\eta^{\sfT}\\ \eta & 0\end{pmatrix}$, breaking $\exp t\hat{\eta}$ to even and odd powers (note the horizontal geodesics are the same for the induced metric $\sfg_i\eta = \eta$ and the canonical metric $\sfg_c\eta = \eta - \frac{1}{2}YY^{\sfT}\eta$, as $Y^{\sfT}\eta = 0$).

The expression for $J^{\cH}(t)$ is just the horizontal projection of the directional derivative of $\gamma(Y, \eta; t)$ in the direction $(\delta_{\frm}, \delta_{\frt})$ defined by the canonical flip.
\end{proof}

For the natural metric on $\cH\St{p}{n}$ corresponding to the submersion to the Grassmann manifold, at a point $(Y, V)\in \cH\St{p}{n}$, for $\omega\in \R^{n\times p}$ we set $\Gammaa(\omega, V)_Y = \GammaH(\ttH_Y\omega, V) - \rB(\omega, V) = Y\omega^{\sfT}V - V Y^{\sfT}\omega$ from \cref{eq:Hproj} and extend it to an operator on $\cE^2$. This expression could be used to evaluate the metric $\sfGQ$ in \cref{eq:Hproj}, the projection in \cref{eq:bundl_projH}. To evaluate the Christoffel function at $(Y, V)\in \cH\St{p}{n}$ for horizontal vectors $\txi=(\xi_{\frm}, \xi_{\frt}), \teta=(\eta_{\frm}, \eta_{\frt})\in \cQ_{(Y, V)}\cH\St{p}{n}$, in \cref{eq:GammaHsfGparts}, with the curvature known, we can use
\begin{equation}(\rD_{\xi_{\frm}}\Gammaa)( \eta_{\frm}, V)=\xi_{\frm}\eta_{\frm}^{\sfT}V - V \xi_{\frm}^{\sfT}\eta_{\frm}\end{equation}
\begin{equation}\Gammaa(\GammaH(\xi_{\frm},\eta_{\frm}), V) = - V \xi_{\frm}^{\sfT}\eta_{\frm}\end{equation}
\section{Discussion} We have demonstrated differential geometric measures of a Riemannian manifold could be computed effectively using a metric operator if the manifold is embedded in a Euclidean space, or if it is a submersed image of such manifold, and have derived several new results using this approach. We believe the approach could be effective for other types of geometries, for example, Finsler geometry or generalized complex geometry. Jacobi field and tangent bundle metrics appear in the problem of geodesic regression in computer vision \cite{MachaLeite,Niethammer2011,Fletcher2013}, thus our present work presents an approach to evaluate them for common manifolds. We hope researchers, both applied and pure mathematics will find the approach useful in their future works.
\begin{appendix}
\section{Fr{\'e}chet Derivative}\label{sec:frechet}Recall \cite{Higham,Mathias,Havel} if $f(A)=\sum_{i=0}^{\infty}f_iA^i$ is a power series with scalar coefficients and $A$ is a square matrix, then the Fr{\'e}chet derivative $\frL_f(A, E) = \lim_{h\to 0}\frac{1}{h}(f(A+hE) - f(A))$ in direction $E$ could be expressed under standard convergence condition as
$$\frL_f(A, E) = \sum_{i=0}^{\infty} f_i \sum_{a+b=i-1}A^aEA^b$$
If $\hat{A} = \begin{pmatrix} A & E \\ 0 & A\end{pmatrix}$ then $f(\hat{A}) = \begin{pmatrix} f(A) & \frL_f(A, E) \\ 0 & f(A)\end{pmatrix}$, this could be used to show $\frL_f(A, E)$ and $f(A)$ could be computed together with a computational complexity of around three times the complexity of $f(A)$. There exist routines to compute Fr{\'e}chet derivatives of the exponential function in open source or commercial packages. We have mentioned $\frL_{\exp}(A, E) = \exp A\sum_{n=0}^{\infty} \frac{(-1)^n}{(n+1)!}\ad_A^n E$. For Jacobi fields of the Grassmann manifold, we need to evaluate $\frL_{\csr}$ and $\frL_{\ssr}$, where we recall $\csr z$ and $\ssr$ are analytic continuations of $\cos z^{1/2}$ and $z^{-1/2}\sin z^{1/2}$. Based on the ideas in \cite{HighamCosine}, the evaluation for $A$ with small eigenvalues could be done by Pad{\'e} approximant, then use functional equations for $\csr$ and $\ssr$ (based on equations for {\it cosine} and {\it sine}) for $A$ with large eigenvalues. To evaluate time derivatives of Jacobi fields, the following formula is handly. While it is easy to prove, we could not find a reference.
\begin{lemma} If $f(x) = \sum_{i=0}^{\infty} f_i x^i$ is an analytic function near zero and $f'(x) = \sum_{i=0}^{\infty} (i+1)f_{i+1} x_i$, then
  \begin{equation}
    \frac{d}{dt}L_f(tA, tE) = AL_{f'}(tA, tE) + Ef'(tA)
\end{equation}    
\end{lemma}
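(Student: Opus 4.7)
The plan is a direct term-by-term comparison of power series in $t$; analyticity of $f$ near $0$ justifies termwise differentiation inside the radius of convergence (and hence for operator arguments with sufficiently small spectral radius, by the standard estimate on $\|L_f(A,E)\|$).

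First I would expand the left-hand side using the series formula for the Fr\'echet derivative. Since $L_f$ is bilinear in the shifts, homogeneity in $t$ gives
\begin{equation*}
L_f(tA,tE) \;=\; \sum_{i=0}^{\infty} f_i\, t^{i}\sum_{a+b=i-1} A^{a} E\, A^{b},
\end{equation*}
so differentiating in $t$ yields, after reindexing $j=i-1$,
\begin{equation*}
\frac{d}{dt}L_f(tA,tE) \;=\; \sum_{j=0}^{\infty}(j+1)f_{j+1}\, t^{j}\sum_{a+b=j} A^{a} E\, A^{b}.
\end{equation*}

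Next I would expand the right-hand side by the same recipe, using the coefficients of $f'$ which are $(j+1)f_{j+1}$. One obtains
\begin{equation*}
A L_{f'}(tA,tE) \;=\; \sum_{j=0}^{\infty}(j+1)f_{j+1}\, t^{j} \sum_{a+b=j-1} A^{a+1} E\, A^{b},
\qquad
E f'(tA) \;=\; \sum_{j=0}^{\infty}(j+1)f_{j+1}\, t^{j}\, E A^{j}.
\end{equation*}
Shifting $a'=a+1$ in the first sum converts its inner double sum into $\sum_{a'=1}^{j} A^{a'} E A^{j-a'}$, and the $E A^{j}$ term from the second sum supplies the missing $a'=0$ summand, so the two combine to give exactly $\sum_{a+b=j} A^{a} E A^{b}$. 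Matching coefficients of $t^{j}$ on both sides completes the identity.

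There is no real obstacle; the only thing to watch is bookkeeping in the index shift between $L_{f'}$ and $L_f$, and the fact that the $E A^{j}$ contribution is precisely what fills the $a=0$ slot missing from $A L_{f'}(tA,tE)$. If one prefers a coordinate-free argument, the same identity follows from the block-triangular identity $f\!\left(\begin{smallmatrix} tA & tE\\ 0 & tA\end{smallmatrix}\right)=\begin{smallmatrix}\end{smallmatrix}\left(\begin{smallmatrix} f(tA) & L_f(tA,tE)\\ 0 & f(tA)\end{smallmatrix}\right)$ by differentiating in $t$ and reading off the $(1,2)$ block, using $\frac{d}{dt}f(tM)=M f'(tM)=f'(tM)M$ for any analytic $f$ and square $M$; this is an alternative presentation I would include as a remark.
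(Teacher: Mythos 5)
Your proposal is correct and is essentially the same argument as the paper's: the paper reduces to monomials $f(x)=x^n$ by linearity and checks the single identity, which is exactly your coefficient-of-$t^j$ comparison with the same index shift absorbing the $EA^j$ term into the $a=0$ slot. The block-triangular remark is a nice optional addition but not needed.
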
  
\begin{proof} We only need to prove this for monomials $f(x) = x^n$. This follows from
  $$  \frac{d}{dt}\sum_{a+b=n-1}t^nA^aEA^b =  A(t^{n-1}\sum_{a+b=n-2}nA^a E A^b) + E(n  t^{n-1} A^{n-1})
  $$
\end{proof}
\end{appendix}
\bibliographystyle{amsplain}
\bibliography{RiemannianCurvature}
\end{document}